\newtheorem{Thm}{Theorem}[section]
\newtheorem{Prp}[Thm]{Proposition}
\newtheorem{Lem}[Thm]{Lemma}
\newtheorem{Cor}[Thm]{Corollary}
\newtheorem{Def}[Thm]{Definition}
\newtheorem{Rem}[Thm]{Remark}
\newcommand\id{\mathrm{id}}
\newcommand\dvol{\mathrm{dvol}}
\newcommand\Spin{\mathrm{Spin}}
\newcommand\setsep{\;\big|\;}
\newcommand\dbar{\overline{\partial}}
\newcommand\laplace{\triangle}
\renewcommand\vec[1]{\boldsymbol #1}
\newcommand\vvarphi{\vec\varphi}
\newcommand\vpsi{\vec\psi}
\newcommand\vx{\vec x}
\newcommand\vy{\vec y}
\newcommand\vz{\vec z}
\newcommand\mA{\mathcal A}
\newcommand\mC{\mathcal C}
\newcommand\mD{\mathcal D}
\newcommand\mJ{\mathcal J}
\newcommand\mM{\mathcal M}
\newcommand\mU{\mathcal U}
\newcommand\bC{\mathbb C}
\newcommand\bN{\mathbb N}
\newcommand\bR{\mathbb R}
\newcommand\bZ{\mathbb Z}
\newcommand\ox{\overline{x}}
\newcommand\tvarphi{\tilde{\varphi}}
\newcommand\tpsi{\tilde{\psi}}
\newcommand\tD{\tilde{D}}
\newcommand\tJ{\tilde{J}}
\newcommand\tz{\tilde{z}}
\newcommand{\dd}[2][]{\frac{\partial #1}{\partial #2}}
\newcommand{\abs}[1]{\left| #1 \right|}
\newcommand{\norm}[1]{\left|\left| #1 \right|\right|}
\newcommand{\scal}[3][]{\ifthenelse{\equal{#1}{}}{
  \left\langle #2,\,#3 \right\rangle
}{\ifthenelse{\equal{#1}{(}}{
  \left( #2,\,#3 \right)
}{\ifthenelse{\equal{#1}{[}}{
  \left[ #2,\,#3 \right]
}{
  #1\left( #2,\,#3 \right)
}}}}
\renewcommand{\title}[1]{\vbox{\center\LARGE{\textsc{#1}}}\vspace{5mm}}
\renewcommand{\author}[1]{\vbox{\center\large{\textsc{#1}}}\vspace{5mm}}
\newcommand{\address}[1]{\vbox{\center\em#1}}
\newcommand{\email}[1]{\vbox{\center\tt#1}\vspace{5mm}}
\begin{document}

\title{Compactness for Holomorphic Supercurves}

\author{Josua Groeger$^1$}

\address{Humboldt-Universit\"at zu Berlin, Institut f\"ur Mathematik,\\
  Rudower Chaussee 25, 12489 Berlin, Germany }

\email{$^1$groegerj@mathematik.hu-berlin.de}


\begin{abstract}
\noindent
We study the compactness problem for moduli spaces of holomorphic supercurves
which, being motivated by supergeometry, are perturbed such as to allow for
transversality.
We give an explicit construction of limiting objects for sequences
of holomorphic supercurves and prove that, in important cases, every such sequence
has a convergent subsequence provided that a
suitable extension of the classical energy is uniformly bounded.
This is a version of Gromov compactness. Finally, we introduce a topology on the
moduli spaces enlarged by the limiting objects which makes these spaces
compact and metrisable.
\end{abstract}

\noindent
\textit{Key words and phrases.} symplectic manifolds, holomorphic curves, compactness,\\
removal of singularities, Gromov topology.

\section{Introduction}

Holomorphic supercurves were introduced in \cite{Gro11a} as a natural generalisation
of holomorphic curves to supergeometry. Holomorphic curves, in turn, have been a
powerful tool in the study of symplectic manifolds since the seminal work of Gromov (\cite{Gro85}).
In general, solution sets of nonlinear elliptic differential equations often lead
to interesting algebraic invariants. In the case of holomorphic curves, these are known
as Gromov-Witten invariants.
Here, one typically encounters two problems: First, the moduli spaces in question are
wanted to have a nice structure (which is a transversality problem) and, second,
they are usually not compact but need to be compactified.
Examples for the occurrence and solution of both issues include the aforementioned
Gromov-Witten invariants (cf. \cite{MS04}), invariants of Hamiltonian group actions
as introduced in \cite{CGMS02} and symplectic field theory (cf. \cite{EGH00} and \cite{Dra04}).
In the case of holomorphic supercurves, the transversality problem was solved in \cite{Gro11b}
by making the defining equations depend on a connection $A$ such that the corresponding linearised operator
is generically surjective. This perturbed definition is referred to as an $(A,J)$-holomorphic supercurve.

To fix notation, let us briefly recall the relevant background.
Let $\Sigma$ be a connected and closed Riemann surface with a fixed complex structure $j$.
Moreover, for simplicity, we also fix a Riemann metric $h$ in the conformal class
corresponding to $j$.
By a standard result, biholomorphic diffeomorphisms of $\Sigma$ agree with its
conformal automorphisms (cf. Sec. A.3 of \cite{BP92}).
For our purposes, the most important example is the sphere $\Sigma=S^2$ which we identify,
via stereographic projection, with two copies of $\bC$ glued along $\bC^*$ via the transition map
$z\mapsto\frac{1}{z}$. Under this identification, the standard metric on $S^2$ is a constant multiple
of the \emph{Fubini-Study metric} $h_{(s,t)}:=\frac{1}{(1+s^2+t^2)^2}(ds^2+dt^2)$
on $\bC\cup\{\infty\}$, and conformal automorphisms, known as \emph{Möbius transformations},
have the form $z\mapsto\frac{az+b}{cz+d}$ with $ad-bc=1$ and $a,b,c,d\in\bC$.
As our target space, we let $(X,\omega)$ denote a compact symplectic manifold
of dimension $2n$ and fix an $\omega$-tame (or $\omega$-compatible) almost complex structure $J$.
Every such structure $J$ determines a Riemann metric $g_J$.
A \emph{($J$-)holomorphic curve} is a smooth map $\varphi:\Sigma\rightarrow X$
such that $\dbar_J\varphi:=\frac{1}{2}(d\varphi+J\circ d\varphi\circ j)=0$ holds.

\begin{Def}[\cite{Gro11b}]
\label{defAJHolomorphicSupercurve}
Let $L\rightarrow\Sigma$ be a holomorphic line bundle and $A$ be a connection on $X$.
An \emph{$(A,J)$-holomorphic supercurve} is a pair $(\varphi,\psi)$, consisting of a smooth map
$\varphi\in C^{\infty}(\Sigma,X)$ and a smooth section $\psi\in\Gamma(\Sigma,L\otimes_J\varphi^*TX)$,
for brevity denoted $(\varphi,\psi):(\Sigma,L)\rightarrow X$, such that
\begin{align*}
\dbar_J\varphi=0\;,\qquad\mD^{A,J}_{\varphi}\psi
:=\left(\nabla^{A,J}\psi_{\theta}\right)^{0,1}\cdot\theta+\psi_{\theta}\cdot(\dbar\theta)=0
\end{align*}
holds where, for $U\subseteq\Sigma$ sufficiently small, we fix a nonvanishing section
$\theta\in\Gamma(U,L)$ and let $\psi_{j\theta}\in\Gamma(U,\varphi^*TX)$ be
such that $\psi_j=\theta\cdot\psi_{j\theta}$ holds. Here, $\dbar$ denotes the
usual Dolbeault operator on $L$.
\end{Def}

Based on Def. \ref{defAJHolomorphicSupercurve}, this article solves the compactness problem in important cases
and may be read independent of \cite{Gro11a} and \cite{Gro11b}.
It is organised as follows.
In Sec. \ref{secProperties}, we show that the defining equations are conformally
invariant and introduce the super energy as a conformally invariant extension of the classical energy.
In Sec. \ref{secRemovalOfSingularities}, we prove removability of isolated singularities for holomorphic supercurves with
finite super energy by means of mean value type inequalities and an isoperimetric inequality for local holomorphic supercurves.
In Sec. \ref{secBubbling}, we study how the bubbling off of classical holomorphic spheres affects
a sequence of holomorphic supercurves. We prove, in particular, that the rescaling can be chosen such
that there is no loss of super energy.
In Sec. \ref{secGromovCompactness}, we introduce stable supercurves and prove that
every sequence of holomorphic superspheres with uniformly bounded super energy
has a subsequence that converges to such an object. This is Gromov compactness.
Finally, we define a compact and metrisable topology on the moduli spaces
that describes this convergence.
Put together, our transversality and compactness results raise hope to be able to construct
new invariants or at least to find new expressions for existing ones in subsequent work.

\subsection{Gromov Compactness for Holomorphic Curves}

We recall some basic facts about compactness in the case of holomorphic curves.
Consult \cite{MS04} as well as the references therein for details.
Since the holomorphicity condition $\dbar_J\varphi=0$ is conformally invariant,  
there is a canonical action of the group $G$ of conformal automorphisms of $\Sigma$ on the moduli space
of simple $J$-holomorphic curves representing a given homology class $\beta\in H_2(X,\bZ)$.
Since $G$ is typically not compact, this space is, in general, non-compact either.
Another failure of compactness lies in the formation of bubbles. Let us first recall
the notion of \emph{energy} for smooth maps $\varphi:\Sigma\rightarrow X$:
\begin{align}
\label{eqnHarmonicAction}
E(\varphi):=\frac{1}{2}\int_{\Sigma}\abs{d\varphi}_{h,g_J}^2\dvol_{\Sigma}
\end{align}
On the one hand, it coincides with the \emph{harmonic action functional} $\mA(\varphi)=E(\varphi)$
and, by the energy identity, is a topological invariant.
On the other hand, it is important for the bubbling analysis to be sketched next.

Consider $\Sigma=S^2$ and let $\varphi^{\nu}$ be a sequence of holomorphic curves such that the energy
$E(\varphi^{\nu})$ is uniformly bounded.
Then a suitable subsequence converges to a holomorphic sphere $\varphi$ on $S^2$
with a finite number of points $z_j\in S^2$ removed and, at each $z_j$, another holomorphic
sphere (a \emph{bubble}) is attached in the limit. The proof is based on a conformal rescaling argument,
using removability of isolated singularities for holomorphic curves and conformal invariance
of the energy $E(\varphi,U)$ for $U\subseteq S^2$.
The limiting objects are obtained by an inductive argument over the bubbling process,
which terminates by the following result: There is a constant $\hbar>0$ such that
$E(\varphi)\geq\hbar$
for every nonconstant $J$-holomorphic sphere $\varphi:S^2\rightarrow X$. One can thus show that every
sequence of holomorphic curves has a subsequence that converges to a \emph{stable map}
(in the sense of Kontsevich)
\begin{align}
\label{eqnStableMap}
(\vvarphi,\vz)=(\{\varphi_{\alpha}\}_{\alpha\in T},\{z_{\alpha\beta}\in S^2\}_{\alpha E\beta},
\{\alpha_i,z_i\in S^2\}_{1\leq i\leq n})
\end{align}
that is a collection of holomorphic curves modelled over a labelled tree $(T,E,\Lambda)$
with compatibility conditions for the edges. This result is called \emph{Gromov compactness}.
Here, the \emph{marked points} $z_i$ are not needed in the first place but added
for obtaining an evaluation map, from which the Gromov-Witten invariants are then built.

There is a natural notion of \emph{equivalence} of two stable maps
$(\vvarphi,\vz)\sim(\vvarphi',\vz')$ by a tree isomorphism and a collection
of Möbius transformations. We define the \emph{moduli spaces}
\begin{align}
\mM_{0,n}(X;J)\quad,\qquad\mM_{0,n}(X,\beta;J)\quad,\qquad\overline{\mM}_{0,n}(X,\beta;J)
\end{align}
of stable maps with $n$ marked points, those which represent $\beta$,
and the corresponding space of equivalence classes, respectively.
By a more general Gromov compactness theorem, every sequence in $\mM_{0,n}(X;J)$ with
uniformly bounded energy has a convergent subsequence, and the same is true for every
sequence in $\mM_{0,n}(X,\beta;J)$. Moreover, limits are unique up to equivalence.
Gromov convergence can be measured by a ''distance'' function
$\scal[\rho_{\varepsilon}]{(\vvarphi,\vz)}{(\vvarphi',\vz')}$
for $(\vvarphi,\vz),(\vvarphi',\vz')\in\mM_{0,n}(X,\beta;J)$, where $\varepsilon>0$
is a sufficiently small constant. By means of $\rho_{\varepsilon}$, one can define
a topology on $\overline{\mM}_{0,n}(X,\beta;J)$, called \emph{Gromov topology}, with
respect to which these spaces are compact and metrisable, and
convergence is equivalent to Gromov convergence.

\section{Conformal Invariance and the Super Energy}
\label{secProperties}

In this section, we show conformal invariance of $(A,J)$-holomorphic supercurves
and introduce a conformally invariant extension of the energy (\ref{eqnHarmonicAction}).

For this purpose, let us first study an important class of holomorphic line bundles
on Riemann surfaces.
Let $\mathrm{Spin}(2)\cong S^1$ denote the $2$-dimensional spin group
and, in the following, fix a spin structure $\Spin(\Sigma)$ on $\Sigma$.
We define, for $d\in\bZ^*$, representations
\begin{align*}
\mu_d:\Spin(2)\rightarrow GL(\bC)\cong\bC^*\;&,\qquad\mu_d(s)(v)=s^d\cdot v
\end{align*}
on $\bC$ and consider the complex line bundles $L_d:=\Spin(\Sigma)\times_{(\Spin(2),\mu_d)}\bC$
of degree $\deg L_d=d(1-g)$ where $g$ denotes the genus of $\Sigma$.
On the sphere $S^2$, which carries a unique spin structure,
the (isomorphism classes of) holomorphic line bundles are exactly the bundles $L_d$.
Of particular importance are the (half-)spinor bundles
$S^+=L_{-1}$ and $S^-=L_1$ as well as the (co)tangent bundles
$T^*\Sigma=L_{-2}$ and $T\Sigma=L_2$. In general, we may identify
$L_{d}\cong (L_1)^d\cong (L_{-1})^{-d}\cong(S^+)^{-d}$,
and the spin structure thus induces a holomorphic structure on each $L_d$.
By a simple calculation, the Hermitian metric $H$ on $\bC$, defined by
$\scal[H]{x}{y}:=\Re(\ox y)$, is invariant under the action
$\mu_d$ of $\Spin(2)$ for every $d\in\bZ^*$. We thus yield a global
metric as follows.

\begin{Lem}
\label{lemSpinInvariantFormsGlobal}
Let $s\in\Gamma(U\subseteq\Sigma,\Spin(\Sigma))$ be a local section such that
elements of $(L_d)_z$ may be written $[s_z,v]$, $[s_z,w]$ with $v,w\in\bC$.
Then, prescribing
\begin{align*}
\scal[H]{[s_z,v]}{[s_z,w]}:=\scal[[]{s_z}{\scal[H]{v}{w}}
\end{align*}
yields a well-defined Hermitian bundle metric on $L_d$.
\end{Lem}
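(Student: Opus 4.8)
The plan is to verify the two points implicit in the statement: that the right-hand side of the defining formula does not depend on the auxiliary choices, so that the local prescriptions glue to one object on $L_d$, and that this object is a Hermitian bundle metric. Since $L_d$ is a line bundle, a local section $s\in\Gamma(U,\Spin(\Sigma))$ trivialises $L_d$ over $U$ via $[s_z,v]\mapsto v$, so the prescription is manifestly smooth on each such $U$; moreover, for a fixed frame the vectors $v,w$ representing given elements of $(L_d)_z$ are unique, because $\Spin(2)$ acts freely on $\Spin(\Sigma)$. Hence the only ambiguity, and the real content of the lemma, is compatibility under a change of local spin frame.

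First I would fix $z\in\Sigma$ and two local sections $s,s'$ of $\Spin(\Sigma)$ defined near $z$. As $\Spin(\Sigma)$ is a principal $\Spin(2)$-bundle, there is a unique $g\in\Spin(2)\cong S^1$ with $s'_z=s_z\cdot g$. From the definition $L_d=\Spin(\Sigma)\times_{(\Spin(2),\mu_d)}\bC$ one has $[s'_z,v']=[s_z\cdot g,v']=[s_z,\mu_d(g)v']=[s_z,g^dv']$, so expressing a vector of $(L_d)_z$ in both frames as $[s_z,v]=[s'_z,v']$ forces $v=g^dv'$, and likewise $w=g^dw'$ for a second vector. Substituting into the definition and using $\abs{g}=1$,
\begin{align*}
\scal[H]{v}{w}=\Re\bigl(\overline{g^dv'}\,g^dw'\bigr)=\abs{g^d}^2\,\Re\bigl(\overline{v'}w'\bigr)=\Re\bigl(\overline{v'}w'\bigr)=\scal[H]{v'}{w'}\,.
\end{align*}
This is exactly the $\mu_d$-invariance of $H$ recorded just before the lemma, applied to the transition element $g$; hence the defining expression is frame-independent, the local formulas patch together, and in particular the class $[s_z,\scal[H]{v}{w}]$ on the right is frame-independent as well.

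It then remains to check that the global $H$ so obtained is a Hermitian bundle metric, which can be done frame-by-frame and is routine: smoothness is clear from the local expression, while its symmetry, bilinearity, and positive definiteness $\scal[H]{\xi}{\xi}=\abs{v}^2\geq 0$ with equality iff $\xi=0$ all descend immediately from the corresponding elementary properties of $\scal[H]{x}{y}=\Re(\ox y)$ on $\bC$. The only step carrying any weight is the frame-independence above, and even it reduces to the single observation that the structure group acts through $\mu_d$ by scalars of modulus one, so that every transition function is an isometry of $H$; everything else is bookkeeping.
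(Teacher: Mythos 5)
Your proof is correct and follows the same route the paper intends: the paper gives no separate proof of this lemma, relying entirely on the observation immediately preceding it that $H$ is invariant under $\mu_d$ because $\mu_d(g)$ acts by the unit-modulus scalar $g^d$, which is precisely the computation you carry out for the transition between two spin frames. The remaining verifications (smoothness, bilinearity, positivity) are routine, as you note.
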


In general, let $L$ be a complex line bundle and $d\in\bZ^*$.
We let $l_d$ denote the canonical bundle map
$l_d:L\rightarrow L^d$, which is constructed as follows. Note first that any bundle atlas of $L$
induces a bundle atlas of $L^d$: Considering $L$ as a collection $\{U_{\alpha}\times\bC\}$
for open subsets $U_{\alpha}\subseteq\Sigma$ with cocycles
$g_{\alpha\beta}:U_{\alpha}\cap U_{\beta}\rightarrow\bC^*$,
$L^d$ may be identified with the same collection $\{U_{\alpha}\times\bC\}$ with cocycles $g_{\alpha\beta}^d$.
Now identify $v\in L_{|U_{\alpha}}$ with a complex number and prescribe $l_d(v):=v^d$.
This prescription is well-defined.

\begin{Lem}
\label{lemBundleHomomorphismLift}
Let $m,\tilde{m}:\Sigma\rightarrow\Sigma$ be holomorphic maps which are homotopic to the identity
$\id:\Sigma\rightarrow\Sigma$. Then, for every $d\in\bZ^*$, there exists a homomorphism
$\sqrt{dm}^d:L_d\rightarrow L_d$ of holomorphic line bundles, which is unique upon prescribing
$\sqrt{\id}^d:=\id$. Moreover, $\sqrt{d(m\circ\tilde{m})}^d=\sqrt{dm}^d\circ\sqrt{d\tilde{m}}^d$
holds. If $m$, $\tilde{m}$ are bijective, then so is $\sqrt{dm}^d$ and
$(\sqrt{dm}^d)^{-1}=\sqrt{d(m^{-1})}^d$ and
$(\sqrt{d(m\circ\tilde{m})}^d)^{-1}=(\sqrt{d\tilde{m}}^d)^{-1}\circ(\sqrt{dm}^d)^{-1}$.
In this case, we may view $\sqrt{dm}^d$ as a bundle isomorphism
$\sqrt{dm}^d:L_d\stackrel{\cong}{\rightarrow}m^*L_d$.
\end{Lem}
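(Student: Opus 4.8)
The plan is to reduce the statement to the case $d=1$, and there to extract a square root of the derivative of $m$, using the homotopy hypothesis twice: once to make an obstruction class vanish (existence) and once to pin down an overall sign (uniqueness and the composition law).

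\emph{Reduction to $d=1$.} Using the canonical identifications $L_d\cong L_1^{\otimes d}$ (negative tensor powers being duals), I would set $\sqrt{dm}^d:=\big(\sqrt{dm}^1\big)^{\otimes d}$. Since $\sqrt{dm}^1$ will be constructed as a bundle \emph{isomorphism} $L_1\to m^*L_1$, this makes sense for every $d\in\bZ^*$, it reduces to $\id$ when $m=\id$, and the composition law $\sqrt{d(m\circ\tilde m)}^d=\sqrt{dm}^d\circ\sqrt{d\tilde m}^d$ together with the two inverse formulas follow formally from the case $d=1$ by functoriality of $\phi\mapsto\phi^{\otimes d}$ and of pullback. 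The final sentence of the Lemma is merely a reformulation: a bundle homomorphism $L_d\to L_d$ covering $m$ is the same datum as a bundle morphism $L_d\to m^*L_d$ covering $\id_\Sigma$, and it is an isomorphism precisely when $dm$ (hence $m$, if $m$ is bijective) is.

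\emph{The case $d=1$: existence.} The derivative is a holomorphic bundle morphism $dm\colon T\Sigma\to m^*T\Sigma$, and I would first check that it is nowhere vanishing, hence a holomorphic bundle isomorphism; this uses the rigidity of holomorphic self-maps homotopic to the identity (Möbius transformations on $S^2$, translations on a torus, only the identity for genus $\ge 2$), in each case $dm$ having no zeros. Under $T\Sigma=L_2=L_1^{\otimes 2}$ and $m^*T\Sigma=(m^*L_1)^{\otimes 2}$ one then seeks a holomorphic isomorphism $\sqrt{dm}^1\colon L_1\to m^*L_1$ with $\big(\sqrt{dm}^1\big)^{\otimes 2}=dm$. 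Over a good cover, $dm$ is a nowhere vanishing holomorphic function with two holomorphic square roots, and the sign ambiguities on overlaps assemble to a class in $H^1(\Sigma;\bZ/2)$. This class is exactly the difference of the spin structure $\Spin(\Sigma)$ defining $L_1$ and its pullback $m^*\Spin(\Sigma)$ — equivalently, of the associated square roots of $T\Sigma$, with $m^*T\Sigma$ identified with $T\Sigma$ via $dm$ — and since $m$ is homotopic to $\id$ it acts trivially on $H^1(\Sigma;\bZ/2)$, so $m^*\Spin(\Sigma)=\Spin(\Sigma)$ and the obstruction vanishes. Hence a global holomorphic square root exists; any two differ by a global holomorphic map $\Sigma\to\{\pm 1\}$, i.e.\ — $\Sigma$ being connected — by an overall sign.

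\emph{The case $d=1$: uniqueness, and the main obstacle.} It remains to single out the sign so that $\sqrt{\id}^1=\id$ and multiplicativity holds. I would choose a homotopy $m_t$ from $\id$ to $m$ \emph{through holomorphic automorphisms} (Möbius transformations, translations, or the constant homotopy, according to genus), form the family $dm_t\colon T\Sigma\to m_t^*T\Sigma$ of nowhere vanishing smooth isomorphisms over $\Sigma\times[0,1]$, note that the $\bZ/2$-obstruction again vanishes there, and thus obtain the unique continuous family of square roots equal to $\id$ at $t=0$, finally setting $\sqrt{dm}^1:=\sqrt{dm_1}$. The hard part will be to show that this is independent of the chosen homotopy and that the resulting assignment $m\mapsto\sqrt{dm}^1$ respects composition — this is exactly where ``homotopic to the identity'' and the fixed spin structure are essential, as one must rule out a sign monodromy around loops in the automorphism group of $\Sigma$. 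Granting this, the composition law, the inverse formulas, and the passage to general $d$ are formal, as above. I expect this control of the overall sign to be the only genuine difficulty; the reduction to $d=1$ and the existence statement are routine once $dm$ is recognised as an isomorphism.
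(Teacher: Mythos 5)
Your existence argument is sound and is essentially an explicit \v{C}ech version of what the paper does: the paper constructs $\sqrt{dm}^d$ as a lift of $dm:(L_1)^2\rightarrow(L_1)^2$ through the fibrewise power maps $l_2,l_d$, invoking the covering-space lifting criterion, and the vanishing of your class in $H^1(\Sigma;\bZ/2)$ (equality of the spin structures $L_1$ and $m^*L_1$ as square roots of $T\Sigma\cong m^*T\Sigma$) is exactly that criterion. The reduction to $d=1$ via tensor powers and the observation that $dm$ is nowhere vanishing are also fine.

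The genuine gap is in the part you defer (``Granting this\ldots''), and the route you propose for closing it is actually blocked. You want to normalise the sign by continuing the square root along a homotopy $m_t$ from $\id$ to $m$ through holomorphic automorphisms and then to prove path-independence. For $\Sigma=S^2$ — the case the paper actually uses — this fails: $\pi_1(PSL(2,\bC))=\bZ/2$, and the monodromy of the square root around the generating loop is $-1$. Concretely, for $m_\theta(z)=e^{i\theta}z$, $\theta\in[0,2\pi]$, one has $m_\theta'\equiv e^{i\theta}$, and the continuous square root starting at $1$ ends at $-1$; so two homotopies from $\id$ to $m$ differing by this loop produce square roots differing by a sign, and for odd $d$ by $(-1)^d=-1$ on $L_d$. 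Equivalently, a path-independent multiplicative choice of $\sqrt{dm}^d$ for all Möbius transformations would split the nontrivial central extension $SL(2,\bC)\rightarrow PSL(2,\bC)$. So the sign monodromy you hope to ``rule out'' is genuinely present, and no canonical global assignment $m\mapsto\sqrt{dm}^d$ with the strong uniqueness and composition properties exists. The correct (and the paper's) formulation of uniqueness and of the composition law comes from the unique-lifting property of coverings instead: a lift of $dm$ through $l_d$ is unique once its value at a single point of the total space is prescribed, and a composite of two lifts is \emph{a} lift of $d(m\circ\tilde m)$, hence \emph{the} lift agreeing with it at that point. You should replace your homotopy-continuation normalisation by this pointwise normalisation; as it stands, your argument would prove a statement that is false for $S^2$ and odd $d$.
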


\begin{proof}
The differential $dm:(L_1)^2\rightarrow(L_1)^2$ is homotopic to the identity since $m$ is homotopic to the
identity and, moreover, complex linear since $m$ is holomorphic.
Therefore, the existence of $\sqrt{dm}^d$ as a homomorphism of complex line bundles with the properties stated follows
from standard arguments involving covering spaces (cf. Sec. 1.3 in \cite{Hat01}).
$\sqrt{dm}^d$ is holomorphic because $dm$ is.
\end{proof}

\begin{Lem}
\label{lemLiftConformal}
Let $m:\Sigma\rightarrow\Sigma$ be a holomorphic map
which is homotopic to the identity.
Then the induced bundle homomorphisms $\sqrt{dm}^d:L_d\rightarrow L_d$ are conformal
with respect to $H$. More precisely,
\begin{align*}
\scal[H_{m(z)}]{\sqrt{dm}^d(v)}{\sqrt{dm}^d(w)}=(\lambda_m(z))^{\frac{d}{2}}\cdot\scal[H_z]{v}{w}
\end{align*}
holds for $v,w\in (L_d)_z$, where $\lambda_m$ is the conformal factor from $m^*h=\lambda_m\cdot h$.
\end{Lem}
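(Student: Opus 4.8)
\medskip

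\noindent\emph{Sketch of the argument.}
The plan is to reduce the statement to the single case $d=2$, where it becomes a direct restatement of the conformality of $m$, and then to propagate it to all $d\in\bZ^*$ using the tensor structure $L_d\cong(L_1)^{\otimes d}$. Recall from Lemma~\ref{lemBundleHomomorphismLift} and its proof that $\sqrt{dm}^1:L_1\to L_1$ is the square root of the differential $dm$ produced by the covering-space lifting, and that $\sqrt{dm}^d=(\sqrt{dm}^1)^{\otimes d}$ under $L_d=(L_1)^{\otimes d}$; in particular $(\sqrt{dm}^1)^{\otimes2}=dm$ on $(L_1)^{\otimes2}=L_2=T\Sigma$. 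I shall also use the elementary identity $H_z(u^{\otimes k},u^{\otimes k})=H_z(u,u)^k$ for $u\in(L_1)_z$ and $k\geq 1$, which is immediate from the definition of $H$ in Lemma~\ref{lemSpinInvariantFormsGlobal} (write $u=[s_z,a]$, so $u^{\otimes k}=[s_z,a^k]$ and $H(a,a)=\abs a^2$).

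First I would settle $d=2$. Here $\sqrt{dm}^2=dm$ acts on $T\Sigma=L_2$, and the metric $H$ on $L_2$ coincides with $h$: any local section $s\in\Gamma(U,\Spin(\Sigma))$ projects to an $h$-orthonormal oriented frame of $T\Sigma$, so $H_z(v,v)=h_z(v,v)$ for $v\in T_z\Sigma$, and hence $H_z=h_z$ on $T_z\Sigma$ (a symmetric bilinear form being determined by its quadratic form). Consequently, for $v,w\in(L_2)_z=T_z\Sigma$,
\begin{align*}
\scal[H_{m(z)}]{dm\,v}{dm\,w}=h_{m(z)}(dm\,v,\,dm\,w)=(m^*h)_z(v,w)=\lambda_m(z)\cdot h_z(v,w)=\lambda_m(z)\cdot\scal[H_z]{v}{w}\,,
\end{align*}
which is the claim for $d=2$, as $\lambda_m(z)=(\lambda_m(z))^{2/2}$.

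Next I would bootstrap to the remaining $d$, working at the level of the quadratic forms $v\mapsto H_z(v,v)$. For $v\in(L_1)_z$ one has $(\sqrt{dm}^1 v)^{\otimes2}=(\sqrt{dm}^1)^{\otimes2}(v^{\otimes2})=dm(v^{\otimes2})$, so applying the case $d=2$ to the tangent vector $v^{\otimes2}$, together with this identity and $H(u^{\otimes2},u^{\otimes2})=H(u,u)^2$, gives
\begin{align*}
H_{m(z)}(\sqrt{dm}^1 v,\,\sqrt{dm}^1 v)^2=H_{m(z)}\big(dm(v^{\otimes2}),\,dm(v^{\otimes2})\big)=\lambda_m(z)\cdot H_z(v^{\otimes2},v^{\otimes2})=\lambda_m(z)\cdot H_z(v,v)^2\,,
\end{align*}
hence $H_{m(z)}(\sqrt{dm}^1 v,\sqrt{dm}^1 v)=(\lambda_m(z))^{1/2}\,H_z(v,v)$. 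For $d>0$ one then writes an arbitrary $v\in(L_d)_z$ as $\xi^{\otimes d}$ with $\xi\in(L_1)_z$ (possible since the fibres are one-dimensional), uses $\sqrt{dm}^d(\xi^{\otimes d})=(\sqrt{dm}^1\xi)^{\otimes d}$ and $H(u^{\otimes d},u^{\otimes d})=H(u,u)^d$, and obtains $H_{m(z)}(\sqrt{dm}^d v,\sqrt{dm}^d v)=(\lambda_m(z))^{d/2}\,H_z(v,v)$. The cases $d<0$ are entirely analogous after identifying $L_d\cong(L_{-1})^{\otimes\abs d}$ and using that $\sqrt{dm}^{-1}$ is the transpose-inverse of $\sqrt{dm}^1$; here one invokes that $m$, being holomorphic and homotopic to $\id$, is a biholomorphism, so that $\lambda_m>0$ throughout. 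Finally, since $\sqrt{dm}^d$ is complex linear, hence $\bR$-linear on fibres, and each $H_z$ is a real symmetric bilinear form, the identity of quadratic forms just obtained upgrades by polarisation to the full bilinear identity claimed in the lemma.

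The genuinely substantive step is the case $d=2$, i.e.\ the two identifications $\sqrt{dm}^2=dm$ and $H|_{L_2}=h$; everything after that is bookkeeping with tensor powers. The one point requiring care is the half-integer power of $\lambda_m$: it appears precisely because $dm$ lives on the square $(L_1)^{\otimes2}$, whereas the square root $\sqrt{dm}^1$ that governs all odd $d$ lives on $L_1$ itself.
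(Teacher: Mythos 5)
Your proof is correct and rests on the same two observations as the paper's own (very compressed) proof: that $H$ restricted to $L_2=T\Sigma$ coincides with $h$, so $dm$ scales it by $\lambda_m$, and that $H$ is multiplicative under the tensor powers out of which $\sqrt{dm}^d$ is built — the paper simply passes to a local spin trivialisation in which $dm$ and $\sqrt{dm}^d$ become multiplication by complex numbers with $\abs{dm}=\sqrt{\lambda_m}$, whence $\abs{\sqrt{dm}^d}^2=\lambda_m^{d/2}$. Your version is an explicit, invariantly organised rendering of the same computation (reduction to $d=2$, tensor-power bootstrap, polarisation), so there is nothing to object to.
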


\begin{proof}
Let $s$ be a local section of $\Spin(\Sigma)$ in a neighbourhood of $z\in U\subseteq\Sigma$.
Then, by definition, $L_d$ is trivial on $U$ upon identifying $[s,v]\in\Spin(\Sigma)\times_{\mu_d}\bC=L_d$
with the complex number $v$. The same holds for a neighbourhood of $m(z)$, and analogous for the tangent bundle
$T\Sigma\cong\Spin(\Sigma)\times_{\mu_2}\bR^2$.
With respect to these trivialisations, the maps $dm$ and $\sqrt{dm}^d$ correspond to multiplication with complex
numbers, denoted (unambiguously) by the same symbols, such that $\abs{dm}=\sqrt{\lambda_m}$.
\end{proof}

Every Möbius transformation of the sphere is homotopic to the identity. In this case,
the hypotheses of Lem. \ref{lemBundleHomomorphismLift} and Lem. \ref{lemLiftConformal}
are, therefore, satisfied.
Now let $\varphi:\Sigma\rightarrow X$ be a smooth map and consider the (twisted) complex vector bundles
$L_d\otimes_J\varphi^*TX\cong L_d\otimes_{\bC}\varphi^*T^{1,0}X$.
Let $m:\Sigma\rightarrow\Sigma$ be a conformal automorphism which is homotopic to the identity.
We denote the lift from Lem. \ref{lemBundleHomomorphismLift} by $\Phi_m:=\sqrt{dm}^d:L_d\rightarrow L_d$.
Its inverse induces identifications (bundle isomorphisms)
$\Phi_m^{-1}:m^*L_d\stackrel{\cong}{\rightarrow} L_d$ and
\begin{align*}
\Phi_m^{-1}:m^*L_d\otimes_J(\varphi\circ m)^*TX\stackrel{\cong}{\rightarrow}L_d\otimes_J(\varphi\circ m)^*TX
\end{align*}
denoted by the same symbol.

\begin{Def}
\label{defTransformedCurve}
Consider a function $\varphi\in C^{\infty}(\Sigma,X)$ and a section $\psi\in\Gamma(L_d\otimes_J\varphi^*TX)$.
We define the transformed objects under $m$ by
\begin{align*}
\tvarphi:=\varphi\circ m\;,\qquad
\tpsi:=\Phi_m^{-1}\circ\psi\circ m\in\Gamma(L_d\otimes_J\tvarphi^*TX)
\end{align*}
\end{Def}

If $\tilde{m}$ is another conformal automorphism homotopic to the identity, Lem. \ref{lemBundleHomomorphismLift}
implies that the concatenation satisfies
\begin{align}
\label{eqnTransformedConcatenation}
\varphi\circ(m\circ\tilde{m})=(\varphi\circ m)\circ\tilde{m}\;,\quad
\Phi_{m\circ\tilde{m}}^{-1}\circ\psi\circ(m\circ\tilde{m})
=\Phi_{\tilde{m}}^{-1}\circ\left(\Phi_m^{-1}\circ\psi\circ m\right)\circ\tilde{m}
\end{align}

\begin{Prp}[Conformal Invariance]
\label{prpSuperConformalInvariance}
Let $m:\Sigma\rightarrow\Sigma$ be a conformal automorphism which is homotopic to the identity and
$(\varphi,\psi)$ be a pair, consisting of a function $\varphi\in C^{\infty}(\Sigma,X)$ and a section
$\psi\in\Gamma(\Sigma,L_d\otimes_J\varphi^*TX)$. Then
\begin{align*}
\mD^{A,J}_{\tvarphi}(\tpsi)=\Phi_m^{-1}\circ\mD_{\varphi}^{A,J}\psi\circ dm
\in\Omega^{0,1}(L_d\otimes_J\tvarphi^*TX)
\end{align*}
holds. In particular, the defining equations for an $(A,J)$-holomorphic supercurve
$(\varphi,\psi):(\Sigma,L_d)\rightarrow X$ are conformally invariant:
\begin{align*}
\dbar_J\varphi=0\iff\dbar_J\tvarphi=0\;,\qquad
\mD_{\varphi}^{A,J}\psi=0\iff\mD_{\tvarphi}^{A,J}(\tpsi)=0
\end{align*}
\end{Prp}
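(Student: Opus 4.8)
The plan is to reduce the identity to a local statement, to express $\mD^{A,J}$ through a \emph{holomorphic} local frame of $L_d$ so that the $\dbar\theta$-term drops out, and then to invoke naturality of the pullback connection together with the holomorphicity of $m$ and of the lift $\Phi_m$. There is no analytic content: the whole issue is keeping the canonical identifications straight. First I would collect the structural facts. Since $m$ is a conformal automorphism it is in particular a biholomorphic diffeomorphism, so $dm$ is fibrewise invertible and $j$-linear (hence pullback by $m$ preserves the $(p,q)$-decomposition of forms), and since $m$ is homotopic to the identity it admits the lift $\Phi_m=\sqrt{dm}^d$, which by Lemma \ref{lemBundleHomomorphismLift} is a fibrewise invertible holomorphic bundle map. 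Both sides of the claimed formula are sections of $\Omega^{0,1}(L_d\otimes_J\tvarphi^*TX)$, so it suffices to check equality on a neighbourhood of an arbitrary point $z_0\in\Sigma$.

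On a neighbourhood $V$ of $m(z_0)$ I would choose a nonvanishing \emph{holomorphic} section $\theta\in\Gamma(V,L_d)$ and write $\psi=\theta\cdot\psi_\theta$ with $\psi_\theta\in\Gamma(V,\varphi^*TX)$; since $\mD^{A,J}_\varphi$ is independent of the local nonvanishing section used in Definition \ref{defAJHolomorphicSupercurve}, this gives $\mD^{A,J}_\varphi\psi=(\nabla^{A,J}\psi_\theta)^{0,1}\cdot\theta$ on $V$. The transformed frame $\tilde\theta:=\Phi_m^{-1}\circ\theta\circ m$ is again holomorphic, as $m$ and $\Phi_m^{-1}$ are, so $\dbar\tilde\theta=0$; and unwinding Definition \ref{defTransformedCurve} fibrewise yields $\tpsi=\tilde\theta\cdot(\psi_\theta\circ m)$, i.e. the $\tilde\theta$-component of $\tpsi$ is $\psi_\theta\circ m$. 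Applying Definition \ref{defAJHolomorphicSupercurve} with the frame $\tilde\theta$ then gives $\mD^{A,J}_{\tvarphi}\tpsi=(\nabla^{A,J}(\psi_\theta\circ m))^{0,1}\cdot\tilde\theta$ on $m^{-1}(V)$.

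Next comes the naturality step. Using $\tvarphi^*TX=m^*(\varphi^*TX)$ and functoriality of the pullback connection, $(\varphi\circ m)^*\nabla^A=m^*(\varphi^*\nabla^A)$, hence $\nabla^{A,J}(\psi_\theta\circ m)=m^*(\nabla^{A,J}\psi_\theta)$, that is $(\nabla^{A,J}(\psi_\theta\circ m))_z(v)=(\nabla^{A,J}\psi_\theta)_{m(z)}(dm_z v)$ for $v\in T_z\Sigma$; since $m$ is holomorphic this passes to the $(0,1)$-parts, so $(\nabla^{A,J}(\psi_\theta\circ m))^{0,1}=m^*((\nabla^{A,J}\psi_\theta)^{0,1})$. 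Substituting and recalling $\tilde\theta=\Phi_m^{-1}\circ\theta\circ m$, evaluation at $z$ on $v\in T_z\Sigma$ gives
\begin{align*}
\big(\mD^{A,J}_{\tvarphi}\tpsi\big)_z(v)
&=\big((\nabla^{A,J}\psi_\theta)^{0,1}\big)_{m(z)}(dm_z v)\otimes\Phi_m^{-1}\big(\theta(m(z))\big)\\
&=\Phi_m^{-1}\Big[\big((\nabla^{A,J}\psi_\theta)^{0,1}\cdot\theta\big)_{m(z)}(dm_z v)\Big]
=\Big(\Phi_m^{-1}\circ\big(\mD^{A,J}_\varphi\psi\big)\circ dm\Big)_z(v),
\end{align*}
which is the claimed identity. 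The equivalence of the defining equations then follows at once, because $\Phi_m^{-1}$ and $dm$ are fibrewise isomorphisms; the companion statement $\dbar_J\varphi=0\Leftrightarrow\dbar_J\tvarphi=0$ is the classical $\dbar_J(\varphi\circ m)=(\dbar_J\varphi)\circ dm$, again a consequence of the $j$-linearity and invertibility of $dm$.

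I expect the main obstacle to be purely organisational: being precise about the identifications $\tvarphi^*TX=m^*\varphi^*TX$ and $\Phi_m^{-1}\colon m^*L_d\stackrel{\cong}{\rightarrow}L_d$, about the meaning of precomposing with $dm$ on the one-form factor, and about the frame-independence of $\mD^{A,J}_\varphi$, which is what legitimises choosing $\theta$ holomorphic and discarding the summand $\psi_\theta\cdot\dbar\theta$. (Alternatively one can keep a general nonvanishing $\theta$ and use $\dbar\tilde\theta=\Phi_m^{-1}(m^*\dbar\theta)$, obtaining the same conclusion term by term.) With those points pinned down, the proof is exactly the two naturality assertions above plus Lemma \ref{lemBundleHomomorphismLift}.
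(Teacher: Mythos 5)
Your proof is correct and follows essentially the same route as the paper: both arguments rest on the naturality of the pullback connection, $(\nabla^{A,J}(\psi_\theta\circ m))^{0,1}=(\nabla^{A,J}\psi_\theta)^{0,1}\circ dm$, together with the holomorphicity of $m$ and of the lift $\Phi_m$. The only (harmless) difference is that you choose a holomorphic frame $\theta$ so that the term $\psi_\theta\cdot\dbar\theta$ vanishes, whereas the paper keeps a general nonvanishing $\theta$ and transforms that term via $\dbar\circ\Phi_m^{-1}(\theta\circ m)=\Phi_m^{-1}\circ(\dbar\theta)\circ dm$ --- precisely the alternative you mention in your closing parenthesis.
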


\begin{proof}
By assumption, $dm$ commutes with $\dbar$ and, by Lem. \ref{lemBundleHomomorphismLift},
the same holds for $\Phi_m^{-1}$, such that
$\dbar\circ\Phi_m^{-1}(\theta\circ m)=\Phi_m^{-1}\circ(\dbar\theta)\circ dm$
follows. Let $z\in\Sigma$. In general, the pullback connection satisfies the identity
\begin{align*}
\left(\nabla^A(\xi\circ m)\right)_z=\left(\nabla^A\xi\right)_{m(z)}\circ d_zm
\end{align*}
for every vector field $\xi\in\Gamma(\varphi^*TX)$ and the composition
$\xi\circ m\in(\varphi\circ m)^*TX$. Therefore
\begin{align*}
\left(\nabla^{A,J}(\psi_{\theta}\circ m)\right)^{0,1}
=\left(\nabla^{A,J}\psi_{\theta}\right)^{0,1}\circ dm\in\Omega^{0,1}((\varphi\circ m)^*TX)
\end{align*}
With this preparation, the statement follows from a straightforward calculation.
\end{proof}

We will next introduce a conformally invariant extension of the energy
(\ref{eqnHarmonicAction}) which involves the sectional part $\psi$.
Let $g=g_J$ denote the Riemann metric on $X$ induced by $\omega$ and $J$,
and let $H$ denote the Hermitian metric on $L_d$ from Lem. \ref{lemSpinInvariantFormsGlobal}.
$H$ and $g$ together induce a bundle metric,
denoted $\scal[(]{}{}$, on $L_d\otimes_{\bR}\varphi^*TX\cong L_d\otimes_{\bC}\varphi^*T^{\bC}X$,
which descends to the subbundle $L_d\otimes_{\bC}\varphi^*T^{1,0}X$.
It is Hermitian since $H$ and $g$ are Hermitian (i.e. $g$ is $J$-orthogonal).
A short calculation yields
\begin{align}
\label{eqnH2}
\scal[(]{\psi}{\psi'}=\frac{1}{2}\scal[g]{\psi_{\theta}}{\psi'_{\theta}}\cdot\scal[H]{\theta}{\theta}
+\frac{1}{2}\scal[g]{\psi_{\theta}}{J\psi'_{\theta}}\cdot\scal[H]{\theta}{i\theta}
\end{align}
where $\theta$ and $\psi_{\theta}$ are local sections of $L_d$ and $\varphi^*TX$,
respectively, and we write sections of $L_d\otimes_J\varphi^*TX$ in the (local) form
$\psi=\frac{1}{2}(\psi_{\theta}\theta-iJ\psi_{\theta}\theta)$.
The following lemma follows immediately from Lem.
\ref{lemLiftConformal} and $m^*\dvol_{\Sigma}=\lambda_m\cdot\dvol_{\Sigma}$.

\begin{Lem}
\label{lemSuperEnergyInvariant}
In the situation of Def. \ref{defTransformedCurve}, let $d\neq 0$
and $\psi,\psi'\in\Gamma(L_d\otimes_J\varphi^*TX)$. Then
\begin{align*}
\int_U\scal[(]{\psi}{\psi'}^{-\frac{2}{d}}\dvol_{\Sigma}
=\int_{m^{-1}(U)}\scal[(]{\tpsi}{\tpsi'}^{-\frac{2}{d}}\dvol_{\Sigma}
\end{align*}
holds for $U\subseteq\Sigma$, whenever both sides are defined.
In other words, the integral is conformally invariant.
\end{Lem}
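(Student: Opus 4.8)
The plan is to reduce the assertion to a pointwise transformation law for the integrand under $\psi\mapsto\tpsi=\Phi_m^{-1}\circ\psi\circ m$ and then to apply the change of variables formula for the diffeomorphism $m:m^{-1}(U)\rightarrow U$ (recall that $m$, being a conformal automorphism homotopic to the identity, is in particular holomorphic and bijective, so that Lem.\ \ref{lemLiftConformal} applies).

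First I would record how the bundle metric $\scal[(]{}{}$ on $L_d\otimes_J\varphi^*TX\cong L_d\otimes_{\bC}\varphi^*T^{1,0}X$ behaves. By its construction, made explicit locally in (\ref{eqnH2}), $\scal[(]{}{}$ is built from $H$ on $L_d$ and the $\bR$-bilinear extension of $g$ on $\varphi^*TX$ and is \emph{linear} in the $L_d$-slot. Now $\tpsi$ is obtained from $\psi\circ m$ by applying the complex-linear bundle map $\Phi_m^{-1}=(\sqrt{dm}^d)^{-1}$ in the $L_d$-factor only, whereas on the $\varphi^*TX$-factor one merely precomposes with $m$, which does not change $g$-norms of tangent vectors (the fibre over $z$ of $\tvarphi^*TX$ is literally $T_{\varphi(m(z))}X$). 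Hence, by Lem.\ \ref{lemLiftConformal}, $\Phi_m^{-1}$ rescales $H$-pairings at $z$ by the factor $(\lambda_m(z))^{-d/2}$, so that
\begin{align*}
\scal[(]{\tpsi}{\tpsi'}=(\lambda_m)^{-\frac{d}{2}}\cdot\big(\scal[(]{\psi}{\psi'}\circ m\big)
\end{align*}
as functions on $\Sigma$. Since $\lambda_m>0$ everywhere, raising to the power $-\tfrac{2}{d}$ (defined at $z$ exactly when it is defined at $m(z)$) yields
\begin{align*}
\scal[(]{\tpsi}{\tpsi'}^{-\frac{2}{d}}=\lambda_m\cdot\big(\scal[(]{\psi}{\psi'}^{-\frac{2}{d}}\circ m\big)
\end{align*}

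Next I would combine this with $m^*\dvol_{\Sigma}=\lambda_m\cdot\dvol_{\Sigma}$, valid because $m$ is conformal and $\dim_{\bR}\Sigma=2$. Writing $f:=\scal[(]{\psi}{\psi'}^{-2/d}$, the previous display says that on $m^{-1}(U)$ the integrand equals $(f\circ m)\cdot\lambda_m$, and $\lambda_m$ is precisely the Jacobian factor of $m$ relative to $\dvol_{\Sigma}$. The change of variables formula for $m:m^{-1}(U)\rightarrow U$ therefore gives
\begin{align*}
\int_{m^{-1}(U)}\scal[(]{\tpsi}{\tpsi'}^{-\frac{2}{d}}\dvol_{\Sigma}
&=\int_{m^{-1}(U)}(f\circ m)\cdot\lambda_m\;\dvol_{\Sigma}\\
&=\int_U f\;\dvol_{\Sigma}=\int_U\scal[(]{\psi}{\psi'}^{-\frac{2}{d}}\dvol_{\Sigma}
\end{align*}
and, since $\lambda_m>0$, one side is defined precisely when the other is.

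The argument is routine; the only delicate points are bookkeeping ones. One must get the direction and exponent of the conformal factor right: it is $\Phi_m^{-1}$ (not $\Phi_m$) that acts on $\tpsi$, producing the exponent $-d/2$, and this is exactly what cancels against $-2/d$ to leave the first power of $\lambda_m$ needed to absorb $m^*\dvol_{\Sigma}$. One should also note that $\scal[(]{}{}$ scales linearly rather than quadratically in the $L_d$-slot; this is visible from (\ref{eqnH2}), where both summands are linear in the $H$-pairings $\scal[H]{\theta}{\theta}$ and $\scal[H]{\theta}{i\theta}$. Beyond these sign and exponent checks, no genuine obstacle arises.
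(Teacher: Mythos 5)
Your argument is correct and is precisely the proof the paper intends: the paper states that the lemma ``follows immediately from Lem.~\ref{lemLiftConformal} and $m^*\dvol_{\Sigma}=\lambda_m\cdot\dvol_{\Sigma}$'', and you have simply written out that computation, with the exponent bookkeeping $((\lambda_m)^{-d/2})^{-2/d}=\lambda_m$ done correctly and the observation (correct, by (\ref{eqnH2})) that the bundle pairing carries exactly one $H$-pairing factor. Nothing further is needed.
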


\begin{Rem}
Similarly, one shows that in the case $d=0$, i.e. $L=L_0=\underline{\bC}$,
the following integral is conformally invariant:
\begin{align*}
\int_U\scal[(]{\nabla\psi}{\nabla\psi'}_{\varphi^*g,h}\dvol_{\Sigma}
=\int_{m^{-1}(U)}\scal[(]{\nabla\tpsi}{\nabla\tpsi'}_{\tvarphi^*g,h}\dvol_{\Sigma}
\end{align*}
where $\nabla$ is any connection on $TX$ (e.g. the Levi-Civita connection of $g$), and we denote
the induced connection on $\varphi^*TX=\underline{\bC}\otimes_J\varphi^*TX$ by the same symbol.
\end{Rem}

\begin{Def}
Let $U\subseteq\Sigma$ be an open subset. For a pair $(\varphi,\psi)$, consisting of a function
$\varphi\in C^{\infty}(\Sigma,X)$ and a section $\psi\in\Gamma(L_d\otimes_J\varphi^*TX)$ such that $d\neq 0$,
we define the \emph{energies}
\begin{align*}
&E(\varphi,U):=\frac{1}{2}\int_U\abs{d\varphi}^2\,\dvol_{\Sigma}\;,\qquad
E(\psi,U):=\frac{1}{2}\int_U\abs{\psi}^{-\frac{4}{d}}\,\dvol_{\Sigma}\\
&E(\varphi,\psi,U):=E(\varphi,U)+E(\psi,U)
\end{align*}
where the norm $\abs{\psi}^2=\scal[(]{\psi}{\psi}$ is defined by the bundle metric (\ref{eqnH2}).
We call $E(\varphi,\psi):=E(\varphi,\psi,S^2)$ the \emph{super energy} of $(\varphi,\psi)$.
\end{Def}

The super energy of a pair $(\varphi,\psi)$ extends the energy of $\varphi$ as defined in
(\ref{eqnHarmonicAction}). Note that we do not treat $E(\varphi,\psi)$ as an action functional, however.
By Lem. \ref{lemSuperEnergyInvariant}, it is conformally invariant:
\begin{align}
\label{eqnSuperEnergyInvariance}
E(\varphi,U)=E(\varphi\circ m,m^{-1}(U))\;,\qquad
E(\psi,U)=E(\psi\circ m,m^{-1}(U))
\end{align}
where we abbreviate $\psi\circ m:=\tpsi=\Phi_m^{-1}\circ\psi\circ m$, using the notation
from Def. \ref{defTransformedCurve}. By (\ref{eqnTransformedConcatenation}), it is then clear that
$\psi\circ (m\circ\tilde{m})=(\psi\circ m)\circ\tilde{m}$ holds for the concatenation of two
conformal automorphisms $m,\tilde{m}$ homotopic to the identity.
We summarise the most important cases $d=-2$ and $d=-1$:
\begin{align*}
E_{d=-1}(\psi,U)=\frac{1}{2}\int_U\abs{\psi}^4\,\dvol_{\Sigma}\;,\qquad
E_{d=-2}(\psi,U)=\frac{1}{2}\int_U\abs{\psi}^2\,\dvol_{\Sigma}
\end{align*}

\section{Removal of Singularities}
\label{secRemovalOfSingularities}

In this section, we prove the following theorem about the removability of
isolated singularities.

\begin{Thm}[Removal of Singularities]
\label{thmRemovalOfSingularities}
Let $U\subseteq\Sigma$ be an open subset, $p\in U$ and $(\varphi,\psi):(U\setminus\{p\},L_d)\rightarrow X$
be an $(A,J)$-holomorphic supercurve on $U$ with finite super energy $E(\varphi,\psi,U)<\infty$.
Then, in the cases $d=-2$ and $d=-1$, $(\varphi,\psi)$ extends smoothly over $p$.
\end{Thm}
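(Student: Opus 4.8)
The plan is to reduce the problem to the section $\psi$ and to handle it as a solution of a linear Cauchy--Riemann equation whose $L^2$-integrability near $p$ is exactly what the super energy bound supplies. Since $E(\varphi,U)\leq E(\varphi,\psi,U)<\infty$, the equation $\dbar_J\varphi=0$ falls under the classical removal of singularities for $J$-holomorphic curves (cf.\ \cite{MS04}), so $\varphi$ extends to a smooth $J$-holomorphic map over $p$. Regarding $\varphi$ as smooth on all of $U$, the bundle $L_d\otimes_J\varphi^*TX$, the metric $H$, the connection $\nabla^{A,J}$ and the holomorphic structure of $L_d$ extend smoothly across $p$. Choosing a local holomorphic frame $\theta$ of $L_d$ near $p$ (so $\dbar\theta=0$) and a smooth local frame of $\varphi^*TX$, the equation $\mD^{A,J}_{\varphi}\psi=0$ becomes
\begin{align*}
\dbar\psi_{\theta}+\Gamma\,\psi_{\theta}=0
\end{align*}
on the punctured disc, with $\Gamma$ a smooth matrix-valued $(0,1)$-form near $p$; moreover $\psi_{\theta}\in L^q_{\mathrm{loc}}$ if and only if $\psi\in L^q_{\mathrm{loc}}$, since $\theta$ is smooth and non-vanishing with $\abs{\theta}_H$ bounded above and below near $p$.

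From the super energy one reads off the decisive fact. For $d=-2$, finiteness of $E(\psi,U)=\tfrac12\int_U\abs{\psi}^2\,\dvol_{\Sigma}$ means $\psi\in L^2$ near $p$; for $d=-1$, finiteness of $\tfrac12\int_U\abs{\psi}^4\,\dvol_{\Sigma}$ means $\psi\in L^4$, hence again $\psi\in L^2$ near $p$ because the punctured disc has finite area. This is precisely where $d\in\{-2,-1\}$ enters, and the restriction is essentially sharp: for $d\leq-3$ the super energy only controls $\abs{\psi}$ in $L^{-4/d}$ with exponent $-4/d<2$, and $\psi$ equal to $1/z$ times a covariantly constant frame would then be a finite-super-energy solution that does not extend over $0$.

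The core of the argument is a removability lemma for the linear equation: the identity $\dbar\psi_{\theta}=-\Gamma\psi_{\theta}$, valid classically on the punctured disc, persists distributionally across $p$. Indeed $\psi_{\theta}$ is smooth off $p$ and lies in $L^2_{\mathrm{loc}}$, so $\dbar\psi_{\theta}$ (distributionally) and $\Gamma\psi_{\theta}$ both lie in $W^{-1,2}_{\mathrm{loc}}$; thus $\nu:=\dbar\psi_{\theta}+\Gamma\psi_{\theta}$ is a distribution supported in $\{p\}$ --- a finite combination of $\delta_p$ and its derivatives --- and at the same time an element of $W^{-1,2}$, which forces $\nu=0$, since not even $\delta_p$ lies in $W^{-1,2}(\bR^2)$ ($W^{1,2}$ does not embed into $C^0$ in two real dimensions). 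Alternatively, in the spirit of \cite{MS04}, one can use that a Bochner identity for $\dbar+\Gamma$ makes $\abs{\psi}^{-4/d}$ --- that is $\abs{\psi}^2$ for $d=-2$ and $\abs{\psi}^4$ for $d=-1$ --- a subsolution of a Schr\"odinger-type operator, whence a mean value inequality for $\abs{\psi}$ which, combined with a decay estimate for $E(\psi,B_r)$ coming from an isoperimetric inequality for the local holomorphic supercurve, rules out a pole or logarithmic singularity; in this variant the $L^2$-bound reappears as the ingredient that actually excludes the singularity. Once $\dbar\psi_{\theta}=-\Gamma\psi_{\theta}$ holds on a full disc with $\psi_{\theta}\in L^2$ and $\Gamma$ smooth, a standard elliptic bootstrap concludes: the solid Cauchy transform together with the Calder\'on--Zygmund estimate gives $\psi_{\theta}\in W^{1,2}_{\mathrm{loc}}\subseteq L^q_{\mathrm{loc}}$ for all $q<\infty$, hence $\Gamma\psi_{\theta}\in L^q_{\mathrm{loc}}$ and $\psi_{\theta}\in W^{1,q}_{\mathrm{loc}}\subseteq C^{0,\alpha}_{\mathrm{loc}}$, and then Schauder estimates for $\dbar$ and iteration (with $\Gamma$, $\varphi$ smooth) yield $\psi_{\theta}\in C^{\infty}$ near $p$; re-multiplying by $\theta$ shows $\psi$ extends smoothly over $p$ together with $\varphi$.

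The step I expect to be the real obstacle is exactly this removability for $\psi$: in contrast to a holomorphic curve, where $d\varphi$ is a derivative and an energy-decay estimate $E(\varphi,B_r)\leq Cr^{2\mu}$ already forces continuity of $\varphi$ even when $\abs{d\varphi}$ is mildly unbounded, here $\psi$ is a \emph{value}, and the analogous decay only gives $\abs{\psi(z)}\leq C\abs{z}^{\mu-1}$ with $\mu<1$ in general, which may blow up. One therefore has to genuinely exploit the linear structure of the $\psi$-equation and the borderline $L^2$-integrability furnished by the super energy; making the admissible degrees $d=-2,-1$ line up with the Sobolev removability threshold is the delicate point, and it is also what pins down the hypothesis on $d$.
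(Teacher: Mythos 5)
Your argument is correct, but it takes a genuinely different route from the paper's. Both proofs begin identically, by invoking the classical removable singularity theorem to extend $\varphi$, after which the equation for $\psi$ is a linear first-order elliptic system with smooth coefficients. From there you argue softly: the super energy gives $\psi\in L^2$ near $p$ (exactly where $d\in\{-2,-1\}$ enters), the defect distribution $\dbar\psi_{\theta}+\Gamma\psi_{\theta}$ is supported at $p$ and lies in $W^{-1,2}_{\mathrm{loc}}$, hence vanishes because no nonzero combination of derivatives of $\delta_p$ belongs to $H^{-1}(\bR^2)$, and a Calder\'on--Zygmund bootstrap finishes. The paper instead runs the quantitative scheme of the nonlinear theory: mean value inequalities for $\abs{\psi}^2$ and $\abs{d\psi}^2$ from Heinz's lemma (Lem. \ref{lemMVI1}, Lem. \ref{lemMVI2}), a local isoperimetric inequality for $\psi$ viewed as a map into $(\bR^{2n},\omega_0)$ (Lem. \ref{lemSuperIsoperimetricInequality}), a differential inequality $\varepsilon(r)\leq 2br\dot{\varepsilon}(r)$ giving the decay $\int_{B_r}\abs{d\psi}^2\leq c r^{2\nu}$ and hence the pointwise bound $\abs{d\psi}^2\leq Br^{2\nu-2}$ of Prp. \ref{prpRemovalOfSingularities}, and finally Morrey's inequality plus elliptic regularity. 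Note that the paper applies the decay argument to $\int_{B_r}\abs{d\psi}^2$ rather than to $\int_{B_r}\abs{\psi}^2$, which is how it sidesteps the obstacle you flag at the end: the pointwise bound on $\abs{d\psi}$ integrates to H\"older continuity of $\psi$ exactly as in the classical case for $\varphi$. What each approach buys: yours is shorter, exploits the linearity of the $\psi$-equation, and makes transparent that $L^2$-integrability of $\psi$ is precisely the two-dimensional removability threshold (consistent with your sharpness remark for $d\leq -3$); the paper's machinery yields constants depending only on $\norm{\tJ}$ and $\norm{\tD}$ and is reused wholesale in the bubbling analysis of Sec. \ref{secBubbling}, so it is not expendable there. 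Two points you should make explicit: the principal part of the local equation is $\partial_s+\tJ(z)\partial_t$ with $\tJ=J\circ\varphi$ non-constant, so reaching the normal form $\dbar\psi_{\theta}+\Gamma\psi_{\theta}=0$ requires a smooth complex trivialisation of $(\varphi^*TX,J\circ\varphi)$ over the disc (or the conjugation trick of Lem. \ref{lemDbarEstimate}); and the step from ``$\psi_{\theta}\in L^2$ with distributional $\dbar\psi_{\theta}\in L^2$'' to $\psi_{\theta}\in W^{1,2}_{\mathrm{loc}}$ needs the regularity form of the Calder\'on--Zygmund estimate via the solid Cauchy transform, not merely the a priori estimate of Lem. \ref{lemEllipticBootstrappingWeak}, which presupposes $u\in W^{1,p}_{\mathrm{loc}}$.
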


The corresponding statement for the underlying holomorphic curve $\varphi$ is a classical result,
stated e.g. as Thm. 4.1.2 in \cite{MS04}. In the proof for the section $\psi$,
we use mean value type inequalities and a local isoperimetric
inequality.
Thm. \ref{thmRemovalOfSingularities} is entirely local. Recall from
\cite{Gro11b} that every $(A,J)$-holomorphic supercurve
is a \emph{local holomorphic supercurve} upon choosing conformal coordinates on $\Sigma$
and trivialising the bundle $L$ by a local holomorphic section.
In the following, we denote by $z=s+it$ the standard coordinates on $\bC\cong\bR^2$.

\begin{Def}
\label{defLocalHolomorphicSupercurve}
Let $U\subseteq\bC$ be an open set and $p>2$. Then a pair of functions $(\varphi,\psi)\in W^{1,p}(U,\bR^{2n})$
is called a \emph{local holomorphic supercurve} if there are functions
$J\in C^{l+1}(V,\bR^{2n\times 2n})$ and $D\in C^l(V\times\bR^{2n},\bR^{2n\times 2n})$,
where $V\subseteq\bR^{2n}$ is an open set with $\varphi(U)\subseteq V$, $J^2=-\id$ holds
and $D$ is linear in the second component, such that
\begin{align*}
\partial_s\varphi+J(\varphi)\cdot\partial_t\varphi=0\;,\qquad
\partial_s\psi+J(\varphi)\cdot\partial_t\psi+(D(\varphi)\cdot\partial_s\varphi)\cdot\psi=0
\end{align*}
or, equivalently, upon abbreviating $\tilde{J}:=J\circ\varphi$ and $\tilde{D}:=(D(\varphi)\cdot\partial_s\varphi)$
\begin{align}
\label{eqnLocalHolomorphicCurve}
\partial_s\varphi(z)+\tilde{J}(z)\cdot\partial_t\varphi(z)&=0\\
\label{eqnLocalHolomorphicSupercurve}
\partial_s\psi(z)+\tilde{J}(z)\cdot\partial_t\psi(z)+\tilde{D}(z)\cdot\psi(z)&=0
\end{align}
for every $z\in U$.
\end{Def}

This is a good place for collecting some estimates for later use.
Consult \cite{Wer02} or \cite{Dob06} for a standard treatment.
We denote the Sobolev norms for maps $f:U\rightarrow V$ with $U\subseteq\bR^n$
and $V\subseteq\bR^m$ by $\norm{f}_{U,p}:=\norm{f}_{L^p(U,V)}$ and
$\norm{f}_{U,k,p}:=\norm{f}_{W^{k,p}(U,V)}$.
This includes the case $p=\infty$. If the reference to $U$ is clear, we also abbreviate the norms
towards $\norm{f}_p=\norm{f}_{0,p}$ and $\norm{f}_{k,p}$, respectively.
Hölder's inequality reads $\norm{uv}_1\leq\norm{u}_p\norm{v}_q$ for
$1/p+1/q=1$. Applied to $p=\frac{3}{2}$ and $q=3$, we thus yield
\begin{align}
\label{eqnHoelder43}
\norm{fg}_{\frac{4}{3}}=\left(\norm{\abs{f}^{\frac{4}{3}}\abs{g}^{\frac{4}{3}}}_1\right)^{\frac{3}{4}}
\leq\left(\norm{\abs{f}^{\frac{4}{3}}}_{\frac{3}{2}}\norm{\abs{g}^{\frac{4}{3}}}_3\right)^{\frac{3}{4}}
=\norm{f}_2\norm{g}_4
\end{align}
Different Lebesgue norms may be estimated against each other:
Let $1\leq p\leq q\leq\infty$ and assume that $U$ is bounded. Then there is a constant $C>0$,
depending on $U$, $p$ and $q$ such that
\begin{align}
\label{eqnLebesgue}
\norm{f}_{U,p}\leq C\cdot\norm{f}_{U,q}
\end{align}
By definition of the Sobolev norms, an analogous estimate holds for $\norm{\cdot}_{U,k,p}$ and
$\norm{\cdot}_{U,k,q}$.
Next, the Sobolev embedding theorem states that, for $1\leq kp<n$ and provided that $U$ is a bounded
and has a Lipschitz boundary, there is a constant $C>0$,
depending on $U$, $k$ and $p$ such that
\begin{align}
\label{eqnSobolev}
\norm{f}_{U,np/(n-kp)}\leq C\cdot\norm{f}_{U,k,p}
\end{align}
For our purposes, $U$ is usually a subset of $\bR^2$, such that $n=2$.
Estimate (\ref{eqnSobolev}), applied to $k=1$ and $p=4/3$, then reads $\norm{f}_4\leq C\norm{f}_{1,\frac{4}{3}}$.
Sobolev spaces embed into spaces of differentiable functions as follows.
Let $k>0$ and $p>n$, and assume that $U$ is bounded and has a Lipschitz boundary.
Then there is a constant $C>0$, depending on $U$, $k$ and $p$ such that
\begin{align}
\label{eqnMorrey}
\norm{f}_{C^{k-1}(U)}\leq C\cdot\norm{f}_{U,k,p}\;,\qquad
\sup_{\substack{x,y\in U\\x\neq y}}\frac{\abs{f(x)-f(y)}}{\abs{x-y}^{\mu}}\leq C\cdot\norm{df}_{U,p}
\end{align}
for $\mu:=1-n/p$. The second estimate is known as Morrey's inequality,
and the inclusion $W^{k,p}(U)\subseteq C^{k-1}(U)$ is compact.

\subsection{The Local Super Energy}

We translate boundedness of the super energy into local form next. For the cases $d=-2$ and $d=-1$,
the sectional part $E(\psi)$ involves $L^2$ and $L^4$-norms, respectively, which depend on
the global bundle metrics from (\ref{eqnH2}) and may be estimated by the
corresponding norms for functions $\bR^2\rightarrow\bR^{2n}$.

For later use, we need a sharper estimate, treating different $\varphi$ and $J$ simultaneously, as follows.
For $z\in S^2$ and $p\in X$, we consider the scalar product $\scal[(]{\cdot}{\cdot}$ on the vector space
$L_z\otimes_{J_p}T_pX$, which is constructed from $H_z$ and $g^J_p$ analogous to the bundle metric (\ref{eqnH2}).
Fixing a nonvanishing local section $\theta\in\Gamma(U,L)$ and coordinates $(V\subseteq X,x^1,\ldots,x^{2n})$ on $X$,
there is a canonical identification
\begin{align}
\label{eqnUniformIdentification}
L_z\otimes_{J_p}T_pX\rightarrow\bR^{2n}\;,
\qquad\frac{1}{2}\left(\dd{x^j}|_p-iJ_p\dd{x^j}|_p\right)\cdot\theta_z\mapsto e_j
\end{align}
for $z\in U$ and $p\in V$, where $e_j$ denotes the $j$-th standard basis vector of $\bR^{2n}$.
Moreover, choosing $\theta$ such that $\scal[H]{\theta}{\theta}\equiv 1$ and $\scal[H]{\theta}{j\theta}\equiv 0$
(which is possible for every proper open subset $U\subseteq S^2$), we obtain
\begin{align}
\label{eqnUniformMetric}
\scal[(]{e_j}{e_k}_{L_z\otimes_{J_p}T_pX}=\frac{1}{2}\scal[g^J_{p}]{\dd{x^j}}{\dd{x^k}}
\end{align}
Writing $v=v^j\cdot e_j$, a standard estimate yields the following lemma.

\begin{Lem}
\label{lemLocalGlobalEstimate}
Let $K\subseteq X$ be a compact subset which lies completely within a coordinate chart of $X$.
Let $U\subseteq S^2$ be a proper open subset and $\theta\in\Gamma(U,L)$ be a section
with respect to which (\ref{eqnUniformIdentification}) and (\ref{eqnUniformMetric}) hold.
Then there are constants $m,M>0$, depending (continuously) on $J$, $\theta$ and the
$X$-coordinates chosen, such that for all $z\in U$, $p\in K$ and $v\in L_z\otimes_{J_p}T_pX\cong\bR^{2n}$
\begin{align*}
m\abs{v}_{L_z\otimes_{J_p}T_pX}\leq\abs{v}_{\bR^{2n}}\leq M\abs{v}_{L_z\otimes_{J_p}T_pX}
\end{align*}
holds, where $\abs{\cdot}_{\bR^{2n}}$ denotes the standard norm on $\bR^{2n}$.
\end{Lem}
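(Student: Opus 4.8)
The plan is to reduce the inequality to a uniform two-sided bound on a single matrix-valued function and then invoke compactness of the parameter space. Fix the coordinate chart $(V,x^1,\dots,x^{2n})$ containing $K$ and the section $\theta$ with $\scal[H]{\theta}{\theta}\equiv 1$, $\scal[H]{\theta}{j\theta}\equiv 0$. Writing $v=v^je_j\in L_z\otimes_{J_p}T_pX$ under the identification (\ref{eqnUniformIdentification}), formula (\ref{eqnUniformMetric}) gives
\begin{align*}
\abs{v}_{L_z\otimes_{J_p}T_pX}^2=\scal[(]{v}{v}=\tfrac12\,g^J_p\!\left(\dd{x^j},\dd{x^k}\right)v^jv^k
=\tfrac12\,v^{\mathsf T}G_J(p)\,v\;,
\end{align*}
where $G_J(p):=\big(g^J_p(\partial_{x^j},\partial_{x^k})\big)_{j,k}$ is the Gram matrix of the metric $g_J$ in the chosen coordinates at the point $p$. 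Meanwhile $\abs{v}_{\bR^{2n}}^2=\sum_j(v^j)^2=v^{\mathsf T}v$. So the asserted inequalities are precisely the statement that there are constants $m,M>0$ with $m^2\cdot\tfrac12\,v^{\mathsf T}G_J(p)v\le v^{\mathsf T}v\le M^2\cdot\tfrac12\,v^{\mathsf T}G_J(p)v$ for all $p\in K$ and all $v$; equivalently, that the smallest and largest eigenvalues of $G_J(p)$ are bounded below and above by positive constants uniformly in $p\in K$.

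First I would observe that $G_J(p)$ is symmetric positive definite for each fixed $p$, since $g_J$ is a Riemann metric; hence its eigenvalues $\lambda_{\min}(G_J(p))$ and $\lambda_{\max}(G_J(p))$ are strictly positive real numbers depending continuously on $p$ (eigenvalues depend continuously on the matrix entries, which in turn depend continuously — indeed smoothly — on $p$ through $g_J$ and the coordinate vector fields). Since $K$ is compact, the continuous functions $p\mapsto\lambda_{\min}(G_J(p))$ and $p\mapsto\lambda_{\max}(G_J(p))$ attain a positive minimum $c>0$ and a finite maximum $C>0$ respectively on $K$. Then for all $p\in K$ and all $v\in\bR^{2n}$ one has $c\abs{v}_{\bR^{2n}}^2\le v^{\mathsf T}G_J(p)v\le C\abs{v}_{\bR^{2n}}^2$, and combining with the identity above yields the claim with $m:=\sqrt{2/C}$ and $M:=\sqrt{2/c}$.

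Finally, to get the asserted continuous dependence of $m,M$ on $J$ (and on $\theta$ and the $X$-coordinates), I would note that the whole construction is natural: $G_J(p)$ depends continuously on $J$ in, say, the $C^0$-topology on almost complex structures, and likewise on the choice of chart; hence $c=\min_{K}\lambda_{\min}(G_J)$ and $C=\max_{K}\lambda_{\max}(G_J)$ vary continuously under such perturbations (a uniform-on-$K$ perturbation of $G_J$ perturbs its eigenvalues uniformly, by e.g. the Bauer–Fike or simply Weyl's inequality for symmetric matrices), and so do $m,M$. I do not expect a genuine obstacle here; the only point requiring a little care is making explicit that $H_z$ contributes only the scalar factors $\scal[H]{\theta}{\theta}=1$ and $\scal[H]{\theta}{j\theta}=0$ in (\ref{eqnH2}), so that $z$ drops out entirely and the bound is automatically uniform in $z\in U$ — this is exactly what the normalisation of $\theta$ was arranged to achieve, and it is why the constants do not depend on $U$ beyond the choice of $\theta$.
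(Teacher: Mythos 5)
Your proof is correct and is exactly the "standard estimate" the paper invokes without writing out: reduce via (\ref{eqnUniformIdentification}) and (\ref{eqnUniformMetric}) to the equivalence of the quadratic form $\tfrac12 v^{\mathsf T}G_J(p)v$ with the Euclidean one, uniformly over the compact set $K$, with the normalisation of $\theta$ killing the $z$-dependence. Nothing is missing.
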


For the rest of this section, we shall denote by $\scal{\cdot}{\cdot}:=\scal{\cdot}{\cdot}_{\bR^{2n}}$
the (constant) standard scalar product on $\bR^{2n}$ and by $\abs{\cdot}:=\abs{\cdot}_{\bR^{2n}}$
the induced norm, unless otherwise stated.
We introduce a local version of the super energy next, which is defined such that,
on the one hand, it is bounded above by the ordinary super energy in the cases $d=-2,-1$
(by Lem. \ref{lemSuperEnergyLocal} below) and, on the other hand,
its boundedness suffices for the proof of Thm. \ref{thmRemovalOfSingularities}, which will
become clear in the course of this section.

\begin{Def}
Let $U\subseteq\bR^2$ be an open subset and $\varphi,\psi\in C^1(U,\bR^{2n})$. Then the \emph{local super energy}
of $(\varphi,\psi)$ on $U$ is the following integral.
\begin{align*}
E^{\mathrm{loc}}(\varphi,\psi,U):=\int_U\left(\abs{d\varphi}^2+\abs{\psi}^2+\abs{d\psi}^2\right)ds\,dt
\end{align*}
\end{Def}

Here and in the following, $s+it$ denotes the standard coordinates on $\bC\cong\bR^2$.
Moreover, for brevity we shall often omit writing ''$ds\,dt$'' in integrals over a subset of $\bR^2$.
The next lemma is borrowed from App. B in \cite{MS04}. It is proved by means of an inequality due to
Calderon and Zygmund \cite{CZ52}, \cite{CZ56}.

\begin{Lem}[Elliptic Bootstrapping, weak form]
\label{lemEllipticBootstrappingWeak}
Let $U'\subseteq U\subseteq\bC$ be open sets such that $\overline{U'}\subseteq U$ and $1<p<\infty$.
Then, for every constant $c_0>0$, there is a constant $c>0$ such that the following holds.
Assume $J\in W^{1,\infty}(U,\bR^{2n\times 2n})$ satisfies $J^2=-\id$ and $\norm{J}_{U,1,\infty}\leq c_0$.
Then, for every $u\in W^{1,p}_{\mathrm{loc}}(U,\bR^{2n})$, the following estimate holds.
\begin{align*}
\norm{u}_{U',1,p}\leq c\left(\norm{\partial_su+J\partial_tu}_{U,0,p}+\norm{u}_{U,0,p}\right)
\end{align*}
\end{Lem}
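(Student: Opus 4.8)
The plan is to reduce the statement to the classical Calderon–Zygmund estimate for the standard Cauchy–Riemann operator $\partial_s + i\partial_t$ by treating the variable complex structure $J$ as a perturbation. First I would recall the model estimate: for the constant-coefficient operator $\dbar_0 := \frac{1}{2}(\partial_s + i\partial_t)$ acting on $\bC^n$-valued functions, the Calderon–Zygmund inequality (together with interior elliptic regularity and a cutoff function supported near $\overline{U'}$) gives a constant $c_1 = c_1(U',U,p)$ with $\norm{u}_{U',1,p} \leq c_1(\norm{\dbar_0 u}_{U,0,p} + \norm{u}_{U,0,p})$ for all $u \in W^{1,p}_{\mathrm{loc}}(U,\bR^{2n})$. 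This is the content cited from App.~B of \cite{MS04}.

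Next I would pass from $\dbar_0$ to the operator $u \mapsto \partial_s u + J\partial_t u$. Pick a point $z_0 \in \overline{U'}$; after a constant linear change of coordinates on $\bR^{2n}$ we may assume $J(z_0)$ equals the standard complex structure $J_0$, so that $\partial_s u + J(z_0)\partial_t u = 2\dbar_0 u$ in those coordinates. Then
\begin{align*}
\partial_s u + J_0 \partial_t u = \bigl(\partial_s u + J\partial_t u\bigr) + \bigl(J_0 - J\bigr)\partial_t u,
\end{align*}
and since $J$ is continuous with $J(z_0) = J_0$, by choosing a small ball $B_\delta(z_0)$ we can make $\norm{J_0 - J}_{L^\infty(B_\delta)}$ as small as we like — here the key quantitative input is $\norm{J}_{U,1,\infty} \leq c_0$, which bounds the Lipschitz constant of $J$ and hence lets us choose $\delta$ depending only on $c_0$ and the desired smallness, uniformly in $z_0$. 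On $B_\delta(z_0)$ the term $\norm{(J_0-J)\partial_t u}_{0,p} \leq \norm{J_0-J}_{L^\infty}\norm{\partial_t u}_{0,p}$ can be absorbed into the left-hand side $\norm{u}_{1,p}$ of the local model estimate (shrinking the ball slightly), yielding a local estimate $\norm{u}_{B_{\delta/2}(z_0),1,p} \leq c_2(\norm{\partial_s u + J\partial_t u}_{B_\delta(z_0),0,p} + \norm{u}_{B_\delta(z_0),0,p})$ with $c_2$ depending only on $p$, $\delta$ and the bound on $J(z_0)^{-1}$ (which is again controlled by $c_0$ via $J^2 = -\id$).

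Finally I would globalise by a covering argument: $\overline{U'}$ is compact, so it is covered by finitely many balls $B_{\delta/2}(z_0^{(k)})$ of the above type, all with $B_\delta(z_0^{(k)}) \subseteq U$ (possible after shrinking $\delta$, since $\overline{U'} \subseteq U$); the number of balls needed is bounded in terms of $U'$, $U$ and $c_0$ alone. Summing the local estimates over this finite cover and using $\norm{\cdot}_{L^p}^p$-additivity gives the claimed $\norm{u}_{U',1,p} \leq c(\norm{\partial_s u + J\partial_t u}_{U,0,p} + \norm{u}_{U,0,p})$ with $c = c(U',U,p,c_0)$ independent of $J$ itself. The main obstacle is bookkeeping the uniformity: one must check that $\delta$, $c_2$, and the covering number all depend only on $c_0$ and not on the particular $J$, which is exactly where the hypothesis $\norm{J}_{U,1,\infty} \leq c_0$ (rather than mere continuity) is used — it converts the qualitative "$J$ close to $J(z_0)$ near $z_0$" into a uniform modulus of continuity.
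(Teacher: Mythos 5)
The paper does not actually prove this lemma; it simply cites App.~B of \cite{MS04} (Calderon--Zygmund). Your argument is precisely the standard proof given there: the constant-coefficient Calderon--Zygmund interior estimate, freezing of coefficients at a point, absorption of the perturbation $(J_0-J)\partial_t u$ on a small ball whose radius is controlled uniformly by the Lipschitz bound $\norm{J}_{U,1,\infty}\leq c_0$, and a finite covering of $\overline{U'}$. So the route is essentially the same as the (cited) one, and the uniformity bookkeeping you emphasise is exactly the right concern.

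One step is stated too loosely to go through literally: you bound the perturbation by $\norm{J_0-J}_{L^\infty(B_\delta)}\norm{\partial_t u}_{L^p(B_\delta)}$, i.e.\ by $\varepsilon\norm{u}_{W^{1,p}(B_\delta)}$ over the \emph{large} ball, and claim it can be absorbed into the left-hand side $\norm{u}_{W^{1,p}(B_{\delta/2})}$, which lives on the \emph{small} ball; that absorption is not valid as written. The standard fix is to apply the full-plane Calderon--Zygmund estimate to $\chi u$ for a cutoff $\chi$ supported in $B_\delta$ with $\chi\equiv 1$ on $B_{\delta/2}$: then the perturbation term becomes $\varepsilon\norm{\chi\,\partial_t u}_{L^p}\leq\varepsilon\bigl(\norm{d(\chi u)}_{L^p}+\norm{(d\chi)u}_{L^p}\bigr)$, and the first summand can legitimately be absorbed into the left-hand side $\norm{d(\chi u)}_{L^p}$ for $\varepsilon$ small. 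With that (entirely routine) correction your proof is complete.
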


In the following lemmas, we use the notations $J$, $D$, $\tJ$ and $\tD$ as in Def.
\ref{defLocalHolomorphicSupercurve}.

\begin{Lem}
\label{lemL2NormEstimateReverse}
Let $U'\subseteq U\subseteq\bR^2$ be bounded open sets such that $\overline{U'}\subseteq U$ and
$(\varphi,\psi)\in C^1(U,\bR^{2n})$ be a local holomorphic supercurve of regularity class $C^1$.
Then there is a constant $C\geq 0$, depending on $\norm{\tJ}_{U,1,\infty}$ and $\norm{\tD}_{U,0,\infty}$
(but not on $\psi$) such that
\begin{align*}
\int_{U'}\abs{d\psi}^2\leq C\cdot\int_U\abs{\psi}^2
\end{align*}
\end{Lem}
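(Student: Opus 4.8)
The plan is to treat equation (\ref{eqnLocalHolomorphicSupercurve}) as a first-order
equation $\partial_s\psi+\tJ\partial_t\psi=-\tD\psi$ for the unknown $\psi$, with the right-hand
side controlled \emph{linearly} by $\psi$ itself, and then feed this into the weak elliptic
bootstrapping estimate of Lem.~\ref{lemEllipticBootstrappingWeak}. First I would fix an
auxiliary open set $U''$ with $\overline{U'}\subseteq U''$ and $\overline{U''}\subseteq U$;
this is the standard device for carrying out a one-step interior estimate without the endpoint
sets touching. Since $(\varphi,\psi)\in C^1(U,\bR^{2n})$ and $U''$ is bounded, $\psi\in W^{1,2}_{\mathrm{loc}}(U,\bR^{2n})$,
so Lem.~\ref{lemEllipticBootstrappingWeak} with $p=2$ and $c_0:=\norm{\tJ}_{U,1,\infty}$
applies to $u=\psi$ on the pair $U'\subseteq U''$ and yields a constant $c=c(U',U'',c_0)$ with
\[
\norm{\psi}_{U',1,2}\leq c\left(\norm{\partial_s\psi+\tJ\partial_t\psi}_{U'',0,2}+\norm{\psi}_{U'',0,2}\right).
\]

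Next I would substitute the supercurve equation (\ref{eqnLocalHolomorphicSupercurve}),
which gives $\partial_s\psi+\tJ\partial_t\psi=-\tD\psi$ pointwise on $U$, hence
$\norm{\partial_s\psi+\tJ\partial_t\psi}_{U'',0,2}=\norm{\tD\psi}_{U'',0,2}\leq\norm{\tD}_{U'',0,\infty}\cdot\norm{\psi}_{U'',0,2}$.
Combining this with the bootstrapping estimate and then enlarging the integration domain from
$U''$ to $U$ (monotonicity of the integral, since $U''\subseteq U$) produces
\[
\norm{\psi}_{U',1,2}\leq c\left(1+\norm{\tD}_{U,0,\infty}\right)\norm{\psi}_{U,0,2}.
\]
Since $\abs{d\psi}^2\leq\abs{\psi}^2+\abs{d\psi}^2$ pointwise, the left side dominates
$\left(\int_{U'}\abs{d\psi}^2\right)^{1/2}$, and squaring gives exactly the claimed inequality
with $C:=c^2\left(1+\norm{\tD}_{U,0,\infty}\right)^2$, a constant depending only on $U$, $U'$,
$\norm{\tJ}_{U,1,\infty}$ and $\norm{\tD}_{U,0,\infty}$ and not on $\psi$, as required.

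\textbf{Main obstacle.} The only real subtlety is bookkeeping the hypotheses of
Lem.~\ref{lemEllipticBootstrappingWeak}: it demands $J\in W^{1,\infty}$ of the \emph{ambient}
matrix field with a uniform bound, whereas here $\tJ=J\circ\varphi$ is already the composition.
One must observe that $\tJ\in W^{1,\infty}(U,\bR^{2n\times 2n})$ with $\tJ^2=-\id$ follows from
$\varphi\in C^1$ and $J\in C^{l+1}$ together with boundedness of $U$, so that $c_0:=\norm{\tJ}_{U,1,\infty}$
is finite and the lemma genuinely applies to $\tJ$ in place of the $J$ appearing in its statement.
Everything else — the insertion of the equation, the Hölder/operator-norm estimate of $\tD\psi$,
and the domain monotonicity — is routine, so no step presents a genuine difficulty beyond
this matching of regularity classes. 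Note in particular that the argument never uses any
equation for $\varphi$ beyond what is needed to know $\tJ$ and $\tD$ are bounded, which is why
the constant $C$ is allowed to depend on those norms rather than on $\varphi$ directly.
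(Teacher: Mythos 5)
Your proposal is correct and takes essentially the same route as the paper: apply Lem.~\ref{lemEllipticBootstrappingWeak} with $p=2$ and $J$ replaced by $\tJ$, insert equation (\ref{eqnLocalHolomorphicSupercurve}) to rewrite $\partial_s\psi+\tJ\partial_t\psi=-\tD\psi$, bound this by $\norm{\tD}_{U,\infty}\norm{\psi}_{U,2}$, and square. The only cosmetic difference is your auxiliary intermediate set $U''$, which is harmless but unnecessary since the bootstrapping lemma already applies directly to the pair $U'\subseteq U$.
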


\begin{proof}
By (\ref{eqnLocalHolomorphicSupercurve}), the hypotheses of Lem. \ref{lemEllipticBootstrappingWeak}
are satisfied for $p=2$ and $J$ replaced by $\tJ$, and the statement follows from the
resulting estimate as follows.
\begin{align*}
\left(\int_{U'}\abs{d\psi}^2\right)^{\frac{1}{2}}
\leq c\left(\norm{\partial_s\psi+\tJ\partial_t\psi}_{U,2}+\norm{\psi}_{U,2}\right)
\leq c\left(\norm{\tD}_{U,\infty}+1\right)\cdot\left(\int_U\abs{\psi}^2\right)^{\frac{1}{2}}
\end{align*}
\end{proof}

\begin{Lem}
\label{lemSuperEnergyLocal}
Let $(\varphi,\psi):(S^2,L_d)\rightarrow X$ be an $(A,J)$-holomorphic supercurve.
Let $U'\subseteq U\subseteq\bR^2$ be bounded open sets such that $\overline{U'}\subseteq U$
and such that $(\varphi,\psi)$ may be considered as a local holomorphic
supercurve on $U$. Then, in the cases $d=-2$ and $d=-1$, there is a constant
$C\geq 0$, depending on $\norm{\tJ}_{U,1,\infty}$ and $\norm{\tD}_{U,0,\infty}$
(but not on $\psi$), such that
\begin{align*}
E^{\mathrm{loc}}(\varphi,\psi,U')\leq C\cdot\left(E(\varphi,\psi,U)+E(\varphi,\psi,U)^{\frac{1}{2}}\right)
\end{align*}
In particular, the local super energy is bounded if the super energy is.
\end{Lem}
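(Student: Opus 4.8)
The plan is to estimate the three summands of $E^{\mathrm{loc}}(\varphi,\psi,U')=\int_{U'}(\abs{d\varphi}^2+\abs{\psi}^2+\abs{d\psi}^2)$ separately, reducing the first two to $E(\varphi,U)$ and to Lebesgue integrals of $\psi$, and handling the derivative term by Lem. \ref{lemL2NormEstimateReverse}. We may assume $E(\varphi,\psi,U)<\infty$, as otherwise there is nothing to prove.

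First I would fix an intermediate open set $U''$ with $\overline{U'}\subseteq U''$ and $\overline{U''}\subseteq U$. Since $S^2$ is compact, on the compact set $\overline{U''}\subseteq\bC$ the Fubini--Study metric $h$ is uniformly comparable to the Euclidean metric and $\dvol_\Sigma$ to the Lebesgue measure; and since $\varphi(\overline{U''})$ is a compact subset of the coordinate chart $V$ from Def. \ref{defLocalHolomorphicSupercurve}, Lem. \ref{lemLocalGlobalEstimate} yields constants $m,M>0$ (not depending on $\psi$) comparing the Euclidean norm on $\bR^{2n}$ with the bundle metric $\scal[(]{\cdot}{\cdot}$ on $L_z\otimes_{J_p}T_pX$ for $z\in\overline{U''}$ and $p\in\varphi(\overline{U''})$. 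Together with the analogous (elementary) comparison of $g_J$ with the Euclidean metric on the coordinate patch, these comparisons immediately give $\int_{U'}\abs{d\varphi}^2\leq C\,E(\varphi,U)\leq C\,E(\varphi,\psi,U)$, where $C$ involves only the fixed metrics, coordinates and the sets $U,U'$.

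For the zeroth-order terms in $\psi$ I would use that $-4/d$ equals $2$ when $d=-2$ and $4$ when $d=-1$. In the case $d=-2$ the metric comparison directly bounds the Euclidean $\int_{W}\abs{\psi}^2$ by $C\int_W\scal[(]{\psi}{\psi}\,\dvol_\Sigma\leq 2C\,E(\psi,U)$ for $W\in\{U',U''\}$. In the case $d=-1$ the comparison bounds $\int_W\abs{\psi}^4$ by $C\,E(\psi,U)$, and then Hölder's inequality on the bounded set $W$ (cf. estimate (\ref{eqnLebesgue})) gives $\int_W\abs{\psi}^2\leq\abs{W}^{1/2}\big(\int_W\abs{\psi}^4\big)^{1/2}\leq C\,E(\psi,U)^{1/2}$. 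This is exactly where the square-root summand of the asserted bound comes from; in particular the middle term of $E^{\mathrm{loc}}$ is bounded by $C\,E(\varphi,\psi,U)$ for $d=-2$ and by $C\,E(\varphi,\psi,U)^{1/2}$ for $d=-1$.

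Finally, since $(\varphi,\psi)$ is a global $(A,J)$-holomorphic supercurve it is smooth, in particular $C^1$ on $U''$, so Lem. \ref{lemL2NormEstimateReverse} applied to the pair $U'\subseteq U''$ gives $\int_{U'}\abs{d\psi}^2\leq C\int_{U''}\abs{\psi}^2$ with $C$ depending on $\norm{\tJ}_{U'',1,\infty}\leq\norm{\tJ}_{U,1,\infty}$ and $\norm{\tD}_{U'',0,\infty}\leq\norm{\tD}_{U,0,\infty}$ but not on $\psi$; and the right-hand side was just bounded by $C\,E(\varphi,\psi,U)$ resp. $C\,E(\varphi,\psi,U)^{1/2}$. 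Adding the three estimates yields $E^{\mathrm{loc}}(\varphi,\psi,U')\leq C(E(\varphi,\psi,U)+E(\varphi,\psi,U)^{1/2})$ with total constant depending only on $U$, $U'$, the fixed geometric data, $\norm{\tJ}_{U,1,\infty}$ and $\norm{\tD}_{U,0,\infty}$. The one point requiring care is the case $d=-1$, where the super energy controls only an $L^4$-norm of $\psi$ while both $E^{\mathrm{loc}}$ and Lem. \ref{lemL2NormEstimateReverse} are stated in $L^2$; Hölder bridges the gap at the price of the square root, and one merely checks that no constant secretly depends on $\psi$, which holds since $\tD=D(\varphi)\cdot\partial_s\varphi$ involves only $\varphi$ and $J$.
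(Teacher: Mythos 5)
Your proposal is correct and follows essentially the same route as the paper: the metric comparison of Lem.~\ref{lemLocalGlobalEstimate} handles $\abs{d\varphi}^2$ and $\abs{\psi}^2$ (with the Lebesgue/H\"older embedding (\ref{eqnLebesgue}) producing the square-root term in the case $d=-1$), and Lem.~\ref{lemL2NormEstimateReverse} handles $\abs{d\psi}^2$ on the smaller set. The only cosmetic difference is your insertion of an intermediate set $U''$, which the paper avoids by applying Lem.~\ref{lemL2NormEstimateReverse} directly to the pair $U'\subseteq U$.
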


\begin{proof}
By Lem. \ref{lemLocalGlobalEstimate}, we may estimate the standard norm on $\bR^{2n}$
by the bundle norm. In the case $d=-2$, we thus obtain a constant $c>0$,
depending only on the geometry, such that the following estimate holds.
\begin{align*}
\int_U\abs{\psi}^2 ds\,dt\leq c\cdot\int_U\abs{\psi}_{L_d\otimes_J\varphi^*TX}^2\,\dvol_{S^2}
=2c\cdot E(\varphi,\psi,U)
\end{align*}
In the case $d=-1$, the analogous argument and, moreover, the Lebesgue estimate
(\ref{eqnLebesgue}) with $p=2$ and $q=4$ yield constants $c_1,c_2>0$, depending
only on the geometry, such that the following estimate holds.
\begin{align*}
\int_U\abs{\psi}^2\leq c_1\cdot\left(\int_U\abs{\psi}^4\right)^{\frac{1}{2}}
\leq c_2\cdot\left(\int_U\abs{\psi}_{L_d\otimes_J\varphi^*TX}^4\,\dvol_{S^2}\right)^{\frac{1}{2}}
=\sqrt{2}\,c_2\cdot E(\varphi,\psi,U)^{\frac{1}{2}}
\end{align*}

In either case, the integral over $\abs{d\varphi}^2$ is, similarly, bounded above by a multiple
of $E(\varphi,U)$. Finally, the remaining estimate for the integral of the term $\abs{d\psi}^2$
(over a smaller set $U'$ and with a constant as stated in the hypotheses) follows directly from
Lem. \ref{lemL2NormEstimateReverse}. This proves the statement.
\end{proof}

\subsection{Mean Value Inequalities}

For the proof of Thm. \ref{thmRemovalOfSingularities}, and also for the bubbling analysis in the next
section, we need the mean value inequalities for local holomorphic supercurves to be developed next.
They both follow from a lemma due to Heinz. Let $\laplace:=\partial_s^2+\partial_t^2$ denote the standard
Laplacian on $\bR^2$ and $B_r:=B_r(0)$ be the (open) disc around $0$ with radius $r$.

\begin{Lem}[The Heinz Trick, Lem. A.1 in \cite{RS01}]
\label{lemHeinzTrick}
Let $r>0$ and $a,b\geq 0$. Let $w:B_r\rightarrow\bR$ be a $C^2$-function that satisfies
the inequalities
\begin{align*}
\laplace w\geq -a-bw^2\;,\qquad w\geq 0\;,\qquad\int_{B_r}w<\frac{\pi}{16b}
\end{align*}
Then the estimate $w(0)\leq\frac{ar^2}{8}+\frac{8}{\pi r^2}\int_{B_r}w$ holds.
\end{Lem}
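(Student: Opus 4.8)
The plan is to run Heinz's concentration argument. First I would introduce the concentration function $g:[0,r]\to[0,\infty)$, $g(\rho):=(r-\rho)^2\sup_{\overline{B_\rho}}w$, which is continuous with $g(r)=0$ (a standard exhaustion reduces to the case where $w$ extends continuously to $\overline{B_r}$) and hence attains its maximum at some $\rho^*\in[0,r)$; pick $\xi$ with $\abs\xi\le\rho^*$ and $w(\xi)=\sup_{\overline{B_{\rho^*}}}w=:c$. If $c=0$ then $g\equiv 0$, so $w\equiv 0$ and the claim is trivial; hence assume $c>0$. Putting $\varepsilon:=\tfrac12(r-\rho^*)$ one has $B_\varepsilon(\xi)\subseteq\overline{B_{\rho^*+\varepsilon}}\subseteq B_r$, and the point of this choice is the pointwise bound $w\le 4c$ on $B_\varepsilon(\xi)$: for $z\in B_\varepsilon(\xi)$ we have $\abs z\le\rho^*+\varepsilon$, hence $w(z)\le(r-\rho^*-\varepsilon)^{-2}g(\rho^*+\varepsilon)\le\varepsilon^{-2}g(\rho^*)=4c$.

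Next I would localise the differential inequality: on $B_\varepsilon(\xi)$ it reads $\laplace w\ge -a-bw^2\ge -(a+16bc^2)$. Since $\laplace(\abs{z-\xi}^2)=4$ in $\bR^2$, the function $v(z):=w(z)+\tfrac14(a+16bc^2)\abs{z-\xi}^2$ is subharmonic on $B_\varepsilon(\xi)$, so the sub-mean-value inequality yields $c=w(\xi)=v(\xi)\le\frac{1}{\pi\varepsilon^2}\int_{B_\varepsilon(\xi)}v$. Using $\int_{B_\varepsilon(\xi)}\abs{z-\xi}^2=\tfrac{\pi}{2}\varepsilon^4$ and $\int_{B_\varepsilon(\xi)}w\le\int_{B_r}w$, this becomes
\begin{align*}
c\;\le\;\frac{1}{\pi\varepsilon^2}\int_{B_r}w+\frac{(a+16bc^2)\varepsilon^2}{8}\;=\;\frac{1}{\pi\varepsilon^2}\int_{B_r}w+\frac{a\varepsilon^2}{8}+2bc^2\varepsilon^2\,.
\end{align*}
Setting $\mu:=c\varepsilon^2$ and multiplying by $\varepsilon^2$, and recording that by construction $r^2w(0)=g(0)\le g(\rho^*)=4\mu$, the remaining task is to show that the inequality $\mu\le\tfrac1\pi\int_{B_r}w+\tfrac18 a\varepsilon^4+2b\mu^2$, with $\varepsilon\le r/2$, forces $\mu$ to be small.

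The main obstacle is exactly this absorption of the quadratic term $2b\mu^2$, and it is where the hypothesis $\int_{B_r}w<\tfrac{\pi}{16b}$ is used: it makes the ``constant'' part $\tfrac1\pi\int_{B_r}w$ strictly less than $\tfrac1{16b}$, hence strictly below the threshold $\tfrac1{8b}$ beyond which the quadratic $2b\mu^2-\mu+(\text{const})$ loses its real roots, so $\mu$ is barred from the open interval between those two roots. To conclude that $\mu$ lies below the \emph{smaller} root (rather than above the larger one) I would argue by connectedness: repeating the whole construction with $r$ replaced by $\rho\in(0,r]$ produces a continuous function $\rho\mapsto\mu(\rho)$ with $\mu(\rho)\to 0$ as $\rho\to 0^+$ that satisfies the same quadratic inequality on $B_\rho$; since it starts at $0$ and cannot jump across the forbidden interval, it stays on the lower branch, whence $\mu=\mu(r)\le 2\big(\tfrac1\pi\int_{B_r}w+\tfrac18 a\varepsilon^4\big)$. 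Feeding this into $w(0)\le 4\mu/r^2$ and using $\varepsilon\le r/2$ gives an estimate of the asserted shape $w(0)\le C_1ar^2+C_2r^{-2}\int_{B_r}w$; bookkeeping of the numerical constants — the $4$ in $w\le 4c$, the $16$ in $16bc^2$, the $\tfrac{\pi}{2}\varepsilon^4$, and the factor $2$ from the lower root — then yields the sharp values $\tfrac{ar^2}{8}$ and $\tfrac{8}{\pi r^2}$, the linear $a$-term being lower order and absorbed via the analogous comparison with a radial supersolution.
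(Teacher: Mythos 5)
The paper itself does not prove this lemma but quotes it from [RS01, Lem.\ A.1], so I am comparing your argument against the standard Heinz concentration proof. Your setup is exactly that proof and is correct up to and including the sub-mean-value inequality $c\le\frac{1}{\pi\varepsilon^2}\int_{B_r}w+\frac{a\varepsilon^2}{8}+2bc^2\varepsilon^2$; in the special case $a=0$ your absorption step does deliver the stated constants. The genuine gap is in the absorption for general $a$. The constant term of your quadratic inequality $2b\mu^2-\mu+K\ge 0$ is $K=\frac1\pi\int_{B_r}w+\frac18a\varepsilon^4$, not just $\frac1\pi\int_{B_r}w$, and the forbidden interval is nonempty only when $K<\frac{1}{8b}$. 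The hypothesis $\int_{B_r}w<\frac{\pi}{16b}$ controls the first summand but says nothing about $\frac18a\varepsilon^4$: once $abr^4$ is large the discriminant $1-8bK$ can be negative, the quadratic is nonnegative for \emph{every} $\mu$, and the connectedness argument yields no bound at all. Your closing sentence, that the $a$-term is ``lower order and absorbed via comparison with a radial supersolution'', is precisely the point that requires an argument; the radial comparison is already what produced the $\frac18a\varepsilon^2$ term in the sub-mean-value step and does not help with the absorption.

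The standard repair is a case distinction in place of the forbidden-interval argument. If $8bc\varepsilon^2\le1$, then $2bc^2\varepsilon^2\le\frac c4$ is absorbed into the left-hand side directly, giving $c\le\frac{a\varepsilon^2}{6}+\frac{4}{3\pi\varepsilon^2}\int_{B_r}w$ and the claim via $r^2w(0)\le4\varepsilon^2c$ and $\varepsilon\le r/2$. If $8bc\varepsilon^2>1$, apply the same sub-mean-value inequality on the smaller disc $B_\delta(\xi)$ with $\delta:=(8bc)^{-1/2}<\varepsilon$, on which still $w\le4c$: this gives $c\le\frac{a}{64bc}+\frac c4+\frac{8bc}{\pi}\int_{B_r}w$, and now the hypothesis makes the last term less than $\frac c2$, whence $16bc^2<a$. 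This is the missing estimate: it shows that in the bad case the quadratic term is dominated by the linear one, so $\laplace w\ge-2a$ on $B_\varepsilon(\xi)$ and the purely linear computation finishes the proof with constants $\frac{ar^2}{16}$ and $\frac{4}{\pi r^2}$. A secondary, fixable point: your forbidden interval depends on $\varepsilon(\rho)$, which, unlike $\mu(\rho)=\frac14\max_{\sigma\in[0,\rho]}(\rho-\sigma)^2\sup_{\overline{B_\sigma}}w$, need not depend continuously on $\rho$ because the maximiser $\rho^*$ can jump; before invoking connectedness you should replace it by the fixed interval coming from a worst-case constant term.
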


\begin{Lem}[First Mean Value Inequality]
\label{lemMVI1}
Let $0\in U\subseteq\bR^2$ be a bounded open set, $(\varphi,\psi)\in C^{l+1}(U,\bR^{2n})$ be
a local holomorphic supercurve and $r>0$ small enough such that $B_r:=B_r(0)\subseteq U$. Then
\begin{align*}
\int_{B_r}\abs{\psi}^2<\frac{\pi}{16}\implies
\abs{\psi(0)}^2\leq\frac{ar^2}{8}+\frac{8}{\pi r^2}\int_{B_r}\abs{\psi}^2
\end{align*}
where $a\geq 0$ is a constant depending on $\norm{\tD}_{U,1,\infty}$ and
$\norm{\tJ}_{U,1,\infty}$ (but not on $\psi$).
\end{Lem}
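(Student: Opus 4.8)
The goal is to apply the Heinz trick (Lem.~\ref{lemHeinzTrick}) to the function $w:=\abs{\psi}^2$, so the heart of the matter is to derive a differential inequality of the form $\laplace\abs{\psi}^2\geq -a-b\abs{\psi}^4$, or better yet $\laplace\abs{\psi}^2\geq -a$ with $a$ depending only on $\norm{\tD}_{U,1,\infty}$, $\norm{\tJ}_{U,1,\infty}$ and (implicitly, through the $C^{l+1}$-bound on $(\varphi,\psi)$) on $\psi$ in a way we must be careful to control. First I would compute $\laplace\abs{\psi}^2=2\abs{\partial_s\psi}^2+2\abs{\partial_t\psi}^2+2\scal{\psi}{\laplace\psi}$. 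The first two terms are nonnegative and can be discarded. For the last term, I differentiate the supercurve equation \eqref{eqnLocalHolomorphicSupercurve}, i.e.\ $\partial_s\psi=-\tJ\partial_t\psi-\tD\psi$, once more in $s$ and once in $t$ after the analogous rearrangement, and assemble $\laplace\psi=\partial_s(\partial_s\psi)+\partial_t(\partial_t\psi)$; using $\tJ^2=-\id$ this produces $\laplace\psi$ as a first-order expression in $\partial\psi$ and a zeroth-order expression in $\psi$, with coefficients built from $\tJ$, $\tD$ and their first derivatives.

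The key cancellation I expect is the familiar one from the classical case: the genuinely second-order term in $\psi$ drops out because $\tJ$ is an almost complex structure, leaving
\begin{align*}
\laplace\psi=(\text{first order in }\tJ,\tD)\cdot\partial\psi+(\text{involving }\partial\tJ,\partial\tD)\cdot\psi\;.
\end{align*}
Pairing with $\psi$ and using Cauchy--Schwarz together with Young's inequality $2\abs{a}\abs{b}\leq\abs{a}^2+\abs{b}^2$, the cross terms $\scal{\psi}{(\cdot)\partial\psi}$ are absorbed partly into the discarded $\abs{\partial\psi}^2$ and partly into $\abs{\psi}^2$; the remaining terms are bounded by $C(\norm{\tJ}_{U,1,\infty},\norm{\tD}_{U,1,\infty})\cdot(\abs{\psi}^2+\abs{\partial\psi}^2)$. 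The $\abs{\partial\psi}^2$ contribution is troublesome and is the main obstacle: unlike the harmonic-map case, $\abs{\partial\psi}^2$ is not obviously controlled pointwise. Here I would invoke \eqref{eqnLocalHolomorphicSupercurve} itself, which gives $\abs{\partial_s\psi}\leq\norm{\tJ}_\infty\abs{\partial_t\psi}+\norm{\tD}_\infty\abs{\psi}$, hence $\abs{\partial\psi}^2\leq C(\norm{\tJ}_\infty,\norm{\tD}_\infty)(\abs{\partial_t\psi}^2+\abs{\psi}^2)$; but $\abs{\partial_t\psi}^2$ still appears. The clean way out is to keep the $+2\abs{\partial_s\psi}^2+2\abs{\partial_t\psi}^2$ terms rather than discarding them and show they dominate, after using the equation, any $\abs{\partial\psi}^2$ coming from the lower-order analysis, so that everything not proportional to $\abs{\psi}^2$ has a good sign.

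With $\laplace\abs{\psi}^2\geq -a-b\abs{\psi}^4$ established (where $b$ too depends only on the stated norms), I would note that if $b=0$ the hypothesis $\int_{B_r}\abs{\psi}^2<\infty$ suffices and Lem.~\ref{lemHeinzTrick} applies directly; if $b>0$, the hypothesis $\int_{B_r}\abs{\psi}^2<\pi/16$ is exactly what is needed provided $b\leq 1$, which we may arrange by shrinking $U$ or by replacing $a$ with $\max(a,16b^2/\pi\cdot\sup\abs{\psi}^2)$-type adjustments --- but more cleanly, since $(\varphi,\psi)\in C^{l+1}$ is fixed, $\sup_{B_r}\abs{\psi}^2<\infty$, so $b\abs{\psi}^4\leq(b\sup\abs{\psi}^2)\abs{\psi}^2$ and we can again reduce to the affine case by enlarging $a$ --- however this would make $a$ depend on $\psi$, which the statement forbids. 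Therefore the honest route is to verify $\int_{B_r}\abs{\psi}^2<\pi/(16b)$ follows from $\int_{B_r}\abs{\psi}^2<\pi/16$ after rescaling the constant $b$ to be $\leq 1$ (absorbing the excess into $a$ via the pointwise a priori bound is not allowed, so I will instead carry $b$ through and state the threshold as $\pi/16$ under the normalisation $b\leq1$, achievable by choice of coordinates on a small enough $U$). Applying Lem.~\ref{lemHeinzTrick} then yields $\abs{\psi(0)}^2=w(0)\leq ar^2/8+(8/\pi r^2)\int_{B_r}w$, which is the claim. The one delicate point to get right is thus the bookkeeping of which constants are allowed to depend on what; the differential-inequality computation itself is routine once the almost-complex-structure cancellation is spotted.
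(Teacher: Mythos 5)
Your computation of the differential inequality is exactly the paper's: the same decomposition $\laplace\abs{\psi}^2=2\scal{\laplace\psi}{\psi}+2\abs{\partial_s\psi}^2+2\abs{\partial_t\psi}^2$, the same cancellation of the second-order terms after differentiating $\partial_s\psi=-\tJ\partial_t\psi-\tD\psi$ in $s$ and $\partial_t\psi=\tJ\partial_s\psi+\tJ\tD\psi$ in $t$ (giving $\laplace\psi=L\psi+M\partial_s\psi+N\partial_t\psi$ with $L,M,N$ built from $\tJ,\tD$ and their first derivatives), and the same absorption: Young's inequality is applied with coefficients chosen so that the $\abs{\partial_s\psi}^2+\abs{\partial_t\psi}^2$ contribution from $\abs{\scal{\laplace\psi}{\psi}}$ is cancelled exactly by the positive derivative terms, leaving $\laplace w\geq -C\,w$ with $C$ depending only on $\norm{\tJ}_{U,1,\infty}$ and $\norm{\tD}_{U,1,\infty}$. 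Your detour of re-using the equation to trade $\abs{\partial_s\psi}$ for $\abs{\partial_t\psi}$ is unnecessary, but your eventual resolution (keep the positive derivative terms and let them dominate) is the correct and intended one.

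The genuine gap is in the last step, which you yourself flag as delicate but then resolve incorrectly. You treat the coefficient $b$ of the quadratic term as if it were produced by the computation and then try to normalise $b\leq 1$ by "shrinking $U$" or "choice of coordinates"; neither works (shrinking $U$ does not change a pointwise constant, and rescaling $z\mapsto\lambda z$ changes both the radius and the value of $\int\abs{\psi}^2$, so it does not reduce to the stated $\pi/16$ threshold). The point you are missing is that the inequality you actually derive is \emph{linear} in $w$, namely $\laplace w\geq -2\sqrt{a}\,w$ with $2\sqrt{a}:=C$; the quadratic term is then introduced artificially by the elementary estimate $2\sqrt{a}\,w\leq a+w^2$, which yields $\laplace w\geq -a-w^2$ with $b=1$ on the nose. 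The hypothesis $\int_{B_r}\abs{\psi}^2<\pi/16$ is then exactly $\int_{B_r}w<\pi/(16b)$, Heinz's Lemma \ref{lemHeinzTrick} applies directly, and $a=C^2/4$ depends only on the stated norms, as required. No normalisation, no dependence on $\psi$, no rescaling.
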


\begin{proof}
With $w:=\abs{\psi}^2$, we have
\begin{align}
\label{eqnLaplaceMean}
\laplace w=2\scal{\laplace\psi}{\psi}+2\abs{\partial_s\psi}^2+2\abs{\partial_t\psi}^2
\end{align}

For the rest of the proof, we shall write $D$ and $J$ instead of $\tD$ and $\tJ$, respectively.
Then, the holomorphicity condition (\ref{eqnLocalHolomorphicSupercurve}) implies
\begin{align*}
\laplace\psi&=\partial_s(-J\cdot\partial_t\psi-D\cdot\psi)+\partial_t(J\partial_s\psi+JD\psi)\\
&=\left(-\partial_sD+\partial_t(JD)\right)\psi+\left(-D+\partial_tJ\right)\partial_s\psi
+\left(-\partial_sJ+JD\right)\partial_t\psi\\
&=:L\psi+M\partial_s\psi+N\partial_t\psi
\end{align*}
with $L,M,N:U\rightarrow\bR^{2n\times 2n}$ depending only on $J$ and $D$ and their first derivatives.
The previous calculation implies the estimate
\begin{align*}
\scal{\laplace\psi}{\psi}
&\leq\left(\abs{L}\abs{\psi}+\abs{M}\abs{\partial_s\psi}+\abs{N}\abs{\partial_t\psi}\right)\abs{\psi}\\
&=\abs{L}\abs{\psi}^2+2\left(\frac{1}{2}\abs{M}\abs{\psi}\right)\abs{\partial_s\psi}
+2\left(\frac{1}{2}\abs{N}\abs{\psi}\right)\abs{\partial_t\psi}\\
&\leq\abs{L}\abs{\psi}^2+\frac{1}{4}\abs{M}^2\abs{\psi}^2+\abs{\partial_s\psi}^2
+\frac{1}{4}\abs{N}^2\abs{\psi}^2+\abs{\partial_t\psi}^2
\end{align*}
and, using (\ref{eqnLaplaceMean}), we obtain
\begin{align*}
\laplace w&\geq-2\abs{\scal{\laplace\psi}{\psi}}+2\abs{\partial_s\psi}^2+2\abs{\partial_t\psi}^2\\
&\geq-\left(2\abs{L}+\frac{1}{2}\abs{M}^2+\frac{1}{2}\abs{N}^2\right)\abs{\psi}^2\\
&\geq-\left(2\norm{L}_{U,\infty}+\frac{1}{2}\norm{M}_{U,\infty}^2
+\frac{1}{2}\norm{N}_{U,\infty}^2\right)\abs{\psi}^2\\
&=:-2\sqrt{a}\cdot w\geq-a-w^2
\end{align*}
By the last estimate, the hypotheses of Heinz' Lemma \ref{lemHeinzTrick} are satisfied with $b:=1$,
thus providing the estimate which was to show.
\end{proof}

\begin{Lem}[Second Mean Value Inequality]
\label{lemMVI2}
Let $0\in U\subseteq\bR^2$ be a bounded open set, $(\varphi,\psi)\in C^{l+1}(U,\bR^{2n})$ be
a local holomorphic supercurve with $l\geq 2$ and $r>0$ small enough such that
$B_{2r}:=B_{2r}(0)\subseteq U$. Then
\begin{align*}
\int_{B_{2r}}\abs{\psi}^2<\frac{\pi}{16}\,,\;\int_{B_r}\abs{d\psi}^2<\frac{\pi}{64}\implies
\abs{d\psi(0)}^2\leq\frac{1}{4}+cr^2+dr^4+\frac{8}{\pi r^2}\int_{B_r}\abs{d\psi}^2
\end{align*}
where $c,d\geq 0$ depend on $\norm{\tD}_{U,2,\infty}$ and $\norm{\tJ}_{U,2,\infty}$ (but not on $\psi$).
\end{Lem}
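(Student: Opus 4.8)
The plan is to mimic the structure of the proof of the First Mean Value Inequality (Lem.~\ref{lemMVI1}), but applied to the quantity $w:=\abs{d\psi}^2=\abs{\partial_s\psi}^2+\abs{\partial_t\psi}^2$ rather than $\abs{\psi}^2$, and to verify the hypotheses of the Heinz Trick (Lem.~\ref{lemHeinzTrick}) for this $w$ on the ball $B_r$. First I would differentiate the equation of motion (\ref{eqnLocalHolomorphicSupercurve}) once more with respect to $s$ and $t$ to obtain equations for $\partial_s\psi$ and $\partial_t\psi$. Writing again $J$ and $D$ for $\tJ$ and $\tD$, differentiating $\partial_s\psi+J\partial_t\psi+D\psi=0$ gives
\begin{align*}
\partial_s(\partial_k\psi)+J\,\partial_t(\partial_k\psi)
=-(\partial_kJ)\,\partial_t\psi-(\partial_kD)\,\psi-D\,\partial_k\psi\qquad(k=s,t).
\end{align*}
Hence each of $\partial_s\psi$, $\partial_t\psi$ satisfies an equation of the same shape as (\ref{eqnLocalHolomorphicSupercurve}) but with an inhomogeneous term that is linear in $\psi$, $\partial_s\psi$, $\partial_t\psi$, with coefficients depending on $J$, $D$ and their first derivatives. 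Then, exactly as in the computation of $\laplace\psi$ in the proof of Lem.~\ref{lemMVI1}, I would apply $\partial_s$ and $\partial_t$ once more to these first-order systems to express $\laplace(\partial_s\psi)$ and $\laplace(\partial_t\psi)$ as linear combinations of $\psi$, $\partial_s\psi$, $\partial_t\psi$, $\partial_s^2\psi$, $\partial_s\partial_t\psi$, $\partial_t^2\psi$ with coefficients built from $J$, $D$ and their derivatives up to second order — this is where $l\geq2$ is used.

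Next I would compute, in analogy with (\ref{eqnLaplaceMean}),
\begin{align*}
\laplace w=2\scal{\laplace\partial_s\psi}{\partial_s\psi}+2\scal{\laplace\partial_t\psi}{\partial_t\psi}
+2\abs{\nabla\partial_s\psi}^2+2\abs{\nabla\partial_t\psi}^2,
\end{align*}
and estimate the two scalar products from below. The inner products will produce terms of the form $\abs{\partial^2\psi}\cdot\abs{\partial\psi}$, $\abs{\partial\psi}^2$ and $\abs{\psi}\cdot\abs{\partial\psi}$; the first kind is absorbed into the positive quadratic term $2\abs{\nabla\partial_s\psi}^2+2\abs{\nabla\partial_t\psi}^2$ via Young's inequality $2ab\leq a^2+b^2$, exactly as in Lem.~\ref{lemMVI1}. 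What remains is an estimate of the shape
\begin{align*}
\laplace w\geq-\tilde a-\tilde b\,w-\tilde c\,\abs{\psi}\sqrt{w}
\end{align*}
with constants $\tilde a,\tilde b,\tilde c\geq0$ depending on $\norm{J}_{U,2,\infty}$ and $\norm{D}_{U,2,\infty}$. Using $\abs{\psi}\sqrt{w}\leq\frac12(\abs{\psi}^2+w)$ and the pointwise bound on $\abs{\psi}^2$ coming from the First Mean Value Inequality (applied on $B_{2r}$, whose hypothesis $\int_{B_{2r}}\abs{\psi}^2<\pi/16$ is exactly assumed here, giving $\abs{\psi}^2\leq\frac{a}{4}(2r)^2+\frac{8}{\pi(2r)^2}\cdot\frac{\pi}{16}=\tfrac{1}{4}+ar^2$ at every point of $B_r$ — this is the source of the $\tfrac14+cr^2$ terms in the conclusion, after possibly shrinking $r$ so that $\abs\psi^2$ is controlled on all of $B_r$), I would convert this into an inequality $\laplace w\geq-a'-w^2$ for a suitable constant $a'$ of the form $a'=\tfrac14+($terms$)r^2+($terms$)r^4$, after rescaling $\tilde b$-type terms into $w^2$ exactly as the step ``$-2\sqrt a\,w\geq-a-w^2$'' in Lem.~\ref{lemMVI1}. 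The smallness hypothesis $\int_{B_r}\abs{d\psi}^2=\int_{B_r}w<\pi/64$ in the statement is precisely the condition $\int_{B_r}w<\frac{\pi}{16b}$ of Heinz' Lemma with $b=1$ — I would arrange the constants so that $b=1$ — wait, with the threshold $\pi/64$ one reads $b=4$; so instead I would track $b=4$ through the Young steps (writing $4ab\le 4a^2+b^2$ where needed), or equivalently absorb a factor into the definition of $w$. Applying Lem.~\ref{lemHeinzTrick} with this $b$ then yields $w(0)=\abs{d\psi(0)}^2\leq\frac{a'r^2}{8}+\frac{8}{\pi r^2}\int_{B_r}w$, which is the asserted estimate after expanding $a'$.

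The main obstacle is purely bookkeeping rather than conceptual: one must carry out the two successive differentiations of the PDE cleanly, keeping track of exactly which derivatives of $J$ and $D$ appear (to justify the stated dependence on the $C^2$-norms, hence the hypothesis $l\geq2$), and then organise the resulting cross-terms so that every ``bad'' term $\abs{\partial^2\psi}\abs{\partial\psi}$ is swallowed by the good term $\abs{\nabla\partial_s\psi}^2+\abs{\nabla\partial_t\psi}^2$ and every term involving $\abs\psi$ is disposed of using the First Mean Value Inequality. Getting the numerology of the constants to produce exactly the ``$\tfrac14+cr^2+dr^4$'' shape — in particular isolating the dimensionless $\tfrac14$, which comes from the $\frac{8}{\pi(2r)^2}\cdot\frac{\pi}{16}$ contribution in the pointwise bound on $\abs\psi^2$ — is the one place where care is required; everything else is a direct transcription of the proof of Lem.~\ref{lemMVI1}.
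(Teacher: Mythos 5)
Your plan follows the paper's proof essentially step for step: set $w=\abs{d\psi}^2$, differentiate the equation to express $\laplace(\partial_s\psi)$ and $\laplace(\partial_t\psi)$ linearly in $\psi$, $d\psi$, $d^2\psi$ with coefficients involving $\tJ,\tD$ up to second order, absorb the $\abs{d^2\psi}\abs{d\psi}$ cross terms into the positive part of $\laplace w$ by Young's inequality, control $\abs{\psi}^2$ pointwise via the First Mean Value Inequality, and close with Heinz' Lemma with $b=4$ (matching the threshold $\pi/64$). All the ingredients are present and in the right order.

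One piece of bookkeeping is wrong, and it is precisely the piece that produces the $\tfrac14$ in the conclusion. Applying Lem.~\ref{lemMVI1} once, centred at $0$ with radius $2r$, only controls $\abs{\psi(0)}^2$; to bound $\abs{\psi}^2$ on all of $B_r$ you must re-centre, as the paper does: for each $\rho e^{i\theta}\in B_r$ one has $B_r(\rho e^{i\theta})\subseteq B_{2r}(0)$, so the hypothesis $\int_{B_{2r}}\abs{\psi}^2<\pi/16$ lets you apply Lem.~\ref{lemMVI1} at $\rho e^{i\theta}$ with radius $r$, giving
\begin{align*}
\abs{\psi(\rho e^{i\theta})}^2\leq\frac{ar^2}{8}+\frac{8}{\pi r^2}\cdot\frac{\pi}{16}
=\frac{ar^2}{8}+\frac{1}{2r^2}
\end{align*}
on $B_r$. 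This bound blows up as $r\rightarrow 0$; your claimed bound $\abs{\psi}^2\leq\tfrac14+ar^2$ is not correct (indeed $\frac{8}{\pi(2r)^2}\cdot\frac{\pi}{16}=\frac{1}{8r^2}$, not $\tfrac14$). Consequently the constant $A$ in $\laplace w\geq-4w^2-A$ contains a term $\frac{2}{r^2}$ coming from $4\abs{\psi}^2$, and the dimensionless $\tfrac14$ only materialises at the very end, as $\frac{2}{r^2}\cdot\frac{r^2}{8}$ when Heinz' conclusion multiplies $A$ by $\frac{r^2}{8}$. With your version of the pointwise bound ($A$ bounded independently of $r$) the argument would output an estimate without the $\tfrac14$, which is not what the computation actually yields; with the corrected bound everything expands to exactly the stated $\tfrac14+cr^2+dr^4+\frac{8}{\pi r^2}\int_{B_r}\abs{d\psi}^2$.
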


\begin{proof}
Let $w:=\abs{d\psi}^2$ and set $\xi:=\partial_s\psi$ and $\eta:=\partial_t\psi$.
Then $w=\abs{\xi}^2+\abs{\eta}^2$ and
\begin{align}
\label{eqnLaplaceMean2}
\laplace w=2\scal{\laplace\xi}{\xi}+2\scal{\laplace\eta}{\eta}
+2\abs{\partial_s\xi}^2+2\abs{\partial_s\eta}^2+2\abs{\partial_t\xi}^2+2\abs{\partial_t\eta}^2
\end{align}
For the terms involving $\laplace\xi$ and $\laplace\eta$ note that
$\partial_t\partial_s\psi=\partial_s\partial_t\psi$
and thus $\partial_t\xi=\partial_s\eta$ holds, whence
$\partial_s\partial_t\eta=\partial_t\partial_s\eta=\partial_t\partial_t\xi$. Therefore,
with $L$,$M$ and $N$ as in the proof of the previous lemma, we yield
\begin{align*}
\laplace\xi&
=\partial_s(\partial_s\xi+\partial_t\eta)+\partial_t\partial_t\xi-\partial_s\partial_t\eta\\
&=\partial_s(\partial_s\partial_s\psi+\partial_t\partial_t\psi)\\
&=\partial_s\left(L\psi+M\partial_s\psi+N\partial_t\psi\right)\\
&=(\partial_sL)\psi+\left(L+\partial_sM\right)\xi+(\partial_sN)\eta+M(\partial_s\xi)+N(\partial_s\eta)\\
&=:X\psi+Y\xi+Z\eta+M(\partial_s\xi)+N(\partial_s\eta)
\end{align*}
The previous calculation implies the following estimate.
\begin{align*}
\scal{\laplace\xi}{\xi}
&\leq\abs{X\psi+Y\xi+Z\eta+M(\partial_s\xi)+N(\partial_s\eta)}\abs{\xi}\\
&\leq\frac{1}{4}\left(\norm{X}_{U,\infty}^2+4\norm{Y}_{U,\infty}+\norm{Z}_{U,\infty}^2
+\norm{M}_{U,\infty}^2+\norm{N}_{U,\infty}^2+4\right)
\left(\abs{\xi}^2+\abs{\eta}^2\right)\\
&\qquad+\abs{\psi}^2+\abs{\partial_s\xi}^2+\abs{\partial_s\eta}^2\\
&=:2\sqrt{P}\cdot w+\abs{\psi}^2+\abs{\partial_s\xi}^2+\abs{\partial_s\eta}^2\\
&\leq w^2+(P+\abs{\psi}^2)+\abs{\partial_s\xi}^2+\abs{\partial_s\eta}^2
\end{align*}
where $P$ only depends on the infinity norms over $U$ of derivatives up to second order of $\tJ$ and $\tD$.
By analogous estimates, one can further show that there is a constant $Q$ with corresponding dependencies
such that
\begin{align*}
\scal{\laplace\eta}{\eta}\leq w^2+(Q+\abs{\psi}^2)+\abs{\partial_t\xi}^2+\abs{\partial_t\eta}^2
\end{align*}
Thus, using (\ref{eqnLaplaceMean2}), we obtain
\begin{align*}
\laplace w&\geq -2\abs{\scal{\laplace\xi}{\xi}}-2\abs{\scal{\laplace\eta}{\eta}}
+2\abs{\partial_s\xi}^2+2\abs{\partial_s\eta}^2+2\abs{\partial_t\xi}^2+2\abs{\partial_t\eta}^2\\
&\geq -4w^2-(2P+2Q+4\abs{\psi}^2)
\end{align*}
By hypothesis we have, for $\abs{\rho}< r$,
\begin{align*}
\int_{B_r(\rho e^{i\theta})}\abs{\psi}^2\leq\int_{B_{2r}(0)}\abs{\psi}^2<\frac{\pi}{16}
\end{align*}
and thus, applying Lem. \ref{lemMVI1} with $0$ replaced by $\rho e^{i\theta}\in B_r$,
we obtain
\begin{align*}
\abs{\psi(\rho e^{i\theta})}^2\leq\frac{ar^2}{8}+\frac{8}{\pi r^2}\int_{B_r(\rho e^{i\theta})}\abs{\psi}^2
\leq\frac{ar^2}{8}+\frac{1}{2r^2}
\end{align*}
Therefore
\begin{align*}
\laplace w\geq -4w^2-\left(2P+2Q+\frac{ar^2}{2}+\frac{2}{r^2}\right)=:-4w^2-A
\end{align*}
follows. By Heinz' Lemma \ref{lemHeinzTrick}, applied with $A$ and $b:=4$, we thus yield
\begin{align*}
\abs{d\psi(0)}^2&\leq\frac{Ar^2}{8}+\frac{8}{\pi r^2}\int_{B_r}\abs{d\psi}^2\\
&=\frac{1}{4}+\frac{P+Q}{4}r^2+\frac{a}{16}r^4+\frac{8}{\pi r^2}\int_{B_r}\abs{d\psi}^2
\end{align*}
which was to show.
\end{proof}

\begin{Cor}
\label{corMVI}
Let $0\in U\subseteq\bR^2$ be a bounded open set,
$(\varphi,\psi)\in C^{l+1}(U\setminus\{0\},\bR^{2n})$ be a local holomorphic supercurve
where $l\geq 2$ and such that $E^{\mathrm{loc}}(\varphi,\psi,U)<\infty$.
Then there is a constant $r_0>0$, depending on $\psi$, and a constant $C> 0$, depending
on $\norm{\tD}_{U,2,\infty}$ and $\norm{\tJ}_{U,2,\infty}$ and $r_0$ (but not on $\psi$),
such that $B_{3r_0}\subseteq U$ and for all $r<r_0$
\begin{align*}
\abs{d\psi(r e^{i\theta})}^2\leq C+\frac{8}{\pi r^2}\int_{B_{2r}}\abs{d\psi}^2
\end{align*}
\end{Cor}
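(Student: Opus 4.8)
The plan is to obtain the estimate by invoking the Second Mean Value Inequality (Lemma~\ref{lemMVI2}) not at the puncture $0$, where $(\varphi,\psi)$ is undefined, but at the point $re^{i\theta}$ itself, with a radius of order $r$, and then to enlarge the resulting disc of integration to $B_{2r}=B_{2r}(0)$. The two hypotheses of Lemma~\ref{lemMVI2} that must be verified are the smallness of $\int\abs{\psi}^2$ and of $\int\abs{d\psi}^2$ over the relevant discs about $re^{i\theta}$; the key observation is that, since $E^{\mathrm{loc}}(\varphi,\psi,U)<\infty$, these integrals become arbitrarily small once we stay close enough to the puncture, which is exactly what fixing a small $r_0$ will achieve.

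First I would choose $r_0$. Finiteness of the local super energy means $\abs{\psi}^2,\abs{d\psi}^2\in L^1(U)$, and since $\bigcap_{r>0}B_{3r}(0)=\{0\}$ has Lebesgue measure zero, dominated convergence yields
\[
\int_{B_{3r}(0)}\bigl(\abs{\psi}^2+\abs{d\psi}^2\bigr)\longrightarrow 0\qquad\text{as }r\to 0\,.
\]
Hence one can fix $r_0>0$ with $B_{3r_0}\subseteq U$ and such that
\[
\int_{B_{3r_0}}\abs{\psi}^2<\frac{\pi}{16}\qquad\text{and}\qquad\int_{B_{3r_0}}\abs{d\psi}^2<\frac{\pi}{64}\,.
\]
A fortiori, these two bounds then hold for the integral over any disc contained in $B_{3r_0}$, which covers every disc occurring below.

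Now fix $r<r_0$ and an angle $\theta$, and apply Lemma~\ref{lemMVI2} with the distinguished centre $0$ replaced by $re^{i\theta}$ and with radius $\rho$ a fixed fraction of $r$ chosen small enough that the disc of radius $2\rho$ about $re^{i\theta}$ both avoids the origin and lies inside $B_{3r_0}$ (concretely $\rho=r/2$ works, since then this disc sits in $B_{2r}(0)\subseteq B_{3r_0}$ with $0$ only on its boundary). On this disc $(\varphi,\psi)$ is then a genuine local holomorphic supercurve of class $C^{l+1}$ with $l\geq 2$, and the two smallness hypotheses of Lemma~\ref{lemMVI2} hold by the choice of $r_0$. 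The lemma therefore provides constants $c,d\geq 0$ depending only on $\norm{\tD}_{U,2,\infty}$ and $\norm{\tJ}_{U,2,\infty}$ with
\[
\abs{d\psi(re^{i\theta})}^2\leq\frac14+c\,\rho^2+d\,\rho^4+\frac{8}{\pi\rho^2}\int_{B_\rho(re^{i\theta})}\abs{d\psi}^2\,.
\]
Since $B_\rho(re^{i\theta})\subseteq B_{2r}(0)=B_{2r}$, since $\rho$ is a fixed multiple of $r$, and since $\rho<r_0$, one enlarges the domain of integration to $B_{2r}$, absorbs the terms $\tfrac14+c\rho^2+d\rho^4$ into a constant $C$ depending only on $\norm{\tD}_{U,2,\infty}$, $\norm{\tJ}_{U,2,\infty}$ and $r_0$, and arrives at the asserted bound $\abs{d\psi(re^{i\theta})}^2\leq C+\tfrac{8}{\pi r^2}\int_{B_{2r}}\abs{d\psi}^2$.

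The only delicate point — and thus the main obstacle — is the interplay between the puncture and the hypotheses of Lemma~\ref{lemMVI2}: the radius of the disc about $re^{i\theta}$ must be taken comparable to $r$ (so that a controlled, small amount of energy is seen and the lemma's smallness thresholds are met) while being small enough that the whole disc, including the auxiliary inner discs used inside the proof of Lemma~\ref{lemMVI2}, stays away from $0$. Combined with this, the uniform choice of $r_0$ via the absolute continuity of the Lebesgue integral is what makes the estimate valid simultaneously for all $r<r_0$.
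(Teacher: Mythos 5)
Your proof follows essentially the same route as the paper's: fix $r_0$ by absolute continuity of the integral so that the two smallness thresholds of the second mean value inequality are met on every disc inside $B_{3r_0}$, then apply Lem.~\ref{lemMVI2} with the centre moved to $re^{i\theta}$ and a radius comparable to $r$, and finally enlarge the domain of integration to $B_{2r}(0)$. The one discrepancy is quantitative: with your radius $\rho=r/2$ the lemma returns the factor $\frac{8}{\pi\rho^2}=\frac{32}{\pi r^2}$, not $\frac{8}{\pi r^2}$, and this factor of $4$ cannot be absorbed into the additive constant $C$ (all one knows a priori is $\int_{B_{2r}}\abs{d\psi}^2<\pi/64$, so $r^{-2}\int_{B_{2r}}\abs{d\psi}^2$ need not stay bounded as $r\to 0$); what your argument literally establishes is the corollary with $32/\pi$ in place of $8/\pi$. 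This weaker form is entirely sufficient for every subsequent use (Lem.~\ref{lemSuperIsoperimetricInequality} and Prp.~\ref{prpRemovalOfSingularities} only need \emph{some} constant times $r^{-2}\int_{B_{2r}}\abs{d\psi}^2$), so the slip is harmless in substance, but you should not claim to ``arrive at the asserted bound'' verbatim. For comparison, the paper gets $8/\pi$ exactly by taking $\rho=r$, i.e.\ by invoking Lem.~\ref{lemMVI2} with outer disc $B_{2r}(re^{i\theta})$ --- which contains the puncture $0$ in its interior, a point the paper does not address; your smaller radius is precisely what removes that wrinkle, at the cost of the constant.
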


\begin{proof}
By the assumption on the boundedness of the local super energy, there is $0<r_0<1$ (sufficiently small)
such that $B_{3r_0}\subseteq U$ and
\begin{align*}
\int_{B_{3r_0}}\abs{\psi}^2<\frac{\pi}{16}\;,\qquad\int_{B_{2r_0}}\abs{d\psi}^2<\frac{\pi}{64}
\end{align*}
holds. By the inclusions $B_{2r}(r e^{i\theta})\subseteq B_{3r_0}$ and $B_{r}(r e^{i\theta})\subseteq B_{2r_0}$
for $r<r_0$, the hypotheses of Lem. \ref{lemMVI2} are, therefore, satisfied with $0$ replaced by $r e^{i\theta}$,
and the following inequalities result.
\begin{align*}
\abs{d\psi(r e^{i\theta})}^2
\leq\frac{1}{4}+cr^2+dr^4+\frac{8}{\pi r^2}\int_{B_{r}(r e^{i\theta})}\abs{d\psi}^2
\leq\frac{1}{4}+cr_0^2+dr_0^4+\frac{8}{\pi r^2}\int_{B_{2r}}\abs{d\psi}^2
\end{align*}
\end{proof}

\subsection{Proof of the Singularity Theorem}

Based on the preparations until now, we now come to the actual proof of
Thm. \ref{thmRemovalOfSingularities}. Its core is enshrined in
Prp. \ref{prpRemovalOfSingularities} below which, in turn, is based on the
mean value inequalities from the previous subsection and a local isoperimetric inequality
to be established. We begin with another local estimate.

\begin{Lem}
\label{lemDpsiEstimate}
Let $U'\subseteq U\subseteq\bR^2$ be bounded open sets with a smooth boundary
such that $\overline{U'}\subseteq U$,
and let $(\varphi,\psi)\in C^2(U,\bR^{2n})$ be a local holomorphic supercurve of
regularity class $C^2$. Then there is a constant $C\geq 0$, depending on
$\norm{D(\varphi)}_{U,1,\infty}$ and $\norm{\tJ}_{U,1,\infty}$ (but not on $\psi$) such that
the following estimate holds.
\begin{align*}
\int_{U'}\abs{\tD\psi}^2\leq C\int_{U'}\left(\abs{\psi}^2+\abs{d\psi}^2\right)\cdot\int_U\abs{d\varphi}^2
\end{align*}
\end{Lem}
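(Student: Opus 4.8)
The plan is to split $\abs{\tD\psi}^2$ pointwise, apply the Cauchy--Schwarz inequality, and then establish an $L^4$-bound for $\psi$ and an $L^4$-bound for $\partial_s\varphi$ separately. Since $D$ is linear in its second argument, $\tD\psi=(D(\varphi)\cdot\partial_s\varphi)\cdot\psi$ satisfies $\abs{\tD\psi}\le\norm{D(\varphi)}_{U,0,\infty}\abs{\partial_s\varphi}\abs\psi$ pointwise, so Hölder's inequality with exponents $2,2$ together with $\norm{D(\varphi)}_{U,0,\infty}\le\norm{D(\varphi)}_{U,1,\infty}$ give
\begin{align*}
\int_{U'}\abs{\tD\psi}^2\le\norm{D(\varphi)}_{U,1,\infty}^2\left(\int_{U'}\abs{\partial_s\varphi}^4\right)^{\frac12}\left(\int_{U'}\abs{\psi}^4\right)^{\frac12}\;.
\end{align*}
It then suffices to bound $(\int_{U'}\abs\psi^4)^{1/2}$ by a constant times $\int_{U'}(\abs\psi^2+\abs{d\psi}^2)$ and $(\int_{U'}\abs{\partial_s\varphi}^4)^{1/2}$ by a constant times $\int_U\abs{d\varphi}^2$.

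For the $\psi$-bound I would simply chain Sobolev exponents in dimension two: by (\ref{eqnSobolev}) on $U'$ with $n=2$, $k=1$, $p=\frac43$ (so that $np/(n-kp)=4$), followed by the Lebesgue estimate (\ref{eqnLebesgue}) for the $W^{1,\cdot}$-norms on the bounded set $U'$, one obtains $\norm{\psi}_{U',4}\le C\norm{\psi}_{U',1,\frac43}\le C'\norm{\psi}_{U',1,2}$; here I use that $U'$ is bounded with smooth (hence Lipschitz) boundary. Since $\norm{\psi}_{U',1,2}^2=\int_{U'}(\abs\psi^2+\abs{d\psi}^2)$, this is the desired estimate. (Note that this step needs $kp<n$, which is why one routes through $W^{1,4/3}$ rather than using $W^{1,2}$ directly.)

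The $L^4$-bound for $\partial_s\varphi$ is the crux, and I would derive it from the observation that $\xi:=\partial_s\varphi$ itself solves a first-order elliptic equation. Differentiating the holomorphic curve equation (\ref{eqnLocalHolomorphicCurve}) with respect to $s$ and substituting $\partial_t\varphi=\tJ\partial_s\varphi$ (which follows from $\tJ^2=-\id$) and $\partial_s\partial_t\varphi=\partial_t\partial_s\varphi$, one finds
\begin{align*}
\partial_s\xi+\tJ\partial_t\xi+B\xi=0\;,\qquad B:=(\partial_s\tJ)\tJ\;,\qquad\norm{B}_{U,0,\infty}\le\norm{\tJ}_{U,1,\infty}^2\;.
\end{align*}
Since $\varphi\in C^2$ we have $\xi\in C^1(U)\subseteq W^{1,\frac43}_{\mathrm{loc}}(U)$, so the weak elliptic bootstrapping estimate (Lem.~\ref{lemEllipticBootstrappingWeak}), applied with $p=\frac43$ and using $\overline{U'}\subseteq U$, provides a constant $c$ depending only on $U',U$ and $\norm{\tJ}_{U,1,\infty}$ with
\begin{align*}
\norm{\xi}_{U',1,\frac43}\le c\left(\norm{B\xi}_{U,\frac43}+\norm{\xi}_{U,\frac43}\right)\le c\left(\norm{B}_{U,0,\infty}+1\right)\norm{\xi}_{U,\frac43}\;.
\end{align*}
Combining this with the Sobolev embedding $W^{1,\frac43}(U')\hookrightarrow L^4(U')$ as above, the Lebesgue estimate $\norm{\xi}_{U,\frac43}\le C''\norm{\xi}_{U,2}$ (valid since $U$ is bounded), and $\norm{\xi}_{U,2}=\norm{\partial_s\varphi}_{L^2(U)}\le\norm{d\varphi}_{L^2(U)}$, I obtain $(\int_{U'}\abs{\partial_s\varphi}^4)^{1/2}\le C_3\int_U\abs{d\varphi}^2$ with $C_3$ depending only on $U',U$ and $\norm{\tJ}_{U,1,\infty}$. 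Substituting the two bounds into the Hölder estimate above yields the claim, with $C$ the product of the constants so obtained; it depends on $\norm{D(\varphi)}_{U,1,\infty}$ and $\norm{\tJ}_{U,1,\infty}$ but not on $\psi$. I expect the main obstacle to be precisely this differentiation trick — recognising that $\partial_s\varphi$ solves an elliptic system, so that elliptic regularity upgrades its $L^2$-control to $L^4$-control; the remaining steps are routine bookkeeping of Sobolev exponents in two dimensions.
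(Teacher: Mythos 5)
Your proof is correct, and it rests on the same crux as the paper's: differentiating the local holomorphic-curve equation to see that $\xi=\partial_s\varphi$ solves $\partial_s\xi+\tJ\partial_t\xi+(\partial_s\tJ)\tJ\,\xi=0$ and then invoking the weak elliptic bootstrapping estimate of Lem.~\ref{lemEllipticBootstrappingWeak}. The two arguments differ only in how the $\tD$-factor is handled after the initial Hölder split $\int_{U'}\abs{\tD\psi}^2\le\norm{\tD}_{U',4}^2\norm{\psi}_{U',4}^2$. The paper bounds $\norm{\tD}_{U',4}$ by $\norm{\tD}_{U',1,2}$ via $W^{1,4/3}\hookrightarrow L^4$ and the Lebesgue estimate, which forces it to control $\abs{d\tD}$ and hence $\int_{U'}\abs{d^2\varphi}^2$; this is where it applies the bootstrapping lemma to $v=\partial_s\varphi$ with $p=2$ and must repeat the argument for the remaining second derivatives of $\varphi$. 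You instead peel off $D(\varphi)$ in $L^\infty$ and bound $\norm{\partial_s\varphi}_{U',4}$ directly, applying the bootstrapping lemma at $p=4/3$ followed by $W^{1,4/3}(U')\hookrightarrow L^4(U')$. This is slightly more economical: you never need the full Hessian of $\varphi$ in $L^2$, and your constant involves only $\norm{D(\varphi)}_{U,0,\infty}$ rather than $\norm{D(\varphi)}_{U,1,\infty}$. Both routes yield the stated estimate with a constant independent of $\psi$; your bookkeeping of the Sobolev and Lebesgue exponents is accurate throughout.
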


\begin{proof}
By Hölder's inequality, we get the following inequality.
\begin{align*}
\int_{U'}\abs{\tD\psi}^2\leq\norm{\tD}^2_{U',4}\norm{\psi}^2_{U',4}
\end{align*}
Further, using the Lebesgue and Sobolev embeddings (\ref{eqnLebesgue}) and
(\ref{eqnSobolev}), we yield constants $c_1,c_2>0$, depending only on the geometry, such that
\begin{align*}
\int_{U'}\abs{\tD\psi}^2&\leq c_1\norm{\tD}^2_{U',1,\frac{4}{3}}\norm{\psi}^2_{U',1,\frac{4}{3}}\\
&\leq c_2\norm{\tD}^2_{U',1,2}\norm{\psi}^2_{U',1,2}\\
&=c_2\int_{U'}\left(\abs{\tD}^2+\abs{d\tD}^2\right)\cdot\int_{U'}\left(\abs{\psi}^2+\abs{d\psi}^2\right)
\end{align*}
We estimate
\begin{align*}
\abs{\tD}^2&=\abs{D(\varphi)\cdot\partial_s\varphi}^2
\leq\norm{D(\varphi)}_{U,\infty}^2\abs{\partial_s\varphi}^2
\leq\norm{D(\varphi)}_{U,\infty}^2\abs{d\varphi}^2
=:c_3\cdot\abs{d\varphi}^2
\end{align*}
and
\begin{align*}
\abs{\partial_s\tD}^2
\leq\norm{D(\varphi)}^2_{U,1,\infty}\left(\abs{d\varphi}^2+\abs{d^2\varphi}^2\right)
\end{align*}
and analogous for $\abs{\partial_t\tD}^2$, providing a constant $c_4>0$,
depending on $\norm{D(\varphi)}_{U,1,\infty}$, such that
\begin{align*}
\abs{d\tD}^2\leq c_4\left(\abs{d\varphi}^2+\abs{d^2\varphi}^2\right)
\end{align*}

By the estimates for $\abs{\tD}^2$ and $\abs{d\tD}^2$ thus obtained, it remains to show that
the integral over $\abs{d^2\varphi}^2$ is bounded above by (a constant multiple of) $\int_U\abs{d\varphi}^2$.
We set $v:=\partial_s\varphi$ and differentiate the (local) definition (\ref{eqnLocalHolomorphicCurve})
for holomorphic curves, to obtain
\begin{align*}
\partial_s v+\tJ\partial_t v+(\partial_s\tJ)\tJ v=0
\end{align*}
By this equation, the hypotheses of Lem. \ref{lemEllipticBootstrappingWeak} are satisfied
with $J$ and $u$ replaced by $\tJ$ and $v$, respectively, and $p=2$. By the resulting
inequality, we yield a constant $c_5>0$ depending on $\norm{\tJ}_{U,1,\infty}$ such that
the following estimate holds.
\begin{align*}
\int_{U'}\abs{\partial_s\partial_s\varphi}^2&\leq\norm{v}_{U',1,2}^2
\leq c_5\left(\norm{\partial_sv+\tJ\partial_tv}_{U,2}+\norm{v}_{U,2}\right)^2\\
&\leq c_5\left(\norm{(\partial_s\tJ)\tJ}_{U,\infty}+1\right)^2\int_U\abs{d\varphi}^2
=:c_6\cdot\int_U\abs{d\varphi}^2
\end{align*}
Moreover, we obtain analogous estimates for $\partial_s\partial_s\varphi$ replaced with
$\partial_s\partial_t\varphi=\partial_t\partial_s\varphi$ and $\partial_t\partial_t\varphi$, respectively.
This shows $\int_{U'}\abs{d^2\varphi}^2\leq 4c_6\int_U\abs{d\varphi}^2$
and thus finishes the proof.
\end{proof}

We prove a (local) isoperimetric inequality for holomorphic supercurves next.
In order to state the result, we need the following yoga.

\begin{Rem}
\label{remSuperIsoperimetricInequality}
In general, let $J_0\in\bR^{2n\times 2n}$ be a matrix such that $J_0^2=-\id$. We may assume
without loss of generality
that the (constant) scalar product $\scal{\cdot}{\cdot}$ on $\bR^{2n}$ is Hermitian
(i.e. satisfies $\scal{J_0 v}{J_0 w}=\scal{v}{w}$ for all $v,w\in\bR^{2n}$)
for, if not, replace $\scal{\cdot}{\cdot}$ by the average
$\scal{v}{w}':=\frac{1}{2}\left(\scal{v}{w}+\scal{J_0v}{J_0w}\right)$.
On the other hand, any $J_0$-Hermitian scalar product $\scal{\cdot}{\cdot}$ induces a
symplectic form $\omega_0$ on $\bR^{2n}$ by prescribing
$\scal[\omega_0]{v}{w}:=\scal{v}{J_0 w}$
such that $J_0$ is $\omega_0$-compatible.
\end{Rem}

\begin{Lem}[Local Isoperimetric Inequality]
\label{lemSuperIsoperimetricInequality}
Let $0\in U\subseteq\bR^2$ be a bounded open set, $l\geq 2$, and
$(\varphi,\psi)\in C^{l+1}(U\setminus\{0\},\bR^{2n})$ be a local holomorphic supercurve
such that $E^{\mathrm{loc}}(\varphi,\psi,U)<\infty$ and $\varphi$ extends over $0$
to a map $\varphi\in C^{l+1}(U,\bR^{2n})$.
Let $\omega_0$ denote the symplectic form on $\bR^{2n}$ induced by
$J_0:=\tJ(0)$ as in Rem. \ref{remSuperIsoperimetricInequality}.
Then there is a constant $r_0>0$, depending on $\psi$, and a constant $c>0$, depending
on $r_0$, such that $B_{3r_0}\subseteq U$ and, for all $r<r_0$,
\begin{align*}
\int_{B_r}\psi^*\omega_0\leq c\cdot l(\gamma_r)^2
:=c\cdot\left(\int_0^{2\pi}\abs{\dot{\gamma_r}(\theta)}d\theta\right)^2
\end{align*}
holds, where $\gamma_r:S^1\rightarrow X$ denotes the loop $\gamma_r(\theta):=\psi(r e^{i\theta})$.
\end{Lem}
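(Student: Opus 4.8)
The plan is to imitate the classical isoperimetric inequality for $J$-holomorphic curves (cf. the proof of Lemma 4.4.1 in \cite{MS04}), replacing the map by the section $\psi$ and using the mean value control of $d\psi$ obtained in Cor. \ref{corMVI}. First I would fix $r_0$ as in Cor. \ref{corMVI}, so that $B_{3r_0}\subseteq U$ and both smallness hypotheses on $\int\abs{\psi}^2$ and $\int\abs{d\psi}^2$ hold on the relevant discs. Since $\psi\in C^{l+1}(U\setminus\{0\},\bR^{2n})$ with $l\geq 2$, on each punctured disc the loop $\gamma_r$ is $C^1$, and $\int_{B_r}\psi^*\omega_0=\int_{B_r}\scal{\partial_s\psi}{J_0\partial_t\psi}\,ds\,dt$ makes sense (the finite local super energy guarantees integrability near $0$, so $\psi^*\omega_0\in L^1(B_{r})$ and $\int_{B_r}\psi^*\omega_0=\lim_{\varepsilon\to 0}\int_{B_r\setminus B_\varepsilon}\psi^*\omega_0$).

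The core computation is to write $\psi^*\omega_0$ as an exact form away from $0$ and apply Stokes. On $B_r\setminus\{0\}$ one has $\psi^*\omega_0=d\lambda$ for the primitive $\lambda:=\tfrac12\bigl(\scal{\psi}{J_0\,ds\,\partial_s\psi}-\dots\bigr)$; more cleanly, since $\omega_0$ is the constant symplectic form $\scal{v}{J_0w}$, a standard choice is $\lambda=\tfrac12\bigl(\scal{\psi}{J_0\partial_s\psi}ds+\scal{\psi}{J_0\partial_t\psi}dt\bigr)$, and one checks $d\lambda=\scal{\partial_s\psi}{J_0\partial_t\psi}\,ds\wedge dt=\psi^*\omega_0$ using that $\scal{\cdot}{\cdot}$ is $J_0$-Hermitian (Rem. \ref{remSuperIsoperimetricInequality}). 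By Stokes on the annulus $B_r\setminus B_\varepsilon$, $\int_{B_r\setminus B_\varepsilon}\psi^*\omega_0=\int_{\partial B_r}\lambda-\int_{\partial B_\varepsilon}\lambda$. The inner boundary term is controlled by $\varepsilon\cdot\sup_{\partial B_\varepsilon}\abs{\psi}\cdot\sup_{\partial B_\varepsilon}\abs{d\psi}$; using Lem. \ref{lemMVI1} (giving $\abs{\psi}^2\lesssim \tfrac{1}{r^2}\int\abs{\psi}^2$, hence $\abs{\psi}$ stays bounded on small circles as the integrals vanish) together with Cor. \ref{corMVI} (giving $\abs{d\psi(\varepsilon e^{i\theta})}^2\leq C+\tfrac{8}{\pi\varepsilon^2}\int_{B_{2\varepsilon}}\abs{d\psi}^2$, so $\varepsilon\abs{d\psi}\to 0$ since $\int_{B_{2\varepsilon}}\abs{d\psi}^2\to 0$), the boundary integral over $\partial B_\varepsilon$ tends to $0$ as $\varepsilon\to0$. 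Hence $\int_{B_r}\psi^*\omega_0=\int_{\partial B_r}\lambda$.

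It remains to bound $\int_{\partial B_r}\lambda$ by $c\cdot l(\gamma_r)^2$. On $\partial B_r$ one has $\lambda=\tfrac12\scal{\gamma_r(\theta)}{J_0\dot\gamma_r(\theta)}\,d\theta$ up to a factor $r$ absorbed into reparametrisation, so $\abs{\int_{\partial B_r}\lambda}\leq\tfrac12\int_0^{2\pi}\abs{\gamma_r(\theta)}\,\abs{\dot\gamma_r(\theta)}\,d\theta\leq\tfrac12\bigl(\sup_\theta\abs{\gamma_r(\theta)}\bigr)\,l(\gamma_r)$. The standard device is that $\lambda$ only depends on $\psi$ up to a constant shift: replacing $\psi$ by $\psi-\bar\psi_r$ (its mean over $\partial B_r$) changes neither $\psi^*\omega_0$ nor $l(\gamma_r)$, and then Wirtinger/Poincaré on $S^1$ gives $\sup_\theta\abs{\gamma_r(\theta)-\bar\psi_r}\leq\tfrac12\int_0^{2\pi}\abs{\dot\gamma_r}\,d\theta=\tfrac12 l(\gamma_r)$. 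Combining, $\abs{\int_{\partial B_r}\lambda}\leq\tfrac14 l(\gamma_r)^2$, which is the claim with $c=\tfrac14$ (or, after tracking the $r$-factor and the constant from Cor. \ref{corMVI} that enters only in justifying the vanishing of the inner term, some explicit $c$ depending on $r_0$).

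The main obstacle I expect is the justification that the inner boundary term $\int_{\partial B_\varepsilon}\lambda$ vanishes in the limit: this is exactly where the hypothesis $E^{\mathrm{loc}}(\varphi,\psi,U)<\infty$ and the mean value inequalities are indispensable, since a priori $d\psi$ could blow up at $0$, and one needs the quantitative decay $\varepsilon^2\abs{d\psi(\varepsilon e^{i\theta})}^2\to0$ and boundedness of $\abs{\psi}$ near $0$, both extracted from Lem. \ref{lemMVI1} and Cor. \ref{corMVI}, to kill it. Everything else is the routine $\bR^{2n}$-linear-algebra bookkeeping for the constant form $\omega_0$ and the one-variable Poincaré inequality on the circle.
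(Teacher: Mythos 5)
Your overall strategy is sound and is essentially the classical isoperimetric argument unwound by hand: you write $\psi^*\omega_0=d\lambda$ for the explicit primitive $\lambda=\tfrac12\bigl(\scal{\psi}{J_0\partial_s\psi}ds+\scal{\psi}{J_0\partial_t\psi}dt\bigr)$ (the computation $d\lambda=\psi^*\omega_0$ is correct, using that $\scal{\cdot}{\cdot}$ is $J_0$-Hermitian), apply Stokes on the annulus, and bound the outer circle term by $\tfrac14 l(\gamma_r)^2$ via the mean-subtraction/Wirtinger trick. The paper instead fills in both circles by continuous extensions of the loops, invokes Stokes for the resulting closed cycles, and quotes the classical isoperimetric inequality (Lem.~4.4.3 in \cite{MS04}) as a black box for both the outer and the inner filling, killing the inner contribution because $l(\gamma_\rho)\to 0$; your route is more self-contained but must control the inner boundary integral $\int_{\partial B_\varepsilon}\lambda$ directly.

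That is where there is a genuine gap. You bound $\abs{\int_{\partial B_\varepsilon}\lambda}$ by $2\pi\varepsilon\cdot\sup_{\partial B_\varepsilon}\abs{\psi}\cdot\sup_{\partial B_\varepsilon}\abs{d\psi}$ and claim $\abs{\psi}$ stays bounded near $0$ by Lem.~\ref{lemMVI1}. But at this stage boundedness of $\abs{\psi}$ at the puncture is not available -- it is essentially what the removal-of-singularities argument is in the middle of proving. Applying Lem.~\ref{lemMVI1} at a point $z$ with $\abs{z}=\varepsilon$ forces the ball to have radius at most $\varepsilon$ (so as not to contain the puncture, where $\psi$ is not $C^2$), and the resulting bound is $\abs{\psi(z)}^2\lesssim\varepsilon^2+\varepsilon^{-2}\int_{B_{2\varepsilon}}\abs{\psi}^2$; the integral tends to zero, but not necessarily faster than $\varepsilon^2$, so you only get $\abs{\psi}=o(1/\varepsilon)$. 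Combined with $\varepsilon\sup\abs{d\psi}\to 0$ from Cor.~\ref{corMVI}, the product $2\pi\varepsilon\cdot\sup\abs{\psi}\cdot\sup\abs{d\psi}$ is only $o(1/\varepsilon)$ and need not vanish. The fix is immediate and uses only what you already set up: apply the same mean-subtraction estimate to the \emph{inner} circle, giving $\abs{\int_{\partial B_\varepsilon}\lambda}\leq\tfrac14 l(\gamma_\varepsilon)^2\leq\tfrac14\bigl(2\pi C\varepsilon^2+16\int_{B_{2\varepsilon}}\abs{d\psi}^2\bigr)\to 0$, which is exactly the length estimate you (and the paper) derive from Cor.~\ref{corMVI}. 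With that substitution your proof closes and agrees in substance with the paper's.
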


\begin{proof}
By assumption, the hypotheses of Cor. \ref{corMVI} are satisfied, which yields constants $r_0>0$
and $C>0$ (the latter depending on $\varphi$) such that, for $r<r_0$
\begin{align*}
\abs{d\psi(re^{i\theta})}^2\leq C+\frac{8}{\pi r^2}\int_{B_{2r}}\abs{d\psi}^2
\end{align*}
holds and, therefore, we obtain the estimates
\begin{align*}
\abs{\dot{\gamma_r}(\theta)}^2=\frac{r^2}{2}\abs{d\psi(re^{i\theta})}^2
\leq Cr^2+\frac{8}{\pi}\int_{B_{2r}}\abs{d\psi}^2
\end{align*}
and
\begin{align*}
l(\gamma_r)^2=\left(\int_0^{2\pi}d\theta\abs{\dot{\gamma_r}(\theta)}\right)^2
\leq\int_0^{2\pi}d\theta\abs{\dot{\gamma_r}(\theta)}^2
\leq 2\pi Cr^2+16\int_{B_{2r}}\abs{d\psi}^2
\end{align*}
Since the local super energy as well as the constant $C$ are, by hypothesis, bounded,
the last inequality implies that $l(\gamma_r)$ goes to zero as $r\rightarrow 0$.

Now let $\psi_{\rho}:=\psi_{\gamma_{\rho}}:B_{\rho}\rightarrow X$ denote any continuous extension
of $\gamma_{\rho}$. Then Stokes' theorem implies that, for every $0<\rho<r< r_0$, the following identity holds.
\begin{align}
\label{eqnStokesIdentity}
\int_{B_r\setminus B_{\rho}}\psi^*\omega_0+\int_{B_{\rho}}\psi_{\rho}^*\omega_0=\int_{B_r}\psi_r^*\omega_0
\end{align}
By the classical isoperimetric inequality (e.g. stated as Lem. 4.4.3 in \cite{MS04}), there is
a constant $c>0$ such that
$\abs{\int_{B_{\rho}}\psi_{\rho}^*\omega_0}\leq c\cdot l(\gamma_{\rho})^2$ holds,
which goes to zero as $\rho\rightarrow 0$. Therefore, taking the limit $\rho\rightarrow 0$
in (\ref{eqnStokesIdentity}), we obtain
\begin{align*}
\int_{B_r}\psi^*\omega_0=\int_{B_r}\psi_r^*\omega_0\leq c\cdot l(\gamma_r)^2
\end{align*}
using the isoperimetric inequality again.
\end{proof}

\begin{Prp}
\label{prpRemovalOfSingularities}
Let $0\in U\subseteq\bR^2$ be a bounded open set, $l\geq 2$, and
$(\varphi,\psi)\in C^{l+1}(U\setminus\{0\},\bR^{2n})$ be a local holomorphic supercurve
such that $E^{\mathrm{loc}}(\varphi,\psi,U)<\infty$ and $\varphi$ extends over $0$
to a map $\varphi\in C^{l+1}(U,\bR^{2n})$.
Then there are constants $r_0>0$, $0<B<\infty$ and $0<\nu<1$, depending on $(\varphi,\psi)$ and
the geometry, such that $B_{3r_0}\subseteq U$ and, for all $r<r_0$ and $\theta\in[0,2\pi)$,
the following estimate holds.
\begin{align*}
\abs{d\psi(r,\theta)}^2\leq\frac{B}{r^{2-2\nu}}
\end{align*}
\end{Prp}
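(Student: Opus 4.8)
The plan is to combine the local isoperimetric inequality (Lem.~\ref{lemSuperIsoperimetricInequality}) with the mean value estimate for $\abs{d\psi}^2$ (Cor.~\ref{corMVI}) to run the classical ``energy decay'' argument for removable singularities, adapted to the sectional part $\psi$. Write $\epsilon(r):=\int_{B_r}\abs{d\psi}^2$ for $r<r_0$; this is finite and monotone by the finiteness of $E^{\mathrm{loc}}(\varphi,\psi,U)$, and $\epsilon(r)\to 0$ as $r\to 0$. The first step is to relate $\epsilon(r)$ to $\int_{B_r}\psi^*\omega_0$. Since $J_0=\tJ(0)$ and $\tJ$ is continuous with $\tJ^2=-\id$, the section $\psi$ is ``asymptotically $J_0$-holomorphic'' near $0$ in the sense that, from~(\ref{eqnLocalHolomorphicSupercurve}), $\partial_s\psi+J_0\partial_t\psi=(J_0-\tJ)\partial_t\psi-\tD\psi$, whose $L^2$-norm on $B_r$ is controlled by $(\sup_{B_r}\abs{\tJ-J_0})\cdot\epsilon(r)^{1/2}$ plus a term bounded via Lem.~\ref{lemDpsiEstimate} by a constant times $\big(\int_{B_r}(\abs{\psi}^2+\abs{d\psi}^2)\big)^{1/2}\cdot E(\varphi,B_{r_0})^{1/2}$. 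Using the pointwise identity $\abs{d\psi}^2 = 2\psi^*\omega_0(\partial_s,\partial_t)+\abs{\partial_s\psi+J_0\partial_t\psi}^2$ (valid because $\scal{\cdot}{\cdot}$ is $J_0$-Hermitian, cf.~Rem.~\ref{remSuperIsoperimetricInequality}) and integrating over $B_r$, one gets
\begin{align*}
\tfrac12\epsilon(r)\leq\int_{B_r}\psi^*\omega_0 + o(1)\cdot\epsilon(r) + C\cdot\epsilon(r_0)^{1/2}\cdot\epsilon(r)^{1/2}\cdot E(\varphi,B_{r_0})^{1/2},
\end{align*}
so after shrinking $r_0$ and absorbing the $o(1)\epsilon(r)$ term, $\epsilon(r)\leq 4\int_{B_r}\psi^*\omega_0 + (\text{small})\cdot\epsilon(r)^{1/2}$, and hence $\epsilon(r)\leq C'\int_{B_r}\psi^*\omega_0 + C''\epsilon(r)^{1/2}$; I will need to be slightly careful here, perhaps iterating once more, to end up with a genuine bound of the form $\epsilon(r)\leq C_1\int_{B_r}\psi^*\omega_0$ for $r$ small — this absorption is the first delicate point.

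The second step is the differential inequality. By Lem.~\ref{lemSuperIsoperimetricInequality}, $\int_{B_r}\psi^*\omega_0\leq c\,l(\gamma_r)^2$, and $l(\gamma_r)^2\leq 2\pi\int_0^{2\pi}\abs{\dot\gamma_r(\theta)}^2\,d\theta = \pi r^2\int_0^{2\pi}\abs{d\psi(re^{i\theta})}^2\,d\theta = \pi r\cdot\epsilon'(r)$ for a.e.\ $r$, where the last equality is just the coarea/polar-coordinates expression of $\epsilon'(r)=\int_0^{2\pi}r\abs{d\psi(re^{i\theta})}^2\,d\theta$. Combining with the result of step one, $\epsilon(r)\leq C_1 c\,\pi\, r\,\epsilon'(r)$, i.e.\ $\epsilon'(r)/\epsilon(r)\geq \mu/r$ with $\mu:=(C_1 c\pi)^{-1}>0$. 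Integrating from $r$ to $r_0$ gives $\epsilon(r)\leq \epsilon(r_0)\,(r/r_0)^{2\nu}$ with $\nu:=\mu/2$, which I will arrange to satisfy $0<\nu<1$ by enlarging the constant $C_1$ if necessary. Thus we have the key energy decay $\int_{B_r}\abs{d\psi}^2\leq C_2\,r^{2\nu}$.

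The final step feeds this back into the mean value inequality. By Cor.~\ref{corMVI}, for $r<r_0$ and any $\theta$,
\begin{align*}
\abs{d\psi(re^{i\theta})}^2\leq C+\frac{8}{\pi r^2}\int_{B_{2r}}\abs{d\psi}^2\leq C+\frac{8}{\pi r^2}\cdot C_2(2r)^{2\nu}=C+\frac{C_3}{r^{2-2\nu}}\leq\frac{B}{r^{2-2\nu}}
\end{align*}
for a suitable $B<\infty$ and after a final harmless shrinking of $r_0$ (using $r<r_0<1$ so $C\leq C r^{2\nu-2}$). This is exactly the claimed estimate, with $r_0,B,\nu$ depending only on $(\varphi,\psi)$ and the geometry as stated. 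I expect the main obstacle to be step one: cleanly converting the ``$\epsilon(r)\leq 4\int\psi^*\omega_0 + \text{error}$'' inequality into a clean comparison $\epsilon(r)\leq C_1\int_{B_r}\psi^*\omega_0$, since the error terms involve $\epsilon(r)^{1/2}$ (from the non-integrable, merely $C^\ell$, perturbation $\tD$ and from $\tJ-J_0$) rather than $\epsilon(r)$, so one must either iterate the estimate or use that $\epsilon(r_0)$ and $E(\varphi,B_{r_0})$ can be made small — the latter requires knowing $\int_{B_{r_0}}\abs{d\varphi}^2\to 0$, which holds because $\varphi$ already extends smoothly over $0$, so $d\varphi$ is bounded and the integral over $B_{r_0}$ is $O(r_0^2)$.
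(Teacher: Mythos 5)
Your overall architecture --- the pointwise identity $\abs{d\psi}^2=\abs{\partial_s\psi+J_0\partial_t\psi}^2-2(\psi^*\omega_0)(\partial_s,\partial_t)$, the isoperimetric inequality, a differential inequality for the energy, $r^{2\nu}$-decay, and finally Cor.~\ref{corMVI} --- is exactly the paper's, and your Steps 2 and 3 are correct as written. The gap is the one you flag yourself in Step 1, and it is genuine: the error term coming from $\tD\psi$ is \emph{additive}, not proportional to $\epsilon(r)$ or to $\epsilon(r)^{1/2}$, so no iteration or absorption will produce the clean multiplicative bound $\epsilon(r)\leq C_1\int_{B_r}\psi^*\omega_0$. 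Concretely, Lem.~\ref{lemDpsiEstimate} gives $\int_{B_r}\abs{\tD\psi}^2\leq C\int_{B_r}(\abs{\psi}^2+\abs{d\psi}^2)\cdot\int_{B_{2r}}\abs{d\varphi}^2$, and the factor $\int_{B_r}\abs{\psi}^2$ is not controlled by any power of $\epsilon(r)=\int_{B_r}\abs{d\psi}^2$; so the best available inequality has the form $\epsilon(r)\leq C'\int_{B_r}\psi^*\omega_0+\delta(r)$ with $\delta(r)$ a forcing term independent of $\epsilon(r)$. Your proposed error $C''\epsilon(r)^{1/2}$ with $C''$ a fixed (even small) constant is not what the lemma yields, and even if it were, absorbing it via $C''\epsilon^{1/2}\leq\tfrac12\epsilon+\tfrac12(C'')^2$ leaves an $r$-independent additive constant that destroys the decay argument as $r\rightarrow 0$.

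The paper sidesteps this by working with the combined quantity $\varepsilon(r):=\varepsilon^{\varphi}(2r)+\varepsilon^{\psi}(r)$: after bounding $\int_{B_r}(\abs{\psi}^2+\abs{d\psi}^2)$ by the (finite) local super energy, the offending term becomes $C_2\,\varepsilon^{\varphi}(2r)$, i.e.\ a piece of $\varepsilon(r)$ itself, and the classical inequality $\varepsilon^{\varphi}(r)\leq c_2 r\dot{\varepsilon}^{\varphi}(r)$ turns everything into the homogeneous inequality $\varepsilon(r)\leq 2br\dot{\varepsilon}(r)$, which integrates directly. Alternatively --- and this is where your closing remark points --- your observation that $\varphi$ extends smoothly over $0$, so that $\int_{B_{2r}}\abs{d\varphi}^2=O(r^2)$, does rescue your single-energy setup, but only if you keep the error as an $r$-dependent additive term: one then obtains $\epsilon(r)\leq ar\dot{\epsilon}(r)+Cr^2$, and this inhomogeneous differential inequality still integrates to $\epsilon(r)\leq C_3 r^{2\nu}$ after enlarging $a$ so that $2\nu=1/a<2$ (consider $\frac{d}{dr}\bigl(r^{-1/a}\epsilon(r)\bigr)\geq-\frac{C}{a}r^{1-1/a}$). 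Either fix must actually be carried out; as written, Step 1 does not close.
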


\begin{proof}
Let $\omega_0$ denote the symplectic form on $\bR^{2n}$ induced by
$J_0:=\tJ(0)$ as in Rem. \ref{remSuperIsoperimetricInequality}. Then
\begin{align*}
\abs{d\psi}^2&=\abs{\partial_s\psi+J_0\partial_t\psi}^2-2\scal{\partial_s\psi}{J_0\partial_t\psi}\\
&=\abs{\tD\psi+(\tJ-J_0)\partial_t\psi}^2-2(\psi^*\omega_0)(\partial_s,\partial_t)\\
&\leq 2\abs{\tD\psi}^2+2\abs{\tJ-J_0}^2\abs{d\psi}^2-2(\psi^*\omega_0)(\partial_s,\partial_t)
\end{align*}
To make further estimates, let $r_0>0$ be the constant of Lem. \ref{lemSuperIsoperimetricInequality},
whose hypotheses are, by assumption, satisfied, and let $r<r_0$. By Taylor's theorem,
there is $\xi\in B_{r_0}\subseteq U$ such that $\tJ(r e^{i\theta})-J_0=d_{\xi}\tJ[r e^{i\theta}]$ and, therefore,
\begin{align*}
\abs{\tJ-J_0}^2(r e^{i\theta})=\abs{d_{\xi}\tJ[r e^{i\theta}]}^2
&\leq\abs{d_{\xi}\tJ}^2r^2\leq\norm{\tJ}_{U,1,\infty}r^2
\end{align*}
We define
\begin{align*}
\varepsilon^{\varphi}(r):=\int_{B_r}\abs{d\varphi}^2\;,\qquad
\varepsilon^{\psi}(r):=\int_{B_r}\abs{d\psi}^2=\int_0^rd\rho\,\rho\int_0^{2\pi}d\theta\abs{d\psi}^2
\end{align*}
and $\varepsilon(r):=\varepsilon^{\varphi}(2r)+\varepsilon^{\psi}(r)$. Then,
using the estimates for $\abs{d\psi}^2$ and $\abs{\tJ-J_0}^2$ just established and
Lem. \ref{lemDpsiEstimate} applied with $U'=B_r$ and $U=B_{2r}$, we yield
a constant $C>0$, depending on $\varphi$ and the geometry but not on $\psi$, such that
the following estimate holds.
\begin{align*}
\varepsilon(r)&=\varepsilon^{\varphi}(2r)+\int_{B_r}\abs{d\psi}^2\\
&\leq\varepsilon^{\varphi}(2r)+2\int_{B_r}\abs{\tD\psi}^2+2\int_{B_r}\abs{J-J_0}^2\abs{d\psi}^2
-2\int_{B_r}(\psi^*\omega_0)(\partial_s,\partial_t)\\
&\leq\varepsilon^{\varphi}(2r)+2C\int_{B_r}(\abs{\psi}^2+\abs{d\psi}^2)\cdot\int_{B_{2r}}\abs{d\varphi}^2
+2Cr^2\int_{B_r}\abs{d\psi}^2-2\int_{B_r}(\psi^*\omega_0)(\partial_s,\partial_t)
\end{align*}
Since, by assumption, the local super energy is bounded, there is a further constant $C_2>0$ such that
\begin{align*}
\varepsilon(r)&\leq C_2\cdot\varepsilon^{\varphi}(2r)+C_2r^2\cdot\varepsilon^{\psi}(r)
-2\int_{B_r}(\psi^*\omega_0)(\partial_s,\partial_t)
\end{align*}
By Lem. \ref{lemSuperIsoperimetricInequality}, the following estimate holds with a constant $c>0$
depending on $r_0$.
\begin{align*}
\int_{B_r}\psi^*\omega_0&\leq c\cdot l(\gamma_r)^2
=\frac{cr^2}{2}\left(\int_0^{2\pi}\abs{d\psi(r,\theta)}d\theta\right)^2\\
&\leq cr^2\int_0^{2\pi}d\theta\abs{d\psi(r,\theta)}^2
=cr\cdot\dot{\varepsilon}^{\psi}(r)
\end{align*}
A similar consideration yields a constant $c_2>0$ such that
$\varepsilon^{\varphi}(r)\leq c_2r\cdot\dot{\varepsilon}^{\varphi}(r)$ (cf. the proof
of Thm. 4.1.2 in \cite{MS04} for details).
Since $\dot{\varepsilon}^{\varphi}\geq 0$ and $\dot{\varepsilon}^{\psi}\geq 0$, we thus obtain
\begin{align*}
\varepsilon(r)\leq C_2\cdot\varepsilon^{\varphi}(2r)+C_2r^2\cdot\varepsilon^{\psi}(r)+2cr\cdot\dot{\varepsilon}^{\psi}(r)
\leq 2c_2 C_2r\cdot\dot{\varepsilon}(r)+C_2r^2\cdot\varepsilon(r)+2cr\cdot\dot{\varepsilon}(r)
\end{align*}
Shrinking $r_0$ if necessary, we may assume that $C_2r^2<\frac{1}{2}$. Then
\begin{align*}
\varepsilon(r)&\leq 2(2c_2 C_2+2c)r\cdot\dot{\varepsilon}(r)=:2br\cdot\dot{\varepsilon}(r)
\end{align*}
follows for $r$ sufficiently small. Therefore, we obtain
\begin{align*}
\frac{\dot{\varepsilon}(r)}{\varepsilon(r)}\geq\frac{1}{2br}=\frac{2\nu}{r}
\end{align*}
with $\nu:=\frac{1}{4b}$ which satisfies, without loss of generality, $\nu<1$
(for, otherwise, replace $b$ by a larger number).
Integrating the inequality from $r$ to $r_1$, this implies
\begin{align*}
\left(\frac{r_1}{r}\right)^{2\nu}\leq\frac{\varepsilon(r_1)}{\varepsilon(r)}
\end{align*}
and hence, setting $c_3:=r_1^{-2\nu}\varepsilon(r_1)$, we yield
$\varepsilon^{\psi}(r)\leq\varepsilon(r)\leq c_3r^{2\nu}$.
For $r$ sufficiently small, Cor. \ref{corMVI} then provides a constant $C_3>0$ such that
\begin{align*}
\abs{d\psi(r,\theta)}^2\leq C_3+\frac{8}{\pi r^2}\varepsilon^{\psi}(2r)
\leq C_3+\frac{A}{r^{2-2\nu}}\leq\frac{B}{r^{2-2\nu}}
\end{align*}
for suitable constants $A,B>0$ which are independent of $r$ and $\theta$.
\end{proof}

\begin{Lem}
\label{lemHoelderContinuous}
Let $0\in U\subseteq\bR^2$ be an open set, $v\in C^1(U\setminus\{0\},\bR^{2n})$ and
$r_0>0$, $0<B<\infty$, $0<\nu<1$ be constants such that $B_{r_0}\subseteq U$ and, for all $0<r<r_0$
and $\theta\in[0,2\pi)$, the following estimate holds.
\begin{align*}
\abs{dv(r,\theta)}^2\leq\frac{B}{r^{2-2\nu}}
\end{align*}
Then $v$ extends continuously over $0$ and, moreover, $v\in W^{1,p}(B_{r_0},\bR^{2n})$.
\end{Lem}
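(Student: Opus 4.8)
The plan is to extract both conclusions directly from the pointwise gradient bound, which in the notation $z=re^{i\theta}$ reads $\abs{dv(z)}\le\sqrt{B}\,\abs{z}^{\nu-1}$: since $\nu>0$ this singularity is integrable to a suitable power and mild enough to control the oscillation of $v$ near the origin. First I would establish the continuous extension, then deduce $L^p$-bounds for $v$ and $dv$, and finally verify that the classical derivative is the distributional one.

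\emph{Continuous extension.} I would bound $\abs{v(z_1)-v(z_2)}$ for $\abs{z_1},\abs{z_2}<r_0$ by splitting a displacement into a radial and an angular segment. Along a ray, using $\abs{\partial_\rho v(\rho e^{i\theta})}\le\abs{dv(\rho e^{i\theta})}$, for $0<\rho_1\le\rho_2<r_0$,
\begin{align*}
\abs{v(\rho_2 e^{i\theta})-v(\rho_1 e^{i\theta})}\le\int_{\rho_1}^{\rho_2}\abs{dv(\rho e^{i\theta})}\,d\rho\le\sqrt{B}\int_{\rho_1}^{\rho_2}\rho^{\nu-1}\,d\rho\le\frac{\sqrt{B}}{\nu}\,\rho_2^{\,\nu},
\end{align*}
and along a circle of radius $\rho<r_0$, using $\abs{\tfrac{d}{d\theta}v(\rho e^{i\theta})}\le\rho\,\abs{dv(\rho e^{i\theta})}$,
\begin{align*}
\abs{v(\rho e^{i\theta_1})-v(\rho e^{i\theta_2})}\le\int_0^{2\pi}\rho\,\abs{dv(\rho e^{i\theta})}\,d\theta\le 2\pi\sqrt{B}\,\rho^{\,\nu}.
\end{align*}
Concatenating a radial with an angular segment gives $\abs{v(z_1)-v(z_2)}\le(\tfrac1\nu+2\pi)\sqrt{B}\max(\abs{z_1},\abs{z_2})^{\nu}$, so $v$ is uniformly Cauchy as $z\to0$; hence $v(0):=\lim_{z\to0}v(z)$ exists and $v$ extends continuously over $0$. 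Letting $z_2\to0$ in the same estimate yields $\abs{v(z)-v(0)}\le(\tfrac1\nu+2\pi)\sqrt{B}\,\abs{z}^{\nu}$ for all $z\in B_{r_0}\setminus\{0\}$, so $v$ is bounded on $B_{r_0}$.

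\emph{Sobolev regularity.} Fix $p$ with $1\le p<\frac{2}{1-\nu}$; since $\nu\in(0,1)$ one may take $p>2$. By the previous step $v\in L^\infty(B_{r_0})\subseteq L^p(B_{r_0})$, and passing to polar coordinates
\begin{align*}
\int_{B_{r_0}}\abs{dv}^p\,ds\,dt\le B^{p/2}\int_0^{2\pi}\!\!\int_0^{r_0}\rho^{\,1-p(1-\nu)}\,d\rho\,d\theta<\infty
\end{align*}
because $1-p(1-\nu)>-1$. It remains to see that the classical derivative $dv$ on $B_{r_0}\setminus\{0\}$ represents the distributional derivative on all of $B_{r_0}$. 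For this I would take cut-offs $\beta_\varepsilon\in C^\infty(\bR^2)$ with $\beta_\varepsilon\equiv0$ on $B_\varepsilon$, $\beta_\varepsilon\equiv1$ off $B_{2\varepsilon}$ and $\abs{d\beta_\varepsilon}\le C\varepsilon^{-1}$. For $\phi\in C_c^\infty(B_{r_0})$ the function $\beta_\varepsilon\phi$ is $C^\infty$ with compact support in $U\setminus\{0\}$, where $v$ is $C^1$, so classical integration by parts gives
\begin{align*}
\int_{B_{r_0}}v\,\beta_\varepsilon\,\partial_j\phi+\int_{B_{r_0}}v\,\phi\,\partial_j\beta_\varepsilon=-\int_{B_{r_0}}(\partial_jv)\,\beta_\varepsilon\,\phi.
\end{align*}
Letting $\varepsilon\to0$: the first and last integrals tend to $\int v\,\partial_j\phi$ and $-\int(\partial_jv)\phi$ by dominated convergence (the integrands $v\,\partial_j\phi$ and $(\partial_jv)\phi$ are integrable by the above and $\beta_\varepsilon\to1$ pointwise), while $\abs{\int v\,\phi\,\partial_j\beta_\varepsilon}\le\norm{v}_\infty\norm{\phi}_\infty\,C\varepsilon^{-1}\,\abs{B_{2\varepsilon}\setminus B_\varepsilon}\le C'\varepsilon\to0$. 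Hence $\int v\,\partial_j\phi=-\int(\partial_jv)\phi$, so $dv\in L^p(B_{r_0})$ is the weak derivative and $v\in W^{1,p}(B_{r_0},\bR^{2n})$.

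\emph{Main obstacle.} The estimates are elementary; the only delicate point is the last one — checking that the removed point carries no distributional derivative — and it is precisely there that one uses both the boundedness of $v$ near $0$ (from the oscillation bound) and the quadratic decay $\abs{B_{2\varepsilon}\setminus B_\varepsilon}=O(\varepsilon^2)$ of the cut-off shell. One must also be careful that $v$ is only assumed $C^1$ away from $0$, so every integration by parts has to be carried out on a region avoiding the origin before the cut-off is removed.
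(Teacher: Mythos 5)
Your proof is correct and complete. It follows the same overall outline as the paper's (very terse) proof — continuity of the extension from the pointwise gradient bound, then $L^p$-integrability of $dv$ and identification of the weak derivative — but it differs in the tools used at both steps. For the continuous extension the paper invokes ``a standard argument involving Morrey's inequality'' applied on the punctured disc, whereas you integrate $\abs{dv(z)}\le\sqrt{B}\abs{z}^{\nu-1}$ directly along radial and circular paths; your route is more elementary, entirely self-contained, and gives the explicit modulus of continuity $\abs{v(z)-v(0)}\le C\abs{z}^{\nu}$ for free. For the Sobolev claim the paper merely asserts that the weak derivatives exist and agree with the strong ones on $B_{r_0}\setminus\{0\}$; you supply the actual argument (the cut-off $\beta_\varepsilon$ with $\abs{d\beta_\varepsilon}\le C\varepsilon^{-1}$ supported on a shell of area $O(\varepsilon^2)$, so the boundary term is $O(\varepsilon)$), which is exactly the removability-of-a-point-of-zero-capacity argument that the paper leaves implicit. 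You also correctly identify the admissible range $p<2/(1-\nu)$ and note that $p>2$ is attainable, which is what the subsequent application to elliptic regularity in the proof of Theorem 3.1 actually requires. Nothing is missing.
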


\begin{proof}
By a standard argument involving Morrey's inequality (\ref{eqnMorrey}),
$v$ is uniformly Hölder continuous on the punctured disc.
Since $\bR^{2n}$ is complete, this shows that $v$ extends continuously over zero.
Finally, the weak first derivatives of $v$ exist on $B_{r_0}$ and agree with the strong first derivatives on $B_{r_0}\setminus\{0\}$.
\end{proof}

\begin{proof}[Proof of Thm. \ref{thmRemovalOfSingularities}]
The removable singularity theorem for the underlying holomorphic curve is a classical result
(cf. Thm. 4.1.2 in \cite{MS04}), and thus we may assume that $\varphi$ extends smoothly over $0$.
It remains to show the statement for the section $\psi$.
Consider any local holomorphic coordinates such that $p$ is mapped to $0\in\bR^2$ and
$(\varphi,\psi)$ is identified with a local holomorphic supercurve.
By Lem. \ref{lemSuperEnergyLocal}, it has finite
local super energy. Therefore, the hypotheses of Prp. \ref{prpRemovalOfSingularities} are
satisfied and, using the resulting estimate, we may apply Lem. \ref{lemHoelderContinuous},
by which $\psi$ extends over $0$ to a map of class $W^{1,p}$. By elliptic regularity
(Prp. 3.7 in \cite{Gro11b}), it follows that this extension
is smooth.
\end{proof}

\section{Bubbling}
\label{secBubbling}

In this section, we study the bubbling phenomenon for sequences of holomorphic supercurves
with domain $\Sigma=S^2$ and holomorphic line bundles $L_{-2}=T^*S^2$ and $L_{-1}=S^+$,
whose super energy is uniformly bounded above.
$(X,\omega)$ continues to be a compact symplectic manifold, and we let $\mA:=\mA(GL(X))$
and $\mJ:=\mJ(X,\omega)$ denote the respective spaces of connections and $\omega$-tame almost complex
structures on $X$. Moreover, we fix $(A,J)\in\mA\times\mJ$.
For the cases $d=\deg L_d<0$ of interest, we recall Rem. 2.3 of \cite{Gro11b}:

\begin{Lem}
\label{lemGhostBubble}
Let $(\varphi,\psi):(\Sigma,L)\rightarrow X$ be a holomorphic supercurve such that $\varphi$
is constant. Then, in the cases $\deg L<0$, $\psi\equiv 0$ vanishes identically.
\end{Lem}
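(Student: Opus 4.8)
The plan is to reduce the global statement to a local computation and then use the structure of the defining equation for $\psi$. Since $\varphi$ is constant, say $\varphi\equiv x_0\in X$, the bundle $L\otimes_J\varphi^*TX$ is just $L\otimes_{\bC}T_{x_0}^{1,0}X$, a direct sum of $\dim_{\bC}X$ copies of $L$. The equation $\mD_{\varphi}^{A,J}\psi=0$ then decouples: because $\nabla^{A,J}$ acts on a pullback of a fixed fibre by a constant map, the term $(\nabla^{A,J}\psi_\theta)^{0,1}$ reduces to $(\dbar\psi_\theta)$ in any local holomorphic frame, so each component $\psi_\theta$ of $\psi$ (relative to a local holomorphic section $\theta$ of $L$) is a holomorphic section of $L$ over that coordinate patch; equivalently, $\psi$ is a global holomorphic section of $L\otimes_{\bC}T_{x_0}^{1,0}X$.

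Next I would invoke the fact, recalled in Section~\ref{secProperties}, that on $\Sigma$ the line bundle $L=L_d$ has degree $\deg L_d=d(1-g)$ and that $L\otimes_{\bC}T_{x_0}^{1,0}X\cong L_d^{\oplus n}$. A holomorphic line bundle of negative degree over a connected closed Riemann surface has no nonzero global holomorphic sections, since a nonzero section would have a well-defined nonnegative divisor of degree equal to $\deg L_d<0$, a contradiction. Hence each of the $n$ components of $\psi$ vanishes, so $\psi\equiv 0$.

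The only point requiring care --- and the main (mild) obstacle --- is justifying rigorously that the equation $\mD_{\varphi}^{A,J}\psi=0$ really does reduce, for constant $\varphi$, to the Dolbeault equation $\dbar_{L_d\otimes\bC^n}\psi=0$ for the holomorphic structure on $L_d$ induced by the spin structure. This is where one uses that $d_z\varphi=0$ for all $z$, so that the connection $\nabla^{A,J}$ pulled back along $\varphi$ is flat along $\Sigma$ in the $TX$-directions and contributes nothing to the $(0,1)$-part beyond the intrinsic $\dbar$ on $L$; concretely, in Def.~\ref{defAJHolomorphicSupercurve} one picks $\theta\in\Gamma(U,L)$ to be \emph{holomorphic} (so $\dbar\theta=0$), writes $\psi_\theta\in\Gamma(U,\varphi^*TX)=C^\infty(U,T_{x_0}X)$, and observes that $(\nabla^{A,J}\psi_\theta)^{0,1}\cdot\theta=(\dbar\psi_\theta)\cdot\theta$ because the pullback connection along a constant map has vanishing connection form. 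Thus the equation becomes $(\dbar\psi_\theta)\cdot\theta=0$ on each patch, which is exactly the holomorphicity of $\psi$ as a section of $L_d^{\oplus n}$, and the degree argument finishes the proof.

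Alternatively, one can give a metric-analytic proof avoiding the holomorphic-section vocabulary: with $\varphi$ constant, the argument of Lem.~\ref{lemLaplaceMean}-type computations (as in Lem.~\ref{lemMVI1}) shows that $w:=|\psi|^2$ satisfies an inequality $\laplace w\geq -C w$ for a constant $C$ depending only on the geometry, and combining this with conformal invariance of the relevant energy (Lem.~\ref{lemSuperEnergyInvariant}) and a maximum-principle / unique-continuation argument forces $w\equiv 0$; but the holomorphic-degree argument above is cleaner and is presumably what Rem.~2.3 of \cite{Gro11b} intends.
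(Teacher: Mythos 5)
Your main argument is correct and complete. The paper itself gives no proof of this lemma — it is imported verbatim as Rem.~2.3 of \cite{Gro11b} — but the reduction you describe is exactly the intended one: for constant $\varphi\equiv x_0$ the pullback connection form $A(d\varphi(\cdot))$ vanishes (equivalently, in the local form of Def.~\ref{defLocalHolomorphicSupercurve} one has $\tilde D=D(\varphi)\cdot\partial_s\varphi=0$ and $\tilde J=J_{x_0}$ constant), so in a local holomorphic frame $\theta$ with $\dbar\theta=0$ the equation $\mD^{A,J}_\varphi\psi=0$ collapses to $\dbar\psi_\theta=0$, making $\psi$ a global holomorphic section of $L_d\otimes_{\bC}(T_{x_0}X,J_{x_0})\cong L_d^{\oplus n}$ over the connected closed surface $\Sigma$; a nonzero holomorphic section of a line bundle of negative degree would have a nonnegative divisor of negative total degree, so each component vanishes. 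The only caveat concerns your ``alternative'' analytic sketch: an inequality $\laplace w\geq -Cw$ for $w=\abs{\psi}^2$ does not by itself force $w\equiv 0$ (constants satisfy it), and neither conformal invariance nor a maximum principle closes that gap without further input, so that aside should not be regarded as a second proof. Since it is clearly marked as an alternative and your degree argument stands on its own, the proposal is fine.
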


By our first result, Prp. \ref{prpPsiLInfinityBounded}, which follows from the first
mean value inequality in the last section and a compactness result for holomorphic supercurves
with bounded $L^p$-norms for $p>2$ from \cite{Gro11b},
the emergence of bubbling points is solely determined by the underlying
sequence of holomorphic curves. We may thus follow the classical treatment of bubbling,
with one major exception: For the proof of Gromov
compactness in the next section, we need the super energy, not just the classical energy,
concentrated in a bubbling point to coincide with the super energy of the corresponding bubble.
The proof, concerning the sectional part of the super energy in the case $L=L_{-1}$, is provided by
Prp. \ref{prpSuperBubblesEnergy} below.
Our results are largely based on conformal invariance of the super energy.
Throughout this chapter, we let $G$ denote the group of Möbius transformations and,
with the notation from the previous section, we abbreviate
$\psi\circ m:=\tpsi=\Phi_m^{-1}\circ\psi\circ m$ for $m\in G$.

\begin{Prp}[Compactness, Prp. 3.8 in \cite{Gro11b}]
\label{prpPhiPsiConverging}
Let $(A,J)$ be a connection and an almost complex structure on $X$, both of regularity class $C^{l+1}$,
and let $(A^{\nu},J^{\nu})$ be a sequence of such objects that converges to $(A,J)$ in the
$C^{l+1}$-topology. Let $j^{\nu}$ be a sequence of complex structures on $\Sigma$
converging to $j$ in the $C^{\infty}$-topology, and let $U^{\nu}\subseteq\Sigma$ be an
increasing sequence of open sets whose union is $\Sigma$. Let $(\varphi^{\nu},\psi^{\nu})$
be a sequence of $(A^{\nu},J^{\nu})$-holomorphic supercurves
\begin{align*}
\varphi^{\nu}\in W^{1,p}(U^{\nu},X)\;,\qquad
\psi^{\nu}\in W^{1,p}(U^{\nu},L\otimes_{J^{\nu}}(\varphi^{\nu})^*TX)
\end{align*}
such that $p>2$ and assume that, for every compact set $Q\subseteq\Sigma$,
there exists a compact set $K\subseteq X$ and a constant $c>0$ such that
\begin{align*}
\norm{d\varphi^{\nu}}_{Q,p}\leq c\;,\qquad\varphi^{\nu}(Q)\subseteq K\;,\qquad
\norm{\psi^{\nu}}_{Q,p}\leq c
\end{align*}
for $\nu$ sufficiently large.
Then there exists a subsequence of $(\varphi^{\nu},\psi^{\nu})$, which converges in
the $C^l$-topology on every compact subset of $\Sigma$.
\end{Prp}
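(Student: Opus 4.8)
The plan is to treat the two components of $(\varphi^{\nu},\psi^{\nu})$ in turn, reduce everything to a local statement, and patch by a diagonal subsequence. Fix a compact exhaustion $Q_1\subseteq Q_2\subseteq\cdots$ of $\Sigma$; since the $U^{\nu}$ increase to $\Sigma$, each $Q_i$ is eventually contained in $U^{\nu}$, so it suffices to produce, for each $i$, a $C^l$-convergent subsequence over $Q_i$ and then diagonalise in $i$. Cover $Q_i$ by finitely many coordinate charts on which $L$ is trivialised by a holomorphic section, so that $(\varphi^{\nu},\psi^{\nu})$ becomes a sequence of local holomorphic supercurves in the sense of Def.\ \ref{defLocalHolomorphicSupercurve}, with structure functions $J^{\nu}\in C^{l+1}$, $D^{\nu}\in C^l$ converging (since $(A^{\nu},J^{\nu})\to(A,J)$ and $j^{\nu}\to j$) and defined on a fixed open set $V\subseteq\bR^{2n}$ with $\varphi^{\nu}(U)\subseteq V$ for all $\nu$ (possible after a slight shrinking of the chart, using $\varphi^{\nu}(Q)\subseteq K$). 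Thus it is enough to work on one chart $U'\subseteq U\subseteq\bC$.

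First I would handle $\varphi^{\nu}$. The hypotheses $\norm{d\varphi^{\nu}}_{Q,p}\le c$, $\varphi^{\nu}(Q)\subseteq K$ with $p>2$ are exactly those of the classical compactness theorem for pseudoholomorphic curves: by the Sobolev and Morrey embeddings (\ref{eqnSobolev})--(\ref{eqnMorrey}) the $\varphi^{\nu}$ are uniformly bounded and Hölder; applying Lem.\ \ref{lemEllipticBootstrappingWeak} to the differentiated curve equation $\partial_sv^{\nu}+\tJ^{\nu}\partial_tv^{\nu}+(\partial_s\tJ^{\nu})\tJ^{\nu}v^{\nu}=0$ for $v^{\nu}:=\partial_s\varphi^{\nu}$ (exactly as in the proof of Lem.\ \ref{lemDpsiEstimate}), and iterating, gives uniform $W^{k,p}$-bounds on $\varphi^{\nu}$ over successively smaller sets for all $k\le l+2$. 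Rellich--Kondrachov together with Arzelà--Ascoli then extracts a subsequence with $\varphi^{\nu}\to\varphi$ in $C^{l+1}$ on $U'$; this is the curve part of Thm.\ 4.1.2 and App.\ B of \cite{MS04}, reproduced in the present coordinates.

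With $\varphi^{\nu}\to\varphi$ in $C^{l+1}$ in hand, the second equation (\ref{eqnLocalHolomorphicSupercurve}) becomes a \emph{linear} first-order elliptic system $\partial_s\psi^{\nu}+\tJ^{\nu}\partial_t\psi^{\nu}+\tD^{\nu}\psi^{\nu}=0$ whose coefficients $\tJ^{\nu}=J^{\nu}\circ\varphi^{\nu}$ and $\tD^{\nu}=D^{\nu}(\varphi^{\nu})\cdot\partial_s\varphi^{\nu}$ are uniformly bounded in $C^l$, resp.\ $C^{l-1}$, and converge there. The hypothesis $\norm{\psi^{\nu}}_{Q,p}\le c$ gives a uniform $L^p$-bound. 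Now apply Lem.\ \ref{lemEllipticBootstrappingWeak} with $J$ replaced by $\tJ^{\nu}$ (whose $W^{1,\infty}$-norms are uniformly controlled, so the constant is uniform in $\nu$): from $\norm{\partial_s\psi^{\nu}+\tJ^{\nu}\partial_t\psi^{\nu}}_{U,p}=\norm{\tD^{\nu}\psi^{\nu}}_{U,p}\le C\norm{\psi^{\nu}}_{U,p}$ one obtains a uniform $W^{1,p}$-bound on a smaller set; differentiating the equation and repeating (each derivative of the equation involving only derivatives of $\tJ^{\nu},\tD^{\nu}$ up to the order available) yields uniform $W^{k,p}$-bounds on nested shrinking sets for every $k\le l+1$. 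By the compact embedding $W^{l+1,p}\hookrightarrow C^l$ and a further Arzelà--Ascoli/diagonal argument we extract a subsequence with $\psi^{\nu}\to\psi$ in $C^l$ on $U'$, and passing to the limit in the equations shows $(\varphi,\psi)$ solves the limiting system.

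The main obstacle is the \emph{coupling}: the regularity obtainable for $\psi^{\nu}$ is capped by the regularity of the coefficient $\tD^{\nu}$, which is built from $\varphi^{\nu}$; this forces the bootstrap for $\varphi^{\nu}$ to be pushed one step further than naively needed (to $C^{l+1}$), and one must check that \emph{all} constants in the successive elliptic estimates are uniform in $\nu$ — which they are, since each depends only on $C^l$-norms of $\tJ^{\nu}$ and $\tD^{\nu}$ that converge. The remaining bookkeeping — choosing the nested sets $U'\subseteq\cdots\subseteq U$, organising the successive subsequences (over bootstrap order, over charts, and over the exhaustion $Q_i$) into one diagonal subsequence, and checking that the $C^l$-limits on overlapping charts agree — is routine.
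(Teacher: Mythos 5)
This proposition is not proved in the paper at all: it is quoted verbatim from Prp.\ 3.8 of \cite{Gro11b}, so there is no in-paper argument to compare against, but your proof is the standard two-stage elliptic bootstrap (first push $\varphi^{\nu}$ to $C^{l+1}$ via the classical pseudoholomorphic-curve compactness, then treat (\ref{eqnLocalHolomorphicSupercurve}) as a linear first-order system in $\psi^{\nu}$ with convergent coefficients and bootstrap from the uniform $L^{p}$-bound) and is correct in outline and in all essential steps, using exactly the local formulation and the Calderon--Zygmund estimate of Lem.\ \ref{lemEllipticBootstrappingWeak} that the paper itself deploys elsewhere. One bookkeeping slip: for the claimed uniform $W^{l+1,p}$-bound on $\psi^{\nu}$ you need $\tD^{\nu}=D^{\nu}(\varphi^{\nu})\cdot\partial_s\varphi^{\nu}$ uniformly bounded in $C^{l}$, not merely $C^{l-1}$ as you state; this does hold (it is precisely why you bootstrapped $\varphi^{\nu}$ to $C^{l+1}$ with $D^{\nu}\in C^{l}$), so only the stated coefficient regularity needs correcting and nothing in the argument breaks.
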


\begin{Prp}
\label{prpPsiLInfinityBounded}
Let $(A^{\nu},J^{\nu})\in\mA\times\mJ$ be a sequence that converges to $(A,J)$ in the
$C^{\infty}$-topology and $(\varphi^{\nu},\psi^{\nu}):(U,L_d)\rightarrow X$ be a sequence of
$(A^{\nu},J^{\nu})$-holomorphic supercurves on an open subset $U\subseteq S^2$.
Moreover, assume that the sequence $\varphi^{\nu}$ converges to a holomorphic curve
$\varphi:U\rightarrow X$ in the $C^{\infty}$-topology on a compact subset $K\subseteq U$,
and that the energy of $\psi^{\nu}$ has a uniform upper bound:
\begin{align*}
\sup_{\nu}E(\psi^{\nu},U)<\infty
\end{align*}
Then, in the cases $d=-2$ and $d=-1$, the sup norm of $\psi^{\nu}$ is uniformly bounded on $K$:
\begin{align*}
\sup_{\nu}\norm{\psi^{\nu}}_{K,\infty}<\infty
\end{align*}
In particular, there is a subsequence, also labelled $\nu$, such that $\psi^{\nu}$ converges
in the $C^{\infty}$-topology on $K$.
\end{Prp}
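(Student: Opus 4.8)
The plan is to obtain the uniform sup‑norm bound from the first mean value inequality, Lem.~\ref{lemMVI1}, and then to deduce $C^\infty$‑convergence of a subsequence from the compactness result Prp.~\ref{prpPhiPsiConverging}. Since everything is local, I would first cover $K$ by finitely many coordinate charts of $S^2$, each with closure in $U$ and over which $L_d$ trivialises, so that on every such chart $(\varphi^\nu,\psi^\nu)$ becomes a local holomorphic supercurve with structure functions $\tJ^\nu=J^\nu\circ\varphi^\nu$ and $\tD^\nu=D^\nu(\varphi^\nu)\cdot\partial_s\varphi^\nu$. Since $\varphi^\nu\to\varphi$ and $(A^\nu,J^\nu)\to(A,J)$ in the $C^\infty$‑topology (the former, say, on an open neighbourhood of $K$), the sets $\varphi^\nu(K)$ stay in a fixed compact subset of $X$ and all $C^k$‑norms of $\tJ^\nu$, $\tD^\nu$ on the relevant sets are bounded uniformly in $\nu$; in particular $\sup_\nu\bigl(\norm{\tJ^\nu}_{1,\infty}+\norm{\tD^\nu}_{1,\infty}+\norm{d\varphi^\nu}_{\infty}\bigr)<\infty$. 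By elliptic regularity the pairs $(\varphi^\nu,\psi^\nu)$ are smooth, so Lem.~\ref{lemMVI1} applies to them.

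The crucial preliminary step is a uniform $L^4$‑bound on $\psi^\nu$ near $K$. Fixing bounded open sets $K\subseteq U'\Subset U''\Subset U$, Lem.~\ref{lemSuperEnergyLocal} --- which is exactly where the restriction to $d=-2,-1$ enters --- together with the local–global comparison Lem.~\ref{lemLocalGlobalEstimate} bounds the local super energy $E^{\mathrm{loc}}(\varphi^\nu,\psi^\nu,U')$ uniformly in $\nu$ in terms of $\sup_\nu E(\psi^\nu,U)<\infty$ and $\sup_\nu E(\varphi^\nu,U'')<\infty$, the latter coming from the $C^\infty$‑convergence of $\varphi^\nu$. This gives a uniform $W^{1,2}(U')$‑bound on $\psi^\nu$, hence by~(\ref{eqnLebesgue}) and~(\ref{eqnSobolev}) a uniform bound $C_4:=\sup_\nu\norm{\psi^\nu}_{L^4(U')}<\infty$. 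Then, for $z_0\in K$ and $r>0$ with $B_r(z_0)\subseteq U'$, Hölder's inequality yields $\int_{B_r(z_0)}\abs{\psi^\nu}^2\leq\sqrt{\pi}\,C_4^2\,r$, so there is a radius $r_\ast>0$ depending only on $C_4$ and $K$ --- in particular not on $\nu$ or $z_0$ --- with $B_{r_\ast}(z_0)\subseteq U'$ and $\int_{B_{r_\ast}(z_0)}\abs{\psi^\nu}^2<\pi/16$ for all $z_0\in K$ and all $\nu$. Applying Lem.~\ref{lemMVI1} centred at each such $z_0$, with the uniform constant $a$ coming from the coefficient bounds, gives $\abs{\psi^\nu(z_0)}^2\leq a r_\ast^2/8+1/(2r_\ast^2)$, and translating back to the bundle metric via Lem.~\ref{lemLocalGlobalEstimate} proves $\sup_\nu\norm{\psi^\nu}_{K,\infty}<\infty$.

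For the last assertion, the $L^\infty$‑bound just obtained, the bound $\sup_\nu\norm{d\varphi^\nu}_{Q,p}<\infty$ and the fact that $\varphi^\nu(Q)$ lies in a fixed compact subset of $X$ for every compact $Q\subseteq U$ together verify the hypotheses of Prp.~\ref{prpPhiPsiConverging} for every $p>2$ and every regularity level $l$. Since all the data converge in $C^\infty$, applying that proposition for each $l$ and passing to a diagonal subsequence yields a subsequence of $(\varphi^\nu,\psi^\nu)$, and hence of $\psi^\nu$, converging in the $C^\infty$‑topology on $K$.

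I expect the main obstacle to be the uniformity of the radius $r_\ast$ in $\nu$ and in the base point: mere boundedness, as opposed to smallness, of the super energies does not make the local $L^2$‑mass small, so one genuinely has to upgrade the energy bound to an $L^q$‑bound with $q>2$ --- which for $d=-2$ relies on the elliptic estimate behind Lem.~\ref{lemSuperEnergyLocal} --- and then keep every constant (the coefficient bounds, $C_4$, $r_\ast$, the Heinz‑type constant $a$) independent of $\nu$, which is precisely what forces the use of $C^\infty$‑convergence of $\varphi^\nu$ and $(A^\nu,J^\nu)$ rather than mere pointwise control.
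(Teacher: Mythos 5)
Your proof is correct and follows essentially the same route as the paper: localise and trivialise, bound the local super energy uniformly via Lem.~\ref{lemSuperEnergyLocal} and Lem.~\ref{lemLocalGlobalEstimate}, apply the first mean value inequality Lem.~\ref{lemMVI1} with constants controlled by the $C^\infty$-convergence of $\varphi^\nu$ and $(A^\nu,J^\nu)$, and conclude with Prp.~\ref{prpPhiPsiConverging}. The only difference is how the smallness hypothesis $\int_{B_r}\abs{\psi^\nu}^2<\pi/16$ of Lem.~\ref{lemMVI1} is arranged: you shrink the radius uniformly using an $L^4$-bound and H\"older, whereas the paper exploits linearity of the $\psi$-equation and replaces $\psi^\nu$ by $A\cdot\psi^\nu$ for a sufficiently small uniform constant $A$; both work.
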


\begin{proof}
For $z\in K$, we choose a sufficiently small positive integer $r_z$ such that
$\varphi^{\nu}(B_{2r_z}(z))$ is contained, for large $\nu$, in a coordinate chart of $X$;
$L_d$ is then trivial on $B_{2r_z}(z)$.
Since $K$ is compact, there are finitely many $z_i\in K$ such that the balls
$B_{r_i}(z_i)$ with $r_i:=r_{z_i}$ cover $K$. Restricted to $B_{2r_i}(z_i)$,
we may identify $(\varphi^{\nu},\psi^{\nu})$ with a local holomorphic supercurve
and, moreover, identify the bundle norm on $L_d\otimes_J(\varphi^{\nu})^*TX$ with
the standard norm on $\bR^{2n}$ by Lem. \ref{lemLocalGlobalEstimate}, with constants
that depend on the trivialisations and $J^{\nu}$ and, by convergence of $J^{\nu}$,
may hence be uniformly estimated. By the resulting inequality
\begin{align*}
\norm{\psi^{\nu}}_{K,\infty}\leq C\sum_i\norm{\psi^{\nu}}_{K\cap B_{r_i}(z_i),\infty}
\end{align*}
it suffices to show that the ($\bR^{2n}$)-sup norm of $\psi^{\nu}$ on $K\cap B_{r_i}(z_i)$
is uniformly bounded for every $i$.

Since, by assumption, the super energy of $(\varphi^{\nu},\psi^{\nu})$ is uniformly bounded above
and the sequences $\varphi^{\nu}$ and $(A^{\nu},J^{\nu})$ are convergent, there is a uniform upper
bound for the local super energy by Lem. \ref{lemSuperEnergyLocal}. In particular, we may assume that
$\int_{B_{2r_i}(z_i)}\abs{\psi^{\nu}}^2<\frac{\pi}{16}$ holds for all $i$ and $\nu$,
for otherwise replace $\psi^{\nu}$ by $A\cdot\psi^{\nu}$ where $A$ is a sufficiently
small constant. The hypotheses of the mean value inequality in Lem. \ref{lemMVI1} are then satisfied
with $0$ and $r$ replaced by $z\in B_{r_i}(z_i)$ and $r_i$, respectively, and we obtain
a constant $a^{\nu}$, depending on $\varphi^{\nu}$ and $(A^{\nu},J^{\nu})$ such that
the following estimate holds.
\begin{align*}
\abs{\psi^{\nu}(z)}^2\leq\frac{a^{\nu}r_i^2}{8}+\frac{8}{\pi r_i^2}\int_{B_{r_i}(z)}\abs{\psi^{\nu}}^2
<\frac{a^{\nu}r_i^2}{8}+\frac{1}{2r_i^2}
\end{align*}
There is a uniform upper bound for $a^{\nu}$, again by the convergence of the sequences
$\varphi^{\nu}$ and $(A^{\nu},J^{\nu})$ and, as a consequence,
$\abs{\psi^{\nu}(z)}$ is uniformly bounded for every $z\in B_{r_i}(z_i)$,
which was to show.

The last part of the statement is then an immediate consequence of Prp. \ref{prpPhiPsiConverging}.
\end{proof}

The next lemma will be useful in Sec. \ref{subsecGromovTopology}.
Note that, despite most results in this chapter, it does not depend on the degree of $L$.
Moreover, there is no need to pass to a subsequence here.

\begin{Lem}
\label{lemUniformImpliesCInfty}
Let $(A^{\nu},J^{\nu})\in\mA\times\mJ$ be a sequence that converges to $(A,J)$ in the
$C^{\infty}$-topology.
Let $\Omega\subseteq\bC$ be an open set and $(\varphi^{\nu},\psi^{\nu}):(\Omega,L_d)\rightarrow X$
be a sequence of $(A^{\nu},J^{\nu})$-holomorphic supercurves. Moreover, suppose that
$(\varphi^{\nu},\psi^{\nu})$ converges uniformly (i.e. in the $C^0$-toplogy) to a continuous pair
$(\varphi,\psi):(\Omega,L_d)\rightarrow X$. Then $(\varphi,\psi)$
is an $(A,J)$-holomorphic supercurve, and $(\varphi^{\nu},\psi^{\nu})$ converges to
$(\varphi,\psi)$ in the $C^{\infty}$-topology on every compact subset of $\Omega$.
\end{Lem}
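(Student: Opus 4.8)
The goal is to upgrade $C^0$-convergence of a sequence of holomorphic supercurves to $C^\infty_{\mathrm{loc}}$-convergence, and to conclude that the limit is again a holomorphic supercurve. The plan is to exploit that these equations are elliptic and that the hypotheses of the compactness result Prp.~\ref{prpPhiPsiConverging} are almost automatically satisfied once we have uniform $C^0$ (hence uniform $L^p$) bounds.

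\medskip

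\noindent\textbf{Proof proposal.} The statement is local, so fix a compact set $Q\subseteq\Omega$ and a slightly larger open set $Q\subseteq U^{\nu}\equiv U\Subset\Omega$; work in holomorphic coordinates on $\Omega$ trivialising $L_d$ by a local holomorphic section, so that each $(\varphi^{\nu},\psi^{\nu})$ is a local holomorphic supercurve in the sense of Def.~\ref{defLocalHolomorphicSupercurve} with structure functions $\tJ^{\nu}=J^{\nu}\circ\varphi^{\nu}$ and $\tD^{\nu}$ built from $J^{\nu}$, the connection $A^{\nu}$ and $\partial_s\varphi^{\nu}$. Since $(\varphi^{\nu},\psi^{\nu})\to(\varphi,\psi)$ uniformly on $\overline U$, the image $\varphi^{\nu}(\overline U)$ lies in a fixed compact $K\subseteq X$ for all large $\nu$, and $\norm{\varphi^{\nu}}_{\overline U,\infty}$, $\norm{\psi^{\nu}}_{\overline U,\infty}$ are uniformly bounded; in particular $\norm{\psi^{\nu}}_{\overline U,p}\leq c$ for every $p>2$. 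The point that still has to be argued is the uniform bound $\norm{d\varphi^{\nu}}_{\overline U,p}\leq c$: this is exactly the classical interior elliptic estimate for pseudoholomorphic curves, obtained by applying Lem.~\ref{lemEllipticBootstrappingWeak} to $u=\varphi^{\nu}$ (whose structure functions $\tJ^{\nu}$ have uniformly bounded $W^{1,\infty}$-norm since $J^{\nu}\to J$ in $C^{\infty}$ and $\varphi^{\nu}(\overline U)\subseteq K$), which gives $\norm{\varphi^{\nu}}_{U',1,p}\le c(\norm{\dbar_{J^{\nu}}\varphi^{\nu}}_{U,p}+\norm{\varphi^{\nu}}_{U,p})=c\norm{\varphi^{\nu}}_{U,p}$, bounded by the uniform $C^0$-bound. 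With all three hypotheses of Prp.~\ref{prpPhiPsiConverging} verified (taking $j^{\nu}\equiv j$, $U^{\nu}\equiv\Omega$, and the given convergent $(A^{\nu},J^{\nu})$), that proposition yields a subsequence converging in $C^l$ on every compact subset of $\Omega$.

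\medskip

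\noindent By uniqueness of limits the $C^l$-limit must coincide with the given $C^0$-limit $(\varphi,\psi)$, so $(\varphi,\psi)\in C^l$; moreover the standard subsequence argument (every subsequence has a further subsequence $C^l$-converging to $(\varphi,\psi)$) promotes this to convergence of the \emph{full} sequence in $C^l_{\mathrm{loc}}$. Since $l$ was the fixed (finite but arbitrarily large, because $A,J\in C^{\infty}$) regularity, we actually get $C^{\infty}_{\mathrm{loc}}$-convergence: for each fixed $k$ apply the above with $C^{l+1}$-data of regularity $l\geq k$. Finally, because $\dbar_{J^{\nu}}\varphi^{\nu}=0$ and $\mD^{A^{\nu},J^{\nu}}_{\varphi^{\nu}}\psi^{\nu}=0$ for all $\nu$, and all the differential operators involved depend continuously on $(A^{\nu},J^{\nu})\to(A,J)$, $\varphi^{\nu}\to\varphi$, and their derivatives up to first order (which converge locally uniformly), we may pass to the limit in the defining equations to obtain $\dbar_J\varphi=0$ and $\mD^{A,J}_{\varphi}\psi=0$; hence $(\varphi,\psi)$ is an $(A,J)$-holomorphic supercurve. (Alternatively, once $(\varphi,\psi)$ is known to be $C^{\infty}$ and a $C^0$-limit of solutions, one checks directly that it is a weak, hence by elliptic regularity a strong, solution.)

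\medskip

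\noindent\textbf{Main obstacle.} Everything hinges on verifying the hypotheses of Prp.~\ref{prpPhiPsiConverging} from $C^0$-convergence alone; the only non-formal ingredient is the uniform interior $W^{1,p}$-bound on $\varphi^{\nu}$, and this is precisely where one must invoke the elliptic estimate of Lem.~\ref{lemEllipticBootstrappingWeak} together with the uniform control of $\tJ^{\nu}$ coming from $\varphi^{\nu}(\overline U)\subseteq K$ and $J^{\nu}\to J$ in $C^{\infty}$. The rest — uniqueness of the limit, upgrading subsequential to full convergence, and taking limits in the equations — is routine.
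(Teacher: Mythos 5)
There is a genuine gap in your treatment of the map part $\varphi^{\nu}$, at exactly the point you single out as the ``only non-formal ingredient''. You claim that the structure functions $\tJ^{\nu}=J^{\nu}\circ\varphi^{\nu}$ have uniformly bounded $W^{1,\infty}$-norm ``since $J^{\nu}\to J$ in $C^{\infty}$ and $\varphi^{\nu}(\overline U)\subseteq K$''. This is false: $d\tJ^{\nu}=(dJ^{\nu})(\varphi^{\nu})\cdot d\varphi^{\nu}$, so a bound on $\norm{\tJ^{\nu}}_{U,1,\infty}$ already presupposes a uniform $L^{\infty}$-bound on $d\varphi^{\nu}$ --- which is precisely what you are trying to establish. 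The hypotheses of Lem.~\ref{lemEllipticBootstrappingWeak} are therefore not verified, and the argument is circular; $C^0$-convergence of $\varphi^{\nu}$ gives no a priori control on first derivatives. The missing idea is a genuinely nonlinear input that rules out gradient blow-up for a uniformly convergent sequence of pseudoholomorphic maps. This is what the paper supplies by citing Lem.~4.6.6 of \cite{MS04}, whose proof combines elliptic bootstrapping with a \emph{conformal rescaling argument}: if $\abs{d\varphi^{\nu}(z^{\nu})}\to\infty$, then Hofer's lemma and rescaling produce a nonconstant finite-energy $J$-holomorphic plane, hence (by removal of singularities and the energy quantum $\hbar$) a nonconstant sphere, whose image is nevertheless forced to be a single point by the uniform convergence of $\varphi^{\nu}$ on shrinking balls --- a contradiction. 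Only after this step are the uniform interior $W^{1,p}$-bounds on $\varphi^{\nu}$, and hence the hypotheses of Prp.~\ref{prpPhiPsiConverging}, actually available.

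The remainder of your argument --- feeding the resulting bounds together with $\sup_{\nu}\norm{\psi^{\nu}}_{U,\infty}<\infty$ into Prp.~\ref{prpPhiPsiConverging}, identifying the subsequential $C^l$-limit with $(\varphi,\psi)$, upgrading subsequential to full-sequence convergence by the standard contradiction/subsequence trick, and passing to the limit in the defining equations --- is correct and coincides with how the paper handles the sectional part $\psi^{\nu}$.
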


\begin{proof}
The statement for $\varphi^{\nu}$ is the content of \cite{MS04}, Lem. 4.6.6,
which follows as a consequence of elliptic bootstrapping and a conformal rescaling
argument.

By hypothesis, we have $\sup_{\nu}\norm{\psi^{\nu}}_{U,\infty}<\infty$,
and we use the compactness result from Prp. \ref{prpPhiPsiConverging}:
If the statement does not hold, then there is a subsequence $\nu_j$, an integer $l$ and $\delta>0$
such that $d_{C^l}(\psi^{\nu_j},\psi)\geq\delta$ for all $j$.
On the other hand, there is a further subsequence, also denoted $\nu_j$, such that
$\psi^{\nu_j}$ converges to $\psi$ in the $C^l$-topology, which is a contradiction.
\end{proof}

\begin{Thm}[Convergence Modulo Bubbling]
\label{thmConvergenceModBubbling}
Let $(A^{\nu},J^{\nu})\in\mA\times\mJ$ be a sequence that converges to $(A,J)$ in the
$C^{\infty}$-topology and $(\varphi^{\nu},\psi^{\nu}):(S^2,L_d)\rightarrow X$ be a sequence of
$(A^{\nu},J^{\nu})$-holomorphic supercurves
such that
\begin{align*}
\sup_{\nu}E(\varphi^{\nu},\psi^{\nu})<\infty
\end{align*}
Then, in the cases $d=-2$ and $d=-1$, there exist a subsequence (still denoted
$(\varphi^{\nu},\psi^{\nu})$), an $(A,J)$-holomorphic supercurve $(\varphi,\psi):(S^2,L_d)\rightarrow X$
and a finite set $Z=\{z_1,\ldots,z_l\}\subseteq\Sigma$ such that the following holds.
\begin{enumerate}
\renewcommand{\labelenumi}{(\roman{enumi})}
\item $(\varphi^{\nu},\psi^{\nu})$ converges to $(\varphi,\psi)$ in the $C^{\infty}$-topology
on all compact subsets of $S^2\setminus Z$.
\item For every $j$ and every $\varepsilon>0$ such that $B_{\varepsilon}(z_j)\cap Z=\{z_j\}$, the limits
\begin{align*}
m_{\varepsilon}^{\varphi}(z_j):=\lim_{\nu\rightarrow\infty}E(\varphi^{\nu},B_{\varepsilon}(z_j))\;,\qquad
m_{\varepsilon}^{\psi}(z_j):=\lim_{\nu\rightarrow\infty}E(\psi^{\nu},B_{\varepsilon}(z_j))
\end{align*}
exist and are continuous functions of $\varepsilon$, and
\begin{align*}
m^{\varphi}(z_j):=\lim_{\varepsilon\rightarrow 0}m_{\varepsilon}^{\varphi}(z_j)\geq\hbar\;,\qquad
m^{\psi}(z_j):=\lim_{\varepsilon\rightarrow 0}m_{\varepsilon}^{\psi}(z_j)\geq 0
\end{align*}
where $\hbar>0$ denotes the minimal classical energy for nonconstant $J$-holomorphic spheres.
\item For every compact subset $K\subseteq S^2$ with $Z\subseteq\mathrm{int}(K)$,
\begin{align*}
E(\varphi,K)+\sum_{j=1}^lm^{\varphi}(z_j)&=\lim_{\nu\rightarrow\infty}E(\varphi^{\nu},K)\\
E(\psi,K)+\sum_{j=1}^lm^{\psi}(z_j)&=\lim_{\nu\rightarrow\infty}E(\psi^{\nu},K)
\end{align*}
\end{enumerate}
\end{Thm}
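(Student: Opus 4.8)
The plan is to mirror the classical proof of convergence modulo bubbling for holomorphic curves (as in \cite{MS04}, Sec.~4.4, Thm.~4.6.1) and to superimpose the analysis for the sectional part $\psi$ on top of it, using the results of the previous sections. First I would treat the underlying sequence $\varphi^{\nu}$: since $\sup_{\nu}E(\varphi^{\nu})<\infty$, the classical argument produces a subsequence, a finite ``bubbling set'' $Z=\{z_1,\ldots,z_l\}$, and a $J$-holomorphic sphere $\varphi$ such that $\varphi^{\nu}\to\varphi$ in $C^{\infty}_{\mathrm{loc}}(S^2\setminus Z)$, with the classical energy quantities $m_{\varepsilon}^{\varphi}(z_j)$ and $m^{\varphi}(z_j)$ as stated in (ii), $m^{\varphi}(z_j)\geq\hbar$, and the classical part of (iii). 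The set $Z$ is characterised as the finite set of points where $|d\varphi^{\nu}|$ concentrates, equivalently where no uniform $C^1$ (or $W^{1,p}$) bound holds near $z_j$; away from $Z$ one has uniform local bounds on $\|d\varphi^{\nu}\|_{Q,p}$ and $\varphi^{\nu}(Q)\subseteq K$ for a fixed compact $K\subseteq X$.

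Next I would upgrade this to the pair. On any compact $Q\subseteq S^2\setminus Z$, the hypotheses of Prp.~\ref{prpPsiLInfinityBounded} are satisfied: $\varphi^{\nu}\to\varphi$ in $C^{\infty}(Q)$, and $\sup_{\nu}E(\psi^{\nu},S^2)<\infty$ gives the required uniform bound on $E(\psi^{\nu},Q)$. Hence $\sup_{\nu}\|\psi^{\nu}\|_{Q,\infty}<\infty$, and, after passing to a further subsequence (diagonalising over an exhaustion of $S^2\setminus Z$ by compact sets), $\psi^{\nu}$ converges in $C^{\infty}_{\mathrm{loc}}(S^2\setminus Z)$ to some section $\psi$ along $\varphi$; by Lem.~\ref{lemUniformImpliesCInfty} the limit pair $(\varphi,\psi)$ is an $(A,J)$-holomorphic supercurve on $S^2\setminus Z$. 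Since $E(\psi^{\nu},S^2)$ is uniformly bounded and the integrand converges locally uniformly off $Z$, Fatou's lemma gives $E(\psi,S^2\setminus Z)<\infty$, so $E(\psi,U)<\infty$ for any punctured neighbourhood $U$ of any $z_j$; Thm.~\ref{thmRemovalOfSingularities} (cases $d=-2,-1$) then extends $(\varphi,\psi)$ smoothly across each $z_j$, giving the global $(A,J)$-holomorphic supercurve $(\varphi,\psi):(S^2,L_d)\to X$ of (i).

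For (ii) and (iii), the energy-accounting argument is the standard one carried out separately for the $\varphi$- and $\psi$-energies. Fix $j$ and choose $\varepsilon$ with $B_{\varepsilon}(z_j)\cap Z=\{z_j\}$. On the annulus (more precisely on $\overline{B_{\varepsilon}(z_j)}\setminus B_{\delta}(z_j)$ for $0<\delta<\varepsilon$, which is compact in $S^2\setminus Z$) we have $E(\psi^{\nu},B_{\varepsilon}(z_j)\setminus B_{\delta}(z_j))\to E(\psi,B_{\varepsilon}(z_j)\setminus B_{\delta}(z_j))$ by locally uniform $C^{\infty}$-convergence; defining $m_{\varepsilon}^{\psi}(z_j):=\lim_{\nu}E(\psi^{\nu},B_{\varepsilon}(z_j))$ (existence after a further subsequence, since these are bounded monotone-in-$\varepsilon$ quantities, or by the usual diagonal selection over a countable dense set of radii together with monotonicity) and letting $\delta\to0$ gives $\lim_{\nu}E(\psi^{\nu},B_{\varepsilon}(z_j))=E(\psi,B_{\varepsilon}(z_j))+m_{\varepsilon}^{\psi}(z_j)$; continuity in $\varepsilon$ follows because $\varepsilon\mapsto E(\psi,B_{\varepsilon}(z_j))$ is continuous (absolute continuity of the integral) and $m_{\varepsilon}^{\psi}(z_j)-m_{\varepsilon'}^{\psi}(z_j)=\lim_{\nu}E(\psi^{\nu},B_{\varepsilon}(z_j)\setminus B_{\varepsilon'}(z_j))=E(\psi,B_{\varepsilon}(z_j)\setminus B_{\varepsilon'}(z_j))$. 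Setting $m^{\psi}(z_j):=\lim_{\varepsilon\to0}m_{\varepsilon}^{\psi}(z_j)\geq0$ gives the $\psi$-part of (ii), and summing the local identities over $j$ and adding the contribution of $K\setminus\bigcup_j B_{\varepsilon}(z_j)$ (where convergence is again local-uniform) gives, after letting $\varepsilon\to0$, the $\psi$-part of (iii); the $\varphi$-parts are the classical statements and the only extra input there is $m^{\varphi}(z_j)\geq\hbar$, which is the standard nonvanishing-of-energy fact. I expect the main obstacle to be purely bookkeeping: organising the iterated passage to subsequences (over $(A^{\nu},J^{\nu})$, then over the exhaustion of $S^2\setminus Z$, then over a dense set of radii $\varepsilon$) into a single clean diagonal subsequence, and verifying that $Z$ for the pair is exactly $Z$ for $\varphi$ — i.e. that no ``extra'' concentration of $\psi$-energy can occur at a point where $\varphi^{\nu}$ already converges smoothly — which is precisely the content of Prp.~\ref{prpPsiLInfinityBounded} and is why that proposition is invoked at the outset.
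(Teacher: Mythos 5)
Your proposal is correct and follows essentially the same route as the paper: the classical argument handles $\varphi^{\nu}$ and produces $Z$, Prp.~\ref{prpPsiLInfinityBounded} supplies the uniform sup bound and $C^{\infty}_{\mathrm{loc}}$-convergence for $\psi^{\nu}$ off $Z$, Thm.~\ref{thmRemovalOfSingularities} removes the singularities (with the Fatou argument you give supplying the needed finiteness of the limit's super energy), and (ii)--(iii) are the standard energy-accounting steps carried out separately for the $\varphi$- and $\psi$-energies. The paper's proof is just a terser version of exactly this.
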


\begin{proof}
By an inductive argument over singular points $z_j\in S^2$, one constructs
a finite set $Z$ such that (a subsequence of) $\varphi^{\nu}$ converges in $C^{\infty}_{\mathrm{loc}}$
to a holomorphic curve $\varphi$ on compact sets $K\subseteq S^2\setminus Z$.
This is detailed in the proof of Thm. 4.6.1 in \cite{MS04}.
By Prp. \ref{prpPsiLInfinityBounded}, there is a (further) subsequence such that also
$\psi^{\nu}$ converges in the $C^{\infty}$-topology on $K$ to a map $\psi:S^2\setminus Z\rightarrow X$,
such that $(\varphi,\psi)$ is a holomorphic supercurve.
By the removable singularity Theorem \ref{thmRemovalOfSingularities},
$(\varphi,\psi)$ extends to a holomorphic supercurve $(S^2,L_d)\rightarrow X$.
This concludes the proof of (i).
The proofs of (ii) and (iii) are almost verbatim to the corresponding steps in the
proof of Thm. 4.6.1 in \cite{MS04}.
\end{proof}

\subsection{Conservation of Super Energy}
\label{subsecEnergyIdentities}

Having established convergence modulo bubbling, we examine in the remainder of this section
the actual bubbling off of holomorphic superspheres. In particular, we show that the
rescaling sequences can be chosen so that there is no loss of super energy or,
to be more precise, that the super energy concentrated in a bubbling point
coincides with the energy of the corresponding bubble, cf. identities (iii)
and (v) in Prp. \ref{prpSuperBubbles} and Prp. \ref{prpSuperBubblesEnergy} below.
Our treatment resembles the analysis of the classical bubbling as described
in Chp. 4 of \cite{MS04}, using in addition a variant of an estimate from
\cite{CJLW05} in a quite intricate context.

\begin{Prp}
\label{prpSuperBubbles}
Let $(A^{\nu},J^{\nu})\in\mA\times\mJ$ be a sequence that converges to $(A,J)$ in the
$C^{\infty}$-topology. Fix a point $z_0\in\bC$ and a number $r>0$. Let
$(\varphi^{\nu},\psi^{\nu}):(\overline{B_r(z_0)},L_d)\rightarrow X$ be
a sequence of $(A^{\nu},J^{\nu})$-holomorphic supercurves and
$(\varphi,\psi):(\overline{B_r(z_0)},L_d)\rightarrow X$ be an $(A,J)$-holomorphic supercurve
such that the following holds.
\begin{enumerate}
\renewcommand{\labelenumi}{(\alph{enumi})}
\item $(\varphi^{\nu},\psi^{\nu})$ converges to $(\varphi,\psi)$ in the $C^{\infty}$-topology on every
compact subset of $\overline{B_r(z_0)}\setminus\{z_0\}$.
\item The limit $m_0^{\varphi}:=\lim_{\varepsilon\rightarrow 0}\lim_{\nu\rightarrow\infty}
E(\varphi^{\nu},B_{\varepsilon}(z_0))$ exists and is positive.
\item The limit $m_0^{\psi}:=\lim_{\varepsilon\rightarrow 0}\lim_{\nu\rightarrow\infty}
E(\psi^{\nu},B_{\varepsilon}(z_0))$ exists.
\end{enumerate}
Then, in the cases $d=-2$ and $d=-1$, there exist a subsequence, still denoted by
$(\varphi^{\nu},\psi^{\nu})$, a sequence of Möbius transformations $m^{\nu}\in G$,
an $(A,J)$-holomorphic supercurve $(\tvarphi,\tpsi):(S^2,L_d)\rightarrow X$,
and finitely many distinct points $z_1,\ldots,z_l,z_{\infty}\in S^2$ such that the
following holds.
\begin{enumerate}
\renewcommand{\labelenumi}{(\roman{enumi})}
\item $m^{\nu}$ converges to $z_0$ in the $C^{\infty}$-topology on every compact subset
of $S^2\setminus\{z_{\infty}\}$.
\item The sequence
$(\tvarphi^{\nu}:=\varphi^{\nu}\circ m^{\nu},\tpsi^{\nu}:=\psi^{\nu}\circ m^{\nu})$
converges to $(\tvarphi,\tpsi)$ in the $C^{\infty}$-topology on every compact subset of
$S^2\setminus\{z_1,\ldots,z_l,z_{\infty}\}$, and the limits
\begin{align*}
m_j^{\varphi}&:=\lim_{\varepsilon\rightarrow 0}\lim_{\nu\rightarrow\infty}E(\tvarphi^{\nu},B_{\varepsilon}(z_j))\\
m_j^{\psi}&:=\lim_{\varepsilon\rightarrow 0}\lim_{\nu\rightarrow\infty}E(\tpsi^{\nu},B_{\varepsilon}(z_j))
\end{align*}
exist for $j=1,\ldots,l$ and, moreover, $m_j^{\varphi}>0$ is positive.
\item $E(\tvarphi,S^2)+\sum_{j=1}^lm_j^{\varphi}=m_0^{\varphi}$.
\item If $\tvarphi$ is constant then $l\geq 2$ and $\tpsi\equiv 0$.
\end{enumerate}
\end{Prp}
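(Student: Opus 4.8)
The plan is to import the classical rescaling construction for the underlying holomorphic curves and then let the sectional part ride along, the engine being conformal invariance of the super energy (\ref{eqnSuperEnergyInvariance}), the compactness result Prp. \ref{prpPsiLInfinityBounded}, and the removable singularity Theorem \ref{thmRemovalOfSingularities}. First I would observe that hypotheses (a)--(c) force a uniform bound $\sup_\nu E(\varphi^\nu,\psi^\nu,B_r(z_0))<\infty$: fixing a small $\varepsilon_1>0$ for which the inner limits in (b) and (c) exist, the super energy on $B_{\varepsilon_1}(z_0)$ converges by (b) and (c), while on the compact annulus $\overline{B_r(z_0)}\setminus B_{\varepsilon_1}(z_0)$ it converges to the super energy of $(\varphi,\psi)$ by the $C^\infty$ convergence in (a).

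Next, since $m_0^\varphi>0$, I would run the classical bubbling construction for $J^\nu$-holomorphic curves on a disc (cf. the proof of Prp. 4.7.1 in \cite{MS04}) on the sequence $\varphi^\nu$. This yields a subsequence, Möbius transformations $m^\nu\in G$, a point $z_\infty\in S^2$, a $J$-holomorphic sphere $\tvarphi$, and finitely many points $z_1,\ldots,z_l$ such that (i) holds, $\tvarphi^\nu:=\varphi^\nu\circ m^\nu$ converges to $\tvarphi$ in $C^\infty$ on compact subsets of $S^2\setminus\{z_1,\ldots,z_l,z_\infty\}$, the limits $m_j^\varphi$ exist and are positive, the energy identity (iii) holds, and, if $\tvarphi$ is constant, then $l\geq 2$. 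The rescaling $m^\nu$ is chosen precisely so as to lose no $\varphi$-energy and to exclude a lone ghost component, and none of this has to be redone; note also that for $\nu$ large, $(m^\nu)^{-1}(\overline{B_r(z_0)})$ contains any prescribed compact subset of $S^2\setminus\{z_\infty\}$.

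Then I would bring in $\psi$. By Prp. \ref{prpSuperConformalInvariance} the rescaled pairs $(\tvarphi^\nu,\tpsi^\nu)$ with $\tpsi^\nu:=\psi^\nu\circ m^\nu$ are $(A^\nu,J^\nu)$-holomorphic supercurves on $(m^\nu)^{-1}(\overline{B_r(z_0)})$, and by (\ref{eqnSuperEnergyInvariance}) together with the first step their super energy over any compact $K\subseteq S^2\setminus\{z_\infty\}$ is, for $\nu$ large, at most $\sup_\nu E(\varphi^\nu,\psi^\nu,B_r(z_0))$, since $m^\nu(K)\subseteq B_r(z_0)$. On compact subsets of $S^2\setminus\{z_1,\ldots,z_l,z_\infty\}$ we thus have $\tvarphi^\nu\to\tvarphi$ in $C^\infty$ and a uniform bound on $E(\tpsi^\nu,\cdot)$, so Prp. \ref{prpPsiLInfinityBounded} together with a diagonal argument gives a further subsequence with $\tpsi^\nu\to\tpsi$ in $C^\infty$ on compacts there, and $(\tvarphi,\tpsi)$ is a holomorphic supercurve on $S^2\setminus\{z_1,\ldots,z_l,z_\infty\}$. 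By the bound just quoted, $(\tvarphi,\tpsi)$ has finite super energy on a punctured neighbourhood of each $z_j$ and of $z_\infty$ (at $z_\infty$ one first passes to the chart $w=1/z$ and uses conformal invariance), so Theorem \ref{thmRemovalOfSingularities} extends it to an $(A,J)$-holomorphic supercurve $(\tvarphi,\tpsi):(S^2,L_d)\to X$. For the limits $m_j^\psi$ I would, exactly as in the proof of Theorem \ref{thmConvergenceModBubbling}, pass to a subsequence along which $E(\tpsi^\nu,B_\varepsilon(z_j))$ converges for $\varepsilon$ in a countable dense set and use monotonicity in $\varepsilon$ (the integrand is nonnegative) to get existence and continuity, hence $m_j^\psi\geq 0$. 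Together with the second step this gives (i), (ii) and (iii).

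Finally, if $\tvarphi$ is constant then $l\geq 2$ by the second step, and since $(\tvarphi,\tpsi)$ is then a holomorphic supercurve over $S^2$ with a constant underlying map and $\deg L_d=d<0$, Lem. \ref{lemGhostBubble} yields $\tpsi\equiv 0$; this is (iv). I do not expect a serious obstacle: the difficult analysis --- choosing $m^\nu$ so that (iii) holds and a single ghost bubble is ruled out --- is the classical statement of \cite{MS04}, while the genuine work here is of a bookkeeping nature, namely checking that conformal invariance of the super energy lets the rescaled super curves satisfy the hypotheses of Prp. \ref{prpPsiLInfinityBounded} on the exhausting domains, and that finiteness of the super energy near each puncture (including $z_\infty$) brings Theorem \ref{thmRemovalOfSingularities} to bear. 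The one point requiring a little care is the diagonal subsequence producing the $m_j^\psi$, but this is structurally the same as in the classical case and in the proof of Theorem \ref{thmConvergenceModBubbling}.
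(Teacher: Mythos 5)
Your proposal is correct and follows essentially the same route as the paper: rescale via the classical construction of McDuff--Salamon for the underlying curves (which supplies (i), (iii), the positivity of the $m_j^{\varphi}$ and the first half of (iv)), then let $\psi$ ride along using conformal invariance of the super energy, Prp.~\ref{prpPsiLInfinityBounded}, the removal of singularities Theorem~\ref{thmRemovalOfSingularities}, and Lem.~\ref{lemGhostBubble}. The only presentational difference is that the paper packages the sectional part as a single application of Thm.~\ref{thmConvergenceModBubbling} to the rescaled sequence, whereas you unpack that theorem's proof by hand (including the diagonal argument for the $m_j^{\psi}$); the content is identical.
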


\begin{proof}
The extension of the proof of Prp. 4.7.1 in \cite{MS04} to the present
situation is straightforward. We summarise the most important steps.
First we may, without loss of generality, assume that $z_0=0$
and that each function $z\mapsto\abs{d\varphi^{\nu}}$ assumes its maximum at $z=0$.
Step 2 of the original proof then yields a sequence $\delta^{\nu}>0$
converging to zero such that $m^{\nu}:=\delta^{\nu}\cdot$ satisfies (i) with $z_{\infty}:=\infty$.
The super energy of the sequence
\begin{align*}
(\tvarphi^{\nu},\tpsi^{\nu}):(B_{r/\delta^{\nu}},L_d)\rightarrow X\;,\qquad
\tvarphi^{\nu}(z):=\varphi^{\nu}(\delta^{\nu}\cdot z)\;,\qquad
\tpsi^{\nu}(z):=\psi^{\nu}(\delta^{\nu}\cdot z)
\end{align*}
is bounded since that of the original sequence $(\varphi^{\nu},\psi^{\nu})$ is.
Therefore, the hypotheses of Thm. \ref{thmConvergenceModBubbling} are
satisfied, providing a subsequence of $(\tvarphi^{\nu},\tpsi^{\nu})$ such that
(ii) holds for some finite set $Z=\{z_1,\ldots,z_l,z_{\infty}\}\subseteq S^2$.
The proof of (iii) is based on the following identity.
\begin{align}
\label{eqnBubbleEnergyIdentity}
\lim_{R\rightarrow\infty}\lim_{\nu\rightarrow\infty}E(\varphi^{\nu},B_{R\delta^{\nu}})=m_0^{\varphi}
\end{align}
Finally, the second part of (iv) is implied by Lem. \ref{lemGhostBubble}.
\end{proof}

By the next proposition, proved as the first part of Prp. 4.7.2 in \cite{MS04},
the classical bubbles connect.

\begin{Prp}
\label{prpBubblesConnect}
Let $J^{\nu}\in\mJ$, $z_0\in\bC$ and $\varphi,\varphi^{\nu}:\overline{B_r(z_0)}\rightarrow X$
be as in the hypotheses of Prp. \ref{prpSuperBubbles} and suppose that $m^{\nu}\in G$,
$\tvarphi:S^2\rightarrow X$ and $z_1,\ldots,z_l,z_{\infty}\in S^2$ satisfy the assertions
(i)-(iii) of Prp. \ref{prpSuperBubbles}. Then
$\varphi(z_0)=\tvarphi(z_{\infty})$ holds.\\
Moreover, for every $\kappa>0$, there exist constants $\gamma>0$ and $\nu_0\in\bN$ such that
\begin{align*}
\scal[d_{S^2}]{z}{z_0}+\scal[d_{S^2}]{(m^{\nu})^{-1}(z)}{z_{\infty}}<\gamma\implies
\scal[d_{g_J}]{\varphi^{\nu}(z)}{\varphi(z_0)}<\kappa
\end{align*}
for every integer $\nu\geq\nu_0$ and every $z\in S^2$.
\end{Prp}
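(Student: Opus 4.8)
The plan is to reduce the assertion to the corresponding classical statement for holomorphic curves. The key observation is that only the underlying holomorphic curves $\varphi$, $\varphi^{\nu}$, $\tvarphi$, the M\"obius transformations $m^{\nu}$, and the points $z_1,\ldots,z_l,z_{\infty}$ enter the claim; the sectional parts $\psi^{\nu}$, $\tpsi$ and the bundle $L_d$ play no role whatsoever. By hypothesis, these objects satisfy conditions (a)--(c) and assertions (i)--(iii) of Prp. \ref{prpSuperBubbles}, which are precisely the conditions under which the connectedness of bubbles is proved classically. I would therefore first make this reduction explicit and then invoke the first part of Prp. 4.7.2 in \cite{MS04}.

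For completeness I would recall the classical argument. To prove $\varphi(z_0)=\tvarphi(z_{\infty})$, one uses that only the finite energy $m_0^{\varphi}$ accumulates near $z_0$. Fix $\kappa>0$. Using the standard estimate bounding the $g_J$-diameter of the image of a region of small energy (see \cite{MS04}), together with continuity of $m_{\varepsilon}^{\varphi}(z_0)$ in $\varepsilon$ and $m_{\varepsilon}^{\varphi}(z_0)\to m_0^{\varphi}$ as $\varepsilon\to 0$, one can choose $\varepsilon>0$ so small that the $\varphi^{\nu}$-image of the annulus $B_{\varepsilon}(z_0)\setminus B_{\varepsilon/2}(z_0)$ has diameter less than $\kappa$ for all large $\nu$. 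Since $\varphi^{\nu}\to\varphi$ uniformly on the circle of radius $\varepsilon$ about $z_0$ by condition (a), the point $\varphi(z_0)$ lies within $\kappa$ of that image. On the other hand, by assertion (i) the map $m^{\nu}$ carries a fixed large circle about $z_{\infty}$ into the same annulus for $\nu$ large, and there $\tvarphi^{\nu}=\varphi^{\nu}\circ m^{\nu}\to\tvarphi$, so $\tvarphi(z_{\infty})$ also lies within $\kappa$ of that image. Letting $\kappa\to 0$ forces $\varphi(z_0)=\tvarphi(z_{\infty})$.

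The second assertion is the uniform-in-$(\nu,z)$ refinement of the same estimate. Given $\kappa>0$, I would fix $\varepsilon$ as above and then choose $\gamma>0$ small enough that the conditions $d_{S^2}(z,z_0)<\gamma$ and $d_{S^2}((m^{\nu})^{-1}(z),z_{\infty})<\gamma$ together place $z$ in the region where only little $\varphi^{\nu}$-energy concentrates and simultaneously place $(m^{\nu})^{-1}(z)$ in the fixed compact set on which $\tvarphi^{\nu}\to\tvarphi$; combining the diameter bound with the convergences $\varphi^{\nu}\to\varphi$, $\tvarphi^{\nu}\to\tvarphi$ and the continuity of $\varphi$ at $z_0$ then yields $d_{g_J}(\varphi^{\nu}(z),\varphi(z_0))<\kappa$ for all $\nu\geq\nu_0$ and all such $z$, with $\nu_0$ chosen large enough. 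The bookkeeping is carried out in detail in the proof of Prp. 4.7.2 in \cite{MS04}.

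I do not expect a genuine obstacle here: the entire statement lives on the classical side, so the only point requiring care is to check that conditions (a)--(c) and assertions (i)--(iii) of Prp. \ref{prpSuperBubbles} genuinely supply every hypothesis used in \cite{MS04}, and that no step of that argument secretly uses the sectional part --- which it cannot, since the conclusion involves $\varphi$ alone.
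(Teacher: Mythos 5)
Your proposal is correct and matches the paper's treatment exactly: the paper gives no independent proof but simply observes that the statement concerns only the underlying holomorphic curves and cites the first part of Prp.\ 4.7.2 in \cite{MS04}, which is precisely your reduction. The additional sketch of the classical argument is accurate but not something the paper itself supplies.
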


Note that we do not claim an analogous statement for holomorphic supercurves as a whole.
On the other hand, the energy identity (iii) in Prp. \ref{prpSuperBubbles} does have
a natural generalisation to the super energy as shown next.
We need the following lemma, which provides a back door for the case that the
$W^{1,\infty}$-norm of $\tJ$ cannot be estimated. We denote by $J_0$
a fixed complex structure on $\bR^{2n}$, e.g. the standard one.

\begin{Lem}
\label{lemDbarEstimate}
Let $U'\subseteq U\subseteq\bC$ and $V\subseteq\bR^{2n}$ be open sets such that
$\overline{U'}\subseteq U$ and $1<p<\infty$.
Then, for every constant $c_0>0$, there is a constant $c>0$ such that the following holds. Assume
$J\in W^{1,\infty}(V,\bR^{2n\times 2n})$ satisfies $J^2=-\id$ and $\norm{J}_{V,1,\infty}\leq c_0$
and consider the map
\begin{align*}
\phi:V\rightarrow\bR^{2n\times 2n}\;,\qquad x\mapsto\phi_x\;,\qquad
\phi_x(v):=\frac{1}{2}(v-J_0\cdot J_x\cdot v)
\end{align*}
which satisfies $\phi_x\circ J_x=J_0\circ\phi_x$ and is bijective if $J_x$ is
sufficiently close to $J_0$.
Then, for every $u\in W^{1,p}_{\mathrm{loc}}(U,\bR^{2n})$ and $\varphi\in C^1(U,V)$,
the following estimate holds, abbreviating $\tJ:=J\circ\varphi$.
\begin{align*}
\norm{(\phi\circ\varphi)\cdot u}_{U',1,p}\leq c\left(\norm{\abs{d\varphi}\abs{u}}_{U,0,p}
+\norm{\partial_su+\tJ\partial_tu}_{U,0,p}+\norm{u}_{U,0,p}\right)
\end{align*}
\end{Lem}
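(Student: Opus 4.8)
The plan is to observe that the map $w := (\phi\circ\varphi)\cdot u$ satisfies a Cauchy--Riemann type equation for the \emph{constant} structure $J_0$, with an inhomogeneity controlled by the three terms on the right-hand side, and then to invoke the weak elliptic bootstrapping estimate Lem.~\ref{lemEllipticBootstrappingWeak} applied with $J$ replaced by $J_0$.

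First I would record the pointwise algebra. Writing $\phi_x = \frac{1}{2}(\id - J_0 J_x)$ as a matrix and using $J_x^2 = -\id = J_0^2$, a one-line computation gives $\phi_x\circ J_x = J_0\circ\phi_x$; note this uses only $J^2 = -\id$, so the requirement that $J_x$ be close to $J_0$ (needed only for invertibility of $\phi_x$) plays no role in the estimate. Setting $\Phi := \phi\circ\varphi \in C^0(U,\bR^{2n\times 2n})$, this reads $J_0\,\Phi(z) = \Phi(z)\,\tJ(z)$ for all $z\in U$. Next I would check that $w\in W^{1,p}_{\mathrm{loc}}(U,\bR^{2n})$: since $\varphi\in C^1(U,V)$ and $\phi\in W^{1,\infty}(V)$, the composition $\Phi$ is locally Lipschitz, so the product $\Phi u$ lies in $W^{1,p}_{\mathrm{loc}}$, with $\partial_s w = (\partial_s\Phi)u + \Phi\,\partial_s u$ and $\partial_t w = (\partial_t\Phi)u + \Phi\,\partial_t u$ almost everywhere, by the product rule together with the a.e.\ chain rule for a Lipschitz map composed with a $C^1$ map.

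Then, using $J_0\Phi = \Phi\tJ$, I would compute
\[
\partial_s w + J_0\,\partial_t w \;=\; \bigl(\partial_s\Phi + J_0\,\partial_t\Phi\bigr)u \;+\; \Phi\bigl(\partial_s u + \tJ\,\partial_t u\bigr),
\]
and estimate the inhomogeneity termwise. Since $d_x\phi = -\frac{1}{2}J_0\,(d_x J)$, both $\norm{\phi}_{V,\infty}$ and $\norm{d\phi}_{V,\infty}$ are bounded by a constant depending only on $c_0$; combined with $\abs{\partial_s\Phi},\abs{\partial_t\Phi}\leq\norm{d\phi}_{V,\infty}\abs{d\varphi}$ and $\abs{\Phi}\leq\norm{\phi}_{V,\infty}$ this yields, with $C = C(c_0)$,
\[
\norm{\partial_s w + J_0\,\partial_t w}_{U,0,p} \leq C\bigl(\norm{\abs{d\varphi}\abs{u}}_{U,0,p} + \norm{\partial_s u + \tJ\,\partial_t u}_{U,0,p}\bigr), \qquad \norm{w}_{U,0,p}\leq C\,\norm{u}_{U,0,p}.
\]
Finally, applying Lem.~\ref{lemEllipticBootstrappingWeak} to $w$ with the constant structure $J_0$ (whose $W^{1,\infty}(U)$-norm is a fixed constant, so that the resulting bootstrapping constant depends only on $U'$, $U$, $p$) and inserting the two displayed bounds gives the asserted estimate, with a final constant depending only on $U'$, $U$, $p$ and $c_0$.

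I do not anticipate a genuine difficulty here. The one point that needs care is the bookkeeping of constants: one must check that each constant that appears ($\norm{\phi}_{V,\infty}$, $\norm{d\phi}_{V,\infty}$, and the bootstrapping constant for the fixed $J_0$) depends only on the permitted data and not on the particular $J$, $\varphi$ or $u$; a secondary, purely routine point is justifying the product/chain rule for $\Phi = \phi\circ\varphi$ when $\phi$ is merely Lipschitz.
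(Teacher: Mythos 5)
Your proposal is correct and follows essentially the same route as the paper: apply Lem.~\ref{lemEllipticBootstrappingWeak} with the constant structure $J_0$, use the Leibniz rule together with the intertwining identity $J_0\,(\phi\circ\varphi)=(\phi\circ\varphi)\,\tJ$ to rewrite $(\partial_s+J_0\partial_t)((\phi\circ\varphi)u)$, and bound the resulting terms by $\norm{\phi}_{V,1,\infty}$, which depends only on $c_0$. Your additional remarks on the regularity of $\phi\circ\varphi$ and the bookkeeping of constants only make explicit what the paper leaves implicit.
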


\begin{proof}
The hypotheses of Lem. \ref{lemEllipticBootstrappingWeak} are satisfied for $J_0$,
and the resulting inequality can be further estimated as follows.
\begin{align*}
&\norm{(\phi\circ\varphi)\cdot u}_{U',1,p}\\
&\qquad\leq c\left(\norm{(\partial_s+J_0\partial_t)((\phi\circ\varphi)\cdot u)}_{U,0,p}
+\norm{(\phi\circ\varphi)\cdot u}_{U,0,p}\right)\\
&\qquad=c\left(\norm{(\partial_s+J_0\partial_t)(\phi\circ\varphi)\cdot u
+(\phi\circ\varphi)\cdot(\partial_s+\tJ\partial_t)u}_{U,0,p}+\norm{(\phi\circ\varphi)\cdot u}_{U,0,p}\right)\\
&\qquad\leq c\norm{\phi}_{V,1,\infty}\left(\norm{(\abs{\partial_s\varphi}
+\abs{\partial_t\varphi})\cdot\abs{u}}_{U,0,p}
+\norm{\partial_su+\tJ\partial_tu}_{U,0,p}+\norm{u}_{U,0,p}\right)\\
&\qquad\leq c_2\left(\norm{\abs{d\varphi}\abs{u}}_{U,0,p}
+\norm{\partial_su+\tJ\partial_tu}_{U,0,p}+\norm{u}_{U,0,p}\right)
\end{align*}
where $c_2:=c\norm{\phi}_{V,1,\infty}$ depends only on $c_0$ (and $J_0$).
\end{proof}

\begin{Prp}
\label{prpSuperBubblesEnergy}
In the situation of Prp. \ref{prpSuperBubbles}, let $d=-1$. Then, additionally, the energy identity
\begin{itemize}
\item[(v)] $E(\tpsi,S^2)+\sum_{j=1}^lm_j^{\psi}=m_0^{\psi}$
\end{itemize}
holds true.
\end{Prp}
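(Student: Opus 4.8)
The plan is to follow the proof of the classical energy identity (assertion (iii) of Prp.~\ref{prpSuperBubbles}), reducing (v) to the statement that no super energy of the section part is lost in the neck joining the bubble to its parent, and then to establish this neck estimate by means of Lem.~\ref{lemDbarEstimate} together with a variant of the three--annulus estimate from \cite{CJLW05}. As in the proof of Prp.~\ref{prpSuperBubbles} I normalise $z_0=0$ and $m^\nu(z)=\delta^\nu z$, write $\tvarphi^\nu:=\varphi^\nu\circ m^\nu$, $\tpsi^\nu:=\psi^\nu\circ m^\nu$, and for $R>0$, $\varepsilon>0$ I set $N^\nu_{R,\varepsilon}:=B_\varepsilon(z_0)\setminus\overline{B_{R\delta^\nu}(z_0)}$, so that $B_\varepsilon(z_0)=B_{R\delta^\nu}(z_0)\cup N^\nu_{R,\varepsilon}$. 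By conformal invariance of the super energy (\ref{eqnSuperEnergyInvariance}) one has $E(\psi^\nu,B_{R\delta^\nu}(z_0))=E(\tpsi^\nu,B_R(0))$. Since, by Prp.~\ref{prpBubblesConnect} and Prp.~\ref{prpPsiLInfinityBounded}, the rescaled sequence converges near $z_\infty=\infty$ in the $C^\infty$--topology (so that $z_\infty$ carries no energy), Thm.~\ref{thmConvergenceModBubbling}(iii) applied to $(\tvarphi^\nu,\tpsi^\nu)$ on $K=\overline{B_R(0)}$, for $R$ large enough that $z_1,\ldots,z_l\in B_R(0)$, gives $\lim_\nu E(\tpsi^\nu,B_R(0))=E(\tpsi,B_R(0))+\sum_{j=1}^l m_j^\psi$, hence $\lim_{R\to\infty}\lim_\nu E(\tpsi^\nu,B_R(0))=E(\tpsi,S^2)+\sum_{j=1}^l m_j^\psi$ as $\tpsi$ is smooth on $S^2$. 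Writing $E(\psi^\nu,B_\varepsilon(z_0))=E(\tpsi^\nu,B_R(0))+E(\psi^\nu,N^\nu_{R,\varepsilon})$ and passing to the limits $\nu\to\infty$, then $\varepsilon\to 0$ (using hypothesis (c)), then $R\to\infty$, the identity (v) follows once I show
\begin{align*}
\lim_{R\to\infty}\lim_{\varepsilon\to 0}\limsup_{\nu\to\infty}E(\psi^\nu,N^\nu_{R,\varepsilon})=0.
\end{align*}

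To prove this, decompose $N^\nu_{R,\varepsilon}$ into the dyadic annuli $A_k^\nu:=\{2^kR\delta^\nu<|z|<2^{k+1}R\delta^\nu\}$, $0\le k\le K^\nu$. By hypothesis (b) and identity (\ref{eqnBubbleEnergyIdentity}), $\int_{N^\nu_{R,\varepsilon}}\abs{d\varphi^\nu}^2$ can be made arbitrarily small once $R$ is large, $\varepsilon$ small and $\nu$ large; in particular $\int_{A_{k-1}^\nu\cup A_k^\nu\cup A_{k+1}^\nu}\abs{d\varphi^\nu}^2<\varepsilon_0$ for every $k$, where $\varepsilon_0>0$ is a small constant to be fixed. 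Moreover, by Prp.~\ref{prpBubblesConnect}, for $R$ large, $\varepsilon$ small and $\nu$ large the whole neck is mapped by $\varphi^\nu$ into an arbitrarily small ball about $\varphi(z_0)$; choosing coordinates on $X$ there in which $J$ equals the standard structure $J_0$ at $\varphi(z_0)$, one may assume $J^\nu$ to be $C^0$--close to $J_0$ on the image, so that the bundle map $\phi$ of Lem.~\ref{lemDbarEstimate} is invertible. On $A_k^\nu$ the section $\psi^\nu$ satisfies $\partial_s\psi^\nu+\tJ^\nu\partial_t\psi^\nu=-\tD^\nu\psi^\nu$ with $\abs{\tD^\nu}\leq C\abs{d\varphi^\nu}$; since the $W^{1,\infty}$--norm of $\tJ^\nu$ is not controlled on the neck, I apply Lem.~\ref{lemDbarEstimate} with $u=\psi^\nu$ and $p=4/3$ (in place of Lem.~\ref{lemEllipticBootstrappingWeak}), invert $\phi\circ\varphi^\nu$, and use the Hölder inequality (\ref{eqnHoelder43}) and the Sobolev embedding (\ref{eqnSobolev}). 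After rescaling each annulus to a fixed one and comparing with $\dbar$--harmonic sections on an annulus as in \cite{CJLW05}, this yields a three--annulus inequality
\begin{align*}
E(\psi^\nu,A_k^\nu)^{1/4}\leq\theta\bigl(E(\psi^\nu,A_{k-1}^\nu)^{1/4}+E(\psi^\nu,A_{k+1}^\nu)^{1/4}\bigr)\qquad(0<k<K^\nu)
\end{align*}
with $\theta<\tfrac12$ once $\varepsilon_0$ is chosen small. Iterating, $E(\psi^\nu,A_k^\nu)$ decays geometrically away from the two ends of the neck, whence $E(\psi^\nu,N^\nu_{R,\varepsilon})=\sum_kE(\psi^\nu,A_k^\nu)\leq C\bigl(E(\psi^\nu,A_0^\nu)+E(\psi^\nu,A_{K^\nu}^\nu)\bigr)$ with $C$ independent of $\nu$. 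By conformal invariance $E(\psi^\nu,A_0^\nu)=E(\tpsi^\nu,B_{2R}(0)\setminus B_R(0))$, which tends to $E(\tpsi,B_{2R}(0)\setminus B_R(0))$ as $\nu\to\infty$ and to $0$ as $R\to\infty$; and $E(\psi^\nu,A_{K^\nu}^\nu)\leq E(\psi^\nu,B_\varepsilon(z_0)\setminus B_{\varepsilon/2}(z_0))$ tends to $E(\psi,B_\varepsilon(z_0)\setminus B_{\varepsilon/2}(z_0))$ as $\nu\to\infty$ and to $0$ as $\varepsilon\to 0$. This gives the displayed limit and completes the proof.

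The delicate point is the neck estimate. Because $\psi^\nu$ solves only a linear equation coupled to a map $\varphi^\nu$ whose derivative is not bounded on the neck, the mean value inequality Lem.~\ref{lemMVI1} is useless there (its constant $a$ blows up like $\norm{d\varphi^\nu}_\infty^2$): one genuinely needs the integral smallness of $\norm{d\varphi^\nu}_{L^2}$ on the neck together with the $L^4$--$W^{1,4/3}$ duality, which is exactly why the result is confined to the case $d=-1$, where $E(\psi)=\tfrac12\int\abs{\psi}^4$. Lemma~\ref{lemDbarEstimate} is the device that makes the elliptic estimate for $\psi^\nu$ go through in spite of the uncontrolled $W^{1,\infty}$--norm of $\tJ^\nu$, and upgrading its consequence to a three--annulus inequality with constant below $\tfrac12$ in the presence of the coupling term $\tD^\nu\psi^\nu$, while keeping all constants uniform across the dyadic annuli and in $\nu$ after the necessary rescalings and coordinate changes on $X$, is the intricate part of the argument.
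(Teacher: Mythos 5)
Your reduction of (v) to the vanishing of the neck energy
\begin{align*}
\lim_{R\to\infty}\lim_{\varepsilon\to 0}\limsup_{\nu\to\infty}E(\psi^{\nu},B_{\varepsilon}(z_0)\setminus\overline{B_{R\delta^{\nu}}(z_0)})=0
\end{align*}
is correct and is essentially Step 2 of the paper's proof (the paper couples the scales as $R=1/\varepsilon$, which is only cosmetic). One small inaccuracy there: Prp.~\ref{prpBubblesConnect} gives only $C^0$ control of $\varphi^{\nu}$ on the neck, not $C^{\infty}$ convergence of the rescaled sequence near $z_{\infty}$, and $z_{\infty}$ is by construction a singular point of that sequence; but you do not actually need this, since $\overline{B_R(0)}$ excludes $z_{\infty}$ and the localised version of Thm.~\ref{thmConvergenceModBubbling}(iii) applies as you use it.

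The genuine gap is the neck estimate itself, which is the entire content of the proposition. Your three--annulus inequality $E(\psi^{\nu},A_k^{\nu})^{1/4}\leq\theta\bigl(E(\psi^{\nu},A_{k-1}^{\nu})^{1/4}+E(\psi^{\nu},A_{k+1}^{\nu})^{1/4}\bigr)$ with $\theta<\tfrac12$ is asserted, not proved, and you yourself flag its derivation as ``the intricate part''. It is not a routine consequence of Lem.~\ref{lemDbarEstimate}: applied on a rescaled triple annulus, that lemma produces a lower--order term $\norm{\psi^{\nu}}_{L^{4/3}}\leq\abs{A}^{1/2}\norm{\psi^{\nu}}_{L^4}$ over the \emph{same} region with a constant of order one, so nothing can be absorbed and no contraction with $\theta<\tfrac12$ falls out; obtaining one would require excluding the scale--invariant borderline mode (the section whose $L^4$--energy is equal on every dyadic annulus), for which the three--annulus relation degenerates to an equality with constant exactly $\tfrac12$. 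The paper avoids all of this: it takes a single cutoff function $\chi^{\nu,\varepsilon}$ equal to $1$ on the whole neck $A(\delta^{\nu}/\varepsilon,\varepsilon)$, applies Lem.~\ref{lemDbarEstimate} with $p=4/3$ once to $\chi^{\nu,\varepsilon}\psi^{\nu}$ on $B_{2\varepsilon_0}$ followed by the Sobolev embedding $W^{1,4/3}\hookrightarrow L^4$, and then observes that the lower--order term is $\norm{\chi^{\nu,\varepsilon}\psi^{\nu}}_{4/3}\leq\norm{1}_{B_{2\varepsilon},2}\norm{\psi^{\nu}}_{4}\to 0$ because the \emph{area of the full ball} $B_{2\varepsilon}$ shrinks, while the coupling and $\dbar\chi^{\nu,\varepsilon}$ terms vanish via $\norm{d\varphi^{\nu}}_{L^2(\mathrm{neck})}\to 0$ (identity (\ref{eqnBubbleEnergyIdentity})) and conformal invariance of the $L^4$--norm on the two end annuli. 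No iteration, mode analysis, or decay rate is needed. As it stands, your argument is missing its central estimate, and the route you propose for it is substantially harder than (and possibly not salvageable in) the form you state.
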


\begin{proof}
We continue the proof of Prp. \ref{prpSuperBubbles} with the same notations.

\emph{Step 1: Let $A(r,R):=B_R\setminus\overline{B_r}$ denote the (open) annulus with radii $0<r<R$.
We prove the identity}
\begin{align*}
\lim_{\varepsilon\rightarrow 0}\lim_{\nu\rightarrow\infty}
\int_{A\left(\frac{\delta^{\nu}}{\varepsilon},\varepsilon\right)}\abs{\psi^{\nu}}^4\dvol_{S^2}=0
\end{align*}
Let $0\leq\chi^{\nu,\varepsilon}\leq 1$ be a cutoff function such that
\begin{align*}
\chi^{\nu,\varepsilon}\in C^{\infty}\left(A\left(\frac{\delta^{\nu}}{2\varepsilon},2\varepsilon\right)\right)\;
\textrm{with compact support}\;&,\qquad
\chi^{\nu,\varepsilon}\equiv 1\quad\mathrm{in}\quad A\left(\frac{\delta^{\nu}}{\varepsilon},\varepsilon\right)\\
\abs{d\chi^{\nu,\varepsilon}}\leq\frac{C}{\varepsilon}\quad\mathrm{in}\quad A(\varepsilon,2\varepsilon)
\;&,\qquad\abs{d\chi^{\nu,\varepsilon}}\leq\frac{C\varepsilon}{\delta^{\nu}}\quad\mathrm{in}\quad
A\left(\frac{\delta^{\nu}}{2\varepsilon},\frac{\delta^{\nu}}{\varepsilon}\right)
\end{align*}
for some constant $C>0$ (independent of $\nu$ and $\varepsilon$).
For an explicit construction of such a function $\chi^{\nu,\varepsilon}$, consult Sec. 2.18 in \cite{Alt02}.
We abbreviate
\begin{align*}
A_1^{\nu,\varepsilon}:=A\left(\frac{\delta^{\nu}}{\varepsilon},\varepsilon\right)\;,\qquad
A_2^{\nu,\varepsilon}:=A\left(\frac{\delta^{\nu}}{2\varepsilon},2\varepsilon\right)
\end{align*}

Choose $\kappa>0$ sufficiently small such that $B_{\kappa}(\varphi(0))\subseteq X$ is
contained in a coordinate chart of $X$.
We aim at applying Lem. \ref{lemDbarEstimate} with $V$, $J_0$ and $J$ replaced by
$B_{\kappa}(\varphi(0))$, $J(\varphi(0))$ and $J^{\nu}$, respectively.
Upon making $\kappa$ smaller if necessary we may assume, by convergence of $J^{\nu}$,
that $J^{\nu}(p)$ is sufficiently close to $J(\varphi(0))$ such that $\phi^{\nu}_p$
(as defined in Lem. \ref{lemDbarEstimate}) is bijective
for all $p\in B_{\kappa}(\varphi(0))$ and for large $\nu$.
Let $\gamma>0$ be the resulting constant from the second part of Prp. \ref{prpBubblesConnect},
depending on $\kappa$. By definition, points $z\in A_2^{\nu,\varepsilon}$ satisfy
$z<2\varepsilon$ and $\frac{1}{2\varepsilon}<(\delta^{\nu})^{-1}z$.
Therefore, there is a (sufficiently small) constant $\varepsilon_0>0$ such that,
for all $\varepsilon\leq\varepsilon_0$ and for all $z\in A_2^{\nu,\varepsilon}$, we yield
\begin{align*}
\scal[d_{S^2}]{z}{0}+\scal[d_{S^2}]{(\delta^{\nu})^{-1}(z)}{\infty}<\gamma
\end{align*}

By Prp. \ref{prpBubblesConnect}, it then follows that $\varphi^{\nu}(z)\in B_{\kappa}(\varphi(0))$
for all $z\in A_2^{\nu,\varepsilon}$, provided that $\varepsilon\leq\varepsilon_0$ and $\nu$ is sufficiently
large, what we shall assume in the following.
By construction of $\kappa$, the bundle $L_d\otimes_{J^{\nu}}(\varphi^{\nu})^*TX$,
restricted to $A_2^{\nu,\varepsilon}$, is thus trivial and, by Lem. \ref{lemLocalGlobalEstimate},
the bundle norm (induced by the Hermitian metric from (\ref{eqnH2}))
is equivalent to the standard norm on $\bR^{2n}$. Moreover, the constants by which these
two norms are estimated against each other depend only on $J^{\nu}$,
and thus have a uniform upper bound. Restricted to $A_2^{\nu,\varepsilon}$, we shall therefore, in the
following, blur the distinction between the bundle norm and the standard norm on $\bR^{2n}$.

We estimate
\begin{align*}
\norm{\psi^{\nu}}_{A_1^{\nu,\varepsilon},0,4}
&\leq\norm{\chi^{\nu,\varepsilon}\psi^{\nu}}_{B_{2\varepsilon_0}(0),0,4}\\
&=\norm{((\phi^{\nu})^{-1}\circ\varphi^{\nu})\cdot
(\phi^{\nu}\circ\varphi^{\nu})\cdot\chi^{\nu,\varepsilon}\psi^{\nu}}_{B_{2\varepsilon_0},0,4}\\
&\leq\norm{(\phi^{\nu})^{-1}}_{B_{\kappa}(\varphi(0)),0,\infty}
\norm{(\phi^{\nu}\circ\varphi^{\nu})\cdot\chi^{\nu,\varepsilon}\psi^{\nu}}_{B_{2\varepsilon_0}(0),0,4}\\
&\leq C_1\norm{(\phi_i^{\nu}\circ\varphi^{\nu})\cdot\chi^{\nu,\varepsilon}\psi^{\nu}}_{B_{2\varepsilon_0}(0),1,\frac{4}{3}}
\end{align*}
where, for the last inequality, we used boundedness of the norm involving $\phi^{\nu}$ (by
convergence of $J^{\nu}$) as well as the Sobolev embedding (\ref{eqnSobolev})
with $k=1$ and $p=\frac{4}{3}$. By Lem. \ref{lemDbarEstimate}, with input data as above
as well as $U':=B_{2\varepsilon_0}(0)$ and $U:=B_{3\varepsilon_0}(0)$, we further obtain
\begin{align*}
&\norm{\psi^{\nu}}_{A_1^{\nu,\varepsilon},0,4}\\
&\quad\leq C_2\left(\norm{\abs{d\varphi^{\nu}}\abs{\chi^{\nu,\varepsilon}\psi^{\nu}}}_{B_{3\varepsilon_0},0,\frac{4}{3}}
+\norm{(\partial_s+\tJ^{\nu}\partial_t)(\chi^{\nu,\varepsilon}\psi^{\nu})}_{B_{3\varepsilon_0},0,\frac{4}{3}}
+\norm{\chi^{\nu,\varepsilon}\psi^{\nu}}_{B_{3\varepsilon_0},0,\frac{4}{3}}\right)\\
&\quad=C_2\left(\norm{\abs{d\varphi^{\nu}}\abs{\chi^{\nu,\varepsilon}\psi^{\nu}}}_{A_2^{\nu,\varepsilon},0,\frac{4}{3}}
+\norm{(\partial_s+\tJ^{\nu}\partial_t)(\chi^{\nu,\varepsilon}\psi^{\nu})}_{A_2^{\nu,\varepsilon},0,\frac{4}{3}}
+\norm{\chi^{\nu,\varepsilon}\psi^{\nu}}_{A_2^{\nu,\varepsilon},0,\frac{4}{3}}\right)\\
&\quad\leq C_2\left(\norm{d\varphi^{\nu}}_{A_2^{\nu,\varepsilon},0,2}\norm{\chi^{\nu,\varepsilon}\psi^{\nu}}_{A_2^{\nu,\varepsilon},0,4}
+\norm{(\partial_s+\tJ^{\nu}\partial_t)(\chi^{\nu,\varepsilon}\psi^{\nu})}_{A_2^{\nu,\varepsilon},0,\frac{4}{3}}\right.\\
&\qquad\qquad\qquad\qquad\qquad\qquad\qquad\qquad\qquad\qquad\qquad\left.+\norm{\chi^{\nu,\varepsilon}}_{A_2^{\nu,\varepsilon},0,2}
\norm{\psi^{\nu}}_{A_2^{\nu,\varepsilon},0,4}\right)
\end{align*}
Here, the equality is due to $\chi^{\nu,\varepsilon}\equiv 0$ outside $A_2^{\nu,\varepsilon}$, and the
last estimate is H\"older's inequality in the form (\ref{eqnHoelder43}).
In particular, the constants $C_1$ and $C_2$ are independent of $\nu$ and $\varepsilon$.

We must show that every summand in the previous estimate vanishes in the limit.
For the third term, this is clear
for, by definition, $E(\psi^{\nu},B_r)=\norm{\psi^{\nu}}_{B_r,0,4}$ is uniformly bounded and
$\norm{\chi^{\nu,\varepsilon}}_{A_2^{\nu,\varepsilon},0,2}\leq\norm{1}_{B_{2\varepsilon},0,2}$
goes to zero as $\varepsilon\rightarrow 0$.
As for the first summand, the second factor $\norm{\chi^{\nu,\varepsilon}\psi^{\nu}}_{A_2^{\nu,\varepsilon},0,4}$
is uniformly bounded whereas, by (\ref{eqnBubbleEnergyIdentity}), the first factor
\begin{align*}
\lim_{\varepsilon\rightarrow 0}\lim_{\nu\rightarrow\infty}
\norm{d\varphi^{\nu}}_{A_2^{\nu,\varepsilon},0,2}^2
&=\lim_{\varepsilon\rightarrow 0}\lim_{\nu\rightarrow\infty}\norm{d\varphi^{\nu}}_{B_{2\varepsilon},2}^2
-\lim_{\varepsilon\rightarrow 0}\lim_{\nu\rightarrow\infty}\norm{d\varphi^{\nu}}_{B_{\frac{\delta^{\nu}}{2\varepsilon}},2}^2\\
&=\lim_{\varepsilon\rightarrow 0}\lim_{\nu\rightarrow\infty}\norm{d\varphi^{\nu}}_{B_{\varepsilon},2}^2
-\lim_{R\rightarrow\infty}\lim_{\nu\rightarrow\infty}\norm{d\varphi^{\nu}}_{B_{R\delta^{\nu}},2}^2
=0
\end{align*}
vanishes in the limit.

Hence, it remains to consider the second term in the previous estimate.
By (\ref{eqnLocalHolomorphicSupercurve}) there are $\tD^{\nu}$,
depending on $d\varphi^{\nu}$ and $(A^{\nu},J^{\nu})$, such that
\begin{align*}
\norm{(\partial_s+\tJ^{\nu}\partial_t)(\chi^{\nu,\varepsilon}\psi^{\nu})}_{A_2^{\nu,\varepsilon},0,\frac{4}{3}}
&=\norm{\chi^{\nu,\varepsilon}((\partial_s+\tJ^{\nu}\partial_t)\psi^{\nu})
+(\dbar\chi^{\nu,\varepsilon})\psi^{\nu}}_{A_2^{\nu,\varepsilon},0,\frac{4}{3}}\\
&=\norm{-\chi^{\nu,\varepsilon}\tD^{\nu}\psi^{\nu}+(\dbar\chi^{\nu,\varepsilon})\psi^{\nu}}_{A_2^{\nu,\varepsilon},0,\frac{4}{3}}\\
&\leq C_4\norm{\abs{d\varphi^{\nu}}\abs{\psi^{\nu}}}_{A_2^{\nu,\varepsilon},0,\frac{4}{3}}
+\norm{(\dbar\chi^{\nu,\varepsilon})\psi^{\nu}}_{A_2^{\nu,\varepsilon},0,\frac{4}{3}}\\
&\leq C_5\norm{d\varphi^{\nu}}_{A_2^{\nu,\varepsilon},0,2}\norm{\psi^{\nu}}_{A_2^{\nu,\varepsilon},0,4}
+\norm{(\dbar\chi^{\nu,\varepsilon})\psi^{\nu}}_{A_2^{\nu,\varepsilon},0,\frac{4}{3}}
\end{align*}
with $\dbar:=\partial_s+i\partial_t$ using that, by definition, $\tJ^{\nu}$ acts on $\psi^{\nu}$ via $i$.
The first term in the last estimate is already known to vanish in the limit, and we estimate the second
one as follows.
\begin{align*}
&\norm{(\dbar\chi^{\nu,\varepsilon})\psi^{\nu}}_{A_2^{\nu,\varepsilon},0,\frac{4}{3}}\\
&\qquad=\norm{(\dbar\chi^{\nu,\varepsilon})\psi^{\nu}}
_{A\left(\frac{\delta^{\nu}}{2\varepsilon},\frac{\delta^{\nu}}{\varepsilon}\right),0,\frac{4}{3}}
+\norm{(\dbar\chi^{\nu,\varepsilon})\psi^{\nu}}_{A(\varepsilon,2\varepsilon),0,\frac{4}{3}}\\
&\qquad\leq\norm{\dbar\chi^{\nu,\varepsilon}}_{A\left(\frac{\delta^{\nu}}{2\varepsilon},\frac{\delta^{\nu}}{\varepsilon}\right),0,2}
\norm{\psi^{\nu}}_{A\left(\frac{\delta^{\nu}}{2\varepsilon},\frac{\delta^{\nu}}{\varepsilon}\right),0,4}
+\norm{\dbar\chi^{\nu,\varepsilon}}_{A(\varepsilon,2\varepsilon),0,2}
\norm{\psi^{\nu}}_{A(\varepsilon,2\varepsilon),0,4}\\
&\qquad=\norm{d\chi^{\nu,\varepsilon}}_{A\left(\frac{\delta^{\nu}}{2\varepsilon},\frac{\delta^{\nu}}{\varepsilon}\right),0,2}
\norm{\tpsi^{\nu}}_{A\left(\frac{1}{2\varepsilon},\frac{1}{\varepsilon}\right),0,4}
+\norm{d\chi^{\nu,\varepsilon}}_{A(\varepsilon,2\varepsilon),0,2}
\norm{\psi^{\nu}}_{A(\varepsilon,2\varepsilon),0,4}\\
&\qquad=:I+II
\end{align*}
For the last equation, we used
$\abs{\dbar\chi^{\nu,\varepsilon}}^2=\abs{\partial_s\chi^{\nu,\varepsilon}}^2
+\abs{\partial_t\chi^{\nu,\varepsilon}}^2=\abs{d\chi^{\nu,\varepsilon}}^2$
as well as conformal invariance (\ref{eqnSuperEnergyInvariance}) of the energy of $\psi^{\nu}$ which,
for the case $d=-1$ considered, coincides with the $L^4$-norm.

To proceed further, we use the estimates for the first derivative of $\chi^{\nu,\varepsilon}$, which form
part of its defining properties summarised above, to obtain that $I$ vanishes in the limit
by the estimates
\begin{align*}
\lim_{\varepsilon\rightarrow 0}\lim_{\nu\rightarrow\infty}I
\leq C_6\lim_{\varepsilon\rightarrow 0}\lim_{\nu\rightarrow\infty}
\left(\int_{A\left(\frac{\delta^{\nu}}{2\varepsilon},\frac{\delta^{\nu}}{\varepsilon}\right)}
\frac{\varepsilon^2}{(\delta^{\nu})^2}\right)^{\frac{1}{2}}
\norm{\tpsi^{\nu}}_{A\left(\frac{1}{2\varepsilon},\frac{1}{\varepsilon}\right),4}
\leq C_7\lim_{\varepsilon\rightarrow 0}\norm{\tpsi}_{A\left(\frac{1}{2\varepsilon},\frac{1}{\varepsilon}\right),4}=0
\end{align*}
for some constants $C_6,C_7>0$ and, similarly,
\begin{align*}
\lim_{\varepsilon\rightarrow 0}\lim_{\nu\rightarrow\infty}II
\leq C_8\lim_{\varepsilon\rightarrow 0}\lim_{\nu\rightarrow\infty}
\left(\int_{A(\varepsilon,2\varepsilon)}\frac{1}{\varepsilon^2}\right)^{\frac{1}{2}}
\norm{\psi^{\nu}}_{A(\varepsilon,2\varepsilon),4}
\leq C_9\lim_{\varepsilon\rightarrow 0}\norm{\psi}_{A(\varepsilon,2\varepsilon),4}=0
\end{align*}
for constants $C_8,C_9>0$.
This concludes the proof of Step 1.

\emph{Step 2: We prove (v).}\\
Using Step 1, we calculate
\begin{align*}
&\lim_{R\rightarrow\infty}\lim_{\nu\rightarrow\infty}\int_{B_{R\delta^{\nu}}}\abs{\psi^{\nu}}^4\dvol_{S^2}\\
&\qquad=\lim_{\varepsilon\rightarrow 0}\lim_{\nu\rightarrow\infty}
\int_{B_{\frac{\delta^{\nu}}{\varepsilon}}}\abs{\psi^{\nu}}^4\dvol_{S^2}\\
&\qquad=\lim_{\varepsilon\rightarrow 0}\lim_{\nu\rightarrow\infty}
\int_{B_{\varepsilon}}\abs{\psi^{\nu}}^4\dvol_{S^2}
-\lim_{\varepsilon\rightarrow 0}\lim_{\nu\rightarrow\infty}
\int_{A\left(\frac{\delta^{\nu}}{\varepsilon},\varepsilon\right)}\abs{\psi^{\nu}}^4\dvol_{S^2}\\
&\qquad=\lim_{\varepsilon\rightarrow 0}\lim_{\nu\rightarrow\infty}
\int_{B_{\varepsilon}}\abs{\psi^{\nu}}^4\dvol_{S^2}
\end{align*}
By Step 4 in the proof of Prp. 4.7.1 in \cite{MS04}, all bubbling points $z_1,\ldots,z_l$ of the sequence
$(\tvarphi^{\nu},\tpsi^{\nu})$ may be assumed to lie in $\overline{B_1}$.
We fix a number $s>1$. Then, by the identity
just established and conformal invariance (\ref{eqnSuperEnergyInvariance}) of the super energy,
the following calculation is valid.
\begin{align*}
m_0^{\psi}&=\lim_{R\rightarrow\infty}\lim_{\nu\rightarrow\infty}E(\psi^{\nu},B_{R\delta^{\nu}})\\
&=\lim_{R\rightarrow\infty}\lim_{\nu\rightarrow\infty}E(\tpsi^{\nu},B_{R})\\
&=\lim_{R\rightarrow\infty}\lim_{\nu\rightarrow\infty}E(\tpsi^{\nu},B_{R}\setminus B_s)
+\lim_{\nu\rightarrow\infty}E(\tpsi^{\nu},B_s)\\
&=\lim_{R\rightarrow\infty}E(\tpsi,B_{R}\setminus B_s)
+\lim_{\nu\rightarrow\infty}E(\tpsi^{\nu},B_s)\\
&=E(\tpsi,S^2\setminus B_s)+\lim_{\varepsilon\rightarrow 0}\lim_{\nu\rightarrow\infty}
E\left(\tpsi^{\nu},B_s\setminus\bigcup_{j=1}^lB_{\varepsilon}(z_j)\right)+\sum_{j=1}^lm_j^{\psi}\\
&=E(\tpsi,S^2)+\sum_{j=1}^lm_j^{\psi}
\end{align*}
This completes the proof.
\end{proof}

\section{Gromov Compactness}
\label{secGromovCompactness}

In this section, we prove Gromov compactness for sequences of holomorphic supercurves
with domain $S^2$ and holomorphic line bundle $L=L_{-1}=S^+$, provided that the
super energy is uniformly bounded. In other words, we show that every such sequence has
a subsequence that converges in a sense to be made precise.
As in the previous section, let $(X,\omega)$ be a compact symplectic manifold and
let $\mA:=\mA(GL(X))$ and $\mJ:=\mJ(X,\omega)$ denote the respective spaces of connections
and $\omega$-tame almost complex structures on $X$. Moreover, we fix $(A,J)\in\mA\times\mJ$.

By the results of Sec. \ref{secBubbling}, convergence cannot be understood without taking
into account the bubbling off of holomorphic superspheres. This results in a tree of holomorphic
supercurves with compatibility conditions for the edges as the limiting object.
The precise notion is that of a stable (holomorphic) supercurve, to be introduced first.
Moreover, we show Gromov compactness for sequences of such stable supercurves and finally
introduce a compact and metrisable topology on the moduli spaces of equivalence classes
with respect to which convergence is equivalent to Gromov convergence.
Throughout, we prove the results for supercurves endowed with marked points,
which might be useful for defining geometric invariants in subsequent work.

The technical difficulties occurring here have, for the most part, already been solved in the
previous sections, most notably by the removable singularity Theorem \ref{thmRemovalOfSingularities}
and the super energy identity in Prp. \ref{prpSuperBubblesEnergy}. With these issues
settled, the following treatment is, to a large extent, parallel to classical Gromov
compactness for holomorphic curves as covered e.g. in Chp. 5 of \cite{MS04}.
We will thus often refer the reader to the arguments provided there and prove
only the sectional parts of our statements, unless it is more instructive to
establish the results a whole.
While the full theory is obtained only for $L=L_{-1}=S^+$, we yield
partial results also for the case $L=L_{-2}=K$.

We recall some preliminaries on trees first, consult App. D in \cite{MS04} for a thorough treatment.
A \emph{tree} is a connected graph without cycles, and as such consists of a finite set $T$
and a relation $E\subseteq T\times T$ such that any two \emph{vertices} (= elements) $\alpha,\beta\in T$
are connected by an edge if and only if $\alpha E\beta$.
The vertices $\alpha_1,\ldots,\alpha_N$ of any tree $(T,E)$ can be ordered such that
the restriction to the subset $T_i:=\{\alpha_1,\ldots,\alpha_i\}$ is a tree for every $i$. Moreover,
for every $i\geq 2$ there is a unique index $j_i<i$ such that $\alpha_{j_i}E\alpha_i$.
A tree $(T,E)$ with $N$ vertices can, therefore, be identified with an integer vector
$(j_2,\ldots,j_N)$ such that $1\leq j_i<i$ for every $i$.
Removing the edge connecting $\alpha E\beta$ results in two subtrees. We denote the
one containing $\beta$ by
\begin{align*}
T_{\alpha\beta}:=\{\gamma\in T\setsep\beta\in[\alpha,\gamma]\}\setminus\{\alpha\}
\end{align*}
where $[\alpha,\gamma]$ is, by definition, the set of elements of $T$ along the chain of
edges connecting $\alpha$ and $\beta$.
Finally, we need the following notion.
An \emph{$n$-labelling} $\Lambda$ of a tree $(T,E)$ is a function
$\Lambda:\{1,\ldots,n\}\rightarrow T\,,\; i\mapsto\alpha_i$
that attaches labels to vertices of $T$.

\begin{Def}[Stable Supercurve]
\label{defStableSupercurve}
Let $n\geq 0$ be a non-negative integer.
A \emph{stable $(A,J)$-holomorphic supercurve from $(S^2,L_d)$ to $X$ with $n$ marked points,
modelled over the labelled tree $(T,E,\Lambda)$} is a tuple
\begin{align*}
((\vvarphi,\vpsi),\vz)=\left(\{\varphi_{\alpha},\psi_{\alpha}\}_{\alpha\in T}
,\{z_{\alpha\beta}\}_{\alpha E\beta},\{\alpha_i,z_i\}_{1\leq i\leq n}\right)
\end{align*}
consisting of a collection of $(A,J)$-holomorphic supercurves
$(\varphi_{\alpha},\psi_{\alpha}):(S^2,L_d)\rightarrow X$ labelled by vertices $\alpha\in T$,
a collection of \emph{nodal points} $z_{\alpha\beta}\in S^2$ labelled by the oriented
edges $\alpha E\beta$, and a sequence of $n$ \emph{marked points} $z_1,\ldots,z_n\in S^2$,
such that the following conditions are satisfied.
\begin{itemize}
\item\emph{Nodal Points:} $\alpha E\beta$ implies $\varphi_{\alpha}(z_{\alpha\beta})=\varphi_{\beta}(z_{\beta\alpha})$
We denote the sets of nodal points on the $\alpha$-sphere by $Z_{\alpha}:=\{z_{\alpha\beta}\setsep\alpha E\beta\}$.
\item\emph{Special Points:} The points $z_{\alpha\beta}$ (for $\alpha E\beta$) and
$z_i$ (such that $\alpha_i=\alpha$)
are pairwise distinct. We denote the set of special points on the $\alpha$-sphere by
$Y_{\alpha}:=Z_{\alpha}\cup\{z_i\setsep\alpha_i=\alpha\}$.
\item\emph{Stability:} If $\varphi_{\alpha}$ is a constant function, then $\#Y_{\alpha}\geq 3$
and $\psi_{\alpha}\equiv 0$.
\end{itemize}
\end{Def}

Note that, by Lem. \ref{lemGhostBubble}, the second part of the stability condition is automatically satisfied
in the cases $d=\deg L_d<0$.

For a stable supercurve $((\vvarphi,\vpsi),\vz)$, we denote
\begin{align*}
E(\vvarphi):=\sum_{\alpha\in T}E(\varphi_{\alpha})\;&,\qquad
E(\vpsi):=\sum_{\alpha\in T}E(\psi_{\alpha})\\
m_{\alpha\beta}(\vvarphi):=\sum_{\gamma\in T_{\alpha\beta}}E(\varphi_{\gamma})\;&,\qquad
m_{\alpha\beta}(\vpsi):=\sum_{\gamma\in T_{\alpha\beta}}E(\psi_{\gamma})
\end{align*}

\begin{Def}
\label{defStableSupercurveEquivalence}
Two stable supercurves $((\vvarphi,\vpsi),\vz)$ and $((\tilde{\vvarphi},\tilde{\vpsi}),\tilde{\vz})$,
modelled over the labelled trees $(T,\Lambda)$ and $(\tilde{T},\tilde{\Lambda})$, respectively,
are called \emph{equivalent} if there exists a tree isomorphism $f:T\rightarrow\tilde{T}$ and a
function $T\rightarrow G,\,\alpha\mapsto m_{\alpha}$ that assigns to each vertex of $T$
a Möbius transformation, such that
\begin{align*}
\tvarphi_{f(\alpha)}\circ m_{\alpha}=\varphi_{\alpha}\;&,\qquad
\tpsi_{f(\alpha)}\circ m_{\alpha}=\psi_{\alpha}\\
\tz_{f(\alpha)f(\beta)}=m_{\alpha}(z_{\alpha\beta})\;&,\qquad\tz_i=m_{\alpha_i}(z_i)
\end{align*}
\end{Def}

\begin{Def}
\label{defStableSuperModuliSpaces}
Let $E>0$ and $\beta\in H_2(X,\bZ)$ be a homology class. We define the \emph{moduli spaces}
\begin{itemize}
\item $\hat{\mM}_{0,d,n}(X;A,J)$ of stable $(A,J)$-holomorphic
supercurves $((\vvarphi,\vpsi),\vz)$ from $(S^2,L_d)$ to $X$ with $n$ marked points,
\item $\hat{\mM}_{0,d,n}(X,\beta,E;A,J)$ of those which represent
$\beta$ in the sense $\beta=\sum_{\alpha\in T}{\varphi_{\alpha}}_*\left[S^2\right]$
and have bounded super energy
$E(\vpsi)\leq E$,
\item $\overline{\mM}_{0,d,n}(X,\beta,E;A,J)$ of equivalence classes, in the sense
of Def. \ref{defStableSupercurveEquivalence}, of stable
supercurves in $\hat{\mM}_{0,d,n}(X,\beta,E;A,J)$.
\end{itemize}
\end{Def}

\begin{Rem}
Since equivalence of stable supercurves is defined by means of Möbius transformations,
the property of representing $\beta$ is obviously invariant and,
by conformal invariance (\ref{eqnSuperEnergyInvariance}) of the super energy,
the same is true for the condition $E(\vpsi)\leq E$.
The moduli space $\overline{\mM}_{0,d,n}(X,\beta,E;A,J)$ of equivalence
classes is hence indeed well-defined.

For the classical energy, it is well-known that there is a constant $E'>0$
(depending on $\beta$) such that $E(\vvarphi)\leq E'$ holds
whenever $\vvarphi$ represents $\beta$.
Uniform boundedness of the super energy $E(\vvarphi)+E(\vpsi)$ will turn out to be
crucial for establishing Gromov compactness (cf. Thm. \ref{thmSuperGromovCompactness} and Thm.
\ref{thmSuperGromovCompactnessStable} below).
\end{Rem}

For the next definition and later use, we introduce the following abbreviation
that we could have used already in the previous section.
Let $U\subseteq S^2$ be an open subset.
A sequence $(\varphi^{\nu},\psi^{\nu})$ is said to \emph{converge u.c.s. on $U$}
to a pair $(\varphi,\psi)$ if it converges to $(\varphi,\psi)$ in the $C^{\infty}$-topology on
every compact subset of $U$
(if it converges \emph{\textbf{u}}niformly with all derivatives on
\emph{\textbf{c}}ompact \emph{\textbf{s}}ubsets).

\begin{Def}[Gromov Convergence]
\label{defSuperGromovConvergence}
Let $(A^{\nu},J^{\nu})\in\mA\times\mJ$ be a sequence that converges to $(A,J)$ in the
$C^{\infty}$-topology, let
\begin{align*}
((\vvarphi,\vpsi),\vz)=\left(\{\varphi_{\alpha},\psi_{\alpha}\}_{\alpha\in T}
,\{z_{\alpha\beta}\}_{\alpha E\beta},\{\alpha_i,z_i\}_{1\leq i\leq n}\right)
\end{align*}
be a stable supercurve and let $(\varphi^{\nu},\psi^{\nu}):(S^2,L_d)\rightarrow X$ be a sequence of
$(A^{\nu},J^{\nu})$-holomorphic supercurves with $n$ distinct marked points $z_1^{\nu},\ldots,z_n^{\nu}\in S^2$.
The sequence
\begin{align*}
((\varphi^{\nu},\psi^{\nu}),\vz^{\nu}):=((\varphi^{\nu},\psi^{\nu}),z_1^{\nu},\ldots,z_n^{\nu})
\end{align*}
is said to \emph{Gromov converge} to $((\vvarphi,\vpsi),\vz)$ if there exists a collection of Möbius
transformations $\{m_{\alpha}^{\nu}\}_{\alpha\in T}^{\nu\in\bN}$ such that the following
axioms are satisfied.
\begin{itemize}
\item\emph{Map:} For every $\alpha\in T$, the sequence
\begin{align*}
(\varphi^{\nu}_{\alpha},\psi^{\nu}_{\alpha})
:=(\varphi^{\nu}\circ m_{\alpha}^{\nu},\psi^{\nu}\circ m_{\alpha}^{\nu}):(S^2,L_d)\rightarrow X
\end{align*}
converges to $(\varphi_{\alpha},\psi_{\alpha})$ u.c.s. on $S^2\setminus Z_{\alpha}$.
\item\emph{Energy:} If $\alpha E\beta$ then
\begin{align*}
m_{\alpha\beta}(\vvarphi)
&=\lim_{\varepsilon\rightarrow 0}\lim_{\nu\rightarrow\infty} E(\varphi_{\alpha}^{\nu},B_{\varepsilon}(z_{\alpha\beta}))\\
m_{\alpha\beta}(\vpsi)
&=\lim_{\varepsilon\rightarrow 0}\lim_{\nu\rightarrow\infty} E(\psi_{\alpha}^{\nu},B_{\varepsilon}(z_{\alpha\beta}))
\end{align*}
\item\emph{Rescaling:} If $\alpha E\beta$ then the sequence
$m_{\alpha\beta}^{\nu}:=(m_{\alpha}^{\nu})^{-1}\circ m_{\beta}^{\nu}$ converges to $z_{\alpha\beta}$
u.c.s. on $S^2\setminus\{z_{\beta\alpha}\}$.
\item\emph{Marked Points:} $z_i=\lim_{\nu\rightarrow\infty}(m_{\alpha_i}^{\nu})^{-1}(z_i^{\nu})$ for $i=1,\ldots,n$.
\end{itemize}
\end{Def}

Setting $\psi^{\nu}:=0$ and $\vpsi:=0$ in Def. \ref{defSuperGromovConvergence} recovers
Gromov convergence for the underlying sequence of holomorphic curves. In particular,
this implies the following proposition, proved as Thm. 5.2.2(ii) in \cite{MS04}.
Connected sums are explained in \cite{Hat01}.

\begin{Prp}
\label{prpSuperGromovConvergence}
Let $((\varphi^{\nu},\psi^{\nu}),\vz^{\nu})$ be a sequence of holomorphic supercurves
with $n$ marked points that Gromov converges
to the stable supercurve $((\vvarphi,\vpsi),\vz)$ as in Def. \ref{defSuperGromovConvergence}.
Then, for sufficiently large $\nu$, the map $\varphi^{\nu}:S^2\rightarrow X$ is homotopic to the connected
sum $\#_{\alpha\in T}\varphi_{\alpha}$.
\end{Prp}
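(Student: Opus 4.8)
The plan is to deduce the statement directly from its classical counterpart, Thm.~5.2.2(ii) in \cite{MS04}, by forgetting the sectional parts $\psi_{\alpha}$ and $\psi^{\nu}$. The only points requiring verification are that the underlying $\varphi$-data forms a classical stable map and that the underlying sequence of holomorphic spheres Gromov converges to it in the classical sense; both are immediate from the definitions, so there is no genuinely hard step, only some bookkeeping matching the super Gromov convergence axioms with the classical ones.

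First I would note that the tuple $(\vvarphi,\vz)=(\{\varphi_{\alpha}\}_{\alpha\in T},\{z_{\alpha\beta}\}_{\alpha E\beta},\{\alpha_i,z_i\}_{1\leq i\leq n})$ underlying the stable supercurve $((\vvarphi,\vpsi),\vz)$ is a stable map in the sense of (\ref{eqnStableMap}): the nodal point and special point conditions in Def.~\ref{defStableSupercurve} are verbatim the classical ones, and the stability clause there requires $\#Y_{\alpha}\geq 3$ whenever $\varphi_{\alpha}$ is constant, which is precisely classical stability. The accompanying requirement $\psi_{\alpha}\equiv 0$ plays no role for the map $\vvarphi$ and is in any case automatic by Lem.~\ref{lemGhostBubble}.

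Next I would check that $\varphi^{\nu}$, with marked points $\vz^{\nu}$, Gromov converges to $(\vvarphi,\vz)$ classically, using the same collection of Möbius transformations $\{m_{\alpha}^{\nu}\}_{\alpha\in T}^{\nu\in\bN}$ supplied by Def.~\ref{defSuperGromovConvergence}. As already observed just before the statement, this is exactly what that definition becomes upon setting $\psi^{\nu}:=0$ and $\vpsi:=0$: u.c.s.\ convergence of $(\varphi_{\alpha}^{\nu},\psi_{\alpha}^{\nu})$ to $(\varphi_{\alpha},\psi_{\alpha})$ on $S^2\setminus Z_{\alpha}$ entails u.c.s.\ convergence of $\varphi_{\alpha}^{\nu}=\varphi^{\nu}\circ m_{\alpha}^{\nu}$ to $\varphi_{\alpha}$, which is the classical Map axiom; the first of the two energy identities in the Energy axiom of Def.~\ref{defSuperGromovConvergence} is precisely the classical energy identity, since $m_{\alpha\beta}(\vvarphi)=\sum_{\gamma\in T_{\alpha\beta}}E(\varphi_{\gamma})$ as in the classical case; and the Rescaling and Marked Points axioms involve only the $m_{\alpha}^{\nu}$ and the marked points and are already stated verbatim as in the classical definition.

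Finally, with a classically Gromov convergent sequence $\varphi^{\nu}\to(\vvarphi,\vz)$ of holomorphic spheres modelled over the tree $(T,E)$ in hand, I would invoke Thm.~5.2.2(ii) in \cite{MS04}, which asserts exactly that $\varphi^{\nu}:S^2\rightarrow X$ is homotopic to the connected sum $\#_{\alpha\in T}\varphi_{\alpha}$ for all sufficiently large $\nu$. This is the assertion of the proposition. The only step demanding care is the second one, namely confirming that each classical axiom is genuinely recovered from its super analogue; once this is granted, the result is a black-box application of the classical theorem.
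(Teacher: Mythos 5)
Your proposal is correct and is exactly the argument the paper uses: the remark preceding the proposition observes that forgetting the sectional data recovers classical Gromov convergence of the underlying sequence of holomorphic spheres to the underlying stable map, and the statement then follows as a direct application of Thm.~5.2.2(ii) in \cite{MS04}. Your additional bookkeeping matching each axiom of Def.~\ref{defSuperGromovConvergence} with its classical counterpart is sound and just makes explicit what the paper leaves implicit.
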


\begin{Thm}[Gromov Compactness]
\label{thmSuperGromovCompactness}
Let $(A^{\nu},J^{\nu})\in\mA\times\mJ$ be a sequence that converges to $(A,J)$ in the
$C^{\infty}$-topology.
Let $(\varphi^{\nu},\psi^{\nu}):(S^2,L_d)\rightarrow X$ be a sequence of $(A^{\nu},J^{\nu})$-holomorphic
supercurves such that the super energy
\begin{align*}
\sup_{\nu}E(\varphi^{\nu},\psi^{\nu})<\infty
\end{align*}
is uniformly bounded, and
$\vz^{\nu}=(z_1^{\nu},\ldots,z_n^{\nu})$ be a sequence of $n$-tuples of pairwise distinct points in $S^2$.
Then, in the case $d=-1$, $(\varphi^{\nu},\psi^{\nu},\vz^{\nu})$
has a Gromov convergent subsequence.
\end{Thm}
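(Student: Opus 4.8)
The plan is to mirror the classical proof of Gromov compactness for $J$-holomorphic curves (e.g. Theorem 5.3.1 in \cite{MS04}), inserting the sectional part $\psi^\nu$ at each stage by invoking the results already proved in Sec. \ref{secBubbling}. The argument proceeds by induction on the number of \emph{soft rescalings}: one builds the tree $(T,E,\Lambda)$ vertex by vertex, and at each step applies the convergence and bubbling propositions to obtain the data on the newly added vertex together with a Möbius sequence $m^\nu_\alpha$.

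\emph{First step: the principal component.} By Thm. \ref{thmConvergenceModBubbling}, applied with $m^\nu_{\alpha_0}:=\id$, there is a subsequence and a finite \emph{bubbling set} $Z\subseteq S^2$ such that $(\varphi^\nu,\psi^\nu)$ converges u.c.s.\ on $S^2\setminus Z$ to an $(A,J)$-holomorphic supercurve $(\varphi_{\alpha_0},\psi_{\alpha_0})$, with the energy concentrations $m^\varphi(z)\geq\hbar$ and $m^\psi(z)\geq 0$ at each $z\in Z$. This gives the root vertex $\alpha_0$. If $\varphi_{\alpha_0}$ is constant and carries fewer than three special points (with $Z_{\alpha_0}$ together with the relevant limiting marked points), the component is unstable and will be discarded at the end by the standard contraction-of-ghost-bubbles procedure; note that here $\psi_{\alpha_0}\equiv 0$ automatically by Lem. \ref{lemGhostBubble}, so no extra work is needed for the sectional part.

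\emph{Inductive step: bubbling at a point $z\in Z$.} For each $z\in Z$, Prp. \ref{prpSuperBubbles} (using hypotheses (a)-(c), which hold by the previous step) produces a subsequence, a rescaling sequence $m^\nu$ with $m^\nu\to z$ u.c.s.\ away from one point $z_\infty$, an $(A,J)$-holomorphic supersphere $(\tvarphi,\tpsi)$, further bubbling points $z_1,\dots,z_l,z_\infty$, and — crucially — the energy identity (iii) for $E(\varphi)$; and in the case $d=-1$, Prp. \ref{prpSuperBubblesEnergy} supplies the additional identity (v), $E(\tpsi,S^2)+\sum_j m_j^\psi=m_0^\psi$, which is exactly the statement that no super energy is lost in the rescaling. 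Prp. \ref{prpBubblesConnect} guarantees that $\tvarphi$ connects to the parent component at the node, i.e.\ $\varphi_{\alpha_0}(z)=\tvarphi(z_\infty)$; one sets $z_{\alpha_0\beta}:=z$, $z_{\beta\alpha_0}:=z_\infty$ for the new vertex $\beta$, and $m^\nu_\beta:=m^\nu_{\alpha_0}\circ m^\nu$. The Rescaling axiom, $m^\nu_{\alpha_0\beta}=(m^\nu_{\alpha_0})^{-1}\circ m^\nu_\beta\to z_{\alpha_0\beta}$ u.c.s.\ away from $z_{\beta\alpha_0}$, is then immediate. One repeats this at every point of every bubbling set; the process terminates because each new nonconstant component carries classical energy $\geq\hbar>0$ while the total is bounded by $\sup_\nu E(\varphi^\nu,\psi^\nu)$, so the number of components with nonconstant $\varphi$ is bounded, and the number of ghost components is controlled by the usual tree/stability combinatorics.

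\emph{Assembling the limit.} After finitely many steps one has a tree $(T,E)$, supercurves $(\varphi_\alpha,\psi_\alpha)$, nodal points, and Möbius sequences $m^\nu_\alpha$ satisfying the Map, Energy, and Rescaling axioms. For the marked points, pass to a further subsequence so that $(m^\nu_{\alpha_i})^{-1}(z_i^\nu)$ converges for each $i$; this determines the labelling $\Lambda$ (choosing $\alpha_i$ so that the limit is not a special point already present, adding a ghost component if two marked points would otherwise collide) and the Marked Points axiom. Collapse any unstable ghost components exactly as in the classical case — on such a component $\psi_\alpha\equiv 0$, so the collapsing does not affect the sectional data — to obtain a genuine stable supercurve $((\vvarphi,\vpsi),\vz)$; the Energy axiom survives this collapse because energy is additive over the tree and ghost components contribute zero. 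This is the required Gromov limit.

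\emph{Main obstacle.} The genuinely new content, and the step I expect to be hardest, is not the combinatorial bookkeeping — that is verbatim the classical argument — but ensuring that the sectional part behaves correctly at each rescaling, i.e.\ that the no-loss-of-super-energy identity (v) holds. That identity is precisely Prp. \ref{prpSuperBubblesEnergy}, whose proof hinges on the annulus estimate of Step 1 there (the vanishing of $\int_{A(\delta^\nu/\varepsilon,\varepsilon)}|\psi^\nu|^4$ in the iterated limit), which in turn requires the $C^0$-estimate for $\psi^\nu$ from Prp. \ref{prpPsiLInfinityBounded} and the Calderon--Zygmund/bootstrapping machinery of Lem. \ref{lemDbarEstimate}. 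This is also the reason the full theorem is restricted to $d=-1$: only there does the sectional energy take the conformally invariant $L^4$-form to which the annulus argument applies, whereas for $d=-2$ one obtains only the partial results alluded to in the text.
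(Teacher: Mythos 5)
Your proposal is correct and follows essentially the same route as the paper: the classical inductive tree construction of McDuff--Salamon, with the sectional part carried along at each stage via Prp.~\ref{prpPsiLInfinityBounded}, Prp.~\ref{prpSuperBubbles}/Prp.~\ref{prpSuperBubblesEnergy}, Lem.~\ref{lemGhostBubble} and (through Thm.~\ref{thmConvergenceModBubbling}) the removal-of-singularities theorem, and with Lem.~\ref{lemMarkedPointsCases}-type case analysis for the marked points. You also correctly single out the super-energy identity (v) of Prp.~\ref{prpSuperBubblesEnergy} as the genuinely new ingredient and the reason for the restriction to $d=-1$; the only (immaterial) deviations are that the paper normalises the first component to have at most one bubbling point rather than starting from $m^{\nu}_{\alpha_0}=\id$, and builds the stable limit directly rather than stabilising afterwards.
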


\begin{proof}[Proof for $n=0$]
Since, by assumption, the super energy is uniformly bounded, we may assume that
$E(\varphi^{\nu})$ and $E(\psi^{\nu})$ converge. We denote the limits by
\begin{align*}
E^{\varphi}:=\lim_{\nu}E(\varphi^{\nu})\;,\qquad E^{\psi}:=\lim_{\nu}E(\psi^{\nu})
\end{align*}
As indicated in the beginning of this section, we describe a tree with $N$ vertices
by an integer vector $(j_2,\ldots,j_N)$. Following the proof of Thm. 5.3.1 in \cite{MS04}
we shall, by induction, construct
\begin{itemize}
\item a tuple $\vx:=((\varphi_1,\psi_1),\ldots,(\varphi_N,\psi_N);j_2,\ldots j_N;z_2,\ldots z_N)$
that consists of $(A,J)$-holomorphic supercurves $(\varphi_i,\psi_i):(S^2,L_d)\rightarrow X$,
positive integers $j_i<1$
for $i\geq 2$, and complex numbers $z_i\in\bC$ with $\abs{z_i}\leq 1$,
\item finite subsets $Z_i\subseteq B_1$ for $i=1,\ldots,N$,
\item sequences of Möbius transformations $\{m_i^{\nu}\}_{\nu\in\bN}$ for $i=1,\ldots,N$
\end{itemize}
such that a suitable subsequence satisfies the following conditions.
\begin{enumerate}
\renewcommand{\labelenumi}{(\roman{enumi})}
\item $(\varphi^{\nu}\circ m_1^{\nu},\psi^{\nu}\circ m_1^{\nu})$ converges
to $(\varphi_1,\psi_1)$ u.c.s on $S^2\setminus Z_1$. For $i=2,\ldots,N$,
$(\varphi^{\nu}\circ m_i^{\nu},\psi^{\nu}\circ m_i^{\nu})$ converges
to $(\varphi_i,\psi_i)$ u.c.s. on $\bC\setminus Z_i$. Moreover, $Z_1\subseteq\{0\}$ and $Z_N=\emptyset$.
\item If $\varphi_i$ is constant then $\#Z_i\geq 2$ and $\psi_i\equiv 0$.
\item The limits
\begin{align*}
m_i^{\varphi}(z)&:=\lim_{\varepsilon\rightarrow 0}\lim_{\nu\rightarrow\infty}
E(\varphi^{\nu}\circ m_i^{\nu},B_{\varepsilon}(z))\\
m_i^{\psi}(z)&:=\lim_{\varepsilon\rightarrow 0}\lim_{\nu\rightarrow\infty}
E(\psi^{\nu}\circ m_i^{\nu},B_{\varepsilon}(z))
\end{align*}
exist for all $z\in Z_i$, and are such that $m_i^{\varphi}>0$, $m_i^{\psi}\geq 0$ holds. Moreover,
if $Z_1=\{0\}$ then
\begin{align*}
E^{\varphi}=E(\varphi_1)+m_1^{\varphi}(0)\;,\qquad E^{\psi}=E(\psi_1)+m_1^{\psi}(0)
\end{align*}
If $i\geq 2$ then $z_i\in Z_{j_i}$ and
\begin{align*}
m_{j_i}^{\varphi}(z_i)=E(\varphi_i)+\sum_{z\in Z_i}m_i^{\varphi}(z)\;,\qquad
m_{j_i}^{\psi}(z_i)=E(\psi_i)+\sum_{z\in Z_i}m_i^{\psi}(z)
\end{align*}
\item If $j_k=j_{k'}$ then $z_k\neq z_{k'}$.
\item For $i=2,\ldots,N$, $\varphi_{j_i}(z_i)=\varphi_i(\infty)$.
\item For $i=2,\ldots N$, $(m_{j_i}^{\nu})^{-1}\circ m_i^{\nu}$ converges to $z_i$ u.c.s.
on $\bC=S^2\setminus\{\infty\}$.
\item For $i=1,\ldots N$, $Z_i=\{z_k\setsep i<k\leq N, j_k=i\}$.
\end{enumerate}
The proof starts by constructing $(\varphi_1,\psi_1)$ to have at most one bubbling point
and then proceeds by induction, constructing the $(\varphi_i,\psi_i)$ and $Z_i$ so as
to satisfy (i)-(vi). When the induction is complete (vii) will also be satisfied.
This is a straightforward generalisation of the proof of Thm. 5.3.1 in \cite{MS04},
which works as follows.

In the base case, $\varphi^{\nu}$ is rescaled to a sequence
of which a subsequence $\varphi'^{\nu}$ converges u.c.s. on $\bC=S^2\setminus\{\infty\}$.
The construction of the rescaled sequence $\psi'^{\nu}$ is analogous, and u.c.s. convergence
of a (further) subsequence of $(\varphi'^{\nu},\psi'^{\nu})$ follows with
Prp. \ref{prpPsiLInfinityBounded}.

For the inductive step, let $l\geq 1$ and suppose, by induction, that $(\varphi_i,\psi_i)$, $j_i$, $z_i$, $Z_i$ and
$\{m_i^{\nu}\}_{\nu\in\bN}$ have been constructed for $i\leq l$ so as to satisfy (i)-(vi)
but not (vii), with $N$ replaced by $l$. We define
$Z_{j;l}:=Z_j\setminus\{z_i\setsep j<i\leq l\,,\;j_i=j\}$
for $j\leq l$. Since (vii) is not satisfied, there exists $j$
such that $Z_{j;l}$ is non-empty. Let $z_{l+1}\in Z_{j;l}$ and apply
Prp. \ref{prpSuperBubbles} and Prp. \ref{prpSuperBubblesEnergy} to the sequence
$(\varphi^{\nu}\circ m_j^{\nu},\psi^{\nu}\circ m_j^{\nu})$ and the point $z_0:=z_{l+1}$
to yield a subsequence, also denoted $\nu$, as well as
Möbius transformations $m^{\nu}$ and a finite set $Z\subseteq B_1$ such that
(i)-(iv) as well as (vi) follow from the conclusions of those propositions.
(v) is established using Prp. \ref{prpBubblesConnect}.

It remains to verify that the induction terminates after finitely many steps,
$\vx$ as constructed is indeed a stable supercurve, and
$(\varphi^{\nu},\psi^{\nu})$ Gromov converges to $\vx$.
This works as in the proof of the classical Gromov compactness theorem.
\end{proof}

For the proof of Thm. \ref{thmSuperGromovCompactness} for $n>0$ marked points, we
need the following two lemmas.

\begin{Lem}
\label{lemSuperNoFurtherBubbling}
Let $(A^{\nu},J^{\nu})\in\mA\times\mJ$ be a sequence that converges to $(A,J)$ in the
$C^{\infty}$-topology, and suppose that the sequence
$((\varphi^{\nu},\psi^{\nu}),z_1^{\nu},\ldots,z_n^{\nu})$ of $(A^{\nu},J^{\nu})$-holomorphic supercurves
with marked points Gromov converges to the stable supercurve
$((\vvarphi,\vpsi),\vz)\in\hat{\mM}_{0,d,n}(X;A,J)$
via the reparametrisation sequences $m_{\alpha}^{\nu}\in G$. Moreover,
let $\alpha E\beta$ be an edge and $m^{\nu}\in G$ be a sequence such that
\begin{enumerate}
\renewcommand{\labelenumi}{(\alph{enumi})}
\item $(m_{\alpha}^{\nu})^{-1}\circ m^{\nu}$ converges to $z_{\alpha\beta}$ u.c.s. on $S^2\setminus\{w_0\}$.
\item $(m_{\beta}^{\nu})^{-1}\circ m^{\nu}$ converges to $z_{\beta\alpha}$ u.c.s. on $S^2\setminus\{w_1\}$.
\end{enumerate}
Then $\varphi^{\nu}\circ m^{\nu}$ converges to $\varphi_{\alpha}(z_{\alpha\beta})=\varphi_{\beta}(z_{\beta\alpha})$
u.c.s. on $S^2\setminus\{w_0,w_1\}$ and
\begin{align*}
\lim_{\nu\rightarrow\infty}E(\varphi^{\nu}\circ m^{\nu},B_r(w_0))=m_{\beta\alpha}(\vvarphi)\,&,\quad
\lim_{\nu\rightarrow\infty}E(\psi^{\nu}\circ m^{\nu},B_r(w_0))=m_{\beta\alpha}(\vpsi)\\
\lim_{\nu\rightarrow\infty}E(\varphi^{\nu}\circ m^{\nu},B_r(w_1))=m_{\alpha\beta}(\vvarphi)\,&,\quad
\lim_{\nu\rightarrow\infty}E(\psi^{\nu}\circ m^{\nu},B_r(w_1))=m_{\alpha\beta}(\vpsi)
\end{align*}
whenever $r<d_{S^2}(w_0,w_1)$. Moreover, if $\alpha_i\in T_{\alpha\beta}$ then $(m^{\nu})^{-1}(z_i^{\nu})$
converges to $w_1$, and if $\alpha_i\in T_{\beta\alpha}$ then $(m^{\nu})^{-1}(z_i^{\nu})$
converges to $w_0$.
\end{Lem}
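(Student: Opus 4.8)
The assertions that concern only the underlying sequence and the Möbius transformations --- the u.c.s.\ convergence of $\varphi^{\nu}\circ m^{\nu}$ to the constant $\varphi_{\alpha}(z_{\alpha\beta})=\varphi_{\beta}(z_{\beta\alpha})$, the two energy identities for $E(\varphi^{\nu}\circ m^{\nu},\cdot)$, and the convergence of the reparametrised marked points $(m^{\nu})^{-1}(z_i^{\nu})$ --- are part of the classical bubbling analysis; since their proofs never involve the section, I would simply cite them from the proofs of Gromov compactness and of the uniqueness of Gromov limits in Chp.\ 5 of \cite{MS04}. What remains is the pair of energy identities for $\psi$,
\begin{align*}
\lim_{\nu\rightarrow\infty}E(\psi^{\nu}\circ m^{\nu},B_r(w_0))=m_{\beta\alpha}(\vpsi)\;,\qquad
\lim_{\nu\rightarrow\infty}E(\psi^{\nu}\circ m^{\nu},B_r(w_1))=m_{\alpha\beta}(\vpsi)
\end{align*}
for $r<d_{S^2}(w_0,w_1)$. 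Interchanging the roles of $(\alpha,w_0)$ and $(\beta,w_1)$ --- which swaps $T_{\alpha\beta}$ and $T_{\beta\alpha}$ --- it suffices to prove the second one.

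The plan is to sandwich $E(\psi^{\nu}\circ m^{\nu},B_r(w_1))=E(\psi^{\nu},m^{\nu}(B_r(w_1)))$ between super energies of bubbles, copying the classical argument but invoking conformal invariance of the super energy (\ref{eqnSuperEnergyInvariance}) wherever \cite{MS04} invokes that of the classical energy. For the upper bound, put $g^{\nu}:=(m_{\alpha}^{\nu})^{-1}\circ m^{\nu}$. Since $r<d_{S^2}(w_0,w_1)$ we have $w_0\notin\overline{B_r(w_1)}$, so hypothesis (a) makes $g^{\nu}$ converge to the constant $z_{\alpha\beta}$ uniformly on $\overline{B_r(w_1)}$, whence $g^{\nu}(B_r(w_1))\subseteq B_{\varepsilon}(z_{\alpha\beta})$ for $\nu$ large and every $\varepsilon>0$. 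Using $m^{\nu}=m_{\alpha}^{\nu}\circ g^{\nu}$, the composition rule (\ref{eqnTransformedConcatenation}) and conformal invariance (\ref{eqnSuperEnergyInvariance}) give $E(\psi^{\nu}\circ m^{\nu},B_r(w_1))=E(\psi_{\alpha}^{\nu},g^{\nu}(B_r(w_1)))\leq E(\psi_{\alpha}^{\nu},B_{\varepsilon}(z_{\alpha\beta}))$, so passing to the limit in $\nu$ and then in $\varepsilon$, together with the Energy axiom of Def.\ \ref{defSuperGromovConvergence}, yields $\limsup_{\nu}E(\psi^{\nu}\circ m^{\nu},B_r(w_1))\leq m_{\alpha\beta}(\vpsi)$.

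For the reverse inequality I would use hypothesis (b). Setting $h^{\nu}:=(m_{\beta}^{\nu})^{-1}\circ m^{\nu}$, the convergence $h^{\nu}\rightarrow z_{\beta\alpha}$ u.c.s.\ on $S^2\setminus\{w_1\}$ forces the inverses to converge to $w_1$ u.c.s.\ on $S^2\setminus\{z_{\beta\alpha}\}$; composing $(h^{\nu})^{-1}$ with the Rescaling sequences of Def.\ \ref{defSuperGromovConvergence} along the unique edge-chain from $\beta$ to an arbitrary vertex $\gamma\in T_{\alpha\beta}$ then shows that $(m^{\nu})^{-1}\circ m_{\gamma}^{\nu}\rightarrow w_1$ u.c.s.\ on $S^2\setminus\{\zeta_{\gamma}\}$ for a suitable nodal point $\zeta_{\gamma}\in Z_{\gamma}$. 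Hence, for arbitrary compact sets $K_{\gamma}\subseteq S^2\setminus Z_{\gamma}$, one has $m_{\gamma}^{\nu}(K_{\gamma})\subseteq m^{\nu}(B_r(w_1))$ for $\nu$ large; since the sets $m_{\gamma}^{\nu}(K_{\gamma})$, $\gamma\in T$, are pairwise disjoint for large $\nu$ (a standard consequence of Gromov convergence, cf.\ Chp.\ 5 of \cite{MS04}), monotonicity and additivity of the energy together with conformal invariance (\ref{eqnSuperEnergyInvariance}) give $E(\psi^{\nu}\circ m^{\nu},B_r(w_1))\geq\sum_{\gamma\in T_{\alpha\beta}}E(\psi_{\gamma}^{\nu},K_{\gamma})$. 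As $\psi_{\gamma}^{\nu}\rightarrow\psi_{\gamma}$ u.c.s.\ on $S^2\setminus Z_{\gamma}$ and the sum is finite, letting $\nu\rightarrow\infty$ and then exhausting each $S^2\setminus Z_{\gamma}$ by the $K_{\gamma}$ (the finite set $Z_{\gamma}$ carries no energy) yields $\liminf_{\nu}E(\psi^{\nu}\circ m^{\nu},B_r(w_1))\geq\sum_{\gamma\in T_{\alpha\beta}}E(\psi_{\gamma})=m_{\alpha\beta}(\vpsi)$, which together with the upper bound proves the identity.

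The only genuinely new ingredient beyond the classical proof is that conformal invariance of the super energy (\ref{eqnSuperEnergyInvariance}) stands in for that of the classical energy at each step. The main obstacle is the delicate part --- showing that $m^{\nu}(B_r(w_1))$ sweeps out precisely the spheres indexed by $T_{\alpha\beta}$ while remaining trapped in an arbitrarily small neighbourhood of $z_{\alpha\beta}$ in the $m_{\alpha}^{\nu}$-frame, i.e.\ the bookkeeping with the reparametrisation sequences along the tree --- but this is word for word the bookkeeping underlying the classical energy identity, so I would invoke \cite{MS04} for it rather than reproduce it.
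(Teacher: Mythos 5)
Your proposal is correct and follows essentially the same route as the paper: the paper likewise delegates the statements about $\varphi^{\nu}$ and the marked points to Lem.~5.4.2 of \cite{MS04} and notes that the energy identities for $\vpsi$ follow by the analogous sandwich argument with conformal invariance (\ref{eqnSuperEnergyInvariance}) of the super energy replacing that of the classical energy. You have in fact spelled out the upper and lower bounds that the paper leaves implicit, and both halves of your sandwich are sound.
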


\begin{proof}
The statements concerning $\varphi^{\nu}$ and $z_i^{\nu}$ are shown in Lem. 5.4.2 of \cite{MS04},
and thus it remains to establish the energy identities concerning $\vpsi$.
Using conformal invariance (\ref{eqnSuperEnergyInvariance}) of the super energy,
the proof is analogous and thus omitted here.
\end{proof}

The following lemma, which is proved as Lem. 5.3.3 in \cite{MS04}, concerns only the
underlying sequence of holomorphic curves with marked points.
We formulate it in terms of holomorphic supercurves.

\begin{Lem}
\label{lemMarkedPointsCases}
Suppose that $((\varphi^{\nu},0),z_1^{\nu},\ldots,z_k^{\nu})$ Gromov converges to a stable supercurve
$((\vvarphi,\vec{0}),\vz)\in\hat{\mM}_{0,d,k}(X;A,J)$
modelled over the tree $T$ via $m_{\alpha}^{\nu}\in G$. Let
$\zeta^{\nu}\in S^2\setminus\{z_1^{\nu},\ldots,z_k^{\nu}\}$ be a sequence such that
the limits $\zeta_{\alpha}:=\lim_{\nu\rightarrow\infty}(m_{\alpha}^{\nu})^{-1}(\zeta^{\nu})$ 
exist for all $\alpha\in T$. Then precisely one of the following conditions holds.
\begin{itemize}
\item[(I)] There exists a (unique) vertex $\alpha\in T$ such that
\begin{align*}
\zeta_{\alpha}\notin Y_{\alpha}((\vvarphi,\vec{0}),\vz)=Z_{\alpha}\cup\{z_i\setsep\alpha_i=\alpha\}
\end{align*}
\item[(II)] There exists a (unique) index $i\in\{1,\ldots,k\}$ such that $\zeta_{\alpha_i}=z_i$.
\item[(III)] There exists a (unique) edge $\alpha E\beta$ in $T$ such that $\zeta_{\alpha}=z_{\alpha\beta}$
and $\zeta_{\beta}=z_{\beta\alpha}$.
\end{itemize}
\end{Lem}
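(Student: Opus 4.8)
The key observation is that Lemma \ref{lemMarkedPointsCases} involves the sectional parts only trivially: since $\psi^\nu\equiv 0$ and $\vpsi=\vec 0$, Gromov convergence of $((\varphi^\nu,0),z_1^\nu,\ldots,z_k^\nu)$ in the sense of Def. \ref{defSuperGromovConvergence} is, by construction, nothing but Gromov convergence of the underlying sequence $(\varphi^\nu,z_1^\nu,\ldots,z_k^\nu)$ of $J^\nu$-holomorphic curves with $k$ marked points in the classical sense. The statement is therefore literally Lem. 5.3.3 in \cite{MS04}, and the plan is simply to invoke it. For completeness I would also recall the (purely classical) argument, which rests only on the Rescaling axiom of Def. \ref{defSuperGromovConvergence} together with the fact that the special points on each sphere are pairwise distinct (Def. \ref{defStableSupercurve}).

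The argument I would reproduce runs as follows. Fix an edge $\alpha E\beta$ and write $\zeta_\alpha^\nu:=(m_\alpha^\nu)^{-1}(\zeta^\nu)$, so that $(m_\beta^\nu)^{-1}(\zeta^\nu)=m_{\beta\alpha}^\nu(\zeta_\alpha^\nu)$ with $m_{\beta\alpha}^\nu:=(m_\beta^\nu)^{-1}\circ m_\alpha^\nu$. By the Rescaling axiom, $m_{\beta\alpha}^\nu$ converges to the constant $z_{\beta\alpha}$ uniformly on compact subsets of $S^2\setminus\{z_{\alpha\beta}\}$; hence, if $\zeta_\alpha\neq z_{\alpha\beta}$, then $\zeta_\beta=z_{\beta\alpha}$. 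By symmetry, \emph{every} edge satisfies at least one of $\zeta_\alpha=z_{\alpha\beta}$ and $\zeta_\beta=z_{\beta\alpha}$. I would then orient each edge $\{\alpha,\beta\}$ towards $\beta$ when $\zeta_\alpha=z_{\alpha\beta}$ but $\zeta_\beta\neq z_{\beta\alpha}$, and call it \emph{special} when both equalities hold; by the previous step every edge is directed or special. Because the special points on the $\alpha$-sphere are distinct, the equality $\zeta_\alpha=z_{\alpha\gamma}$ for one neighbour $\gamma$ forces every other edge at $\alpha$ to point towards $\alpha$, so this orientation propagates consistently along the tree.

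A short combinatorial argument on the finite tree $T$ then produces exactly one of three pictures. If there is a special edge $\alpha E\beta$, all remaining edges flow towards it; this edge is then unique, no vertex is ``good'' and no marked point is hit (the relevant $\zeta_\gamma$ are nodal points, hence distinct from marked points on the same sphere), so only case (III) occurs. If there is no special edge, the orientation has a unique sink vertex $\alpha^*$, at which $\zeta_{\alpha^*}\notin Z_{\alpha^*}$; one then distinguishes according to whether $\zeta_{\alpha^*}=z_i$ for some $i$ with $\alpha_i=\alpha^*$ (case (II)) or not (case (I)), and in both branches the remaining vertices and indices are of ``nodal type'', which gives the uniqueness clause and excludes the other two alternatives. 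I do not anticipate a genuine obstacle here, since everything is classical and already contained in \cite{MS04}; the only point requiring mild care is the verification that (I), (II) and (III) are mutually exclusive, which is exactly where the pairwise distinctness of special points enters.
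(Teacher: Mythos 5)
Your proposal is correct and matches the paper's treatment: the paper gives no independent proof but simply observes that the lemma concerns only the underlying sequence of holomorphic curves with marked points and cites Lem.\ 5.3.3 in \cite{MS04}, which is exactly your reduction. The classical tree-orientation argument you reproduce for completeness is sound and is the content of that cited lemma.
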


\begin{proof}[Proof of Thm. \ref{thmSuperGromovCompactness} for $n>0$]
Let $((\varphi^{\nu},\psi^{\nu}),z_1^{\nu},\ldots,z_n^{\nu})$ denote a sequence of $(A^{\nu},J^{\nu})$-holomorphic
supercurves, each with $n$ distinct marked points. The strategy is to prove, by induction over $k$, that a subsequence
of $((\varphi^{\nu},\psi^{\nu}),z_1^{\nu},\ldots,z_k^{\nu})$ Gromov converges to a stable supercurve
\begin{align*}
((\vvarphi,\vpsi),\vz)=\left(\{\varphi_{\alpha},\psi_{\alpha}\}_{\alpha\in T}
,\{z_{\alpha\beta}\}_{\alpha E\beta},\{\alpha_i,z_i\}_{1\leq i\leq k}\right)
\in\hat{\mM}_{0,d,k}(X;A,J)
\end{align*}
via Möbius transformations $m_{\alpha}^{\nu}$.
We have already proved that this holds for $k=0$.
Let $k\geq 1$ and assume, by induction, that the statement has been established for $k-1$.
Passing to a further subsequence, if necessary, we may assume that the limits
$z_{\alpha k}:=\lim_{\nu}(m_{\alpha}^{\nu})^{-1}(z_k^{\nu})$
exist for all vertices $\alpha$. Hence, we may apply Lem. \ref{lemMarkedPointsCases},
by which precisely one of the conditions I, II or III holds.
In each case, the proof of Thm. 5.3.1 in \cite{MS04} shows that the underlying sequence $(\varphi^{\nu},z_1^{\nu},\ldots,z_k^{\nu})$ of $J^{\nu}$-holomorphic curves with $k$ marked points has a Gromov convergent subsequence.
The remaining axioms of Gromov convergence as in Def. \ref{defSuperGromovConvergence},
concerning the sequence $\psi^{\nu}$, are then shown similarly,
using Prp. \ref{prpPsiLInfinityBounded} and Lem. \ref{lemGhostBubble}.
\end{proof}

\begin{Thm}[Uniqueness of the Limit]
\label{thmSuperGromovCompactnessUniqueness}
Let $(A^{\nu},J^{\nu})\in\mA\times\mJ$ be a sequence that converges to $(A,J)$ in the
$C^{\infty}$-topology, and let $((\varphi^{\nu},\psi^{\nu}),z_1^{\nu},\ldots,z_n^{\nu})$
be a sequence of $(A^{\nu},J^{\nu})$-holomorphic
supercurves with $n$ distinct marked points that Gromov converges to two stable supercurves
$((\vvarphi,\vpsi),\vz)$ and $((\tilde{\vvarphi},\tilde{\vpsi}),\tilde{\vz})$.
Then $((\vvarphi,\vpsi),\vz)$ and $((\tilde{\vvarphi},\tilde{\vpsi}),\tilde{\vz})$ are equivalent
(in the sense of Def. \ref{defStableSupercurveEquivalence}).
\end{Thm}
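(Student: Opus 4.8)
The plan is to reduce the statement to classical uniqueness of the Gromov limit for holomorphic curves and then to dispatch the sectional parts by a short argument built on the cocycle identity (\ref{eqnTransformedConcatenation}) for transformed sections. First I would observe that the axioms of Def.~\ref{defSuperGromovConvergence}, read off only for the component maps $\varphi^{\nu}$ (the Map axiom restricted to $\varphi_{\alpha}^{\nu}$, the Energy axiom for $\vvarphi$, and the Rescaling and Marked Points axioms), are precisely those of classical Gromov convergence; hence the underlying sequence $(\varphi^{\nu},z_1^{\nu},\ldots,z_n^{\nu})$ of $J^{\nu}$-holomorphic curves with $n$ marked points Gromov converges, in the usual sense, to both $(\vvarphi,\vz)$ and $(\tilde{\vvarphi},\tilde{\vz})$. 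By uniqueness of the Gromov limit for holomorphic curves (Thm.~5.4.1 in \cite{MS04}), there exist a tree isomorphism $f:T\rightarrow\tilde{T}$ and Möbius transformations $m_{\alpha}\in G$, $\alpha\in T$, realising the equivalence of the underlying curves, i.e.\ $\tvarphi_{f(\alpha)}\circ m_{\alpha}=\varphi_{\alpha}$, $\tz_{f(\alpha)f(\beta)}=m_{\alpha}(z_{\alpha\beta})$ and $\tz_i=m_{\alpha_i}(z_i)$. Moreover, the proof of that theorem produces $m_{\alpha}$ as the limit of the reparametrisation sequence $m_{\alpha}^{\nu,*}:=(\tilde{m}_{f(\alpha)}^{\nu})^{-1}\circ m_{\alpha}^{\nu}\in G$, the convergence being uniform with all derivatives on \emph{all} of $S^2$ rather than merely u.c.s.\ off a point; this is exactly the point where the absence of further bubbling enters (Lem.~5.4.2 in \cite{MS04}, whose supercurve analogue is our Lem.~\ref{lemSuperNoFurtherBubbling}).

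It then remains to establish the one extra identity $\tpsi_{f(\alpha)}\circ m_{\alpha}=\psi_{\alpha}$ with this very same data $f$, $m_{\alpha}$. I would fix $\alpha\in T$, put $\beta:=f(\alpha)$, and use $m_{\alpha}^{\nu}=\tilde{m}_{\beta}^{\nu}\circ m_{\alpha}^{\nu,*}$: since all three maps are Möbius transformations, hence homotopic to the identity, the cocycle property (\ref{eqnTransformedConcatenation}) for the transformed section gives $\psi^{\nu}\circ m_{\alpha}^{\nu}=(\psi^{\nu}\circ\tilde{m}_{\beta}^{\nu})\circ m_{\alpha}^{\nu,*}$, where $\psi\circ m=\Phi_m^{-1}\circ\psi\circ m$ as in Def.~\ref{defTransformedCurve}. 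By the Map axiom applied to the second limit, $\psi^{\nu}\circ\tilde{m}_{\beta}^{\nu}$ converges to $\tpsi_{\beta}$ u.c.s.\ on $S^2\setminus\tilde{Z}_{\beta}$, while $m_{\alpha}^{\nu,*}$ converges to $m_{\alpha}$ with all derivatives on $S^2$; since the bundle maps $\Phi$ compose according to Lem.~\ref{lemBundleHomomorphismLift} and depend continuously on the transformation, the right-hand side converges to $\tpsi_{\beta}\circ m_{\alpha}$ u.c.s.\ on $S^2\setminus m_{\alpha}^{-1}(\tilde{Z}_{\beta})$. On the other hand, by the Map axiom applied to the first limit, the left-hand side converges to $\psi_{\alpha}$ u.c.s.\ on $S^2\setminus Z_{\alpha}$. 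Hence $\psi_{\alpha}$ and $\tpsi_{\beta}\circ m_{\alpha}$ agree off the finite set $Z_{\alpha}\cup m_{\alpha}^{-1}(\tilde{Z}_{\beta})$, and being continuous sections of the same bundle $L_d\otimes_J\varphi_{\alpha}^*TX$ (note $\tvarphi_{\beta}\circ m_{\alpha}=\varphi_{\alpha}$ as maps $S^2\rightarrow X$), they coincide on all of $S^2$. This is the missing identity, so $((\vvarphi,\vpsi),\vz)$ and $((\tilde{\vvarphi},\tilde{\vpsi}),\tilde{\vz})$ are equivalent in the sense of Def.~\ref{defStableSupercurveEquivalence}. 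I would add the remark that the compatibility of the super energies, $m_{\alpha\beta}(\vpsi)=m_{f(\alpha)f(\beta)}(\tilde{\vpsi})$, is then automatic from the identity just proved together with conformal invariance (\ref{eqnSuperEnergyInvariance}), and is therefore not needed as a separate input.

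The main (and essentially only) new ingredient beyond the classical argument is the passage through the cocycle identity (\ref{eqnTransformedConcatenation}): one must verify that u.c.s.\ convergence of the sectional parts survives composition with the reparametrisation sequences $m_{\alpha}^{\nu,*}$ while correctly carrying along the bundle twist $\Phi$, which amounts to the elementary fact that composing a $C^{\infty}_{\mathrm{loc}}$-convergent sequence of sections with a $C^{\infty}$-convergent sequence of diffeomorphisms is again $C^{\infty}_{\mathrm{loc}}$-convergent, enhanced by the compositional behaviour of $\Phi$ from Lem.~\ref{lemBundleHomomorphismLift}. I do not expect a serious obstacle here: the genuinely hard analytic input — removal of singularities (Thm.~\ref{thmRemovalOfSingularities}) and conservation of the super energy (Prp.~\ref{prpSuperBubblesEnergy}) — has already been absorbed into the construction of the limit and into the classical reduction, so that the uniqueness statement is of a bookkeeping nature.
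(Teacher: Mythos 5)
Your proposal is correct and follows essentially the same route as the paper: reduce to the classical uniqueness theorem for the underlying stable maps, extract $f$ and $m_{\alpha}$ as limits of the composed reparametrisation sequences, and then transfer the identity to the sectional parts via the cocycle property (\ref{eqnTransformedConcatenation}) of $\psi\mapsto\psi\circ m$. The paper's version of this last step is the terse chain of equalities $\psi_{\alpha}\circ m_{\alpha}=\lim_{\nu}(\psi^{\nu}\circ m_{\alpha}^{\nu})\circ(m_{\alpha}^{\nu})^{-1}\circ\tilde{m}^{\nu}_{f(\alpha)}=\tpsi_{f(\alpha)}$ ``modulo bubbling''; your explicit handling of the finite exceptional set and of the bundle twist $\Phi$ merely spells out what that phrase suppresses.
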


\begin{proof}
The hypotheses imply, in particular, that the underlying sequence $(\varphi^{\nu},z_1^{\nu},\ldots,z_n^{\nu})$
of $J^{\nu}$-holomorphic spheres with marked points Gromov converges to the two stable maps
$(\vvarphi,\vz)$ and $(\tilde{\vvarphi},\tilde{\vz})$.
By Thm. 5.4.1. in \cite{MS04}, both are equivalent, 
i.e. there is a tree isomorphism $f:T\rightarrow\tilde{T}$ and a collection
of Möbius transformations $\{m_{\alpha}\}_{\alpha\in T}$ such that
\begin{align*}
\tvarphi_{f(\alpha)}=\varphi_{\alpha}\circ m_{\alpha}\;,\qquad
\tz_{f(\alpha)f(\beta)}=m^{-1}_{\alpha}(z_{\alpha\beta})\;,\qquad\tz_i=m^{-1}_{\alpha_i}(z_i)
\end{align*}
By the proof of that theorem,
$m_{\alpha}=\lim_{\nu\rightarrow\infty}(m_{\alpha}^{\nu})^{-1}\circ\tilde{m}^{\nu}_{f(\alpha)}$ holds,
where $\{m_{\alpha}^{\nu}\}_{\alpha\in T}$ and $\{\tilde{m}_{\alpha}^{\nu}\}_{\alpha\in\tilde{T}}$
denote the respective sequences of Möbius transformations from Definition \ref{defSuperGromovConvergence}
of Gromov convergence. In particular, the limit on the right hand side exists.
We thus obtain the following identity.
\begin{align*}
\psi_{\alpha}\circ m_{\alpha}
&=\lim_{\nu\rightarrow\infty}\psi_{\alpha}\circ(m_{\alpha}^{\nu})^{-1}\circ\tilde{m}^{\nu}_{f(\alpha)}
=\lim_{\nu\rightarrow\infty}\left(\psi^{\nu}\circ m_{\alpha}^{\nu}\right)
\circ(m_{\alpha}^{\nu})^{-1}\circ\tilde{m}^{\nu}_{f(\alpha)}\\
&=\lim_{\nu\rightarrow\infty}\psi^{\nu}\circ\tilde{m}^{\nu}_{f(\alpha)}
=\tpsi_{f(\alpha)}
\end{align*}
where the limits are understood modulo bubbling. Therefore, the stable supercurves
$((\vvarphi,\vpsi),\vz)$ and $((\tilde{\vvarphi},\tilde{\vpsi}),\tilde{\vz})$ are equivalent
as a whole, which was to show.
\end{proof}

\subsection{Compactness for Stable Supercurves}

Based on the results for sequences of holomorphic supercurves already achieved, we prove
in this subsection Gromov compactness and uniqueness of limits, up to equivalence,
for sequences of stable supercurves.
For a stable supercurve $((\vvarphi,\vpsi),\vz)$ modelled over a tree $T$, a vertex $\alpha\in T$
and an open set $U_{\alpha}\in S^2$, we denote
\begin{align*}
E_{\alpha}(\vvarphi,U_{\alpha})&:=E(\varphi_{\alpha},U_{\alpha})
+\sum_{\beta\in T,\,\alpha E\beta,\, z_{\alpha\beta}\in U_{\alpha}}m_{\alpha\beta}(\vvarphi)\\
E_{\alpha}(\vpsi,U_{\alpha})&:=E(\psi_{\alpha},U_{\alpha})
+\sum_{\beta\in T,\,\alpha E\beta,\, z_{\alpha\beta}\in U_{\alpha}}m_{\alpha\beta}(\vpsi)
\end{align*}

\begin{Def}[Gromov Convergence]
\label{defSuperGromovConvergenceStable}
Let $(A^{\nu},J^{\nu})\in\mA\times\mJ$ be a sequence that converges to $(A,J)$ in the
$C^{\infty}$-topology. A sequence of stable supercurves
\begin{align*}
((\vvarphi^{\nu},\vpsi^{\nu}),\vz^{\nu})=\left(\{\varphi_{\alpha}^{\nu},\psi_{\alpha}^{\nu}\}_{\alpha\in T^{\nu}}
,\{z^{\nu}_{\alpha\beta}\}_{\alpha E^{\nu}\beta},\{\alpha_i^{\nu},z_i^{\nu}\}_{1\leq i\leq n}\right)
\in\hat{\mM}_{0,d,n}(X;A^{\nu},J^{\nu})
\end{align*}
is said to \emph{Gromov converge} to a stable supercurve
\begin{align*}
((\vvarphi,\vpsi),\vz)=\left(\{\varphi_{\alpha},\psi_{\alpha}\}_{\alpha\in T}
,\{z_{\alpha\beta}\}_{\alpha E\beta},\{\alpha_i,z_i\}_{1\leq i\leq n}\right)
\in\hat{\mM}_{0,d,n}(X;A,J)
\end{align*}
if, for every sufficiently large $\nu$, there exists a surjective tree homomorphism $f^{\nu}:T\rightarrow T^{\nu}$
and a collection of Möbius transformations $\{m_{\alpha}^{\nu}\}_{\alpha\in T}^{\nu\in\bN}$ such that the
following axioms are satisfied.
\begin{itemize}
\item\emph{Map:} For every $\alpha\in T$ the sequence
\begin{align*}
(\varphi^{\nu}_{f^{\nu}(\alpha)}\circ m_{\alpha}^{\nu},
\psi^{\nu}_{f^{\nu}(\alpha)}\circ m_{\alpha}^{\nu}):(S^2,L_d)\rightarrow X
\end{align*}
converges to $(\varphi_{\alpha},\psi_{\alpha})$ u.c.s. on $S^2\setminus Z_{\alpha}$.
\item\emph{Energy:} If $\alpha E\beta$ then
\begin{align*}
m_{\alpha\beta}(\vvarphi)&=\lim_{\varepsilon\rightarrow 0}\lim_{\nu\rightarrow\infty}
E_{f^{\nu}(\alpha)}(\vvarphi^{\nu},m_{\alpha}^{\nu}(B_{\varepsilon}(z_{\alpha\beta})))\\
m_{\alpha\beta}(\vpsi)&=\lim_{\varepsilon\rightarrow 0}\lim_{\nu\rightarrow\infty}
E_{f^{\nu}(\alpha)}(\vpsi^{\nu},m_{\alpha}^{\nu}(B_{\varepsilon}(z_{\alpha\beta})))
\end{align*}
\item\emph{Rescaling:} If $\alpha,\beta\in T$ such that $\alpha E\beta$ and $\nu_j$ is a subsequence such that
$f^{\nu_j}(\alpha)=f^{\nu_j}(\beta)$ then $m_{\alpha\beta}^{\nu_j}:=(m_{\alpha}^{\nu_j})^{-1}\circ m_{\beta}^{\nu_j}$
converges to $z_{\alpha\beta}$ u.c.s. on $S^2\setminus\{z_{\beta\alpha}\}$.
\item\emph{Nodal Points:} If $\alpha,\beta\in T$ such that $\alpha E\beta$ and $\nu_j$ is a subsequence such that
$f^{\nu_j}(\alpha)\neq f^{\nu_j}(\beta)$ then
$z_{\alpha\beta}=\lim_{j\rightarrow\infty}(m_{\alpha}^{\nu_j})^{-1}(z^{\nu_j}_{f^{\nu_j}(\alpha)f^{\nu_j}(\beta)})$.
\item\emph{Marked Points:} $\alpha_i^{\nu}=f^{\nu}(\alpha_i)$ and
$z_i=\lim_{\nu\rightarrow\infty}(m_{\alpha_i}^{\nu})^{-1}(z_i^{\nu})$ for all $i=1,\ldots,n$.
\end{itemize}
\end{Def}

\begin{Thm}[Gromov Compactness]
\label{thmSuperGromovCompactnessStable}
Let $(A^{\nu},J^{\nu})\in\mA\times\mJ$ be a sequence that converges to $(A,J)$ in the
$C^{\infty}$-topology. Let
$((\vvarphi^{\nu},\vpsi^{\nu}),\vz^{\nu})\in\hat{\mM}_{0,d,n}(X;A^{\nu},J^{\nu})$ be a sequence
of stable supercurves such that
\begin{align*}
\sup_{\nu}E(\vvarphi^{\nu})<\infty\;,\qquad\sup_{\nu}E(\vpsi^{\nu})<\infty
\end{align*}
Then, in the case $d=-1$, $((\vvarphi^{\nu},\vpsi^{\nu}),\vz^{\nu})$
has a Gromov convergent subsequence.
\end{Thm}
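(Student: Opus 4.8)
The plan is to reduce the statement to the already-established compactness theorem for sequences of \emph{holomorphic} supercurves, Thm.~\ref{thmSuperGromovCompactness}, by a standard ``diagonal over the vertices'' argument. First I would reindex so that all trees $T^{\nu}$ are compared to a fixed one: since $\sup_{\nu}E(\vvarphi^{\nu})<\infty$ and every nonconstant component contributes at least $\hbar$ to the classical energy while, by stability (cf.\ Def.~\ref{defStableSupercurve}), every constant component carries at least three special points, there is a uniform bound on $\#T^{\nu}$; passing to a subsequence we may assume $T^{\nu}=T$ is a fixed labelled tree with fixed edge relation, and that the combinatorial type of the marked-point labelling is constant. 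This is exactly the reduction carried out in the proof of Thm.~5.5.5 in \cite{MS04}, and it goes through verbatim since it uses only the classical energy bound.

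Next, for each fixed vertex $\alpha\in T$, the sequence of component supercurves $(\varphi_{\alpha}^{\nu},\psi_{\alpha}^{\nu})$ together with its special points $Y_{\alpha}^{\nu}=Z_{\alpha}^{\nu}\cup\{z_i^{\nu}\setsep\alpha_i=\alpha\}$ is a sequence of $(A^{\nu},J^{\nu})$-holomorphic supercurves with a fixed number of marked points; its classical energy is bounded by $\sup_{\nu}E(\vvarphi^{\nu})$ and its sectional energy by $\sup_{\nu}E(\vpsi^{\nu})$, so Thm.~\ref{thmSuperGromovCompactness} (in the case $d=-1$) applies and yields, after passing to a subsequence, Gromov convergence of each $(\varphi_{\alpha}^{\nu},\psi_{\alpha}^{\nu},Y_{\alpha}^{\nu})$ to a stable supercurve. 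Doing this successively over the finitely many vertices of $T$ (a finite diagonal argument) produces one subsequence along which every component converges in the Gromov sense, with reparametrisation sequences $m_{\alpha}^{\nu}\in G$. One then assembles the limiting trees over all vertices into a single tree $T'$, defines the tree homomorphism $f^{\nu}:T'\to T^{\nu}=T$ by collapsing those subtrees attached at a vertex $\alpha$ that arose as bubbles, and checks the \emph{Map}, \emph{Rescaling} and \emph{Nodal Points} axioms of Def.~\ref{defSuperGromovConvergenceStable}. The compatibility at nodes $\varphi_{\alpha}(z_{\alpha\beta})=\varphi_{\beta}(z_{\beta\alpha})$ and the position of the limit nodal points come from Prp.~\ref{prpBubblesConnect} and Lem.~\ref{lemSuperNoFurtherBubbling}, exactly as in the classical case; the analogous control for $\psi_{\alpha}^{\nu}$ on the gluing annuli is supplied by Lem.~\ref{lemSuperNoFurtherBubbling} and does not require anything new beyond conformal invariance (\ref{eqnSuperEnergyInvariance}) of the super energy.

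The only genuinely new point, beyond bookkeeping, is the \emph{Energy} axiom for the sectional part: one must show
\begin{align*}
m_{\alpha\beta}(\vpsi)=\lim_{\varepsilon\to 0}\lim_{\nu\to\infty}
E_{f^{\nu}(\alpha)}\!\bigl(\vpsi^{\nu},m_{\alpha}^{\nu}(B_{\varepsilon}(z_{\alpha\beta}))\bigr)
\end{align*}
for every edge $\alpha E\beta$. For edges that survive ($f^{\nu}(\alpha)\neq f^{\nu}(\beta)$ for large $\nu$) this is immediate from the definition of $E_{\alpha}(\vpsi,U_{\alpha})$ and the u.c.s.\ convergence away from nodal points. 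For edges that get collapsed ($f^{\nu}(\alpha)=f^{\nu}(\beta)$, i.e.\ the $\beta$-side is a bubble), it is precisely the no-loss-of-super-energy statement: the energy concentrated in the shrinking balls equals the super energy of the bubble tree $T'_{\alpha\beta}$, which is the content of identity (v) in Prp.~\ref{prpSuperBubblesEnergy} applied to the rescaled sequence $(\varphi^{\nu}\circ m_{\alpha}^{\nu},\psi^{\nu}\circ m_{\alpha}^{\nu})$ at the point $z_{\alpha\beta}$, summed inductively over the collapsed subtree. I expect this step — carefully matching the indices of Prp.~\ref{prpSuperBubblesEnergy} with the collapsed subtrees and verifying that the nested limits $\lim_{\varepsilon\to 0}\lim_{\nu\to\infty}$ can be taken consistently — to be the main obstacle, though it is conceptually routine once Prp.~\ref{prpSuperBubblesEnergy} is in hand; it mirrors Step~3 in the proof of Thm.~5.5.5 in \cite{MS04}. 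Finally, stability of the limit $((\vvarphi,\vpsi),\vz)$ (in particular $\psi_{\alpha}\equiv 0$ on ghost components) follows from Lem.~\ref{lemGhostBubble}, and it represents the appropriate homology class with super energy $\leq\sup_{\nu}E(\vpsi^{\nu})$ by lower semicontinuity together with the energy identities just verified. This completes the proof.
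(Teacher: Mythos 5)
Your proposal is correct and follows essentially the same route as the paper: pass to a subsequence with a fixed underlying tree (Rem.~\ref{remSuperGromovCompactnessStable}), apply Thm.~\ref{thmSuperGromovCompactness} vertex by vertex with the special points $Y_{\alpha'}^{\nu}$ as marked points, and reassemble the limiting stable supercurves along the original edges into a single limit. The paper is terser about verifying the axioms of Def.~\ref{defSuperGromovConvergenceStable} (it says this holds ``by construction''), whereas you spell out the Energy axiom via Prp.~\ref{prpSuperBubblesEnergy}; this is consistent with, and implicit in, the paper's argument.
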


\begin{Rem}
\label{remSuperGromovCompactnessStable}
The proofs of Thm. \ref{thmSuperGromovCompactnessStable} and the subsequent results
simplify due to the following general principle. By the proof of Thm. 5.5.5 in \cite{MS04}
there are, up to isomorphism, only finitely many $n$-labelled trees which underlie stable
$J^{\nu}$-holomorphic curves with energy bounded by a constant, provided that the
sequence $J^{\nu}$ is convergent. This fact is shown by properties of weighted trees.
\end{Rem}

\begin{proof}[Proof of Thm. \ref{thmSuperGromovCompactnessStable}]
Passing to a subsequence we may assume, by Rem. \ref{remSuperGromovCompactnessStable},
that all trees $T^{\nu}$ are isomorphic to a fixed tree $T'$.
Let $\alpha'\in T'$ be a vertex and, using the notation from Def. \ref{defStableSupercurve},
we denote the set of special (i.e. marked and nodal) points of the stable supercurve with index $\nu$
on the $\alpha'$-sphere by $Y_{\alpha'}^{\nu}$. By Thm. \ref{thmSuperGromovCompactness},
the sequence $\left((\varphi_{\alpha'}^{\nu},\psi_{\alpha'}^{\nu}),Y_{\alpha'}^{\nu}\right)$
of holomorphic supercurves with marked points $Y_{\alpha'}^{\nu}$ has a subsequence which Gromov
converges to a stable supercurve $((\vvarphi_{\alpha'},\vpsi_{\alpha'}),\vz_{\alpha'})$.
We choose the subsequence such that we have this convergence for every $\alpha'\in T'$.
By the (Marked Points) axiom of Gromov convergence (Def. \ref{defSuperGromovConvergence}),
any marked point in $\vz_{\alpha'}$ on the $\alpha_i$-sphere of
$((\vvarphi_{\alpha'},\vpsi_{\alpha'}),\vz_{\alpha'})$ is then identified with either limit
\begin{align*}
z_i=\lim_{\nu\rightarrow\infty}(m_{\alpha_i}^{\nu})^{-1}(z_i^{\nu})\qquad\mathrm{or}\qquad
z_{\alpha'\beta'}=\lim_{\nu\rightarrow\infty}(m_{\alpha_i}^{\nu})^{-1}(z_{\alpha'\beta'}^{\nu})
\end{align*}
for a sequence of (original) marked points $z_i^{\nu}\in Y_{\alpha'}^{\nu}$ or nodal points
$z_{\alpha'\beta'}^{\nu}\in Y_{\alpha'}^{\nu}$ corresponding to the edge
$\alpha'E\beta'$ in $T'$, respectively.

For $\alpha'E\beta'$ in $T'$, we connect the limiting curves
$((\vvarphi_{\alpha'},\vpsi_{\alpha'}),\vz_{\alpha'})$ and
$((\vvarphi_{\beta'},\vpsi_{\beta'}),\vz_{\beta'})$ by joining the underlying trees
with the edge $\alpha'E\beta'$ and making the marked points $z_{\alpha'\beta'}$
and $z_{\beta'\alpha'}$ again nodal points. After connecting limiting curves for all $\alpha'\in T'$
this way, we finally obtain a single stable supercurve $((\vvarphi,\vpsi),\vz)$ to which,
by construction, (the considered subsequence of) $((\vvarphi^{\nu},\vpsi^{\nu}),\vz^{\nu})$ Gromov converges
in the sense of Def. \ref{defSuperGromovConvergenceStable}. This proves the theorem.
\end{proof}

Our next aim is to show uniqueness, up to equivalence, of the limit of a Gromov converging
sequence of stable holomorphic supercurves.
By Prp. \ref{prpInducedGromovConvergence}, to be established beforehand,
this follows from the uniqueness Thm. \ref{thmSuperGromovCompactnessUniqueness} for sequences of
holomorphic supercurves. The proof of Prp. \ref{prpInducedGromovConvergence}, in turn,
requires the following lemma, which we shall also need in Sec. \ref{subsecGromovTopology} below.
Motivated by Rem. \ref{remSuperGromovCompactnessStable}, we state it only in the case when
the tree homomorphisms $f^{\nu}:T\rightarrow T^{\nu}$ are fixed and equal to $f:T\rightarrow T'$.

\begin{Lem}[\cite{MS04}, Lem. 5.5.6]
\label{lemInducedGromovConvergence}
Let $T=(T,E,\Lambda)$ and $T'=(T',E',\Lambda')$ be $n$-labelled trees and $f:T\rightarrow T'$
be a surjective tree homomorphism. Let $\{m_{\alpha}^{\nu}\}_{\alpha\in T}^{\nu\in\bN}$ be a
collection of Möbius transformations that satisfy the (Rescaling), (Nodal Points), and
(Marked Points) axioms in Def. \ref{defSuperGromovConvergenceStable}. Then the following holds.
\begin{enumerate}
\renewcommand{\labelenumi}{(\roman{enumi})}
\item If $\alpha,\beta\in T$ are such that $\alpha\neq\beta$ and $f(\alpha)=f(\beta)$ then the
sequence $m_{\alpha\beta}^{\nu}:=(m_{\alpha}^{\nu})^{-1}\circ m_{\beta}^{\nu}$ converges to
$z_{\alpha\beta}$ u.c.s on $S^2\setminus\{z_{\beta\alpha}\}$.
\item If $\alpha,\beta\in T$ are such that $f(\alpha)\neq f(\beta)$ then
$z_{\alpha\beta}=\lim_{\nu\rightarrow\infty}(m_{\alpha}^{\nu})^{-1}(z^{\nu}_{f(\alpha)f(\beta)})$.
\item For each $\alpha\in T$ and each $i\in\{1,\ldots,n\}$,
$z_{\alpha i}=\lim_{\nu\rightarrow\infty}(m_{\alpha}^{\nu})^{-1}(z^{\nu}_{f(\alpha)i})$.
\end{enumerate}
\end{Lem}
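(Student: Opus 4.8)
The plan is to follow the argument of \cite[Lem.\ 5.5.6]{MS04} essentially verbatim, the point being that the assertion concerns only the reparametrisation sequences $m_\alpha^\nu$ and the combinatorics of the trees $T,T'$ — the sectional components play no role. The only inputs beyond the three listed axioms are that, on every sphere of the limiting stable supercurve $((\vvarphi,\vpsi),\vz)$, the special points are pairwise distinct (Def.\ \ref{defStableSupercurve}), that each fibre $f^{-1}(\alpha')$, $\alpha'\in T'$, is a subtree of $T$, and the usual extension of the notation $z_{\alpha\beta}$ (resp.\ $z^\nu_{f(\alpha)f(\beta)}$, $z^\nu_{f(\alpha)i}$) to non-adjacent vertices as the nodal point on the $\alpha$-sphere (resp.\ $f(\alpha)$-sphere) pointing toward $\beta$ (resp.\ toward $\beta$, toward the marked point $z_i$). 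I would first record the elementary composition principle for convergent sequences of Möbius transformations: if $\phi^\nu\to a$ u.c.s.\ on $S^2\setminus\{b\}$ and $\chi^\nu\to c$ u.c.s.\ on $S^2\setminus\{e\}$ with $c\neq b$, then $\phi^\nu\circ\chi^\nu\to a$ u.c.s.\ on $S^2\setminus\{e\}$, and, in the same situation, $\phi^\nu(w^\nu)\to a$ whenever $w^\nu\to w\neq b$; both follow from the chain rule once one notes that $\chi^\nu(K)$ eventually avoids a fixed neighbourhood of $b$ for every compact $K\subseteq S^2\setminus\{e\}$.

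For (i), with $\alpha\neq\beta$ and $f(\alpha)=f(\beta)$, let $\alpha=\gamma_0,\gamma_1,\dots,\gamma_k=\beta$ be the chain of edges in $T$ joining $\alpha$ and $\beta$. Since $f^{-1}(f(\alpha))$ is a subtree containing $\gamma_0$ and $\gamma_k$, the function $f$ is constant along the whole chain, so the (Rescaling) axiom applies to each edge and gives $m_{\gamma_{j-1}\gamma_j}^\nu\to z_{\gamma_{j-1}\gamma_j}$ u.c.s.\ on $S^2\setminus\{z_{\gamma_j\gamma_{j-1}}\}$. The telescoping identity $m_{\alpha\beta}^\nu=m_{\gamma_0\gamma_1}^\nu\circ\dots\circ m_{\gamma_{k-1}\gamma_k}^\nu$ together with the composition principle, applied inductively from the right — the hypothesis $c\neq b$ being supplied at the $j$-th step by distinctness of the special points $z_{\gamma_j\gamma_{j+1}}$ and $z_{\gamma_j\gamma_{j-1}}$ on the $\gamma_j$-sphere — then yields $m_{\alpha\beta}^\nu\to z_{\gamma_0\gamma_1}=z_{\alpha\beta}$ u.c.s.\ on $S^2\setminus\{z_{\gamma_k\gamma_{k-1}}\}=S^2\setminus\{z_{\beta\alpha}\}$.

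For (ii), with $f(\alpha)\neq f(\beta)$, pass again to the chain $\alpha=\gamma_0,\dots,\gamma_k=\beta$ and let $\gamma_m$ be the last vertex with $f(\gamma_m)=f(\alpha)$; by connectedness of $f^{-1}(f(\alpha))$ the indices $0,\dots,m$ form a contiguous block, and $f(\gamma_m)\neq f(\gamma_{m+1})$, so $z^\nu_{f(\gamma_m)f(\gamma_{m+1})}=z^\nu_{f(\alpha)f(\beta)}$ and the (Nodal Points) axiom gives $(m_{\gamma_m}^\nu)^{-1}(z^\nu_{f(\alpha)f(\beta)})\to z_{\gamma_m\gamma_{m+1}}$. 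If $m=0$ this is already the claim. If $m\geq 1$, write $(m_\alpha^\nu)^{-1}(z^\nu_{f(\alpha)f(\beta)})=m_{\alpha\gamma_m}^\nu\big((m_{\gamma_m}^\nu)^{-1}(z^\nu_{f(\alpha)f(\beta)})\big)$ and combine part (i) for $m_{\alpha\gamma_m}^\nu$ (whose exceptional point $z_{\gamma_m\gamma_{m-1}}$ is distinct from the point $z_{\gamma_m\gamma_{m+1}}$ to which the argument converges) with the second half of the composition principle; the limit is $z_{\alpha\gamma_1}=z_{\alpha\beta}$. Part (iii) is identical, with the chain from $\alpha$ to $\alpha_i$, using the (Marked Points) axiom as base case when $f(\alpha)=f(\alpha_i)$ and the argument of (ii) otherwise.

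The only real obstacle is bookkeeping rather than analysis: keeping track of the extended meaning of $z_{\alpha\beta}$ for non-adjacent vertices, and verifying at each composition step that the two exceptional points do not coincide (which is exactly where the distinctness of special points and the subtree property of the fibres enter). All of this is carried out in detail in \cite{MS04}, and since no step involves the sections $\psi_\alpha$, nothing new is required here.
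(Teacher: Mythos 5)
Your argument is correct and is essentially the proof of Lem.\ 5.5.6 in \cite{MS04}, which the paper itself simply cites without reproducing: the statement involves only the reparametrisations $m_\alpha^\nu$ and the tree combinatorics, so the sectional data $\psi_\alpha$ indeed never enter. The telescoping decomposition along the chain of edges, the composition principle for sequences of Möbius transformations converging to constants, and the use of the subtree property of the fibres $f^{-1}(\alpha')$ together with distinctness of special points to verify the non-coincidence hypothesis at each step are exactly the ingredients of the standard argument.
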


Let $((\vvarphi,\vpsi),\vz)$ be a stable supercurve modelled over the labelled tree $(T,E,\Lambda)$
and let $T_0\subseteq T$ be a subtree. The \emph{restriction of $((\vvarphi,\vpsi),\vz)$ to $T_0$}
is then the stable supercurve with holomorphic supercurves $(\varphi_{\alpha},\psi_{\alpha})$
for $\alpha\in T_0$, nodal points $z_{\alpha\beta}$ for $\alpha,\beta\in T_0$ such that $\alpha E\beta$,
and marked points being the original marked points $(\alpha_i,z_i)$ with $\alpha_i\in T_0$ plus
the original nodal points $(\alpha,z_{\alpha\beta})$ for $\alpha\in T_0$ and $\beta\in T\setminus T_0$
such that $\alpha E\beta$.
For example, the stable supercurve $\left((\varphi_{\alpha'}^{\nu},\psi_{\alpha'}^{\nu}),Y_{\alpha'}^{\nu}\right)$
occurring in the proof of Thm. \ref{thmSuperGromovCompactnessStable} is the restriction of
$((\vvarphi^{\nu},\vpsi^{\nu}),\vz^{\nu})$ to $\{\alpha'\}\subseteq T'$. We have shown
how convergence of the individual restrictions add up to convergence of the original sequence.
The following proposition states the converse, in a sense, which is more involved: Gromov convergence
implies convergence of the (smallest) restrictions. It is needed in the proof that limits are
unique (up to equivalence).

\begin{Prp}
\label{prpInducedGromovConvergence}
Let $(A^{\nu},J^{\nu})\in\mA\times\mJ$ be a sequence that converges to $(A,J)$ in the
$C^{\infty}$-topology.
Let $((\vvarphi^{\nu},\vpsi^{\nu}),\vz^{\nu})\in\hat{\mM}_{0,d,n}(X;A^{\nu},J^{\nu})$
be a sequence of stable supercurves, modelled
over the (constant) labelled tree $(T',E',\Lambda')$, which Gromov converges to the stable
supercurve $((\vvarphi,\vpsi),\vz)\in\hat{\mM}_{0,d,n}(X;A,J)$,
modelled over $(T,E,\Lambda)$, via the surjective tree homomorphism
$f:T\rightarrow T'$. Then, in the cases $d=-2$ and $d=-1$, the sequence
$((\varphi^{\nu}_{\alpha'},\psi^{\nu}_{\alpha'}),Y^{\nu}_{\alpha'})$
of marked $(A^{\nu},J^{\nu})$-holomorphic supercurves Gromov converges in the sense of
Def. \ref{defSuperGromovConvergence} to the restriction of the stable supercurve
$((\vvarphi,\vpsi),\vz)$ to the subtree $f^{-1}(\alpha')\subseteq T$.
\end{Prp}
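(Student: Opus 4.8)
The plan is to verify, axiom by axiom, that $((\varphi^{\nu}_{\alpha'},\psi^{\nu}_{\alpha'}),Y^{\nu}_{\alpha'})$ Gromov converges, in the sense of Def. \ref{defSuperGromovConvergence}, to the restriction of $((\vvarphi,\vpsi),\vz)$ to $T_0:=f^{-1}(\alpha')$, using as reparametrisation sequences the $\{m_{\alpha}^{\nu}\}_{\alpha\in T_0}$ already supplied by the given Gromov convergence of $((\vvarphi^{\nu},\vpsi^{\nu}),\vz^{\nu})$. First I would clear away the combinatorics, all standard properties of tree homomorphisms (cf. App. D of \cite{MS04}): $T_0$ is a subtree of $T$; the set of special points on the $\alpha$-sphere for $\alpha\in T_0$ is literally the same in $T$ and in the restriction (boundary nodes $z_{\alpha\beta}$ with $\beta\in T\setminus T_0$ merely being relabelled as marked points), so the stability conditions, and hence the claim that the restriction is a stable supercurve, are inherited; and there is a bijection between the edges of $T'$ incident to $\alpha'$ and the boundary edges $\gamma E\delta$ of $T_0$ (with $\gamma\in T_0$, $\delta\notin T_0$, $f(\gamma)=\alpha'$), under which $(T')_{\alpha'\beta'}$ corresponds to $T_{\gamma\delta}$, so that for an interior edge $\alpha E\beta$ of $T_0$ one has the disjoint decomposition $T_{\alpha\beta}\setminus T_0=\bigsqcup_{\gamma}T_{\gamma\delta}$. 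The underlying classical curves $(\varphi^{\nu}_{\alpha'},Y^{\nu}_{\alpha'})$ Gromov converge to the restriction of $(\vvarphi,\vz)$ to $T_0$ — in particular $\varphi^{\nu}_{\alpha'}\circ m_{\alpha}^{\nu}$ converges u.c.s. on $S^2\setminus Z_\alpha^{T_0}$, where $Z_\alpha^{T_0}\subseteq Z_\alpha$ denotes the nodal set of the restriction on the $\alpha$-sphere — by the classical theory (see the proof of Thm. 5.5.5 in \cite{MS04}). Granting this, the Rescaling, Nodal and Marked Point axioms for the restriction are immediate from Lem. \ref{lemInducedGromovConvergence} applied to $\{m_{\alpha}^{\nu}\}_{\alpha\in T_0}$ together with the edge bijection above, so all that needs a new argument is the behaviour of the sectional part.

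For the Map axiom concerning $\psi$, I would argue as follows. By conformal invariance (\ref{eqnSuperEnergyInvariance}) the composition $(\varphi^{\nu}_{\alpha'}\circ m_{\alpha}^{\nu},\psi^{\nu}_{\alpha'}\circ m_{\alpha}^{\nu})$ is again $(A^{\nu},J^{\nu})$-holomorphic (Prp. \ref{prpSuperConformalInvariance}) and $E(\psi^{\nu}_{\alpha'}\circ m_{\alpha}^{\nu})=E(\psi^{\nu}_{\alpha'})\leq E(\vpsi^{\nu})$ is uniformly bounded. Together with u.c.s. convergence of $\varphi^{\nu}_{\alpha'}\circ m_{\alpha}^{\nu}$ on $S^2\setminus Z_\alpha^{T_0}$, Prp. \ref{prpPsiLInfinityBounded} (applied on each compact subset and then diagonalised) produces a subsequence along which $\psi^{\nu}_{\alpha'}\circ m_{\alpha}^{\nu}$ converges u.c.s. on $S^2\setminus Z_\alpha^{T_0}$; its limit agrees with $\psi_{\alpha}$ on the dense subset $S^2\setminus Z_\alpha$ by the Map axiom of the given stable Gromov convergence, hence equals $\psi_{\alpha}$ throughout $S^2\setminus Z_\alpha^{T_0}$, and since the limit does not depend on the subsequence the whole sequence converges. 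Note that this is genuinely sharper than the given convergence: there is no sectional bubbling at the boundary nodes. Running the same reasoning near a boundary node $z_{\gamma\delta}$ (which lies in $S^2\setminus Z_\gamma^{T_0}$, and where $\psi_{\gamma}$ is smooth) gives $\lim_{\varepsilon\rightarrow 0}\lim_{\nu\rightarrow\infty}E(\psi^{\nu}_{\alpha'}\circ m_{\gamma}^{\nu},B_{\varepsilon}(z_{\gamma\delta}))=0$, a fact I will reuse below.

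The Energy axiom for $\vpsi$ is where the real work lies, and I expect the bookkeeping to be the main obstacle. Fix an interior edge $\alpha E\beta$ of $T_0$, so $f(\alpha)=f(\beta)=\alpha'$. Using conformal invariance to write $E(\psi^{\nu}_{\alpha'}\circ m_{\alpha}^{\nu},B_{\varepsilon}(z_{\alpha\beta}))=E(\psi^{\nu}_{\alpha'},m_{\alpha}^{\nu}(B_{\varepsilon}(z_{\alpha\beta})))$ and unfolding the Energy axiom of Def. \ref{defSuperGromovConvergenceStable} for this edge yields
\begin{align*}
m_{\alpha\beta}(\vpsi)=\lim_{\varepsilon\rightarrow 0}\lim_{\nu\rightarrow\infty}\left(E(\psi^{\nu}_{\alpha'}\circ m_{\alpha}^{\nu},B_{\varepsilon}(z_{\alpha\beta}))+\sum_{\beta'}m_{\alpha'\beta'}(\vpsi^{\nu})\right)
\end{align*}
the sum being over the edges $\alpha'E'\beta'$ of $T'$ with $(m_{\alpha}^{\nu})^{-1}(z^{\nu}_{\alpha'\beta'})\in B_{\varepsilon}(z_{\alpha\beta})$. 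By Lem. \ref{lemInducedGromovConvergence} the points $(m_{\alpha}^{\nu})^{-1}(z^{\nu}_{\alpha'\beta'})$ converge to special points of the $\alpha$-sphere in $T_0$, so for $\varepsilon$ small and $\nu$ large the sum runs precisely over those $\beta'$ whose associated boundary edge $\gamma E\delta$ has $\gamma\in(T_0)_{\alpha\beta}$. Applying the Energy axiom of Def. \ref{defSuperGromovConvergenceStable} to each such boundary edge (where now $f(\gamma)=\alpha'\neq f(\delta)$), again rewriting via conformal invariance and using the vanishing established at the end of the previous paragraph, gives $\lim_{\nu\rightarrow\infty}m_{\alpha'\beta'}(\vpsi^{\nu})=\sum_{\mu\in T_{\gamma\delta}}E(\psi_{\mu})$; summing and invoking $T_{\alpha\beta}\setminus T_0=\bigsqcup T_{\gamma\delta}$, the correction term equals $\sum_{\mu\in T_{\alpha\beta}\setminus T_0}E(\psi_{\mu})$, and therefore $\lim_{\varepsilon\rightarrow 0}\lim_{\nu\rightarrow\infty}E(\psi^{\nu}_{\alpha'}\circ m_{\alpha}^{\nu},B_{\varepsilon}(z_{\alpha\beta}))=m_{\alpha\beta}(\vpsi)-\sum_{\mu\in T_{\alpha\beta}\setminus T_0}E(\psi_{\mu})=\sum_{\mu\in(T_0)_{\alpha\beta}}E(\psi_{\mu})$, which is exactly the $\vpsi$-quantity $m_{\alpha\beta}$ of the restriction. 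The $\varphi$-version of this identity, and of the Marked Point and Rescaling axioms for the boundary edges, is classical (\cite{MS04}). The delicate points are tracking which external subtrees feed into which node of $T_0$ and the repeated passage between the $m_{\alpha}^{\nu}$-frame and the original frame; I would emphasise that, unlike the results of Sec. \ref{secBubbling}, this argument never reproves the super-energy identity of Prp. \ref{prpSuperBubblesEnergy} but only consumes the energy axiom that the given stable Gromov convergence already provides, which is why it applies uniformly for $d=-2$ and $d=-1$.
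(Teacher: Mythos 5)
Your proposal is correct and follows essentially the same route as the paper: the (Rescaling) and (Marked Points) axioms are read off directly from Def.~\ref{defSuperGromovConvergenceStable} and Lem.~\ref{lemInducedGromovConvergence}, the (Map) axiom for the sectional part is obtained by combining the uniform energy bound with Prp.~\ref{prpPsiLInfinityBounded} and the absence of (sectional) bubbling at the boundary nodes, and the (Energy) axiom is the classical bookkeeping over the decomposition $T_{\alpha\beta}\setminus T_0=\bigsqcup T_{\gamma\delta}$. The only differences are cosmetic: you establish the Map axiom first and deduce the vanishing of the boundary-node energies from it, whereas the paper extracts that vanishing from Lem.~\ref{lemInducedGromovConvergence} first, and you spell out the Energy-axiom computation that the paper dispatches with ``as in the classical case'' (your citation for the classical u.c.s.\ statement should be Prp.~5.5.2 rather than Thm.~5.5.5 of \cite{MS04}).
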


\begin{proof}
Let $T_0:=f^{-1}(\alpha')$ and denote by $((\vvarphi_0,\vpsi_0),\vz_0)$ the restriction of $((\vvarphi,\vpsi),\vz)$
to $T_0$. We prove that the sequences $((\varphi^{\nu}_{\alpha'},\psi^{\nu}_{\alpha'}),Y^{\nu}_{\alpha'})$
and $\{m^{\nu}_{\alpha}\}_{\alpha\in T_0}^{\nu}$ satisfy all axioms of Def. \ref{defSuperGromovConvergence}.
The (Rescaling) and (Marked Points) axioms follow directly from the (Rescaling), (Nodal Points) and (Marked Points)
axioms of Def. \ref{defSuperGromovConvergenceStable}.

To establish the (Map) and (Energy) axioms, we closely follow the argument of the proof of Prp. 5.5.2 in \cite{MS04}. Consider the set
$Z_{0\alpha}:=\{z_{\alpha\beta}\setsep\beta\in T_0,\,\alpha E\beta\}$.
Using Lem. \ref{lemInducedGromovConvergence}, we find
\begin{subequations}\begin{align}
\label{eqnLimMAB}
m_{\alpha\beta}(\vvarphi)=\lim_{\nu\rightarrow\infty}m_{\alpha'\beta'}(\vvarphi^{\nu})\;&,\qquad
\lim_{\varepsilon\rightarrow 0}\lim_{\nu\rightarrow\infty}
E(\varphi_{\alpha'}^{\nu}\circ m_{\alpha}^{\nu},B_{\varepsilon}(z_{\alpha\beta}))=0\\
m_{\alpha\beta}(\vpsi)=\lim_{\nu\rightarrow\infty}m_{\alpha'\beta'}(\vpsi^{\nu})\;&,\qquad
\lim_{\varepsilon\rightarrow 0}\lim_{\nu\rightarrow\infty}
E(\psi_{\alpha'}^{\nu}\circ m_{\alpha}^{\nu},B_{\varepsilon}(z_{\alpha\beta}))=0
\end{align}\end{subequations}
This shows that the sequence
$(\varphi_{\alpha'}^{\nu}\circ m_{\alpha}^{\nu},\psi_{\alpha'}^{\nu}\circ m_{\alpha}^{\nu})$
exhibits no bubbling near the point $z_{\alpha\beta}$ for every $\beta\in T\setminus T_0$ such that
$\alpha E\beta$ and, hence, converges to $(\varphi_{\alpha},\psi_{\alpha})$ u.c.s. on $S^2\setminus Z_{0\alpha}$
for every $\alpha\in T_0$, thus establishing the (Map) axiom.\\
To make the argument precise, ''no bubbling'' means that the $W^{1,\infty}$-norm of
$\varphi_{\alpha'}^{\nu}\circ m_{\alpha}^{\nu}$
stays bounded in a neighbourhood of $z_{\alpha\beta}$ and, by Prp. \ref{prpPsiLInfinityBounded}, the same is true
for the $L^{\infty}$-norm of $\psi_{\alpha'}^{\nu}\circ m_{\alpha}^{\nu}$.
We know, by the (Map) axiom in Def. \ref{defSuperGromovConvergenceStable}, that the sequence
converges to $(\varphi_{\alpha},\psi_{\alpha})$ u.c.s. on $S^2\setminus Z_{\alpha}$.
Assume the analogous statement for $S^2\setminus Z_{0\alpha}$ is false. Then the compactness
Prp. \ref{prpPhiPsiConverging} yields a contradiction, exactly as in the proof of
Lem. \ref{lemUniformImpliesCInfty}.
Finally, we note that the (Energy) axiom is verified as in the classical case.
\end{proof}

\begin{Thm}[Uniqueness of the Limit]
\label{thmSuperGromovCompactnessUniquenessStable}
Let $(A^{\nu},J^{\nu})\in\mA\times\mJ$ be a sequence that converges to $(A,J)$ in the
$C^{\infty}$-topology. Let
$((\vvarphi^{\nu},\vpsi^{\nu}),\vz^{\nu})\in\hat{\mM}_{0,d,n}(X;A^{\nu},J^{\nu})$ be a
sequence of stable supercurves that converges to two stable supercurves
$((\vvarphi,\vpsi),\vz)$ and $((\tilde{\vvarphi},\tilde{\vpsi}),\tilde{\vz})$.
Then, in the cases $d=-2$ and $d=-1$, $((\vvarphi,\vpsi),\vz)$ and
$((\tilde{\vvarphi},\tilde{\vpsi}),\tilde{\vz})$ are equivalent.
\end{Thm}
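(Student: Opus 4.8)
The plan is to reduce the uniqueness statement for sequences of stable supercurves to the already established uniqueness Thm.~\ref{thmSuperGromovCompactnessUniqueness} for sequences of holomorphic supercurves (without tree structure), by passing through the individual restrictions. First I would invoke Thm.~5.5.4 in \cite{MS04} for the underlying sequence of $J^{\nu}$-holomorphic curves: since $((\vvarphi^{\nu},\vpsi^{\nu}),\vz^{\nu})$ Gromov converges to both $((\vvarphi,\vpsi),\vz)$ and $((\tilde{\vvarphi},\tilde{\vpsi}),\tilde{\vz})$, the underlying stable maps $(\vvarphi,\vz)$ and $(\tilde{\vvarphi},\tilde{\vz})$ are equivalent, so there is a tree isomorphism $g:T\rightarrow\tilde{T}$ and Möbius transformations $\{m_{\alpha}\}_{\alpha\in T}$ with $\tvarphi_{g(\alpha)}\circ m_{\alpha}=\varphi_{\alpha}$, $\tz_{g(\alpha)g(\beta)}=m_{\alpha}^{-1}(z_{\alpha\beta})$ and $\tz_i=m_{\alpha_i}^{-1}(z_i)$. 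It remains to upgrade this to an equivalence of the \emph{full} stable supercurves, i.e.\ to show $\tpsi_{g(\alpha)}\circ m_{\alpha}=\psi_{\alpha}$ for every vertex $\alpha$.

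The key device is Prp.~\ref{prpInducedGromovConvergence}. After passing to a subsequence (using Rem.~\ref{remSuperGromovCompactnessStable} to arrange that all $T^{\nu}$ equal a fixed tree $T'$ and the tree homomorphisms are constant, $f:T\rightarrow T'$ and $\tilde{f}:\tilde{T}\rightarrow T'$), Prp.~\ref{prpInducedGromovConvergence} tells us that for each $\alpha'\in T'$ the sequence $((\varphi_{\alpha'}^{\nu},\psi_{\alpha'}^{\nu}),Y_{\alpha'}^{\nu})$ of marked holomorphic supercurves Gromov converges, in the sense of Def.~\ref{defSuperGromovConvergence}, both to the restriction of $((\vvarphi,\vpsi),\vz)$ to $f^{-1}(\alpha')$ and to the restriction of $((\tilde{\vvarphi},\tilde{\vpsi}),\tilde{\vz})$ to $\tilde{f}^{-1}(\alpha')$. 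Thm.~\ref{thmSuperGromovCompactnessUniqueness}, applied to each of these sequences, then gives an equivalence of these two restricted stable supercurves \emph{as a whole} --- in particular the sectional parts agree after the corresponding Möbius transformations. Because this holds for every $\alpha'\in T'$, the vertex-wise Möbius transformations obtained must be compatible with the gluing of restrictions back into $T$ and $\tilde{T}$ (they agree on overlapping nodal points, being determined as limits $\lim_{\nu}(m_{\alpha}^{\nu})^{-1}\circ\tilde{m}^{\nu}_{g(\alpha)}$ exactly as in Thm.~\ref{thmSuperGromovCompactnessUniqueness}), and they assemble into the single tree isomorphism $g$ and collection $\{m_{\alpha}\}$ already identified on the underlying curves, now additionally satisfying $\tpsi_{g(\alpha)}\circ m_{\alpha}=\psi_{\alpha}$.

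Concretely, the sectional identity is obtained exactly as in the proof of Thm.~\ref{thmSuperGromovCompactnessUniqueness}: writing $m_{\alpha}=\lim_{\nu\to\infty}(m_{\alpha}^{\nu})^{-1}\circ\tilde{m}^{\nu}_{g(\alpha)}$ (the limit exists by that theorem), one computes, modulo bubbling,
\begin{align*}
\psi_{\alpha}\circ m_{\alpha}
=\lim_{\nu\to\infty}\left(\psi^{\nu}_{f(\alpha)}\circ m_{\alpha}^{\nu}\right)\circ(m_{\alpha}^{\nu})^{-1}\circ\tilde{m}^{\nu}_{g(\alpha)}
=\lim_{\nu\to\infty}\psi^{\nu}_{f(\alpha)}\circ\tilde{m}^{\nu}_{g(\alpha)}
=\tpsi_{g(\alpha)}
\end{align*}
where the outer limits are taken u.c.s.\ away from the respective nodal points, and the middle step uses that $f(\alpha)=\tilde{f}(g(\alpha))$ after identifying $T^{\nu}=\tilde{T}^{\nu}=T'$. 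Together with the already known identities $\tvarphi_{g(\alpha)}\circ m_{\alpha}=\varphi_{\alpha}$, $\tz_{g(\alpha)g(\beta)}=m_{\alpha}^{-1}(z_{\alpha\beta})$ and $\tz_i=m_{\alpha_i}^{-1}(z_i)$, this shows $((\vvarphi,\vpsi),\vz)$ and $((\tilde{\vvarphi},\tilde{\vpsi}),\tilde{\vz})$ are equivalent in the sense of Def.~\ref{defStableSupercurveEquivalence}. The main obstacle I anticipate is the bookkeeping in Prp.~\ref{prpInducedGromovConvergence}: one must make sure the restriction of \emph{both} limiting stable supercurves to $f^{-1}(\alpha')$, respectively $\tilde{f}^{-1}(\alpha')$, really is Gromov-convergent-from-the-\emph{same} sequence $((\varphi_{\alpha'}^{\nu},\psi_{\alpha'}^{\nu}),Y_{\alpha'}^{\nu})$ with matching marked-point data, so that Thm.~\ref{thmSuperGromovCompactnessUniqueness} genuinely applies; once that is set up, the sectional part rides along on the classical argument with only conformal invariance (\ref{eqnSuperEnergyInvariance}) added.
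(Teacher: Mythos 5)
Your proposal is correct and follows essentially the same route as the paper, which likewise reduces the statement to Prp.~\ref{prpInducedGromovConvergence} combined with Thm.~\ref{thmSuperGromovCompactnessUniqueness}, mirroring the classical uniqueness argument for stable maps; you have merely written out the details the paper leaves implicit. One small slip: the classical uniqueness theorem for stable maps you invoke at the start is Thm.~5.5.3 in \cite{MS04}, not Thm.~5.5.4 (the latter concerns connected sums).
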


\begin{proof}
Being based on Prp. \ref{prpInducedGromovConvergence} and Thm. \ref{thmSuperGromovCompactnessUniqueness}, this is a straightforward
generalisation of the proof of the classical uniqueness theorem concerning
the underlying sequence $(\vvarphi^{\nu},\vz^{\nu})$ of stable maps
(Thm. 5.5.3 in \cite{MS04}).
\end{proof}

The following result is the analogon of Prp. \ref{prpSuperGromovConvergence} for sequences of
stable holomorphic supercurves. It is proved as Thm. 5.5.4(ii) in \cite{MS04}.

\begin{Prp}
\label{prpSuperGromovConvergenceStable}
Let $(A^{\nu},J^{\nu})\in\mA\times\mJ$ be a sequence that converges to $(A,J)$ in the
$C^{\infty}$-topology. Let
$((\vvarphi^{\nu},\vpsi^{\nu}),\vz^{\nu})\in\hat{M}_{0,d,n}(X;A^{\nu},J^{\nu})$
be a sequence of stable supercurves that Gromov converges to a stable supercurve
$((\vvarphi,\vpsi),\vz)\in\hat{M}_{0,d,n}(X;A,J)$.
Then, for sufficiently large $\nu$, the connected sum $\#_{\alpha\in T^{\nu}}\varphi_{\alpha}^{\nu}$
is homotopic to the connected sum $\#_{\alpha\in T}\varphi_{\alpha}$.
\end{Prp}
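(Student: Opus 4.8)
The plan is to reduce the assertion to the corresponding statement about the underlying sequence of stable maps. The crucial observation is that the homotopy class of the connected sum $\#_{\alpha\in T^{\nu}}\varphi_{\alpha}^{\nu}:S^2\rightarrow X$ depends only on the collection of maps $\{\varphi_{\alpha}^{\nu}\}$ and not at all on the sectional parts $\{\psi_{\alpha}^{\nu}\}$, and likewise for the limit $\#_{\alpha\in T}\varphi_{\alpha}$. So it suffices to ``forget'' the sections and appeal to the classical result, exactly as was done for Prp. \ref{prpSuperGromovConvergence}.

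First I would observe that, upon setting $\vpsi^{\nu}:=0$ and $\vpsi:=0$, Definition \ref{defSuperGromovConvergenceStable} of Gromov convergence for stable supercurves reduces verbatim to the classical notion of Gromov convergence for stable maps: with the same surjective tree homomorphism $f:T\rightarrow T'$ and the same collection $\{m_{\alpha}^{\nu}\}$ of Möbius transformations, the (Map), (Energy), (Rescaling), (Nodal Points) and (Marked Points) axioms become exactly their classical counterparts (note that, for $\vpsi^{\nu}=0$, the local energy $E_{\alpha}(\vpsi^{\nu},\cdot)$ vanishes identically, so only the $\varphi$-part of the (Energy) axiom survives). Hence the hypothesis that $((\vvarphi^{\nu},\vpsi^{\nu}),\vz^{\nu})$ Gromov converges to $((\vvarphi,\vpsi),\vz)$ implies that the underlying sequence $(\vvarphi^{\nu},\vz^{\nu})$ of stable $J^{\nu}$-holomorphic maps Gromov converges, in the classical sense, to $(\vvarphi,\vz)$, where $J^{\nu}\rightarrow J$ in $C^{\infty}$ by hypothesis. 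By Rem. \ref{remSuperGromovCompactnessStable}, only finitely many isomorphism types of tree $T^{\nu}$ occur, so the phrase ``sufficiently large $\nu$'' is meaningful.

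Then I would simply invoke Thm. 5.5.4(ii) in \cite{MS04}, which asserts that for such a classically Gromov convergent sequence of stable maps the connected sum $\#_{\alpha\in T^{\nu}}\varphi_{\alpha}^{\nu}$ is homotopic to $\#_{\alpha\in T}\varphi_{\alpha}$ for all sufficiently large $\nu$. Since this is a statement purely about the underlying maps, it is precisely the conclusion of the proposition. I do not anticipate any genuine obstacle here; the only point requiring care is the routine bookkeeping that Definition \ref{defSuperGromovConvergenceStable} specialises correctly to the classical definition of Gromov convergence for stable maps (and that the surjective tree homomorphisms and Möbius transformations are unchanged under this specialisation), which is immediate from the definitions.
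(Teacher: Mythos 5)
Your proposal is correct and matches the paper's own treatment: the paper simply notes that the $\varphi$-part of Gromov convergence for stable supercurves is exactly classical Gromov convergence of the underlying stable maps and then cites Thm. 5.5.4(ii) of \cite{MS04}, which is precisely your reduction. Your additional bookkeeping about the axioms specialising correctly is a harmless elaboration of the same argument.
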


Consider a sequence of stable supercurves which Gromov converges to a stable supercurve
in $\hat{\mM}_{0,d,n}(X,\beta,E;A,J)$. By Prp. \ref{prpSuperGromovConvergenceStable},
we may assume without loss of generality that the sequence itself already lies in
$\hat{\mM}_{0,d,n}(X,\beta,E;A,J)$.

\subsection{The Gromov Topology}
\label{subsecGromovTopology}

Finally, we shall introduce a distance function for stable supercurves
as a generalisation of the one for stable maps, and a topology on the moduli spaces
$\overline{\mM}_{0,d,n}(X,\beta,E;A,J)$ for $d=-2$ and $d=-1$.
By means of that ''distance'' which, despite its name, is not symmetric and does
not satisfy the triangle inequality, we show that convergence with respect to the \emph{Gromov topology}
thus defined is equivalent to Gromov convergence.
Moreover, we prove that the moduli spaces are compact and metrisable in the case $d=-1$.
Let
\begin{align*}
\vx=(\vvarphi,\vpsi,\vz),\quad\vx'=(\vvarphi',\vpsi',\vz')\in\hat{\mM}_{0,d,n}(X,\beta,E;A,J)
\end{align*}
be two stable supercurves. Fixing a sufficiently small constant $\varepsilon>0$, we define the
\emph{distance between $\vx$ and $\vx'$} to be the real number
\begin{align}
\rho_{\varepsilon}(\vx,\vx')
:=\inf_{f:T\rightarrow T'}\inf_{\{m_{\alpha}\}}\rho_{\varepsilon}(\vx,\vx';f,\{m_{\alpha}\})
\end{align}
where the infimum is taken over all tuples $\{m_{\alpha}\}_{\alpha\in T}$ and all
surjective tree homomorphisms $f:T\rightarrow T'$ such that $f(\alpha_i)=\alpha'_i$
for all $i\in\{1,\ldots,n\}$ (mapping labels of $\vx$ to labels of $\vx'$).
If no such homomorphism exists, we set $\rho_{\varepsilon}(\vx,\vx'):=\infty$.
We define
\begin{align*}
\rho_{\varepsilon}(\vx,\vx';f,\{m_{\alpha}\})
&:=\sup_{\alpha E\beta}\abs{E_{\alpha}(\vvarphi,B_{\varepsilon}(z_{\alpha\beta}))
-E_{f(\alpha)}(\vvarphi',m_{\alpha}(B_{\varepsilon}(z_{\alpha\beta})))}\\
&\qquad\qquad+\sup_{\alpha E\beta}\abs{E_{\alpha}(\vpsi,B_{\varepsilon}(z_{\alpha\beta}))
-E_{f(\alpha)}(\vpsi',m_{\alpha}(B_{\varepsilon}(z_{\alpha\beta})))}\\
&\qquad\qquad+\sup_{\alpha\in T}\sup_{S^2\setminus B_{\varepsilon}(Z_{\alpha})}
\scal[d]{\varphi'_{f(\alpha)}\circ m_{\alpha}}{\varphi_{\alpha}}\\
&\qquad\qquad+\sup_{\alpha\in T}\sup_{S^2\setminus B_{\varepsilon}(Z_{\alpha})}
\scal[d]{\psi'_{f(\alpha)}\circ m_{\alpha}}{\psi_{\alpha}}\\
&\qquad\qquad+\sup_{\substack{\alpha\neq\beta\\f(\alpha)=f(\beta)}}
\sup_{S^2\setminus B_{\varepsilon}(z_{\alpha\beta})}
\scal[d]{m_{\beta}^{-1}\circ m_{\alpha}}{z_{\beta\alpha}}\\
&\qquad\qquad+\sup_{f(\alpha)\neq f(\beta)}
\scal[d]{m_{\beta}^{-1}(z'_{f(\beta)f(\alpha)})}{z_{\beta\alpha}}\\
&\qquad\qquad+\sup_{\substack{\alpha\in T\\1\leq i\leq n}}
\scal[d]{m_{\alpha}^{-1}(z'_{f(\alpha)i})}{z_{\alpha i}}
\end{align*}
Here, we may identify $\psi'_{f(\alpha)}\circ m_{\alpha}$ with a section of
$L_d\otimes_J\varphi_{\alpha}^*TX$ via the trivialisation from Lem. 4.9 in \cite{Gro11b},
provided that $\scal[d]{\varphi'_{f(\alpha)}\circ m_{\alpha}}{\varphi_{\alpha}}$
is sufficiently small, and then $\scal[d]{\psi'_{f(\alpha)}\circ m_{\alpha}}{\psi_{\alpha}}$
may be defined by any bundle metric, e.g. the one from (\ref{eqnH2}).
By the properties of holomorphic supercurves
such as conformal invariance (\ref{eqnSuperEnergyInvariance}) of the energy,
we immediately yield the following lemma.

\begin{Lem}
\label{lemDistanceEquivalence}
The distance functions $\vx'\mapsto\rho_{\varepsilon}(\vx,\vx')$ descend to the moduli space
$\overline{\mM}_{0,d,n}(X,\beta,E;A,J)$ of equivalence classes of stable supercurves:
\begin{align*}
\vx'\equiv\vy'\implies\rho_{\varepsilon}(\vx,\vx')=\rho_{\varepsilon}(\vx,\vy')
\qquad\mathrm{and}\qquad
\vx\equiv\vx'\implies\rho_{\varepsilon}(\vx,\vx')=0
\end{align*}
holds, where ''$\equiv$'' denotes the equivalence relation stated in Def. \ref{defStableSupercurveEquivalence}.
\end{Lem}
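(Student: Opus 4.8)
**The plan is to verify both implications directly from the definitions, exploiting conformal invariance of all the quantities that enter $\rho_{\varepsilon}$.** The lemma asserts two things: first, that $\rho_{\varepsilon}(\vx,\cdot)$ is constant on equivalence classes of its second argument, and second, that $\rho_{\varepsilon}(\vx,\vx')=0$ whenever $\vx\equiv\vx'$. Both reduce to tracking how the defining quantity $\rho_{\varepsilon}(\vx,\vx';f,\{m_{\alpha}\})$ transforms under the data (tree isomorphism plus Möbius transformations) of an equivalence.

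For the first implication, suppose $\vx'\equiv\vy'$ via a tree isomorphism $g:T'\to T''$ and Möbius transformations $\{n_{\alpha'}\}_{\alpha'\in T'}$ with $\varphi''_{g(\alpha')}\circ n_{\alpha'}=\varphi'_{\alpha'}$, $\psi''_{g(\alpha')}\circ n_{\alpha'}=\psi'_{\alpha'}$, and the corresponding identities for nodal and marked points. First I would set up a bijection between the competitors in the two infima: given $(f,\{m_{\alpha}\})$ admissible for $(\vx,\vx')$, the pair $(g\circ f,\{n_{f(\alpha)}\circ m_{\alpha}\})$ is admissible for $(\vx,\vy')$, and conversely. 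Then I would check term by term that $\rho_{\varepsilon}(\vx,\vx';f,\{m_{\alpha}\})=\rho_{\varepsilon}(\vx,\vy';g\circ f,\{n_{f(\alpha)}\circ m_{\alpha}\})$. The energy terms match because $E_{\alpha'}(\vpsi',\cdot)=E_{g(\alpha')}(\vpsi'',n_{\alpha'}(\cdot))$ by conformal invariance (\ref{eqnSuperEnergyInvariance}) of the super energy together with the fact that $m_{\alpha\beta}(\vpsi')=m_{g(\alpha')g(\beta')}(\vpsi'')$ since $g$ restricts to an isomorphism $T'_{\alpha'\beta'}\to T''_{g(\alpha')g(\beta')}$; the same for $\vvarphi'$. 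The distance terms for the maps match because $\varphi''_{g(f(\alpha))}\circ(n_{f(\alpha)}\circ m_{\alpha})=(\varphi''_{g(f(\alpha))}\circ n_{f(\alpha)})\circ m_{\alpha}=\varphi'_{f(\alpha)}\circ m_{\alpha}$, and likewise for $\psi$; the rescaling, nodal-point and marked-point terms match by the analogous algebraic cancellations using $m_{\beta}^{-1}\circ(n_{f(\beta)}^{-1}\circ n_{f(\alpha)})\circ m_{\alpha}$ and the identities $\tz''_{g(f(\alpha))g(f(\beta))}=n_{f(\alpha)}(z'_{f(\alpha)f(\beta)})$. Taking infima over the (now matched) competitors yields $\rho_{\varepsilon}(\vx,\vx')=\rho_{\varepsilon}(\vx,\vy')$.

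For the second implication, suppose $\vx\equiv\vx'$ via $f_0:T\to T'$ and $\{m_{\alpha}^0\}$. I would plug $f:=f_0$ and $\{m_{\alpha}\}:=\{m_{\alpha}^0\}$ into $\rho_{\varepsilon}(\vx,\vx';\cdot,\cdot)$ and observe that every summand vanishes identically: the map terms vanish since $\varphi'_{f_0(\alpha)}\circ m_{\alpha}^0=\varphi_{\alpha}$ and $\psi'_{f_0(\alpha)}\circ m_{\alpha}^0=\psi_{\alpha}$ pointwise, so $\scal[d]{\varphi'_{f_0(\alpha)}\circ m_{\alpha}^0}{\varphi_{\alpha}}\equiv 0$ and similarly for $\psi$; the energy terms vanish because $E_{f_0(\alpha)}(\vvarphi',m_{\alpha}^0(B_{\varepsilon}(z_{\alpha\beta})))=E_{\alpha}(\vvarphi,B_{\varepsilon}(z_{\alpha\beta}))$ by conformal invariance of the super energy (note that $z_{f_0(\alpha)f_0(\beta)}'=m_{\alpha}^0(z_{\alpha\beta})$, so the $\varepsilon$-ball around the nodal point is carried exactly onto $m_{\alpha}^0(B_{\varepsilon}(z_{\alpha\beta}))$); the rescaling term is absent since $f_0$ is injective, so there is no pair $\alpha\neq\beta$ with $f_0(\alpha)=f_0(\beta)$; and the nodal-point and marked-point terms vanish since $(m_{\beta}^0)^{-1}(z'_{f_0(\beta)f_0(\alpha)})=(m_{\beta}^0)^{-1}(m_{\beta}^0(z_{\beta\alpha}))=z_{\beta\alpha}$ and $(m_{\alpha}^0)^{-1}(z'_{f_0(\alpha)i})=(m_{\alpha}^0)^{-1}(m_{\alpha_i}^0(z_i))=z_{\alpha i}$ (using $\alpha_i'=f_0(\alpha_i)$). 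Hence $\rho_{\varepsilon}(\vx,\vx';f_0,\{m_{\alpha}^0\})=0$, and since $\rho_{\varepsilon}(\vx,\vx')\geq 0$ is an infimum of nonnegative quantities, $\rho_{\varepsilon}(\vx,\vx')=0$.

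I do not expect a serious obstacle here; the content is entirely bookkeeping. The one point that requires a little care is making sure the $\varepsilon$-balls transform correctly under the reparametrisations — i.e. that $m_{\alpha}(B_{\varepsilon}(z_{\alpha\beta}))$ appearing in the definition is exactly the set on which the conformal-invariance identity (\ref{eqnSuperEnergyInvariance}) applies, and that this is consistent with $z'_{f(\alpha)f(\beta)}$ being the image nodal point; this is built into how the distance was written, so it works out. A secondary subtlety is that the distance term for $\psi$ is only defined when $\scal[d]{\varphi'_{f(\alpha)}\circ m_{\alpha}}{\varphi_{\alpha}}$ is small, via the trivialisation from Lem. 4.9 in \cite{Gro11b}; but in the case $\vx\equiv\vx'$ the map distance is literally zero, and in the first implication the trivialisations on either side are identified compatibly, so this causes no difficulty.
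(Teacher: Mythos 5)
Your proposal is correct and is essentially the paper's argument: the paper gives no detailed proof, merely remarking that the lemma follows immediately from conformal invariance (\ref{eqnSuperEnergyInvariance}) together with the structure of the distance function, and your term-by-term bookkeeping (matching competitors via $(g\circ f,\{n_{f(\alpha)}\circ m_{\alpha}\})$ for the first implication, plugging in the equivalence data for the second) is exactly the verification being left to the reader. The points you flag as requiring care — the transformation of the $\varepsilon$-balls and the concatenation identity $\psi\circ(m\circ\tilde m)=(\psi\circ m)\circ\tilde m$ from (\ref{eqnTransformedConcatenation}) — are handled correctly.
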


The functions $\rho_{\varepsilon}(\vx,\vx')$ characterise Gromov convergence in the following sense.

\begin{Lem}
\label{lemDistanceGromovConvergence}
Let $\vx=(\vvarphi,\vpsi,\vz)\in\hat{\mM}_{0,d,n}(X,\beta,E;A,J)$ be a stable supercurve.
Then, in the cases $d=-2$ and $d=-1$, there exists a constant $\varepsilon_0>0$ such that the
following holds for $0<\varepsilon<\varepsilon_0$. A sequence
$\vx^{\nu}=(\vvarphi^{\nu},\vpsi^{\nu},\vz^{\nu})\in\hat{\mM}_{0,d,n}(X,\beta,E;A,J)$
Gromov converges to $\vx$ if and only if the sequence of real numbers
$\rho_{\varepsilon}(\vx,\vx^{\nu})$ converges to zero.
\end{Lem}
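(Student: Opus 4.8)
The plan is to prove the two implications separately, in close parallel to the classical equivalence (Thm.~5.6.6 in \cite{MS04}), the new ingredients being conformal invariance (\ref{eqnSuperEnergyInvariance}) of the super energy, the super energy conservation of Sec.~\ref{secBubbling} (Prop.~\ref{prpSuperBubblesEnergy}) in the case $d=-1$, and the uniform bound and compactness results for the sectional part (Prop.~\ref{prpPsiLInfinityBounded}, Prop.~\ref{prpPhiPsiConverging}). First I fix $\varepsilon_0=\varepsilon_0(\vx)>0$ small enough that, on each sphere of $\vx$, the closed balls $\overline{B_{2\varepsilon}(z_{\alpha\beta})}$ ($\alpha E\beta$) are pairwise disjoint and avoid the marked points, that $E(\varphi_\alpha,B_{2\varepsilon}(z_{\alpha\beta}))<\hbar$ for all nodal points (so no classical bubble can form in these balls), and that the trivialisation from Lem.~4.9 in \cite{Gro11b}, used to give meaning to the term $\scal[d]{\psi'_{f(\alpha)}\circ m_\alpha}{\psi_\alpha}$, is available whenever the corresponding $\varphi$--distance is small. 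Throughout we use Rem.~\ref{remSuperGromovCompactnessStable}: only finitely many labelled trees occur, so after passing to a subsequence the surjective tree homomorphisms $f^\nu:T\to T^\nu$ may be taken equal to a fixed $f$, compatible with labels by the (Marked Points) axiom.

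For the implication ``Gromov convergence $\Rightarrow\rho_\varepsilon(\vx,\vx^\nu)\to0$'', let $\{m_\alpha^\nu\}$ be the reparametrisation sequences and $f^\nu\equiv f$ as above. It suffices to check that each of the seven suprema in $\rho_\varepsilon(\vx,\vx^\nu;f,\{m_\alpha^\nu\})$ tends to zero, since then $\rho_\varepsilon(\vx,\vx^\nu)\to0$ along the subsequence, and the usual subsequence argument (a subsequence bounded away from zero would still Gromov converge, giving a contradiction) yields the conclusion for the full sequence. The two energy suprema vanish by the (Energy) axiom together with Lem.~\ref{lemInducedGromovConvergence} and continuity of the bubbling masses in $\varepsilon$ (Thm.~\ref{thmConvergenceModBubbling}(ii)), after noting that $E_\alpha(\vvarphi,B_\varepsilon(z_{\alpha\beta}))=E(\varphi_\alpha,B_\varepsilon(z_{\alpha\beta}))+m_{\alpha\beta}(\vvarphi)$ and $E(\varphi_\alpha,B_\varepsilon(z_{\alpha\beta}))\to0$ as $\varepsilon\to0$, and likewise for $\vpsi$. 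The two $\varphi$-- and $\psi$--sup-distance terms vanish because the (Map) axiom gives $C^\infty$-- hence $C^0$--convergence on the compact set $S^2\setminus B_\varepsilon(Z_\alpha)$, where the identification of $\psi^\nu_{f(\alpha)}\circ m_\alpha^\nu$ with a section of $L_d\otimes_J\varphi_\alpha^*TX$ is legitimate for large $\nu$. The remaining three terms vanish by the (Rescaling), (Nodal Points) and (Marked Points) axioms of Def.~\ref{defSuperGromovConvergenceStable}.

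For the converse, suppose $\rho_\varepsilon(\vx,\vx^\nu)\to0$. It is enough to show that every subsequence has a further subsequence that Gromov converges to $\vx$. Given a subsequence, choose for each $\nu$ a surjective, label-compatible tree homomorphism $f^\nu$ and Möbius transformations $\{m_\alpha^\nu\}$ realising the infimum defining $\rho_\varepsilon(\vx,\vx^\nu)$ up to $1/\nu$; after a further subsequence $f^\nu\equiv f$ and all limits $\lim_\nu$ below exist. Then every term of $\rho_\varepsilon(\vx,\vx^\nu;f,\{m_\alpha^\nu\})$ tends to zero, so: $\varphi^\nu_{f(\alpha)}\circ m_\alpha^\nu\to\varphi_\alpha$ and $\psi^\nu_{f(\alpha)}\circ m_\alpha^\nu\to\psi_\alpha$ uniformly on $S^2\setminus B_\varepsilon(Z_\alpha)$, the bubbling-mass identities of the (Energy) axiom hold, and the rescaling, nodal-point and marked-point identities hold verbatim. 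The one remaining point is to upgrade the uniform convergence on $S^2\setminus B_\varepsilon(Z_\alpha)$ to u.c.s.\ convergence on all of $S^2\setminus Z_\alpha$. As in the proof of Lem.~\ref{lemUniformImpliesCInfty} and Prop.~\ref{prpInducedGromovConvergence}, one argues that no bubbling occurs in $S^2\setminus Z_\alpha$: the energy of $\varphi^\nu_{f(\alpha)}\circ m_\alpha^\nu$ in any small ball disjoint from $Z_\alpha$ is below $\hbar$ for large $\nu$, because the $\varepsilon$-ball energies are pinned by the (Energy) identity (which, for $d=-1$, rests on the super energy identity Prop.~\ref{prpSuperBubblesEnergy}), so the $W^{1,\infty}$--norm of $\varphi^\nu_{f(\alpha)}\circ m_\alpha^\nu$ stays bounded there; then Prop.~\ref{prpPsiLInfinityBounded} bounds $\psi^\nu_{f(\alpha)}\circ m_\alpha^\nu$ in $L^\infty$, and Prop.~\ref{prpPhiPsiConverging} yields a subsequence converging in $C^\infty$ on compact subsets of $S^2\setminus Z_\alpha$ to a holomorphic supercurve that agrees with $(\varphi_\alpha,\psi_\alpha)$ on the open set $S^2\setminus B_\varepsilon(Z_\alpha)$, hence, by unique continuation for the underlying holomorphic curve and for the linear Cauchy--Riemann operator $\mD^{A,J}_{\varphi_\alpha}$, coincides with $(\varphi_\alpha,\psi_\alpha)$ throughout $S^2\setminus Z_\alpha$. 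This verifies all axioms of Def.~\ref{defSuperGromovConvergenceStable}, so $\vx^\nu\to\vx$ in the sense of Gromov.

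I expect the main obstacle to be this last upgrading step in the converse direction: turning the static $C^0$-closeness on the fixed annular complement $S^2\setminus B_\varepsilon(Z_\alpha)$ into genuine u.c.s.\ convergence on $S^2\setminus Z_\alpha$. It is exactly here that the energy data encoded in $\rho_\varepsilon$ must be shown to preclude hidden bubbling of both $\varphi$ and $\psi$ outside $Z_\alpha$ — for the sectional part this is where the super energy conservation of Sec.~\ref{secBubbling} enters (and is the reason the $d=-1$ case is the clean one), with Prop.~\ref{prpPsiLInfinityBounded}, the removable singularity Theorem \ref{thmRemovalOfSingularities}, and unique continuation supplying the rest. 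The secondary technical nuisances are the reduction to a constant tree homomorphism (Rem.~\ref{remSuperGromovCompactnessStable}) and the routine bookkeeping that near-optimal sequences of Möbius transformations are controlled precisely on the regions where the axioms of Def.~\ref{defSuperGromovConvergenceStable} require it.
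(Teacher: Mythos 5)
Your proposal is correct and follows essentially the same route as the paper: the forward direction is a term-by-term verification via the axioms of Def.~\ref{defSuperGromovConvergenceStable} and Lem.~\ref{lemInducedGromovConvergence}, and the converse chooses near-optimal reparametrisations, defers the $\vvarphi$-part to the classical argument, and upgrades the $C^0$-closeness of $\psi^{\nu}_{f(\alpha)}\circ m^{\nu}_{\alpha}$ on $S^2\setminus B_{\varepsilon}(Z_{\alpha})$ to u.c.s.\ convergence on $S^2\setminus Z_{\alpha}$ via Lem.~\ref{lemUniformImpliesCInfty}, Prp.~\ref{prpPsiLInfinityBounded}, Prp.~\ref{prpPhiPsiConverging} and unique continuation, exactly as in the paper. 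The only cosmetic difference is your top-level ``every subsequence has a Gromov-convergent sub-subsequence'' framing in the converse; since Gromov convergence requires a single choice of $(f^{\nu},m^{\nu}_{\alpha})$ for the whole tail, one should (as the paper does) fix the near-optimal data for the full sequence first and apply the sub-subsequence principle only inside the verification of the individual limit statements.
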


\begin{proof}
Choose $\varepsilon>0$ such that $E(\varphi_{\alpha},B_{\varepsilon}(Z_{\alpha}))<\hbar/2$
and $B_{\varepsilon}(z_{\alpha\beta})\cap B_{\varepsilon}(z_{\alpha\gamma})=\emptyset$
for all $\alpha\in T$ and $\beta\neq\gamma$, where $\hbar>0$ denotes the minimal classical
energy for nonconstant $J$-holomorphic spheres. Then
\begin{align*}
E_{\alpha}(\vvarphi,B_{\varepsilon}(z_{\alpha\beta}))
&=E(\varphi_{\alpha},B_{\varepsilon}(z_{\alpha\beta}))+m_{\alpha\beta}(\vvarphi)\\
E_{\alpha}(\vpsi,B_{\varepsilon}(z_{\alpha\beta}))
&=E(\psi_{\alpha},B_{\varepsilon}(z_{\alpha\beta}))+m_{\alpha\beta}(\vpsi)
\end{align*}
holds for all $\alpha,\beta\in T$ such that $\alpha E\beta$.

If $\vx^{\nu}$ Gromov converges to $\vx$, then there are a surjective tree homomorphism
$f^{\nu}:T\rightarrow T^{\nu}$ and Möbius transformations $\{m_{\alpha}^{\nu}\}$
such that $\varphi^{\nu}_{f^{\nu}(\alpha)}\circ m_{\alpha}^{\nu}\rightarrow\varphi_{\alpha}$ u.c.s.
on $S^2\setminus Z_{\alpha}$ for all $\alpha\in T$. Hence
$d(\varphi^{\nu}_{f^{\nu}(\alpha)}\circ m_{\alpha}^{\nu},\varphi_{\alpha})\rightarrow 0$
for all $S^2\setminus B_{\varepsilon}(Z_{\alpha})$. Therefore, the corresponding term
in $\rho_{\varepsilon}(\vx,\vx^{\nu})$ converges to zero, and analogous for the
distance term involving $\psi^{\nu}$. By this and the (Energy) axiom in
Def. \ref{defSuperGromovConvergenceStable},
it is obvious that the energy terms in $\rho_{\varepsilon}(\vx,\vx^{\nu})$ also
converge to zero. For the other terms, this follows from Lem. \ref{lemInducedGromovConvergence}
and thus $\rho_{\varepsilon}(\vx,\vx^{\nu})\stackrel{\nu}{\rightarrow}0$
vanishes in the limit.

Conversely, suppose that $\rho_{\varepsilon}(\vx,\vx^{\nu})\stackrel{\nu}{\rightarrow}0$.
Then, for sufficiently large $\nu$, there exist a surjective tree homomorphism
$f^{\nu}:T\rightarrow T^{\nu}$ and Möbius transformations $\{m_{\alpha}^{\nu}\}_{\alpha\in T}$
such that $f^{\nu}(\alpha_i)=\alpha_i^{\nu}$ and
\begin{align*}
\rho^{\nu}:=\rho_{\varepsilon}(\vx,\vx^{\nu},f^{\nu},\{m_{\alpha}^{\nu}\})\leq
\rho_{\varepsilon}(\vx,\vx^{\nu})+2^{-\nu}
\end{align*}
We prove that this sequence satisfies all axioms from Def. \ref{defSuperGromovConvergenceStable}.
For (Marked Points), (Nodal Points) and (Rescaling) as well as the (Map) axiom
for $\vvarphi^{\nu}$, we refer to the proof of Lem. 5.5.8 in \cite{MS04}.

We show the (Map) axiom for $\vpsi^{\nu}$. Since $\rho^{\nu}\rightarrow 0$,
it follows that $\psi_{\alpha}^{\nu}:=\psi_{f^{\nu}(\alpha)}^{\nu}\circ m_{\alpha}^{\nu}$
converges to $\psi_{\alpha}$ uniformly on $S^2\setminus\bigcup_{\alpha E\beta}B_{\varepsilon}(z_{\alpha\beta})$.
By Lem. \ref{lemUniformImpliesCInfty}, this convergence is uniform with all derivatives.
From $\rho^{\nu}\rightarrow 0$, it is moreover clear that
$E(\psi^{\nu}_{\alpha},B_{2\varepsilon}(z_{\alpha\beta}))$ is uniformly bounded.
This, and the (Map) axiom for $\varphi_{\alpha}^{\nu}$ already established,
shows that the hypotheses of Prp. \ref{prpPsiLInfinityBounded} are satisfied on
$B_{2\varepsilon}(z_{\alpha\beta}))\setminus B_{\delta}(z_{\alpha\beta})$
for $0<\delta<\varepsilon$, providing a convergent subsequence on any compact subset.
On $B_{2\varepsilon}(z_{\alpha\beta}))\setminus B_{\varepsilon}(z_{\alpha\beta})$,
this limit agrees with $\psi_{\alpha}$, and hence, by unique continuation
(Lem. 3.5 in \cite{Gro11b}), the limit agrees with $\psi_{\alpha}$ whereever
both are defined, and thus the sequence $\psi_{\alpha}^{\nu}$ itself converges to
$\psi_{\alpha}$ u.c.s. on $B_{\varepsilon}(z_{\alpha\beta})\setminus\{z_{\alpha\beta}\}$.
This proves the (Map) axiom for $\vpsi^{\nu}$.

Finally note that, again by $\rho^{\nu}\rightarrow 0$ and
by the choice of $\varepsilon$, we have
\begin{align*}
m_{\alpha\beta}(\vvarphi)+E(\varphi_{\alpha}, B_{\varepsilon}(z_{\alpha\beta}))
&=\lim_{\nu\rightarrow\infty}E_{f^{\nu}(\alpha)}
(\vvarphi^{\nu},m_{\alpha}^{\nu}(B_{\varepsilon}(z_{\alpha\beta})))\\
m_{\alpha\beta}(\vpsi)+E(\psi_{\alpha}, B_{\varepsilon}(z_{\alpha\beta}))
&=\lim_{\nu\rightarrow\infty}E_{f^{\nu}(\alpha)}
(\vpsi^{\nu},m_{\alpha}^{\nu}(B_{\varepsilon}(z_{\alpha\beta})))
\end{align*}
Taking the limit $\varepsilon\rightarrow 0$ on both sides, we yield the
(Energy) axiom.

Altogether, we have shown that the sequence $\vx^{\nu}=(\vvarphi^{\nu},\vpsi^{\nu},\vz^{\nu})$
Gromov converges to $\vx=(\vvarphi,\vpsi,\vz)$ provided that
$\rho_{\varepsilon}(\vx,\vx^{\nu})\stackrel{\nu}{\rightarrow}0$, which completes the proof.
\end{proof}

The functions $\rho_{\varepsilon}(\vx,\vx')$ satisfy the following substitute for
the triangle inequality.

\begin{Lem}
\label{lemDistanceTriangle}
For $d=-2$ or $d=-1$, let $\vx\in\hat{\mM}_{0,d,n}(X,\beta,E;A,J)$ be a stable supercurve
and $\varepsilon_0>0$ be as in Lem. \ref{lemDistanceGromovConvergence}.
Then the following conclusion holds for every stable supercurve $\vx'\in\hat{\mM}_{0,d,n}(X,\beta,E;A,J)$,
every sequence $\vx^{\nu}\in\hat{\mM}_{0,d,n}(X,\beta,E;A,J)$ of stable supercurves and
$0<\varepsilon<\varepsilon_0$.
If $\vx'$ satisfies $\rho_{\varepsilon}(\vx,\vx')<\varepsilon$
and $\vx^{\nu}$ Gromov converges to $\vx'$ then
\begin{align*}
\limsup_{\nu\rightarrow\infty}\rho_{\varepsilon}(\vx,\vx^{\nu})\leq\rho_{\varepsilon}(\vx,\vx')
\end{align*}
\end{Lem}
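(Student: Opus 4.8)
The plan is to combine near-optimal data for $\rho_{\varepsilon}(\vx,\vx')$ with the Gromov-convergence data for $\vx^{\nu}\to\vx'$ into a single competitor for $\rho_{\varepsilon}(\vx,\vx^{\nu})$ and to estimate its defining quantity term by term. Since it suffices to show that every subsequence of $(\vx^{\nu})$ admits a further subsequence along which $\limsup_{\nu}\rho_{\varepsilon}(\vx,\vx^{\nu})\le\rho_{\varepsilon}(\vx,\vx')$, we may by Rem.~\ref{remSuperGromovCompactnessStable} assume that all trees $T^{\nu}$ coincide with a fixed tree. Fix a small $\delta>0$ and, using $\rho_{\varepsilon}(\vx,\vx')<\varepsilon$, choose a surjective tree homomorphism $f\colon T\to T'$ with $f(\alpha_i)=\alpha_i'$ and Möbius transformations $\{m_{\alpha}\}_{\alpha\in T}$ such that $\rho^{0}:=\rho_{\varepsilon}(\vx,\vx';f,\{m_{\alpha}\})\le\rho_{\varepsilon}(\vx,\vx')+\delta<\varepsilon$. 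Let $g^{\nu}\colon T'\to T^{\nu}$ and $\{\tilde{m}^{\nu}_{\alpha'}\}_{\alpha'\in T'}$ be the tree homomorphisms and Möbius transformations realising the Gromov convergence $\vx^{\nu}\to\vx'$ in the sense of Def.~\ref{defSuperGromovConvergenceStable}, put $h^{\nu}:=g^{\nu}\circ f\colon T\to T^{\nu}$ (a surjective tree homomorphism with $h^{\nu}(\alpha_i)=\alpha_i^{\nu}$) and $m^{\nu}_{\alpha}:=\tilde{m}^{\nu}_{f(\alpha)}\circ m_{\alpha}$. Then $\rho_{\varepsilon}(\vx,\vx^{\nu})\le\rho_{\varepsilon}(\vx,\vx^{\nu};h^{\nu},\{m^{\nu}_{\alpha}\})$, so it remains to prove $\limsup_{\nu}\rho_{\varepsilon}(\vx,\vx^{\nu};h^{\nu},\{m^{\nu}_{\alpha}\})\le\rho^{0}$ and then let $\delta\to0$.

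For the non-energy terms I would argue as follows. For each $\alpha\in T$ the composite $\varphi^{\nu}_{h^{\nu}(\alpha)}\circ m^{\nu}_{\alpha}=\bigl(\varphi^{\nu}_{g^{\nu}(f(\alpha))}\circ\tilde{m}^{\nu}_{f(\alpha)}\bigr)\circ m_{\alpha}$ converges u.c.s. to $\varphi'_{f(\alpha)}\circ m_{\alpha}$ on $S^{2}\setminus m_{\alpha}^{-1}(Z_{f(\alpha)})$, by the (Map) axiom for $\vx^{\nu}\to\vx'$ and precomposition with the fixed diffeomorphism $m_{\alpha}$. The nodal-point and rescaling terms of $\rho^{0}$, together with $\varepsilon<\varepsilon_{0}$ (disjointness of the balls $B_{\varepsilon}(z_{\alpha\gamma})$, $\alpha E\gamma$), force $m_{\alpha}^{-1}(Z_{f(\alpha)})\subseteq B_{\varepsilon}(Z_{\alpha})$ — a one-step $\varepsilon$-analogue of Lem.~\ref{lemInducedGromovConvergence} — so the convergence is uniform on the compact set $S^{2}\setminus B_{\varepsilon}(Z_{\alpha})$, whence $\limsup_{\nu}\sup_{S^{2}\setminus B_{\varepsilon}(Z_{\alpha})}d(\varphi^{\nu}_{h^{\nu}(\alpha)}\circ m^{\nu}_{\alpha},\varphi_{\alpha})=\sup_{S^{2}\setminus B_{\varepsilon}(Z_{\alpha})}d(\varphi'_{f(\alpha)}\circ m_{\alpha},\varphi_{\alpha})$, which is at most the matching term of $\rho^{0}$. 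The term involving $\vpsi$ is treated identically, upgrading uniform convergence to $C^{\infty}$-convergence by Lem.~\ref{lemUniformImpliesCInfty} and Prop.~\ref{prpPsiLInfinityBounded}. The rescaling, nodal-point and marked-point terms reduce, via Lem.~\ref{lemInducedGromovConvergence} and the corresponding axioms for $\vx^{\nu}\to\vx'$, to limits of composites of the $\tilde{m}^{\nu}$ with the fixed $m_{\alpha}$; in each case $\rho^{0}<\varepsilon$ places the relevant point strictly inside the appropriate ball $B_{\varepsilon}(z_{\alpha\beta})$, and the limit equals the matching term of $\rho^{0}$, exactly as in the proof of the corresponding classical statement in \cite{MS04}.

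The main obstacle, and the only point where something beyond bookkeeping is needed, is the energy terms. Fix an edge $\alpha E\beta$ and set $W:=m_{\alpha}(B_{\varepsilon}(z_{\alpha\beta}))$, an open subset of the $f(\alpha)$-sphere of $\vx'$, so that $m^{\nu}_{\alpha}(B_{\varepsilon}(z_{\alpha\beta}))=\tilde{m}^{\nu}_{f(\alpha)}(W)$; it then suffices to prove
\begin{align*}
\lim_{\nu\to\infty}E_{h^{\nu}(\alpha)}(\vvarphi^{\nu},\tilde{m}^{\nu}_{f(\alpha)}(W))=E_{f(\alpha)}(\vvarphi',W)
\end{align*}
and the analogue with $\vvarphi$ replaced by $\vpsi$, because then $\limsup_{\nu}\abs{E_{\alpha}(\vvarphi,B_{\varepsilon}(z_{\alpha\beta}))-E_{h^{\nu}(\alpha)}(\vvarphi^{\nu},m^{\nu}_{\alpha}(B_{\varepsilon}(z_{\alpha\beta})))}=\abs{E_{\alpha}(\vvarphi,B_{\varepsilon}(z_{\alpha\beta}))-E_{f(\alpha)}(\vvarphi',W)}\le\rho^{0}$, uniformly over the finitely many edges. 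Since $\rho^{0}<\varepsilon$ and $\varepsilon<\varepsilon_{0}$, no nodal point of $\vx'$ lies on $\partial W$ and the bubble masses hanging off the $f(\alpha)$-sphere inside $W$ are carried by a single edge, so $E_{f(\alpha)}(\vvarphi',W)=E(\varphi'_{f(\alpha)},W)+m_{f(\alpha)f(\beta)}(\vvarphi')$, with the corresponding splitting on the $\nu$-side. The displayed identity then follows from the induced Gromov convergence of the restrictions (Prop.~\ref{prpInducedGromovConvergence}), the (Energy) axiom of Gromov convergence, and conservation of super energy in the bubbling process — i.e. Prop.~\ref{prpSuperBubblesEnergy} together with conformal invariance (\ref{eqnSuperEnergyInvariance}) of the super energy for the sectional part — the classical part being copied from the proof of the corresponding lemma in \cite{MS04}. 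Combining this with the previous paragraph gives $\limsup_{\nu}\rho_{\varepsilon}(\vx,\vx^{\nu};h^{\nu},\{m^{\nu}_{\alpha}\})\le\rho^{0}$, and letting $\delta\to0$ completes the argument. I expect the verification of this energy identity to be the delicate step; everything else is routine manipulation of composites of Möbius transformations and the already-established convergence axioms.
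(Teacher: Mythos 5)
Your argument is correct and is precisely the argument the paper has in mind: the paper's entire proof of this lemma is the remark that no new ideas beyond the classical case (Lem.~5.5.9 in \cite{MS04}) are required, and your composite data $h^{\nu}=g^{\nu}\circ f$, $m^{\nu}_{\alpha}=\tilde{m}^{\nu}_{f(\alpha)}\circ m_{\alpha}$ with the term-by-term passage to the limit is exactly that classical argument, with the sectional sup-distance and energy terms handled by the same convergence axioms (the (Energy) axiom of the given convergence $\vx^{\nu}\rightarrow\vx'$ already encodes the no-energy-loss statement, so the explicit appeal to Prp.~\ref{prpSuperBubblesEnergy} is harmless but not needed here). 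The only inaccuracy is the side remark that the bubble masses inside $W$ are ``carried by a single edge'' --- when $f$ collapses a subtree toward $z_{\alpha\beta}$, several edges of $T'$ at $f(\alpha)$ may have their nodal points in $W$ --- but your target identity $\lim_{\nu}E_{h^{\nu}(\alpha)}(\vvarphi^{\nu},\tilde{m}^{\nu}_{f(\alpha)}(W))=E_{f(\alpha)}(\vvarphi',W)$ and its proof are indifferent to the number of such nodal points, so nothing is affected.
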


\begin{proof}
Compared to the proof of the underlying classical case concerning stable maps
(Lem. 5.5.9 in \cite{MS04}), no new ideas are required here,
and we omit the details.
\end{proof}

We will now define the Gromov topology on the moduli space $\overline{\mM}_{0,d,n}(X,\beta,E;A,J)$
for $d=-2$ and $d=-1$ and then show, by the properties of the distance function
just established, that this topology is second countable and Hausdorff, and that convergence
is equivalent to Gromov convergence.
Moreover, we shall prove that the moduli spaces are compact and metrisable in the case $d=-1$.

In general, let $(M,\mU)$ be a Hausdorff topological space, which is first countable.
Then limits are unique and the closure $cl(A)$ of a subset $A\subseteq M$ is
exactly the set of limit points of convergent sequences in $A$.
We define $\mC(\mU)\subseteq M\times M^{\bN}$ to be the set of all pairs
$(x_0,(x_n)_n)$ of elements $x_0\in M$ and sequences $x_n\in M$ such that
$x_n$ converges to $x_0$.
Conversely, let $\mC\subseteq M\times M^{\bN}$ be an arbitrary collection of sequences.
We define $\mU(\mC)\subseteq 2^M$ to be the set of all subsets $U\subseteq M$ that satisfy
\begin{align}
\label{eqnUC}
(x_0,(x_n)_n)\in\mC\cap(U\times M^{\bN})\implies
\exists N\in\bN\;\forall n\geq N: x_n\in U
\end{align}
This can be seen to be indeed a topology.

\begin{Lem}[\cite{MS04}, Lem. 5.6.5]
\label{lemSequenceTopology}
Let $M$ be a set and $\mC\subseteq M\times M^{\bN}$ be a collection of sequences in $M$
that satisfies the uniqueness axiom:
\begin{align*}
(x_0,(x_n)_n)\in\mC\quad\mathrm{and}\quad(y_0,(x_n)_n)\in\mC\quad\implies\quad x_0=y_0
\end{align*}
Suppose that for every $x\in M$ there exists a
constant $\varepsilon_0(x)>0$ and a collection of functions
$M\rightarrow[0,\infty]:x'\mapsto\rho_{\varepsilon}(x,x')$ for $0<\varepsilon<\varepsilon_0(x)$
satisfying the following conditions.
\begin{enumerate}
\renewcommand{\labelenumi}{(\roman{enumi})}
\item If $x\in M$ and $0<\varepsilon<\varepsilon_0(x)$ then $\rho_{\varepsilon}(x,x)=0$.
\item If $x\in M$, $0<\varepsilon<\varepsilon_0(x)$, and $(x_n)_n\in M^{\bN}$ then
\begin{align*}
(x,(x_n)_n)\in\mC\iff\lim_{n\rightarrow\infty}\rho_{\varepsilon}(x,x_n)=0
\end{align*}
\item If $x\in M$, $0<\varepsilon<\varepsilon_0(x)$, and $(x',(x_n)_n)\in\mC$, then
\begin{align*}
\rho_{\varepsilon}(x,x')<\varepsilon\implies
\limsup_{n\rightarrow\infty}\rho_{\varepsilon}(x,x_n)\leq\rho_{\varepsilon}(x,x')
\end{align*}
\end{enumerate}
Then $\mC=\mC(\mU(\mC))$. Moreover, the topology $\mU(\mC)$ is first countable and Hausdorff.
\end{Lem}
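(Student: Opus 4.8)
The plan is to prove the two inclusions $\mC\subseteq\mC(\mU(\mC))$ and $\mC(\mU(\mC))\subseteq\mC$ separately, and then read off first countability and the Hausdorff property from the same ingredients. That $\mU(\mC)$ is a topology (stability under arbitrary unions and finite intersections) follows directly from the implication (\ref{eqnUC}), so I would only note this. The inclusion $\mC\subseteq\mC(\mU(\mC))$ is then immediate: if $(x_0,(x_n)_n)\in\mC$ and $U\in\mU(\mC)$ with $x_0\in U$, then (\ref{eqnUC}) gives $x_n\in U$ for $n$ large, which is exactly the assertion that $x_n\to x_0$ in the topology $\mU(\mC)$.

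The substance lies in the reverse inclusion, and the key step is to manufacture enough open sets from the distance functions. For $x\in M$ and $0<\delta<\varepsilon<\varepsilon_0(x)$ I set $B^{\delta}_{\varepsilon}(x):=\{x'\in M:\rho_{\varepsilon}(x,x')<\delta\}$ and would show $B^{\delta}_{\varepsilon}(x)\in\mU(\mC)$: if $(x',(x_n)_n)\in\mC$ with $x'\in B^{\delta}_{\varepsilon}(x)$, then $\rho_{\varepsilon}(x,x')<\delta<\varepsilon$, so hypothesis (iii) applies and gives $\limsup_n\rho_{\varepsilon}(x,x_n)\le\rho_{\varepsilon}(x,x')<\delta$, whence $x_n\in B^{\delta}_{\varepsilon}(x)$ eventually, verifying (\ref{eqnUC}). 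Now suppose $(x_0,(x_n)_n)\in\mC(\mU(\mC))$, i.e.\ $x_n\to x_0$ in $\mU(\mC)$. For each $0<\delta<\varepsilon<\varepsilon_0(x_0)$, hypothesis (i) gives $\rho_{\varepsilon}(x_0,x_0)=0<\delta$, so $x_0\in B^{\delta}_{\varepsilon}(x_0)\in\mU(\mC)$, and topological convergence forces $\rho_{\varepsilon}(x_0,x_n)<\delta$ for $n$ large. Letting $\delta\downarrow 0$ yields $\rho_{\varepsilon}(x_0,x_n)\to 0$, and hypothesis (ii) then gives $(x_0,(x_n)_n)\in\mC$. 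This establishes $\mC=\mC(\mU(\mC))$.

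For first countability at $x$, fix any $\varepsilon\in(0,\varepsilon_0(x))$ and take the countable family $\{B^{1/k}_{\varepsilon}(x):1/k<\varepsilon\}$; each member is open by the previous step and contains $x$. To see it is a neighbourhood basis, I argue by contradiction: if an open $V\ni x$ contained no $B^{1/k}_{\varepsilon}(x)$, pick $x_k\in B^{1/k}_{\varepsilon}(x)\setminus V$; then $\rho_{\varepsilon}(x,x_k)<1/k\to 0$, so $(x,(x_k)_k)\in\mC$ by (ii), hence $x_k\to x$ in $\mU(\mC)$ by the already-proved inclusion $\mC\subseteq\mC(\mU(\mC))$, so $x_k\in V$ eventually, contradicting the choice of $x_k$. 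Hausdorffness then follows: by $\mC=\mC(\mU(\mC))$, a sequence converging in $\mU(\mC)$ to both $x_0$ and $y_0$ gives $(x_0,(x_n)_n),(y_0,(x_n)_n)\in\mC$, so $x_0=y_0$ by the uniqueness axiom; and a first countable space with unique sequential limits is Hausdorff, since otherwise two inseparable points would have countable decreasing neighbourhood bases whose $k$-th members always meet, producing a sequence with two limits.

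I expect the main obstacle to be precisely the openness claim $B^{\delta}_{\varepsilon}(x)\in\mU(\mC)$: this is the only place where one must cope with the fact that $\rho_{\varepsilon}$ is neither symmetric nor subadditive, so no metric-space reasoning is available, and it is hypothesis (iii) — the substitute for the triangle inequality furnished for our distance by Lem.\ \ref{lemDistanceTriangle} — that makes it work. Everything else is bookkeeping around the equivalence (ii) between $\mC$-convergence and $\rho_{\varepsilon}\to 0$ and the normalisation (i).
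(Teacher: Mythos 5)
Your proof is correct; the paper itself gives no argument for this lemma but simply cites \cite{MS04}, Lem.~5.6.5, and your proof is essentially the standard one found there: openness of the sets $\{x'\,:\,\rho_{\varepsilon}(x,x')<\delta\}$ via hypothesis (iii), the equivalence of topological convergence with $\rho_{\varepsilon}\rightarrow 0$ via (i) and (ii), and first countability plus uniqueness of sequential limits yielding the Hausdorff property. No gaps.
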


By Theorem \ref{thmSuperGromovCompactnessUniquenessStable} on unique limits,
Gromov convergence for sequences of stable supercurves
as defined in Def. \ref{defSuperGromovConvergenceStable}
descends to the moduli space $M:=\overline{\mM}_{0,d,n}(X,\beta,E;A,J)$.
In other words, it makes sense to consider Gromov convergent sequences of equivalence
classes. We define the \emph{Gromov topology} on $M$ to be the topology $\mU(\mC)$
as in (\ref{eqnUC}), where we take $\mC$ to be the set of all Gromov convergent
sequences (to be more precise, the set of all pairs $([\vx],[\vx^{\nu}])\in M\times M^{\bN}$
such that $[\vx^{\nu}]$ Gromov converges to $[\vx]$).

We conclude this chapter with the following main theorem on the Gromov topology
as advertised above, whose proof is based on Lem. \ref{lemSequenceTopology}.
Consult \cite{Mun00} for the necessary background and results on topological spaces,
such as the countability axioms and Urysohn's metrisation theorem.

\begin{Thm}
Let $d=-2$ or $d=-1$. Then the Gromov topology on the moduli space
$\overline{\mM}_{0,d,n}(X,\beta,E;A,J)$ satisfies the following properties.
\begin{enumerate}
\renewcommand{\labelenumi}{(\roman{enumi})}
\item It is first countable and Hausdorff, and
a sequence in $\overline{\mM}_{0,d,n}(X,\beta,E;A,J)$ converges with respect to the Gromov topology
if and only if it Gromov converges.

\item The Gromov topology even satisfies the second countability axiom.

\item Moreover, in the case $d=-1$, the moduli space $\overline{\mM}_{0,d,n}(X,\beta,E;A,J)$
is compact and metrisable.
\end{enumerate}
\end{Thm}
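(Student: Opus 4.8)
The plan is to derive all three assertions from the abstract Lemma \ref{lemSequenceTopology}, the distance estimates of the previous subsection, and standard point-set topology; the only ingredients requiring genuine work beyond the classical treatment in \cite{MS04} are the ones involving the sectional part $\vpsi$.

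\emph{Part (i).} I would apply Lem. \ref{lemSequenceTopology} to the set $M:=\overline{\mM}_{0,d,n}(X,\beta,E;A,J)$, the collection $\mC$ of Gromov convergent sequences of equivalence classes (which is well defined by Thm. \ref{thmSuperGromovCompactnessUniquenessStable}), and the functions $\vx'\mapsto\rho_{\varepsilon}(\vx,\vx')$, which descend to $M$ by Lem. \ref{lemDistanceEquivalence}. The uniqueness axiom of Lem. \ref{lemSequenceTopology} is Thm. \ref{thmSuperGromovCompactnessUniquenessStable}; its hypothesis (i) is the second implication of Lem. \ref{lemDistanceEquivalence}; its hypothesis (ii) is Lem. \ref{lemDistanceGromovConvergence}; its hypothesis (iii) is Lem. \ref{lemDistanceTriangle}. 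The lemma then yields that the Gromov topology $\mU(\mC)$ is first countable and Hausdorff and that $\mC=\mC(\mU(\mC))$, i.e. convergence in the Gromov topology is equivalent to Gromov convergence.

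\emph{Part (ii).} Following the classical argument (proof of Thm.~5.6.6(ii) in \cite{MS04}), one exhibits a countable base. By Rem. \ref{remSuperGromovCompactnessStable}, only finitely many $n$-labelled trees $T$ occur among stable supercurves with super energy bounded by $E$, so it suffices to treat a fixed $T$. For such $T$ the underlying data $(\{\varphi_{\alpha}\},\{\psi_{\alpha}\},\vz)$ range over a subset of a finite product of copies of $C^{\infty}(S^2,X)$, of spaces of smooth sections of the bundles $L_d\otimes_J\varphi^*TX$, and of copies of $S^2$; all of these are separable and metrisable, hence second countable. Combined with first countability from (i) and the characterisation of convergence in Lem. \ref{lemDistanceGromovConvergence}, this produces a countable base for $\mU(\mC)$. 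The only point not already contained in the classical proof is the separability of the spaces of smooth sections entering through $\vpsi$, which holds because each such space, with the $C^{\infty}$-topology, is a separable Fr\'echet space; verifying that this separability is compatible with the identifications forced by bubbling is the step I expect to demand the most care, and is the main obstacle in the whole proof.

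\emph{Part (iii).} Assume $d=-1$. First, $M$ is sequentially compact: given a sequence in $M$, pick representatives in $\hat{\mM}_{0,-1,n}(X,\beta,E;A,J)$ and apply the Gromov compactness Thm. \ref{thmSuperGromovCompactnessStable} to extract a Gromov convergent subsequence. Its limit again represents $\beta$ — the connected sum of its components is homotopic to $\#_{\alpha\in T^{\nu}}\varphi_{\alpha}^{\nu}$ by Prp. \ref{prpSuperGromovConvergenceStable}, and $\beta=\sum_{\alpha\in T}{\varphi_{\alpha}}_{*}[S^2]$ is a homotopy invariant — and again satisfies $E(\vpsi)\leq E$, since no super energy is lost in the limit by the energy axiom of Def. \ref{defSuperGromovConvergenceStable}, ultimately Prp. \ref{prpSuperBubblesEnergy}; this conservation, available only for $d=-1$, is precisely why the present part is restricted to that case. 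By Thm. \ref{thmSuperGromovCompactnessUniquenessStable} the limit is a well-defined point of $M$, and by (i) the subsequence converges to it in the Gromov topology. Now a sequentially compact space is countably compact, and a countably compact space which is Lindel\"of — in particular, second countable by (ii) — is compact; hence $M$ is compact. Being compact and Hausdorff, $M$ is normal, hence regular, and second countable, so Urysohn's metrisation theorem shows that $M$ is metrisable.
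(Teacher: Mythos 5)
Your proposal is correct and follows essentially the same route as the paper: part (i) via Lem.~\ref{lemSequenceTopology} fed by Thm.~\ref{thmSuperGromovCompactnessUniquenessStable} and Lemmas~\ref{lemDistanceEquivalence}, \ref{lemDistanceGromovConvergence}, \ref{lemDistanceTriangle}; part (ii) by reducing to finitely many trees (Rem.~\ref{remSuperGromovCompactnessStable}) and separability of the relevant mapping/section spaces; part (iii) by sequential compactness from Thm.~\ref{thmSuperGromovCompactnessStable} (with Prp.~\ref{prpSuperGromovConvergenceStable} keeping the limit in the space), then second countable $+$ sequentially compact $\Rightarrow$ compact, compact Hausdorff $\Rightarrow$ normal, and Urysohn. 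The only cosmetic difference is that the paper phrases the separability in (ii) as a countable union of compact subsets of a separable Banach manifold rather than separable Fr\'echet spaces of smooth sections.
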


\begin{proof}
By Thm. \ref{thmSuperGromovCompactnessUniquenessStable}, Gromov convergent sequences have
unique limits. By this result and, moreover, Lemmas \ref{lemDistanceEquivalence},
\ref{lemDistanceGromovConvergence} and \ref{lemDistanceTriangle}, the hypotheses of
Lem. \ref{lemSequenceTopology} are satisfied such that (i) follows.

We prove that $\overline{\mM}_{0,d,n}(X,\beta,E;A,J)$ is second countable.
Fix an $n$-labelled tree $(T,E,\Lambda)$. Corresponding to the decomposition
of the homology class $\beta$ into sums $\beta=\sum_{\alpha\in T}\beta_{\alpha}$,
the subset of stable supercurves in $\hat{\mM}_{0,d,n}(X,\beta,E;A,J)$
modelled over $(T,E,\Lambda)$ is a countable union of compact subsets of a separable
Banach manifold, and thus has a countable dense subset. The same remains true upon
factoring out the equivalence relation from Def. \ref{defStableSupercurveEquivalence}.
Since there are only finitely many $n$-labelled trees over which there exist stable
supercurves representing $\beta$ (cf. Rem. \ref{remSuperGromovCompactnessStable}),
the whole moduli space $\hat{\mM}_{0,d,n}(X,\beta,E;A,J)$ contains a countable
dense subset.
The union of the countable neighbourhood bases of the elements of such a countable
dense subset is a countable basis for the Gromov topology, thus
establishing (ii).

In the case $d=-1$, Thm. \ref{thmSuperGromovCompactnessStable} asserts that every sequence in
$\overline{\mM}_{0,d,n}(X,\beta,E;A,J)$ has a Gromov convergent subsequence.
Hence, the moduli space is sequentially compact with respect to the Gromov
topology. Now every second countable sequentially compact topological space is compact and,
moreover, every compact Hausdorff space is normal. By Urysohn's metrisation theorem,
every normal space with a countable basis is metrisable. This concludes the proof of (iii).
\end{proof}

\addcontentsline{toc}{section}{References}

\bibliographystyle{plain}

\end{document}